\let\oldtocsection=\tocsection
\let\oldtocsubsection=\tocsubsection
\let\oldtocsubsubsection=\tocsubsubsection
\renewcommand{\tocsection}[2]{\hspace{0em}\oldtocsection{#1}{#2}}
\renewcommand{\tocsubsection}[2]{\hspace{5em}\oldtocsubsection{#1}{#2}}
\renewcommand{\tocsubsubsection}[2]{\hspace{2em}\oldtocsubsubsection{#1}{#2}}
\numberwithin{equation}{section}
\theoremstyle{plain}
\newtheorem{theor}[equation]{Theorem}
\newtheorem{lemma}[equation]{Lemma}
\newtheorem{corol}[equation]{Corollary}
\newtheorem{prop}[equation]{Proposition}
\theoremstyle{definition}
\newtheorem{defi}[equation]{Definition}
\theoremstyle{remark}
\newtheorem{remark}[equation]{Remark}
\newcommand{\re}{\mathbb{R}}
\newcommand{\rn}{{\mathbb{R}^n}}
\newcommand{\co}{\mathbb{C}}
\newcommand{\RR}{\mathbb{R}}
\newcommand{\NN}{\mathbb{N}}
\newcommand{\HH}{\mathbb{H}}
\newcommand{\J}{\mathcal{J}}
\newcommand{\K}{\mathcal{K}}
\renewcommand{\H}{\mathcal{H}}
\newcommand{\D}{\mathcal{D}}
\newcommand{\cals}{\mathcal{S}}
\def\E{{\mathcal E}}
\def\grad{{\nabla}}
\def\a{{\mathfrak a}}
\def\C{{\mathbb C}}
\newcommand{\A}{\mathcal{A}}
\newcommand{\ep}{\varepsilon}
\newcommand{\normH}[1]{|\!|\!|#1|\!|\!|}
\newcommand{\bigchi}{\mathop{\mathchoice%
{\mbox{\large$\chi$}}{\mbox{\large$\chi$}}{\mbox{\normalsize$\chi$}}%
{\mbox{\small$\chi$}}}\nolimits}
\DeclareMathOperator*{\esssup}{ess\,sup}
\renewcommand{\Re}{{\rm Re}\,}
\def\div{\mathop{\rm div}}
\def\Int{\mathop{\rm Int}}
\def\supp{\mathop{\rm supp}}
\newcommand{\expt}[1]{e^{\textstyle #1}}
\def\Xint#1{\mathchoice
  {\XXint\displaystyle\textstyle{#1}}%
  {\XXint\textstyle\scriptstyle{#1}}%
  {\XXint\scriptstyle\scriptscriptstyle{#1}}%
  {\XXint\scriptscriptstyle\scriptscriptstyle{#1}}%
  \!\int}
\def\XXint#1#2#3{{\setbox0=\hbox{$#1{#2#3}{\int}$}
    \vcenter{\hbox{$#2#3$}}\kern-.5\wd0}}
\def\avgint{\Xint-}
\def\aver#1{\avgint_{#1}}
\newcommand{\off}[2]{\mathcal{O}\big(L^{#1}\rightarrow L^{#2}\big)}
\newcommand{\offw}[2]{\mathcal{O}\big(L^{#1}(w) \rightarrow L^{#2}(w)\big)}
\newcommand{\offwmu}[3]{\mathcal{O}\big(L^{#1}(w) \rightarrow L^{#2}(w),\Sigma_{#3}\big)}
\newcommand{\fullw}[2]{\mathcal{F}\big(L^{#1}(w) \rightarrow L^{#2}(w)\big)}
\newcommand{\dec}[1]{\Upsilon\!\left(#1\right)}
\begin{document}

\title[On the Kato problem and extensions for degenerate elliptic operators]{On the Kato problem and extensions for degenerate elliptic operators}

\author{David Cruz-Uribe, OFS}

\address{David Cruz-Uribe, OFS\\
Department of Mathematics \\ University of Alabama\\
Tuscaloosa, AL 35487-0350, USA}
\email{dcruzuribe@ua.edu}

\author{Jos\'e Mar{\'\i}a Martell}
\address{Jos\'e Mar{\'\i}a Martell\\
Instituto de Ciencias Matem\'aticas CSIC-UAM-UC3M-UCM\\
Consejo Superior de Investigaciones Cient{\'\i}ficas\\
C/ Nicol\'as Cabrera, 13-15\\
E-28049 Madrid, Spain
}
\email{chema.martell@icmat.es}
\address{and}
\address{
Department of Mathematics
\\
University of Missouri
\\
Columbia, MO 65211, USA} 
\email{martellj@missouri.edu}

\author{Cristian Rios}
\address{Cristian Rios\\
Department of Mathematics and Statistics\\
University of Calgary\\
2500 University Drive NW\\
Calgary, AB, T2N-1N4, Canada}
\email{crios@ucalgary.ca}

\thanks{The first author is supported by NSF grant 1362425 and
  research funds provided by the Dean of Arts \& Sciences at the
  University of Alabama.  While substantial portions of this work was
  done he was supported by the Stewart-Dorwart faculty
  development fund at Trinity College.
	The second authors acknowledges financial support from the Spanish Ministry of Economy and Competitiveness, through the ``Severo Ochoa Programme for Centres of Excellence in R\&D'' (SEV-2015-0554). He also acknowledges that the research leading to these results has received funding from the European Research Council under the European Union's Seventh Framework Programme (FP7/2007-2013)/ ERC
agreement no. 615112 HAPDEGMT. The third author is   supported by the Natural Sciences and Engineering Research Council
  of Canada Discovery Grant RT733901. 
The authors warmly thank P.~Auscher and S.~Hofmann for several useful
discussions and their many suggestions.}

\date{October 6, 2016. \textit{Revised:} \today}

\subjclass[2010]{35B45, 35J15, 35J25, 35J70, 42B20, 42B37, 47A07, 47B44, 47D06}

\keywords{Muckenhoupt weights, degenerate elliptic
  operators, Kato problem, semigroups, holomorphic functional
  calculus, square functions, square roots of elliptic operators, Riesz
  transforms, Dirichlet problem, regularity problem, and Neumann problem.}

\begin{abstract}
We study the Kato problem for divergence form operators whose ellipticity may be degenerate.  The study of the
  Kato conjecture for degenerate elliptic equations was begun
  in~\cite{cruz-uribe-riosP,cruz-riosP,DCU-CR2013}.  In these papers the authors proved 
  that given an operator $L_w=-w^{-1}\div (A\grad)$, where $w$ is in the Muckenhoupt class $A_2$ and $A$ is a $w$-degenerate elliptic measure (that is, $A=w\,B$ with $B(x)$ an $n\times n$ bounded, complex-valued, uniformly elliptic matrix), then $L_w$ satisfies the weighted estimate   $\|\sqrt{L_w}f\|_{L^2(w)}\approx \|\nabla f\|_{L^2(w)}$. In the   present paper we solve the $L^2$-Kato problem for a family of   degenerate elliptic operators. We prove that under some   additional conditions on the weight $w$,  the following unweighted   $L^2$-Kato estimates hold:
$$
\|L_w^{1/2}f\|_{L^2(\re^n)}\approx \|\nabla f\|_{L^2(\re^n)}.
$$

This extends the celebrated solution to the Kato conjecture by Auscher, Hofmann, Lacey, McIntosh, and Tchamitchian, allowing the
differential operator to have some degree of degeneracy in its ellipticity.  For example, we consider the family of operators $L_\gamma=-|x|^{\gamma}\div (|x|^{-\gamma} B(x)\grad)$, where $B$  is any bounded, complex-valued, uniformly elliptic matrix.  We prove  that there exists $\epsilon>0$, depending only on dimension and the ellipticity constants, such that
$$
\|L_\gamma^{1/2}f\|_{L^2(\re^n)}\approx \|\nabla f\|_{L^2(\re^n)},
\qquad -\epsilon< \gamma< \frac{2\,n}{n+2}.
$$
The case $\gamma=0$ corresponds to the case of uniformly elliptic matrices. Hence, our result gives a range of $\gamma$'s for which the classical Kato square root proved in \cite{auscher-hofmann-lacey-mcintosh-tchamitchian02} is an interior
point.

Our main results are obtained as a consequence of a rich Calder\'on-Zygmund theory developed for certain operators naturally associated with $L_w$. These results, which are of independent interest, establish estimates on $L^p(w)$, and also on $L^p(v\,dw)$ with $v\in A_\infty(w)$, for the associated semigroup, its gradient, the functional calculus, the Riesz
transform, and vertical square functions.   As an application, we solve some   unweighted $L^2$-Dirichlet, Regularity and Neumann boundary value problems for degenerate elliptic operators. 
\end{abstract}

\maketitle

\tableofcontents

\section{Introduction}
\label{section:introduction}

We study the degenerate elliptic
operators $L_w=-w^{-1}\div A\grad$, where $w$ is in the Muckenhoupt
class $A_2$ and $A(x)$ is an $n\times n$ complex-valued matrix that
satisfies the degenerate ellipticity condition
\[  \lambda w(x) | \xi | ^{2}\leq {\Re}\langle A(x)\xi
    ,\xi \rangle, \qquad
  |\langle A(x)\xi ,\eta \rangle |\leq \Lambda w(x)|\xi
  ||\eta |, \quad \xi ,\,\eta \in \mathbb{C}^{n},
	\ \mbox{a.e.~}x\in\re^n. \]
Equivalently, $A(x)=w(x)B(x)$, where $B$ is an $n\times n$
complex-valued matrix that satisfies the uniform ellipticity
conditions 
\[  \lambda | \xi | ^{2}\leq {\Re}\langle B(x)\xi
    ,\xi \rangle, \qquad
  |\langle B(x)\xi ,\eta \rangle |\leq \Lambda |\xi
  ||\eta |, \quad \xi ,\,\eta \in \mathbb{C}^{n},
	\ \mbox{a.e.~}x\in\re^n. \] 
Such operators were first studied (with $A$ a real symmetric matrix)
by Fabes, Kenig and Serapioni~\cite{fabes-kenig-serapioni82}.  When
$A$ is complex-valued and uniformly elliptic (i.e. $w\equiv 1$), a
landmark result was the
proof of the Kato conjecture by Auscher,  Hofmann, Lacey, McIntosh, and Tchamitchian~\cite{auscher-hofmann-lacey-mcintosh-tchamitchian02}:  that for all $f\in H^1$,
\[ \|L^{1/2} f\|_2 \approx \|\grad f\|_2. \]
The proof of this long-standing conjecture led naturally to the study
of the operators associated with $L$:  the semigroup $e^{-tL}$, its
gradient $\sqrt{t}\grad e^{-tL}$, the Riesz transform $\grad
L^{-1/2}$, the $H^\infty$ functional calculus and square functions:
for details and complete references, see~Auscher~\cite{auscher07}.
These estimates are interesting in themselves; moreover,
it is well known that $L^p$ estimates for these operators yield
regularity results for boundary value problems for $L$:  for details, see the
introduction to~\cite{auscher-tchamitchian98}.

In~\cite{DCU-CR2013} (see also~\cite{cruz-uribe-riosP,cruz-riosP, Auscher-Rosen-Rule}) the
first and third authors solved the Kato problem for degenerate elliptic
operators:  they showed that
if $w\in A_2$ and $A$ satisfies the degenerate ellipticity conditions,
then for all $f\in H^1(w)$,
\begin{equation}  \label{eqn:Lw2-kato}
 \|L_w^{1/2} f\|_{L^2(w)} \approx \|\grad f\|_{L^2(w)}.
\end{equation}

In this paper we consider the problem of determining those $A_2$ weights
such that the classical Kato problem can be solved for $L_w$:  that
is, finding weights such that $L_w$ satisfies the unweighted estimate 
\[ 
 \|L_w^{1/2} f\|_{L^2(\re^n)} \approx \|\grad f\|_{L^2(\re^n)}, 
\]
for $f$ in a class  of nice functions ({\em a posteriori}, by standard density arguments, the estimate can be extended to all $f\in H^1(\re^n)$). 
We solve this problem in two steps.  The first is to prove weighted
$L^p$ estimates for some operators associated with $L_w$ (the
semigroup, its gradient, the Riesz transform, the functional calculus,
and square functions.)  These results, which are of interest in their
own right, are analogous to those obtained in the uniformly elliptic
case.   However, a significant technical obstruction is that given
a weight $w\in A_2$, while it is the case that there exists
$\epsilon>0$ such that $w\in A_{2-\epsilon}$, it is easy to construct
examples to show that $\epsilon$ may be arbitrarily small.  Therefore,
our bounds in the range $1<p<2$ need to take this into account.

The second step is to find conditions on the weight $w$ so that these
operators satisfy {\em unweighted} $L^2$ estimates.   Both steps are
carried out simultaneously, and the proofs are
intertwined.  Our approach is to apply the theory of off-diagonal
estimates on balls developed by Auscher and the second
author~\cite{auscher-martell06,auscher-martell07b,
  auscher-martell07,auscher-martell-08}.   We will in fact prove
weighted estimates on $L^p(v\,dw)$, where $v$ satisfies
Muckenhoupt and reverse H\"older conditions with respect to the
measure $dw=w\,dx$:  $L^p(w)$ estimates are then obtained by
taking $v=1$, and unweighted estimates by taking
$v=w^{-1}$.

The unweighted $L^2$ estimates are 
delicate, since they require a careful estimate of the constants that
appear.  Nevertheless, we are able to give useful sufficient
conditions: e.g.,  $w\in A_1\cap RH_{\frac{n}{2}+1}$.  (For definitions of
these classes, see Section~\ref{section:prelim} below.)  For example,
we have the following result that is a special case of one of our main
results (cf. Theorem~\ref{corol:super-Kato}).  

\begin{theor} \label{thm:special-case}
  Let $L_w=-w^{-1}\div A\grad$ be a degenerate elliptic operator as
  above. If $w\in A_1\cap RH_{\frac{n}{2}+1}$, then the Kato problem
  can be solved for $L_w$: for every $f\in H^1(\re^n)$, 
$$
 \|L_w^{1/2} f\|_{L^2(\re^n)} \approx \|\grad f\|_{L^2(\re^n)}.
$$
The implicit constants depend only on the dimension, the ellipticity
constants, and the $A_1$ and $RH_{\frac{n}{2}+1}$ constants of $w$.

 Furthermore, if we define
$L_\gamma=-|x|^{\gamma}\div(|x|^{-\gamma} B(x)\grad)$, where $B$ is an
$n\times n$ complex-valued matrix that satisfies the uniform
ellipticity condition,  then there exists $0<\epsilon<\frac12 $ small
enough (depending only on the dimension and the ratio $\Lambda/\lambda$)
such that
$$
 \|L_\gamma^{1/2} f\|_{L^2(\re^n)} \approx \|\grad f\|_{L^2(\re^n)},\qquad
-\epsilon< \gamma<\frac{2n}{n+2}.
$$
\end{theor}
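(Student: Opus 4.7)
The strategy is to leverage the weighted Kato estimate $\|L_w^{1/2} f\|_{L^2(w)} \approx \|\grad f\|_{L^2(w)}$ from~\cite{DCU-CR2013} and transfer it to the unweighted $L^2(\re^n)$ setting by means of a Calder\'on--Zygmund theory for the operators naturally associated with $L_w$, viewing the ambient measure as $dw = w\,dx$. Within this framework I would prove, following the machinery of Auscher and Martell~\cite{auscher-martell06, auscher-martell07, auscher-martell07b, auscher-martell-08}, $L^p(v\,dw) \to L^p(v\,dw)$ bounds for the semigroup $e^{-tL_w}$, its gradient $\sqrt{t}\,\grad e^{-tL_w}$, the resolvent, the $H^\infty$ functional calculus, vertical square functions, and the Riesz transform $\grad L_w^{-1/2}$, allowing $v$ to range over weights in Muckenhoupt $A_p(dw)$ and reverse-H\"older $RH_q(dw)$ classes in appropriate intervals. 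The enabling fact is that $dw$ is doubling, so the whole weighted Calder\'on--Zygmund toolkit applies once off-diagonal estimates on balls are established for the semigroup and its gradient; these in turn should follow from the degenerate $L^2(w)$ semigroup theory via Davies--Gaffney type arguments.

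The crucial observation is then that the specific choice $v = w^{-1}$ yields $v\,dw = dx$, so the weighted bounds collapse to ordinary unweighted $L^p$ estimates. It therefore remains to verify that, under the hypothesis $w \in A_1 \cap RH_{\frac{n}{2}+1}(dx)$, the weight $w^{-1}$ lies in Muckenhoupt/reverse-H\"older classes of $dw$ at exponents included in the range where the Riesz transform and its vertical-square-function dual can be handled at the $L^2$ endpoint. The condition $w \in A_1$ forces a pointwise control of $w^{-1}$ by its $dw$-averages, while $w \in RH_{\frac{n}{2}+1}(dx)$ supplies the extra integrability of $w^{-1}$ relative to $dw$ needed to push the Riesz transform through at $p=2$. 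Combining the resulting unweighted $L^2$-boundedness of $\grad L_w^{-1/2}$ and its dual with the weighted Kato equivalence then produces the unweighted Kato estimate claimed in the first part.

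For the operator $L_\gamma$ one reads off directly that $w(x) = |x|^{-\gamma}$ lies in $A_1 \cap RH_{\frac{n}{2}+1}$ precisely when $0 \leq \gamma < 2n/(n+2)$, since $|x|^{-\gamma} \in RH_q$ exactly when $\gamma < n/q$, and the $A_1$ property forces $\gamma \geq 0$. Hence the first part already covers the range $0 \leq \gamma < 2n/(n+2)$. To reach $-\epsilon < \gamma < 0$, I would invoke the more general super-Kato version of the result (Theorem~\ref{corol:super-Kato}), whose hypotheses are phrased in terms of \emph{open} weight classes $w \in A_p \cap RH_q$. The classical uniformly elliptic case $\gamma = 0$ corresponds to $w \equiv 1$, an interior point of every such class; perturbing $\gamma$ slightly below zero keeps $w = |x|^{-\gamma}$ inside these classes and inside the admissible exponent region of the Auscher--Martell machinery, producing an $\epsilon > 0$ depending only on the dimension and $\Lambda/\lambda$.

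The hardest step is the quantitative Calder\'on--Zygmund analysis: one must track the exponents in the off-diagonal estimates and in the weighted bounds with enough precision that the admissible pairs $(p,v)$ at $p=2$ include $v = w^{-1}$ for weights as rough as $w \in A_1 \cap RH_{\frac{n}{2}+1}$. Since a generic $A_2$ weight only satisfies $A_{2-\epsilon}$ for an arbitrarily small $\epsilon$, black-box extrapolation destroys this quantitative information, and the proof must instead rely on sharp, explicit off-diagonal bounds and a direct verification of the endpoint.
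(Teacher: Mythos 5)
Your overall strategy --- establish $L^p(v\,dw)$ bounds for the semigroup, functional calculus, square functions and Riesz transform via the Auscher--Martell off-diagonal machinery and then specialize $v=w^{-1}$ so that $v\,dw=dx$ --- is exactly the paper's route, and your computation that $|x|^{-\gamma}\in A_1\cap RH_{\frac{n}{2}+1}$ precisely when $0\le\gamma<\frac{2n}{n+2}$ matches the first part. The genuine gap is in the range $-\epsilon<\gamma<0$, i.e. $w=|x|^{\alpha}$ with $\alpha>0$ small, which is no longer an $A_1$ weight. Your argument there is an openness/perturbation claim: $w\equiv 1$ is an interior point of the weight classes, so a small perturbation of $\gamma$ stays admissible. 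But the admissible window at $p=2$ for the operators involving a gradient is not a fixed open set of weights: taking $v=w^{-1}$ in Corollary~\ref{corollary-grad-weighted-offd} or Proposition~\ref{prop:ext-RT} forces $w\in A_{q_+(L_w)/2}$, i.e. $r_w<q_+(L_w)/2$, and $q_+(L_w)$ depends on the weight and on the matrix; in general it can be arbitrarily close to $2$, so openness of the classes $A_p\cap RH_q$ by itself proves nothing, and changing $\gamma$ changes the operator $L_w$ as well as the weight. What the paper actually does (Section~\ref{section:q-plus} together with the proof of Theorem~\ref{thm:grad-square-nowt}) is derive a uniform quantitative lower bound $q_+(L_w)\ge p_0$ with $p_0-2$ controlled explicitly in terms of $n$, $\Lambda/\lambda$ and $[w]_{A_2}$, by combining a Caccioppoli inequality, the sharp weighted Poincar\'e--Sobolev estimate of Remark~\ref{remark-best-Poi}, Gehring's lemma with explicit constants, boundedness of the Hodge projection, and Riesz transform bounds for exponents above $2$; together with the observation that $[w_\alpha]_{A_2}\le\Theta(n)$ uniformly for $0<\alpha<\frac12$, this is what produces $\epsilon_1=\epsilon_1(n,\Lambda/\lambda)$. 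You flag the quantitative tracking as ``the hardest step'' but neither carry it out nor identify this mechanism, so the assertion that $\epsilon$ depends only on $n$ and $\Lambda/\lambda$ is unsupported in your proposal.

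A secondary imprecision: you propose to obtain $\|L_w^{1/2}f\|_{L^2(\re^n)}\lesssim\|\grad f\|_{L^2(\re^n)}$ from the unweighted boundedness of $\grad L_w^{-1/2}$ ``and its dual'' combined with the weighted Kato equivalence of \cite{DCU-CR2013}. The adjoint in $L^2(dx)$ is not the adjoint relevant to $L_w$ (whose natural pairing is $L^2(w)$), and the weighted equivalence lives in $L^2(w)$, so this duality does not close. The paper proves the reverse inequality separately within the $L^p(v\,dw)$ framework (Proposition~\ref{prop:reverseRiesz}, via a Calder\'on--Zygmund decomposition adapted to Sobolev functions and the discrete square function estimate of Proposition~\ref{prop:dsicrtee-SF}) and then sets $v=w^{-1}$ in Theorem~\ref{thm:reverseRiesz-nowt}; note that this direction carries the additional restriction $s_w>(2/r_w)'$, harmless when $w\in A_1$ but part of the hypotheses one must verify for the power weights.
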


\begin{remark}
In Theorem \ref{thm:special-case} the operator $L_w^{1/2}$ is {\em a priori} only defined on $H^1(w)$; however this means that it is defined on $C_0^\infty(\re^n)$ and so by a standard density argument we can extend our results to all $f\in H^1(\re^n)$.   Hereafter we will make this extension without further comment.
\end{remark}

We emphasize that in Theorem~\ref{thm:special-case}, when $\gamma=0$
  we are back at the uniformly elliptic case, which is the celebrated solution to
  the Kato square root problem by Auscher, Hofmann, Lacey, McIntosh,
  and Tchamitchian
  in~\cite{auscher-hofmann-lacey-mcintosh-tchamitchian02}.  Here we
  are able to find a range of $\gamma$'s for which the same estimates
  hold and the classical Kato square root problem (i.e., $\gamma=0$)
  is an interior point in that range.

These unweighted $L^2$ estimates have important applications to
boundary value problems for degenerate elliptic operators.  Consider,
for example, the following Dirichlet problem on $\re^{n+1}_+=\rn\times
[0,\infty)$:
\[ \begin{cases}
\partial_t^2 u - L_w u = 0, & \text{on }  \re^{n+1}_+  \\
u= f & \text{on } \partial\re^{n+1}_+ =\re^n.
\end{cases}
\]
If $f\in L^2(\re^n)$, then $u(x,t)=e^{-tL_w^{1/2}}f(x)$ is a solution,
  and if $L_w$ has a bounded $H^\infty$ functional calculus on $L^2$,
  then
$  \sup_{t>0} \|u(\cdot,t)\|_2 \lesssim \|f\|_2. $
Similar results hold for the corresponding Neumann  and Regularity problems.

\medskip

Our proofs are unavoidably technical, and the results for each operator
considered build upon what was proved previously for other operators.
We have organized the material as follows.  In
Section~\ref{section:prelim} we gather some essential definitions and results about
weights, degenerate elliptic operators, and off-diagonal estimates.
Central to all of our subsequent work are Theorems~\ref{theorem:2.2}
and~\ref{theorem:2.4} (which were proved in~\cite{auscher-martell06}).

In Sections~\ref{section:semigroup},~\ref{section:functional},
and~\ref{section:square-function} we prove estimates for the
semigroup $e^{-tL_w}$, $t>0$, the $H^\infty$ functional
calculus (i.e., operators $\varphi(L_w)$ where $\varphi \in
\H^\infty$), the vertical square function associated to the semigroup,
\[ g_{L_w} f(x)  =\bigg( \int_{0}^{\infty }\left\vert \left(
tL_{w}\right) ^{1/2}e^{-tL_{w}}f(x) \right\vert ^{2}\frac{dt}{t}%
\bigg) ^{1/2}, \]
and its discrete analog. Here and in subsequent sections we
prove both  $L^p(w)$ estimates and weighted $L^p(v\,dw)$
estimates.  In many cases these results are proved simultaneously,
with the unweighted results (i.e., in $L^p(w)$) following from the weighted results (i.e.,  in $L^p(v\,dw)$) by taking $v=1$. 

In Section~\ref{section:reverse} we prove the so-called reverse
inequality, $\|L_w^{1/2}\|_{L^p(w)} \lesssim \|\grad f\|_{L^p(w)}$,
that generalizes the $L^2(w)$ estimate in~\eqref{eqn:Lw2-kato}.  We
note that while the equivalence in~\eqref{eqn:Lw2-kato}
follows at once from the reverse inequality for $p=2$ by
duality, the two inequalities behave differently when $p\neq 2$.

In Sections~\ref{section:gradient} and~\ref{section:q-plus} we prove
estimates for the gradient of the semigroup,
$\sqrt{t}\grad e^{-tL_w}$.  The proof that there exists $q_+>2$ such
that this operator satisfies $L^p(w)$ estimates for $2<p<q_+$ is quite
involved as it requires preliminary estimates for the Riesz transform
and the Hodge projection.  We note that, as opposed to the
non-degenerate case, here we cannot use ``global'' embeddings, nor
can we rescale. Also we cannot expect to obtain that the gradient
of the semigroup maps globally $L^2(w)$ into $L^p(w)$ for $p\neq
2$.
All these difficulties arise naturally from the lack of isotropy of
the natural underlying measure $w(x)\,dx$ and make the typical
arguments used in the uniformly elliptic case
(cf.~\cite[Chapter~4]{auscher07}) unusable. We also note that in
some sense our result is the best possible: even in the non-degenerate
case it is known~\cite{auscher07} that given any $p>2$ there exists a
matrix $A$ and operator $L$ such that gradient of the semigroup is not
bounded on $L^p$.

In Section~\ref{section:riesz} we prove $L^p(w)$ estimates for the
Riesz transform $\grad L^{-1/2}$, and in
Section~\ref{section:square-function-gradient} we prove $L^p(w)$
estimates for the square function associated to the gradient of the
semigroup,
\[ G_{L_{w}}f( x)  =\bigg( \int_{0}^{\infty }
|t^{1/2}\nabla e^{-tL_{w}}f( x)|^{2}\frac{dt}{t}\bigg)^{1/2}. \]

In Section~\ref{section:L2-kato} we prove unweighted $L^2$
inequalities for the operators we have considered in previous
sections.  These are a consequence of the weighted estimates and are
obtained by taking $v=w^{-1}$.  The main problem is determining
conditions on $w$ for these to hold.  We essentially have two
different kinds of estimates: one for operators that do not involve
the gradient, and one for those that do.  The latter are more delicate
as they involve careful bounds for the parameter $q_+$ from
Section~\ref{section:q-plus} in terms of the weight~$w$.  We also show
that we get unweighted $L^p$ estimates for $p$ very close to $2$.

Finally, in Section~\ref{section:BVP} we describe in more detail the
application of our results to $L^2$ boundary value problems for
degenerate elliptic operators.   The results in this section our the culmination of
our work as they depend on all the estimates derived in previous
sections.

\medskip

As we were completing this project, we learned that related results
had been obtained  independently by other authors.  
In~\cite{LePhi} Le studied (among other things) the $L^p(w)$ theory for some of the
operators considered here and proved estimates for values of $p$ in the range
$(2-\epsilon,2+\epsilon)$. His proofs differ from ours in a number
of details. In~\cite{HLM} Hofmann, Le and Morris
established some Carleson measure estimates and considered the Dirichlet problem for degenerate elliptic operators.  Also, very recently we learned that Yang and Zhang~\cite{YZ} proved
Kato type estimates in $L^p(w)$ for $p$ in the range $(p_0,2]$.
Finally, we note that the paper
\cite{LiMartellPrisuelos} complements our work here as it considers the
conical square functions associated to the operator $L_w$.

\section{Preliminaries}
\label{section:prelim}

Throughout $n$ will denote the dimension of the underlying space $\rn$
and we will always assume $n\geq 2$.  If we write $A\lesssim B$ we
mean that there exists a constant $C$ such that $A\leq CB$.  We write
$A\approx B$ if $A\lesssim B$ and $B\lesssim A$.  The
constant $C$ in these estimates may depend on the dimension $n$ and other (fixed)
parameters that should be clear from the context.  All constants,
explicit or implicit, may change at each appearance.

Given a ball $B$, let $r(B)$ denote the radius of $B$.  Let
$\lambda B$ denote the concentric ball with radius $r(\lambda B) = \lambda r(B)$.

\subsection*{Weights}

By a weight $w$ we mean a non-negative, locally integrable function.
For brevity, we will often write $dw$ for $w\,dx$.   We will use the following notation for averages:   given a set $E$ such that
$0<w(E)<\infty$,
\[ \avgint_E f\,dw = \frac{1}{w(E)}\int_E f\,dw, \]
or, if $0<|E|<\infty$,
\[ \avgint_E f\,dx = \frac{1}{|E|}\int_E f\,dx. \]

We state some definitions and basic properties of Muckenhoupt
weights.  For further details,
see~\cite{duoandikoetxea01,garciacuerva-rubiodefrancia85}.
We say that $w\in A_p$, $1<p<\infty$, if
\[ [w]_{A_p} = \sup_Q \avgint_Q w(x)\,dx \left(\avgint_Q
  w(x)^{1-p'}\,dx\right)^{p-1} < \infty. \]
When $p=1$, we say that $w\in A_1$ if
\[ [w]_{A_1} = \sup_Q \avgint_Q w(x)\,dx  \esssup_{x\in Q} w(x)^{-1}<
\infty.  \]
We say that $w\in RH_s$, $1<s<\infty$, if
\[ [w]_{RH_s} = \sup_Q \left(\avgint_Qw(x)\,dx\right )^{-1}
\left(\avgint_Qw(x)^s\,dx\right )^{1/s} < \infty, \]
and we say that $w\in RH_\infty$ if
\[ [w]_{RH_\infty} = \sup_Q\left(\avgint_Q w(x)\,dx\right)^{-1} \esssup_{x\in Q} w(x)  <
\infty.  \]
Let
\[ A_\infty = \bigcup_{1\leq p <\infty} A_p  = \bigcup_{1<s\le \infty}
RH_s.  \]
Weights in the $A_p$ and $RH_s$ classes have a self-improving
property: if $w\in A_p$, there exists $\epsilon>0$ such that $w\in
A_{p-\epsilon}$, and similarly if $w\in RH_s$, then $w\in
RH_{s+\delta}$ for some $\delta>0$.  Hereafter, given $w\in A_p$, let
\[ r_w=\inf\{p: w\in A_p\}, \qquad s_w=\sup\{q: w\in RH_q\}. \]

An important property of $A_p$ weights is that they are doubling:
given $w\in A_p$, for all $\tau\ge 1$ and any ball $B$,
\[ w(\tau B)\leq [w]_{A_p} \tau^{pn} w(B).  \]
In particular, hereafter let $D\leq pn$ be the doubling order of $w$:
that is the smallest exponent such that this inequality holds.

As a consequence of this doubling property, we have that with the ordinary Euclidean distance
$|\cdot|$, $(\rn,dw,|\cdot|)$ is a space of homogeneous type.
In this setting we can define the new weight classes $A_p(w)$
and $RH_s(w)$ by replacing Lebesgue measure in the definitions above with
$dw$: e.g., $v\in A_p(w)$ if
\[ [v]_{A_p(w)} = \sup_Q \avgint_Q v(x)\,dw \left(\avgint_Q
  v(x)^{1-p'}\,dw\right)^{p-1} < \infty. \]
It follows at once from these definitions that there is a
``duality'' relationship between the weighted and unweighted
$A_p$ and $RH_s$ conditions:  $v=w^{-1} \in A_p(w)$ if and only if $w \in
RH_{p'}$ and $v=w^{-1}\in RH_s(w)$ if and only if $w\in A_{s'}$.

\medskip

Weighted Poincar\'e-Sobolev inequalities were proved
in~\cite{fabes-kenig-serapioni82}.

\begin{theor} \label{thm:wtd-poincare}
Given $w\in A_p$, $p\geq 1$, let
$p_w^*=\frac{p\,n\,r_w}{n\,r_w-p}$ if $p<n\,r_w$ and $p_w^*=\infty$
otherwise. Then for every $p\le q<p_w^*$,  ball $B$ and $f\in C_0^\infty(B)$,
\begin{equation} \label{eqn:wtd-imbedding}
\left(\avgint_B |f(x)|^q\,dw(x)\right)^{1/q} \leq
Cr(B)\left(\avgint_B |\grad f(x)|^p\,dw\right)^{1/p}.
\end{equation}

Moreover, if $f\in C^\infty(B)$, then
\begin{equation}\label{w-Poincare}
\left(\avgint_B |f(x)-f_{B,w}|^{q}\,dw(x)\right)^{1/q}
\le
C\,
r(B)\left(\aver{B} |\nabla f(x)|^{p}\,dw\right)^{1/p},
\end{equation}
where $f_{B,w}=\avgint_B f\,dw$.
\end{theor}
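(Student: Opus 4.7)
The plan is to reduce both assertions to a weighted mapping property for the truncated Riesz potential on the ball, and then to prove that property by a Hedberg-type argument tuned to the sharp doubling order of $w$.

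For the reduction, the standard pointwise representation on a ball gives, for $x \in B$ and $f \in C^\infty(B)$,
$$|f(x) - f_B| \le c_n \int_B \frac{|\grad f(y)|}{|x-y|^{n-1}}\,dy =: c_n\, I_1^B(|\grad f|)(x),$$
with the analog (no $f_B$) for $f \in C_0^\infty(B)$ by extension by zero. To pass from $f_B$ to $f_{B,w}$ in \eqref{w-Poincare}, H\"older on $dw$ gives $|f_B - f_{B,w}|\, w(B)^{1/q} \le \|f-f_B\|_{L^q(B,dw)}$, and the triangle inequality absorbs the extra term. Both claims therefore reduce to the weighted Riesz-potential bound
$$\|I_1^B g\|_{L^q(B, dw)} \le C\, r(B)\, w(B)^{1/q - 1/p}\, \|g\|_{L^p(B, dw)}, \qquad p \le q < p_w^*.$$

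To prove this, fix $x \in B$ and decompose $I_1^B g(x)$ along dyadic annuli $B_k = B(x, 2^{-k+1} r(B))$, $k\ge 0$. An annulus-by-annulus estimate together with H\"older's inequality and the $A_p$ condition converts the Lebesgue average into a weighted one:
$$I_1^B g(x) \le C\, r(B)\, [w]_{A_p}^{1/p} \sum_{k \ge 0} 2^{-k}\, F_k^{1/p}, \qquad F_k := \avgint_{B_k} |g|^p \chi_B\, dw.$$
Two bounds for $F_k$ are available: the pointwise one $F_k \le M_w(|g|^p \chi_B)(x)$, and the global one $F_k \le w(B_k)^{-1}\|g\|_{L^p(B, dw)}^p$. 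To quantify $w(B_k)^{-1}$ sharply we exploit the open-endedness of the Muckenhoupt classes: since $r_w = \inf\{t : w \in A_t\}$, we have $w \in A_{r_w+\eta}$ for every $\eta > 0$, and hence $w(B)/w(B_k) \le C_\eta\, 2^{k n(r_w + \eta)}$. Splitting the sum at an optimal threshold $k^*$ and balancing the two estimates produces a Hedberg-type pointwise inequality
$$I_1^B g(x) \lesssim r(B)\, w(B)^{-1/D_\eta}\, \|g\|_{L^p(B, dw)}^{p/D_\eta}\, M_w(|g|^p \chi_B)(x)^{(D_\eta - p)/(p\, D_\eta)}, \quad D_\eta = n(r_w + \eta).$$
Taking $L^q(B, dw)$ norms and using the $L^s(dw)$-boundedness of $M_w$ for every $s > 1$ (a consequence of the doubling of $dw$), the exponents collapse to
$$\|I_1^B g\|_{L^q(B, dw)} \le C\, r(B)\, w(B)^{1/q - 1/p}\, \|g\|_{L^p(B, dw)}$$
for every $q$ strictly less than $p D_\eta/(D_\eta - p)$, and letting $\eta \downarrow 0$ recovers the full range $q < p_w^*$.

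The main technical obstacle is precisely this sharp bookkeeping of the doubling exponent of $w$ via the self-improvement $A_p \subset A_{r_w + \eta}$; the remaining ingredients---the unweighted pointwise Riesz representation, the $A_p$ version of H\"older, and the strong $L^s(dw)$-boundedness of the weighted maximal function---are standard. The endpoint $p = 1$ is handled by passing through the weak-$(1,1)$ bound of $M_w$ together with Marcinkiewicz interpolation, while the Morrey-type case $p \ge n r_w$ (with $p_w^* = \infty$) follows from the same Hedberg estimate by absorbing the tail contribution.
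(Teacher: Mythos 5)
Your overall route---the subrepresentation $|f(x)-f_B|\lesssim I_1^B(|\grad f|)(x)$, the $A_p$--H\"older conversion of Lebesgue averages into $dw$-averages, and the Hedberg-type optimization with the doubling exponent $D_\eta=n(r_w+\eta)$ coming from $w\in A_{r_w+\eta}$---is sound, and it is essentially the classical potential-theoretic argument behind the cited result: the paper does not reprove Theorem~\ref{thm:wtd-poincare} but quotes it from \cite{fabes-kenig-serapioni82}. Your reduction to the bound $\|I_1^Bg\|_{L^q(B,dw)}\le C\,r(B)\,w(B)^{1/q-1/p}\|g\|_{L^p(B,dw)}$ and your pointwise Hedberg inequality are both correct as stated.

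The gap is in the final step. After raising the pointwise bound to the $q$-th power, the maximal function enters through $\|M_w(|g|^p\chi_B)\|_{L^{s}(B,dw)}^{\theta}$ with $\theta=\frac{D_\eta-p}{p\,D_\eta}$ and $s=q\theta$, and for \emph{every} $q$ in the range you claim, $q<\frac{pD_\eta}{D_\eta-p}$, one has $s=q\theta<1$ strictly. So the ``strong $L^s(dw)$-boundedness of $M_w$ for $s>1$'' you invoke never applies: the input $|g|^p\chi_B$ lies only in $L^1(dw)$, and $M_w$ is not of strong type at or below $L^1$ (at the critical $q$ one would get exactly $s=1$, which is precisely why the argument cannot close this way). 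The standard fix, uniform in $p\ge 1$, is to use the finiteness of $w(B)$: combine the weak $(1,1)$ bound for $M_w$ on the space of homogeneous type $(\rn,dw)$ with Kolmogorov's inequality, $\int_B (M_wh)^{s}\,dw\le C_s\,w(B)^{1-s}\|h\|_{L^1(dw)}^{s}$ for $0<s<1$, applied with $h=|g|^p\chi_B$; the exponents then do collapse to $w(B)^{1/q-1/p}$, and for a given $q<p_w^*$ one simply chooses $\eta$ so small that $q<pD_\eta/(D_\eta-p)$. With this replacement the case $p=1$ needs no separate treatment (the $A_1$ form of your conversion step and the same Kolmogorov estimate go through verbatim), so the appeal to Marcinkiewicz interpolation there is not the relevant device.
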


\begin{remark}
In the special
case when $w\in A_1$ and $1<p<n$ we can also take $q=p_w^*=p^*$, i.e.,
the regular Sobolev exponent.  See
P\'erez~\cite[Theorem~2.5.2]{perez1999}.
\end{remark}

\begin{remark}\label{remark-best-Poi}
If we let $q= \frac{np}{n-1}<p_w^*$, then we can get a sharp
  estimate for the constant $C$ in~\eqref{eqn:wtd-imbedding}
  and~\eqref{w-Poincare}: it is of the form $C(p,n)[w]_{A_p}^\kappa$
  where $\kappa=\frac{n\,p-1}{n\,p\,(p-1)}$.  This follows from the
  sharp weighted estimates for the fractional integral operator due to
  Alberico, Cianchi and Sbordone~\cite{MR2561035} and the standard
  pointwise estimates used to prove Poincar\'e-Sobolev inequalities;
  see~\cite{fabes-kenig-serapioni82} for details.
\end{remark}

\begin{remark}\label{remark:Poincare-non-smooth}
  By a standard density argument, once we know that \eqref{w-Poincare}
  holds for smooth functions in $B$ we can easily extend that estimate to
  any function $f\in L^q(w)$ with $\nabla f\in L^p(w)$. 
  Details are left to the reader.
\end{remark}

\subsection*{Degenerate elliptic operators}
Given $%
w\in A_2$ and constants $0<\lambda\leq \Lambda<\infty$,
let ${\E}_n(w, \lambda, \Lambda )$ denote the class of $n\times n$
matrices $A=\left( A_{ij}(x) \right) _{i,j=1}^{n}$ of
complex-valued, measurable functions satisfying the degenerate ellipticity
condition
\begin{equation}  \label{eqn:degen}
  \lambda w(x) | \xi | ^{2}\leq {\Re}\langle A\xi
    ,\xi \rangle, \quad
  |\langle \A\xi ,\eta \rangle |\leq \Lambda w(x)|\xi
  ||\eta |, \quad \xi ,\,\eta \in \mathbb{C}^{n}.
\end{equation}%

Given $A\in {\E}_n(w,\lambda,\Lambda)$, we define the
degenerate elliptic operator in divergence form
$$
{L}_{w}=-w^{-1}{{\div}}A{\grad}.
$$   
These operators
were developed in~\cite{cruz-uribe-riosP} and we refer the reader there for complete
details.  Here we sketch the key ideas.

Given a weight $w\in A_2$, the space $H^1(w)$ is the weighted Sobolev
space that is the completion of $C_c^\infty$ with respect to the norm
\begin{equation*}
\|f\|_{H^1(w)} = \left(\int_{\rn} \left(|f(x)|^2+|\grad
f(x)|^2\right)\,dw\right)^{1/2}.
\end{equation*}
Note that the space defined above would usually be denoted by $H^1_0(w)$. The space $H^1(w)$ is defined as the set of distributions for which both $f$ and $|\nabla f|$ belong to $L^2(w)$. However, since the underlying domain is $\rn$ this definition implies that the ``boundary" values vanish in the $L^2(w)$-sense,  and both definitions agree~\cite{miller82}. 

Given a matrix $A\in {\E}_n(w,\lambda,\Lambda)$, define ${%
\a}(f,g)$ to be the sesquilinear form
\begin{equation}  \label{eqn-form}
{\a}(f,g) = \int_{\rn} A(x)\grad f(x) \cdot
\overline{\grad g(x)} \,dx.
\end{equation}
Since $w\in A_2$ and $A$ satisfies \eqref{eqn:degen}, $\a
$ is a closed, maximally accretive, continuous sesquilinear form. Therefore,
there exists an operator  ${L}_w$ whose domain $\D(L_w)\subset H^1(w)$ is dense in
$L^2(w)$ and such
that for every $f \in \D(L_w)$ and every $g\in H^1(w)$,
\begin{equation}  \label{eqn-a2}
{\a}(f,g) = \langle {L}_wf, g \rangle_w= \int_{\rn} {L}_wf(x)\overline{g(x)}\,dw.
\end{equation}
We note that the operator $L_w$ is one to one. Indeed, if $u,v\in\D(L_w)$ are such that $L_wu=L_wv$, then
for all $g\in H^1(w)$
\[0=\int_\rn A(x)\grad (u(x)-v(x)) \cdot
\overline{\grad g(x)} \,dx.\]
Taking $g=u-v$ implies $\grad u(x)=\grad v(x)$ and so $u=v$.

The properties of the sesquilinear form guarantee
that on $L^2(w)$ there exists a bounded, strongly continuous semigroup $e^{-t{L}_w}$.
Further, it has a holomorphic extension.
Let
\[ \Sigma_\omega= \{ z\in \C : z\neq 0, |\arg(z)| < \omega \}  \]
and define $\vartheta, \vartheta^* \in
[0,\pi/2)$  by
\[ \vartheta = \sup\{ |\arg \langle Lf,f\rangle_w| : f \in \D(L_w)
\},  \qquad \vartheta^*=\arctan\sqrt{\frac{\Lambda^2}{\lambda^2}-1}. \]
Then there exists a complex semigroup $e^{-zL_w}$ on
$\Sigma_{\pi/2-\vartheta}$ of bounded operators on $L^2(w)$. By
the weighted ellipticity condition~\eqref{eqn:degen}, we have that
$0\le\vartheta\le\vartheta^* <\pi/2$.

\subsection*{Holomorphic functional calculus}
Our operator $L_w$ is ``an operator of type $\omega$" with
$\omega=\vartheta$ , as defined in~\cite{mcintosh86}.  Indeed, the
ellipticity conditions imply that $L_w$ is closed and densely defined,
its spectrum is contained in $\Sigma_{\vartheta}$, and its resolvent
satisfies standard decay estimates~\cite{cruz-uribe-riosP}. Therefore, we
can define an $L^2(w)$ functional calculus as in~\cite{mcintosh86}.

Given $\mu\in
(\vartheta, \pi)$, let $\H^\infty(\Sigma_\mu)$ be the collection of
bounded holomorphic functions on $\Sigma_\mu$.  To define
$\varphi(L_w)$ for $\varphi\in \H^\infty(\Sigma_\mu)$ we first
consider a smaller class:  we say that  $\varphi\in
\H^\infty_0(\Sigma_\mu)$  if for some $c,\,s>0$ it satisfies
\[ |\varphi(z)| \leq c|z|^s(1+|z|)^{-2s}, \quad z \in \Sigma_\mu.   \]
We then have an integral representation of $\varphi(L_w)$.  Let
$\Gamma_\theta$ be the boundary of $\Sigma_\theta$ with positive orientation,
and let
$\vartheta < \theta < \nu < \min(\mu , \pi/2)$; then
\begin{equation} \label{eqn:L2-holo-rep}
 \varphi(L_w) =  \int_{\Gamma_{\pi/2-\theta}} e^{-zL_w}\eta(z)\,dz,
\end{equation}
where %
\begin{equation} \label{eqn:L2-holo-rep-eta}
\eta(z) = \frac{1}{2\pi i} \int_{\gamma_{\nu}(z)} e^{\zeta z}
\varphi(\zeta)\,d\zeta
\end{equation}
and $\gamma_{\nu}(z)=\mathbb{R}^+e^{i\mathrm{sign}(\mathrm{Im}(z))\nu} $. Note that
\[ |\eta(z)| \lesssim \min(1,|z|^{-s-1}), \quad z \in
\Gamma_{\pi/2-\theta}, \]
so the representation \eqref{eqn:L2-holo-rep} converges in $L^2(w)$, and we have the bound
\begin{equation}\label{eqn:L2-holo-bd}
\|\varphi(L_w)f\|_{L^2(w)} \leq C\|\varphi\|_\infty
\|f\|_{L^2(w)},\qquad f\in \H^\infty_0(\Sigma_\mu). 
\end{equation}

Now, since $L_w$ is a one-to-one
operator of type $\omega$, it has dense range~\cite[Theorem
2.3]{couling96}, and so the results in~\cite{mcintosh86} (see also~\cite[Corollary
2.2]{couling96}) imply that $L_w$ has an ${H}^\infty$
functional calculus and~\eqref{eqn:L2-holo-bd} extends to all of
$\H^\infty(\Sigma_\mu)$. Moreover, in~\cite[Section 8]{mcintosh86}
the equivalence between the existence of this ${H}^\infty$
functional calculus and square function estimates for $L_w$ and
$L_w^*$ is established:
\begin{equation}\label{eqn:holo-sqfe}
\left\{\int_0^\infty\| \varphi(tL_w)\|_{L^2(w)}^2\,\frac{dt}{t}\right\}^{\frac{1}{2}}\le C\|\varphi\|_\infty\| f\|_{L^2(w)},\quad \varphi\in\H^\infty_0(\Sigma_\mu),
\end{equation}
with similar estimates for $L_w^*$. 

The operators $\varphi(L_w)$ also have the following properties:
\begin{itemize}

\item If $\varphi$ and $\psi$ are bounded holomorphic functions, then we
have the operator identity $\varphi(L)\psi(L) = (\varphi \psi)(L)$.

\item Given any sequence $\{\varphi_k\}$ of bounded holomorphic functions
converging uniformly on compact subsets of $\Sigma_\mu$ to $\varphi$,
we have that $\varphi_k(L_w)$ converges to $\varphi(L_w)$ in the
strong operator topology (of operators on $L^2(w)$).

\end{itemize}

\begin{remark}
The $H^\infty$ functional calculus can be extended to more general
holomorphic functions, such as powers, for which the operators $\varphi (L_{w})$ can be
defined as unbounded operators:  see \cite{haase06,mcintosh86}.
\end{remark}

\medskip

\subsection*{Gaffney-type estimates}
The semigroup and its gradient satisfy
Gaffney-type estimates on $L^2(w)$.  Below, we will see that these are
a particular case of what we will call full off-diagonal estimates:
see Definition~\ref{defn:full-offdiagonal}.

\begin{theor} \label{thm:L2-gaffney}
Given $w\in A_2$ and $A\in  {\E}_n(w,\lambda,\Lambda)$, 
for any closed sets $E$ and $F$,  for $f\in L^2(w)$ and for all $z\in
\Sigma_\nu$, where
$0<\nu<\frac{\pi}{2}-\vartheta$,
\begin{enumerate}
\setlength{\itemsep}{8pt}

\item $\|e^{-z\,L_w} (f\,\bigchi_E)\bigchi_F\|_{L^2(w)}
\le
C\, e^{-\frac{c\, d(E,F)^2}{|z|}}\,\|f\bigchi_E\|_{L^2(w)}$,

\item $\|\sqrt{z}\grad e^{-z\,L_w} (f\,\bigchi_E)\bigchi_F\|_{L^2(w)}
\le
C\, e^{-\frac{c\, d(E,F)^2}{|z|}}
\,\|f\bigchi_E\|_{L^2(w)}$,

\item $\|z\, L_w e^{-z\,L_w} (f\,\bigchi_E)\bigchi_F\|_{L^2(w)}
\le
C\, e^{-\frac{c\, d(E,F)^2}{|z|}}
\,\|f\bigchi_E\|_{L^2(w)}$.

\end{enumerate}
\end{theor}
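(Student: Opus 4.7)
The plan is to establish all three estimates via the classical Davies--Gaffney perturbation argument, adapted to the $L^2(w)$ setting in which ellipticity is weighted. For a bounded real-valued Lipschitz function $\phi$ with $\|\grad\phi\|_\infty\le 1$ and a parameter $\rho>0$, introduce the perturbed sesquilinear form
$$\a_\rho(f,g) = \int_{\rn}A(x)\grad(e^{-\rho\phi}f)\cdot\overline{\grad(e^{\rho\phi}g)}\,dx.$$
Expanding gradients produces $\a_\rho=\a$ plus first-order terms of size $\rho$ and a zeroth-order term of size $\rho^2$. The first-order cross terms are controlled pointwise by $\Lambda w|\grad\phi||\grad f||g|$ and its symmetric counterpart; absorbing them via Cauchy--Schwarz with a small parameter into the lower bound $\lambda w|\grad f|^2$ provided by~\eqref{eqn:degen} yields
$$\Re\a_\rho(f,f)\ge \frac{\lambda}{2}\int_\rn w|\grad f|^2\,dx - C\rho^2\int_\rn w|f|^2\,dx,$$
with $|\Im\a_\rho(f,f)|$ controlled analogously. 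Hence $\a_\rho+C\rho^2\langle\cdot,\cdot\rangle_{L^2(w)}$ is closed, accretive, and sectorial with angle uniform in (small) $\rho$. The associated operator $L_\rho$ generates a holomorphic semigroup with
$$\|e^{-zL_\rho}\|_{L^2(w)\to L^2(w)}\le Ce^{C\rho^2|z|},\qquad z\in\Sigma_\nu,$$
for some fixed $\nu\in(0,\pi/2-\vartheta)$ independent of $\rho$.

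The crux is the similarity identity $e^{-zL_\rho}=e^{\rho\phi}e^{-zL_w}e^{-\rho\phi}$. For part (1), take $\phi(x)=\min\{d(x,E),d(E,F)\}$ so that $\phi\equiv 0$ on $E$ and $\phi\equiv d(E,F)$ on $F$. Then
$$\|\bigchi_F e^{-zL_w}(f\bigchi_E)\|_{L^2(w)}\le e^{-\rho d(E,F)}\|e^{-zL_\rho}(f\bigchi_E)\|_{L^2(w)}\le Ce^{-\rho d(E,F)+C\rho^2|z|}\|f\bigchi_E\|_{L^2(w)},$$
and optimizing with $\rho\simeq d(E,F)/|z|$ delivers the Gaussian factor $e^{-cd(E,F)^2/|z|}$.

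For part (3), the analyticity of $e^{-zL_\rho}$ in $\Sigma_\nu$ combined with Cauchy's integral formula on a disk of radius comparable to $|z|$ (contained in $\Sigma_\nu$) gives $\|zL_\rho e^{-zL_\rho}\|_{L^2(w)\to L^2(w)}\le Ce^{C\rho^2|z|}$. Differentiating the similarity identity in $z$ yields $zL_w e^{-zL_w}=e^{-\rho\phi}(zL_\rho e^{-zL_\rho})e^{\rho\phi}$, and the same optimization in $\rho$ produces (3). For part (2), differentiate the similarity in $x$: since $\phi\equiv 0$ on $E$,
$$\grad e^{-zL_w}(f\bigchi_E)=e^{-\rho\phi}\bigl[\grad\, -\,\rho(\grad\phi)\bigr]e^{-zL_\rho}(f\bigchi_E).$$
Applying the accretivity inequality to $g=e^{-zL_\rho}(f\bigchi_E)$ (taking real sections in the sector and using analyticity) gives
$$\|\grad g\|_{L^2(w)}^2\lesssim \frac{1}{|z|}\|zL_\rho g\|_{L^2(w)}\|g\|_{L^2(w)}+\rho^2\|g\|_{L^2(w)}^2,$$
and combining with the bounds already obtained yields
$$\|\bigchi_F\sqrt{z}\grad e^{-zL_w}(f\bigchi_E)\|_{L^2(w)}\le Ce^{-\rho d(E,F)+C\rho^2|z|}\bigl(1+\rho\sqrt{|z|}\bigr)\|f\bigchi_E\|_{L^2(w)}.$$
Optimizing in $\rho$ absorbs the polynomial factor into the Gaussian exponential and produces (2).

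The main technical obstacle is establishing the uniform sectoriality of $\a_\rho$: the first-order perturbation terms are only controlled pointwise by $\Lambda w|\grad\phi|$, so the absorption into $\Re\a_\rho$ must be carried out entirely within the natural measure $w\,dx$, exploiting the degenerate lower bound. Once this uniform sectoriality is in hand, the remainder of the argument is a direct adaptation of the Davies--Gaffney template from the uniformly elliptic case, and the off-diagonal decay constants $C,c$ depend only on $n$, $\lambda$, $\Lambda$, and $[w]_{A_2}$.
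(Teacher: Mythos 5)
Your core computations are the classical Davies--Gaffney twisting argument, correctly transplanted to the $L^2(w)$ setting: the expansion of $\a_\rho$, the absorption of the first-order terms into the degenerate lower bound, the similarity identity $e^{-zL_\rho}=e^{\rho\phi}e^{-zL_w}e^{-\rho\phi}$ with $\phi=\min\{d(\cdot,E),d(E,F)\}$, the Caccioppoli-type step $\|\grad g\|_{L^2(w)}^2\lesssim |z|^{-1}\|zL_\rho g\|_{L^2(w)}\|g\|_{L^2(w)}+\rho^2\|g\|_{L^2(w)}^2$ for (2), the Cauchy-formula bound for $zL_\rho e^{-zL_\rho}$ for (3), and the optimization in $\rho$ are all sound. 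This is a genuinely different write-up from the paper's, which does not rerun the perturbation argument at all: it invokes the real-$z$ Gaffney estimate of \cite{cruz-uribe-riosP} (or the resolvent off-diagonal bounds of \cite{DCU-CR2013}, valid on $\Sigma_{\pi/2+\nu}$) and passes to the sector and to $zL_we^{-zL_w}$ via the contour-integral representation of the semigroup.

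There is, however, a genuine gap against the statement as written, which asserts the estimates for \emph{every} $\nu$ with $0<\nu<\frac{\pi}{2}-\vartheta$. Your uniform bound $\|e^{-zL_\rho}\|_{L^2(w)\to L^2(w)}\le Ce^{C\rho^2|z|}$ comes from the numerical range of the shifted form $\a_\rho+C\rho^2\|\cdot\|_{L^2(w)}^2$, and after the absorption that range lies in a sector of half-angle $\tilde\vartheta$ with $\tan\tilde\vartheta\approx C\,\Lambda/\lambda$; this $\tilde\vartheta$ is in general strictly larger than $\vartheta$ (for instance $\vartheta=0$ when $w^{-1}A$ is Hermitian, while the twisted forms are never symmetric for $\rho>0$). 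Hence the exponential-growth bound, and with it the whole optimization in $\rho$, is only available on $\Sigma_\nu$ for $\nu<\frac{\pi}{2}-\tilde\vartheta$ --- your own phrase ``for some fixed $\nu$'' concedes exactly this --- so the argument proves a weaker theorem on a strictly smaller sector. To recover the full range you need an additional idea: either a Phragm\'en--Lindel\"of/Stein interpolation step that combines the Gaussian decay you obtain on (a small sector around) the positive real axis with the uniform $L^2(w)$ boundedness of $e^{-zL_w}$ on $\Sigma_{\pi/2-\vartheta-\epsilon}$, or the paper's route through the resolvent bounds \eqref{eqn:res01}--\eqref{eqn:res02}, which hold on the large sector $\Sigma_{\pi/2+\nu}$ precisely because the resolvent only sees the distance from $-1/z^2$ to the numerical range of $L_w$, and then the contour representation of $e^{-zL_w}$ delivers (1)--(3) for all $\nu<\frac{\pi}{2}-\vartheta$. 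A secondary, minor point: the Cauchy-formula step for (3) requires working in a slightly larger subsector, so the constants necessarily depend on $\nu$; this is harmless but should be said.
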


\begin{proof}
The semigroup estimate (1) was proved
in~\cite[Theorem~1.6]{cruz-uribe-riosP} for real $z$, but the same
proof can be readily modified to prove the analytic version.
Alternatively, estimates (1) and (2)  follow from the resolvent bounds
\begin{equation} \label{eqn:res01}
\|(1+z^2L_w)^{-1}(f\bigchi_E)\bigchi_F\|_{L^2(w)}
\leq Ce^{-\frac{cd(E,F)}{|z|}}\|f\bigchi_E\|_{L^2(w)},
\end{equation}
and
\begin{equation} \label{eqn:res02}
 \|z\grad(1+z^2L_w)^{-1}(f\bigchi_E)\bigchi_F\|_{L^2(w)}
\leq Ce^{-\frac{cd(E,F)}{|z|}}\|f\bigchi_E\|_{L^2(w)},
\end{equation}
obtained in~\cite[Lemma~2.10]{DCU-CR2013} for
$z\in\Sigma_{\frac{\pi}{2}+\nu}$, together with the integral
representation of the semigroup
\[ e^{-zL_w}f=\frac{1}{2\pi}\int_{\Gamma} e^{z\zeta}\left(\zeta +L_w\right)^{-1}f\, d\zeta,%
\]
where $\Gamma$ is the boundary of $\Sigma_{\theta}$ with positive
orientation and $\frac{\pi}{2}<\theta<\frac{\pi}{2}+\nu-\arg(z)$.

Finally, from \eqref{eqn:res01} and \eqref{eqn:res02} we obtain the estimate
\[\|z^2 L_w(1+z^2L_w)^{-1}(f\bigchi_E)\bigchi_F\|_{L^2(w)}
\leq Ce^{-\frac{cd(E,F)}{|z|}}\|f\bigchi_E\|_{L^2(w)}, \]
and then by the same kind of argument we get (3).
\end{proof}

\medskip

\subsection*{The Kato estimate}
The starting point for all of our estimates is the $L^2(w)$ Kato
estimates for the square root operator $L^{1/2}_w$ proved
in~\cite{DCU-CR2013} (see also \cite{Auscher-Rosen-Rule} for a different proof).  This operator is the
unique, maximal accretive operator such that $L^{1/2}_w
L^{1/2}_w=L_w$.   It has the integral representation
\[ L_w^{1/2}  = \frac{1}{\sqrt{\pi}}\int_0^\infty
\sqrt{t}L_we^{-tL_w}\, \frac{dt}{t}. \]
(For further details, see~\cite{auscher-tchamitchian98,mcintosh86}.)

\begin{theor}{\cite[Theorem~1.1]{DCU-CR2013}}\label{theorem:degen-kato}
Given $w\in A_2$ and $A \in {\E}_n(w,\lambda,\Lambda)$, the domain of $L_w$ is  $H^1(w)$ and 
there exist
constants $c$ and $C$, depending on $n$, $\Lambda/\lambda$ and
$[w]_{A_2}$, such that for all
$f\in H^1(w)$,
\begin{equation} \label{eqn:degen-kato1}
 c\|\grad f\|_{L^2(w)} \leq \|L_w^{1/2}f\|_{L^2(w)} \leq C\|\grad
f\|_{L^2(w)}.
\end{equation}
\end{theor}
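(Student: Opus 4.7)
The plan is to establish the upper bound $\|L_w^{1/2} f\|_{L^2(w)}\lesssim \|\grad f\|_{L^2(w)}$ first, and to deduce the lower bound from it by duality. Since the adjoint matrix $A^{*}$ also belongs to $\E_n(w,\lambda,\Lambda)$, the upper bound applied to $L_w^{*}$, combined with the ellipticity \eqref{eqn:degen} rewritten via the form \eqref{eqn-form}--\eqref{eqn-a2} as
$$\lambda\|\grad f\|_{L^2(w)}^2 \leq \mathrm{Re}\,\a(f,f) = \mathrm{Re}\,\langle L_w^{1/2}f, (L_w^{*})^{1/2}f\rangle_w \leq \|L_w^{1/2}f\|_{L^2(w)}\,\|(L_w^{*})^{1/2}f\|_{L^2(w)},$$
yields the reverse inequality after dividing. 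The identification $\D(L_w^{1/2})=H^1(w)$ then follows once both inequalities are in hand on $C_c^\infty$, by a density argument together with \eqref{eqn:L2-holo-bd}.

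For the upper bound, I would use the bounded $H^\infty$ functional calculus on $L^2(w)$---available for $L_w$ since the form $\a$ is maximally accretive and \eqref{eqn:holo-sqfe} holds---to obtain the McIntosh equivalence
$$\|L_w^{1/2} f\|_{L^2(w)}^2 \approx \int_0^\infty \|tL_w e^{-t^2 L_w}f\|_{L^2(w)}^2\,\frac{dt}{t},$$
gotten by applying the square-function equivalence with $\psi(z)=\sqrt{z}\,e^{-z}$ to $L_w^{1/2} f$, using $\sqrt{tL_w}\cdot L_w^{1/2}=\sqrt{t}\,L_w$, and changing variables $t\mapsto t^2$. It thus suffices to prove
$$(\star)\qquad \int_0^\infty\|tL_w e^{-t^2L_w}f\|_{L^2(w)}^2\,\frac{dt}{t}\lesssim \|\grad f\|_{L^2(w)}^2.$$

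My strategy for $(\star)$ is to transplant the proof of the solution of the classical Kato conjecture in~\cite{auscher-hofmann-lacey-mcintosh-tchamitchian02} to the space of homogeneous type $(\rn, dw, |\cdot|)$. Set $\Theta_t := tL_w e^{-t^2 L_w}$ and let $\mathcal{P}_t$ denote a dyadic $w$-conditional expectation at $w$-scale $t$. Exploiting that $\Theta_t$ annihilates constants (since $L_w c=0$), one splits
$$\Theta_t f \;=\;\bigl[\Theta_t f - R_t\grad f\bigr] + R_t\grad f, \qquad R_t\grad f := \sum_{j=1}^n (\Theta_t x_j)\,(\mathcal{P}_t \partial_j f),$$
so that $R_t$ captures the leading $\grad f$-term. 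The error bracket is controlled in $L^2(dw\,dt/t)$ by $\|\grad f\|_{L^2(w)}$ using the weighted Gaffney estimates of Theorem~\ref{thm:L2-gaffney}, the weighted Poincar\'e inequality \eqref{w-Poincare}, and a Schur-type orthogonality argument against dyadic annuli in $(\rn,dw)$; the point is that $\grad$ is the right object to pair with $\Theta_t$ because of the first-order nature of the commutator $[\Theta_t, \text{multiplication by coordinate}]$.

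The main term $R_t\grad f$ then reduces, via a Carleson embedding with respect to $dw(x)\,dt/t$ on $\re^{n+1}_+$, to the weighted Carleson measure estimate
$$\sup_{B\subset\rn} \frac{1}{w(B)} \int_0^{r(B)} \int_B |\Theta_t(x)|^2\,dw(x)\,\frac{dt}{t} \;\lesssim\; 1,$$
where $\Theta_t(x)$ denotes the matrix-valued symbol of $\Theta_t$ acting on coordinate functions. This is where the argument is hardest, and I would handle it by a weighted local $T(b)$ theorem: for every dyadic cube $Q$ in $(\rn,dw)$ and every $\xi\in\C^n$, construct the pseudo-accretive test function
$$b_Q^\xi := \bigl(I + \epsilon\,r(Q)^2 L_w\bigr)^{-1}\bigl(\chi_{Q^*}(x)\,\xi\cdot(x-x_Q)\bigr),$$
for a suitable enlargement $Q^{*}$ of $Q$, and verify, using the resolvent Gaffney estimates \eqref{eqn:res01}--\eqref{eqn:res02}, the weighted Caccioppoli inequality for $L_w$-harmonic functions, and a weighted stopping-time/sawtooth construction, that $\avgint_Q b_Q^\xi\,dw \approx \xi$ and that the square-function mass of $\Theta_t b_Q^\xi$ is under control. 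The principal obstacle is precisely this weighted local $T(b)$ step: the combinatorial skeleton of~\cite{auscher-hofmann-lacey-mcintosh-tchamitchian02} must be re-run on $(\rn, dw)$, and the dependence of all constants---in the dyadic-cube construction, the relevant John--Nirenberg lemma, and the weighted maximal-function bounds---must be tracked so that the final constants in \eqref{eqn:degen-kato1} depend only on $n$, $\Lambda/\lambda$, and $[w]_{A_2}$. Careful use of the weighted Poincar\'e exponent from Theorem~\ref{thm:wtd-poincare} and compensation for the lack of true scaling invariance of the measure $dw$ are the technical loads that are absent in the non-degenerate case.
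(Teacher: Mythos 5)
Note that the paper does not prove this theorem itself: it is imported verbatim from \cite{DCU-CR2013}, and your plan follows essentially the route taken there (the weighted adaptation of \cite{auscher-hofmann-lacey-mcintosh-tchamitchian02}): the duality argument via $\mathrm{Re}\,\a(f,f)$ for the lower bound, the $H^\infty$-calculus reduction to the square-function estimate $(\star)$, the principal-part splitting against $R_t\grad f$, and the weighted Carleson measure bound settled by a local $T(b)$/stopping-time argument with resolvent-based test functions, supported by the weighted Gaffney and Poincar\'e inequalities. Your outline is correct as far as it goes, with the sole caveat that the step you yourself flag as the principal obstacle --- the weighted local $T(b)$ argument with constants tracked in terms of $n$, $\Lambda/\lambda$ and $[w]_{A_2}$ --- is precisely where all of the cited paper's work lies, so your proposal reproduces rather than replaces that proof.
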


\medskip

The Riesz transform associated to $L_w$ is the operator $\grad
L_w^{-1/2}$.  Formally, by \eqref{eqn:degen-kato1} we have that the
Riesz transform is a bounded operator on $L^2(w,\C^n)$.  To legitimize
this, we define
\begin{equation}\label{defi-RT}
\grad L_w^{-1/2} = \frac{1}{\sqrt{\pi}} \int_0^\infty
\sqrt{t} \grad e^{-t
  L_w} \frac{dt}{t}.
\end{equation}
However, it is not immediate that this integral converges at $0$ or
$\infty$.  To rectify this, for $\epsilon>0$ define
\begin{equation}\label{defi-RT:trunc}
S_\epsilon = S_\epsilon(L_w) = \frac{1}{\sqrt{\pi}} \int_\epsilon^{1/\epsilon}
\sqrt{t} e^{-tL_w} \frac{dt}{t}.
\end{equation}
Since $S_\epsilon(z)$ is a uniformly bounded holomorphic function on the right
half plane for all $0<\epsilon<1$,  by the $L^2(w)$ functional calculus described above, $S_\epsilon(L_w)$ is uniformly bounded on $L^2(w)$ for that range of $\epsilon$.  Further, for $f\in L_c^\infty$, $S_\epsilon f \in \D(L_w)
\subset \D(L_w^{1/2})$, and so by inequality~\eqref{eqn:degen-kato1} and the functional calculus,
\begin{equation} \label{eqn:trunc}
\|\grad S_\epsilon f \|_{L^2(w)} \lesssim \|L^{1/2} S_\epsilon
f\|_{L^2(w)} = \|\varphi_\epsilon (L_w)f\|_{L^2(w)},
\end{equation}
where
\[ \varphi_\epsilon(z)  = \frac{1}{\sqrt{\pi}}
\int_\epsilon^{1/\epsilon}
\sqrt{t} \sqrt{z} e^{-tz}\frac{dt}{t}. \]
The sequence $\{\varphi_\epsilon\}$ is uniformly bounded and converges
uniformly to $1$ on compact subsets of the sector $\Sigma_\mu$,
$0<\mu<\pi/2$.  Therefore, $L^{1/2} S_\epsilon f \rightarrow f$
strongly in $L^2(w)$.   If we combine this fact with \eqref{eqn:trunc}
we see that $\{\grad S_\epsilon f\}$ is Cauchy and so it converges in
$L^2(w)$.  We therefore define
\[ \grad L^{-1/2} f = \lim_{\epsilon\rightarrow 0} \grad S_\epsilon f, \]
where the limit is in $L^2(w)$.

Given this definition, hereafter, when we are proving $L^2(w)$
estimates for the Riesz transform, we should actually prove estimates
for $\grad S_\epsilon$ that are independent of $\epsilon$.  These
arguments will remain implicit unless there are details we need to emphasize.

\bigskip

\subsection*{Off-diagonal estimates}
Off-diagonal estimates as we define them were introduced in~\cite{auscher-martell07} and we
will refer repeatedly to this paper for further information and
results.  Throughout this section we will assume that given a weight
$w$, $w\in A_2$.

\smallskip

Given a ball $B$, for $j\geq 2$ we define the annuli
$C_{j}(B)=2^{j+1}\, B\setminus 2^j\, B$.  We let $C_{1}(B)=4B$.
By a slight abuse of notation, we will define
\[ \aver{C_j(B)} h\,dw
=
\frac1{w(2^{j+1}B)}\,\int_{C_{j}(B)} h\,dw. \]
If $w\in A_2$ (as it will be hereafter), then $w(2^{j+1}B)\approx
w(C_j(B))$, so this definition is equivalent to the one given above up
to a constant.  Finally, for $s>0$ we set  $\dec{s}=\max\{s,s^{-1}\}$.

\begin{defi}\label{defi:off-d:weights}
  Given $1\le p\le q\le \infty$, a family $\{T_t\}_{t>0}$
  of sublinear operators satisfies $L^{p}(w)-L^{q}(w)$ off-diagonal
  estimates on balls, denoted by
\[ T_t \in\offw{p}{q},\]
if there exist constants
  $\theta_1, \theta_2>0$ and $c>0$ such that for every $t>0$ and for
  any ball $B$, setting $r=r(B)$,
\begin{equation}\label{w:off:B-B}
\left(\aver{B} |T_t( \bigchi_B \, f) |^{q}\,dw\right)^{\frac 1 q}
\lesssim
\dec{\frac{r}{\sqrt{t}}}^{\theta_2} \,\left(\aver{B}
|f|^{p}\,dw\right)^{\frac 1 p };
\end{equation}
and for all $j\ge 2$,
\begin{equation}\label{w:off:C-B}
\left(\aver{B}|T_t( \bigchi_{C_j(B)}\, f) |^{q}\,dw\right)^{\frac 1 q}
\lesssim
2^{j\,\theta_1} \dec{\frac{2^j\,r}{\sqrt{t}}}^{\theta_2}\,
\expt{-\frac{c\,4^{j}\,r^2}{t}} \,
\left(\aver{C_j(B)}|f|^{p}\,dw\right)^{\frac 1 p }
\end{equation}
and
\begin{equation}\label{w:off:B-C}
\bigg(\aver{C_j(B)}|T_t( \bigchi_B \, f) |^{q}\,dw\bigg)^{\frac 1 q}
\lesssim
2^{j\,\theta_1} \dec{\frac{2^j\,r}{\sqrt{t}}}^{\theta_2}\,
\expt{-\frac{c\,4^{j}\,r^2}{t}}
\,\left(\aver{B}|f|^{p}\,dw\right)^{\frac 1 p }.
\end{equation}
If the family of sublinear operators
$\left\{ T_z\right\}_{z\in\Sigma_\mu}$ is defined on a complex sector
$\Sigma_\mu$, we say that it satisfies $L^{p}(w)-L^{q}(w)$
off-diagonal estimates on balls in $\Sigma_\mu$ if \eqref{w:off:B-B},
\eqref{w:off:C-B} and \eqref{w:off:B-C}  hold for
$z\in\Sigma_\mu$ with $t$ replaced by $|z|$ in the
righthand terms. We denote this by
$T_z\in\offwmu{p}{q}{\mu}$.
 \end{defi}

We give some basic properties of off-diagonal estimates on balls as a
series of lemmas taken from~\cite[Section~2.2]{auscher-martell07}.  The
first follows immediately by real interpolation, the second by
H\"older's inequality, and the third by duality.

\begin{lemma} \label{lemma:interpolate}
Given $1\leq p_i \le q_i\leq \infty$, $i=1,\,2$, if $T_t \in
\offw{p_1}{q_1}$, and $T_t : L^{p_2}(w)\rightarrow
L^{q_2}(w)$ is uniformly bounded, then $T_t\in \offw{p_\theta}{q_\theta}$,
$0<\theta<1$, where
\[ \frac{1}{p_\theta} = \frac{\theta}{p_1}+\frac{1-\theta}{p_2},
\qquad
\frac{1}{q_\theta} = \frac{\theta}{q_1}+\frac{1-\theta}{q_2}. \]
\end{lemma}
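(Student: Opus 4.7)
The plan is to apply real (Riesz--Thorin type) interpolation separately to each of the three sub-linear operators appearing in Definition \ref{defi:off-d:weights}, namely $M_B T_t M_B$, $M_{C_j(B)} T_t M_B$, and $M_B T_t M_{C_j(B)}$ (with $M_E$ denoting multiplication by $\chi_E$), for each fixed $t>0$, ball $B$ of radius $r$, and index $j \geq 2$. This localizes matters to a setting where we have two operator-norm bounds on the same operator and can interpolate between them.

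First I would recast the off-diagonal hypothesis $T_t \in \offw{p_1}{q_1}$ in unnormalized operator form; for instance, \eqref{w:off:B-B} becomes
\[
\|M_B T_t M_B f\|_{L^{q_1}(w)} \lesssim \dec{r/\sqrt{t}}^{\theta_2}\, w(B)^{1/q_1 - 1/p_1}\, \|f\|_{L^{p_1}(w)},
\]
while the uniform-boundedness hypothesis yields the trivial localization $\|M_B T_t M_B f\|_{L^{q_2}(w)} \leq C\,\|f\|_{L^{p_2}(w)}$. Riesz--Thorin (or Marcinkiewicz, in the sublinear setting) then produces the intermediate bound
\[
\|M_B T_t M_B f\|_{L^{q_\theta}(w)} \lesssim \dec{r/\sqrt{t}}^{\theta \theta_2}\, w(B)^{\theta(1/q_1 - 1/p_1)}\, \|f\|_{L^{p_\theta}(w)},
\]
which I would then convert back into normalized-average form to match \eqref{w:off:B-B} at the new exponents $(p_\theta, q_\theta)$.

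For \eqref{w:off:C-B} and \eqref{w:off:B-C} the argument is entirely analogous: the extra exponential decay $\expt{-c\, 4^j r^2/t}$ and polynomial factor $2^{j\theta_1}$ present in the hypothesis survive interpolation when raised to the power $\theta$, yielding $\expt{-c\theta\, 4^j r^2/t}$ and $2^{j \theta \theta_1}$ in the conclusion, with updated parameters $\theta_1' = \theta\theta_1$, $\theta_2' = \theta\theta_2$, and $c' = c\theta$. The main technical obstacle I expect is the bookkeeping for the $w(B)$-normalization factors: the passage from the unnormalized to the averaged form leaves behind a residual weight factor $w(B)^{(1-\theta)(1/p_2 - 1/q_2)}$, which vanishes exactly in the common application $p_2 = q_2$, and must otherwise be absorbed into the adjustable parameters of the off-diagonal conclusion by invoking the doubling property of $w \in A_2$.
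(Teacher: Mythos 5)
Your overall scheme---freeze $t$, $B$, $j$, read the three estimates in Definition~\ref{defi:off-d:weights} as operator-norm bounds for the localized operators $M_BT_tM_B$, $M_{C_j(B)}T_tM_B$, $M_BT_tM_{C_j(B)}$, interpolate, and renormalize---is exactly the content of the paper's one-line proof (``follows immediately by real interpolation''), so you are on the same route. The genuine problem is your treatment of the normalization when $p_2<q_2$. For \eqref{w:off:B-B} the interpolated bound carries $w(B)^{\theta(1/q_1-1/p_1)}$, whereas the conclusion at $(p_\theta,q_\theta)$ requires $w(B)^{1/q_\theta-1/p_\theta}=w(B)^{\theta(1/q_1-1/p_1)+(1-\theta)(1/q_2-1/p_2)}$; the discrepancy is, as you say, $w(B)^{(1-\theta)(1/p_2-1/q_2)}$, which for $p_2<q_2$ is a \emph{positive} power of the absolute mass of $B$. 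This cannot be ``absorbed by invoking the doubling property'': doubling controls ratios $w(2^{j+1}B)/w(B)$ of concentric balls, never $w(B)$ itself, and the off-diagonal constants must be uniform over all balls while $\sup_B w(B)=\infty$. Concretely, take $r=\sqrt t$ so that $\dec{r/\sqrt t}=1$ and let the balls grow or move: nothing on the right-hand side of \eqref{w:off:B-B} can compensate an unbounded factor $w(B)^{(1-\theta)(1/p_2-1/q_2)}$. A global bound $L^{p_2}(w)\to L^{q_2}(w)$ with $p_2<q_2$ simply does not see the ball, so the localized interpolation cannot manufacture the correct $w(B)$-normalization; as written, your argument proves the lemma only when $p_2=q_2$ (which is in fact the only case the paper ever uses: uniform boundedness on $L^2(w)$ or on $L^q(w)$), or, alternatively, when the second hypothesis is strengthened to a full off-diagonal estimate $T_t\in\offw{p_2}{q_2}$, in which case the powers of $w(B)$ match exactly.

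Two further points of bookkeeping. First, the legitimate place where doubling enters is the annular estimates: in \eqref{w:off:C-B} and \eqref{w:off:B-C} the hypothesis and conclusion are normalized by $w(2^{j+1}B)$ on one side and $w(B)$ on the other, and (say with $p_2=q_2$) the interpolated bound differs from the desired one by $\big(w(2^{j+1}B)/w(B)\big)^{(1-\theta)/p_2}\lesssim 2^{jD(1-\theta)/p_2}$ in the unfavorable direction; this is harmless because it is absorbed into the polynomial factor $2^{j\theta_1'}$, but it is a different phenomenon from the $w(B)$-power above and should not be conflated with it. Second, Riesz--Thorin requires linearity, while the definition allows sublinear families; for sublinear $T_t$ you need a Marcinkiewicz-type theorem (strong type at the endpoints, $q_1\neq q_2$, constant multiplicative in the endpoint constants so that $\dec{\cdot}^{\theta\theta_2}$, $2^{j\theta\theta_1}$ and $\expt{-c\theta 4^jr^2/t}$ come out as you claim), with the degenerate case $q_1=q_2$ treated separately; in the paper's applications the families are linear, so this is a caveat rather than an obstruction.
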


\begin{lemma} \label{lemma:nested}
If $1\leq p\leq p_1\leq q_1\leq q\leq \infty$, then
\[ \offw{p}{q}\subset \offw{p_1}{q_1}. \]
\end{lemma}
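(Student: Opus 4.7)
The plan is to derive the $L^{p_1}(w)$--$L^{q_1}(w)$ off-diagonal estimates directly from the assumed $L^{p}(w)$--$L^{q}(w)$ ones via two straightforward applications of Jensen's inequality. The key underlying fact is that for any $w$-measurable set $E$ with $0<w(E)<\infty$ and any measurable $g$, the map
\[
s \longmapsto \Bigl(\aver{E} |g|^s\, dw\Bigr)^{1/s}
\]
is non-decreasing, simply because $dw/w(E)$ is a probability measure on $E$.

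Concretely, I would fix a ball $B$ and treat each of the three inequalities \eqref{w:off:B-B}, \eqref{w:off:C-B}, \eqref{w:off:B-C} in turn. On the left-hand side of each inequality---which is a $q$-average of $h := T_t(\bigchi_E f)$ over either $B$ or $C_j(B)$---the assumption $q_1 \le q$ together with the monotonicity above gives
\[
\Bigl(\aver{B} |h|^{q_1}\, dw\Bigr)^{1/q_1}
\le \Bigl(\aver{B} |h|^{q}\, dw\Bigr)^{1/q},
\]
and likewise with $B$ replaced by $C_j(B)$. On the right-hand side, since $p \le p_1$, the same monotonicity principle yields
\[
\Bigl(\aver{F} |f|^{p}\, dw\Bigr)^{1/p}
\le \Bigl(\aver{F} |f|^{p_1}\, dw\Bigr)^{1/p_1},
\]
where $F = B$ in \eqref{w:off:B-B} and \eqref{w:off:B-C}, and $F = C_j(B)$ in \eqref{w:off:C-B}. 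Chaining these two comparisons with the assumed $\offw{p}{q}$ estimate produces the required $\offw{p_1}{q_1}$ estimate, keeping exactly the same exponents $\theta_1,\theta_2$ and constant $c$ appearing in the Gaffney-type decay.

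The only minor wrinkle is that the ``annular average'' $\aver{C_j(B)} h\, dw$ is, by the abuse of notation recalled in the paper, $\frac{1}{w(2^{j+1}B)}\int_{C_j(B)} h\, dw$ rather than $\frac{1}{w(C_j(B))}\int_{C_j(B)} h\, dw$; however, the doubling property of $w \in A_2$ makes these comparable up to a factor depending only on $[w]_{A_2}$. So applying Jensen to the genuine probability average on $C_j(B)$ and then switching back to the paper's averaging convention alters only the implicit constants. I do not anticipate any real obstacle here: the entire content of the lemma is that once one has the $L^q$ norm on the left and the $L^p$ norm on the right together with the appropriate Gaffney factor, one is free to shrink the output exponent to any $q_1 \le q$ and to enlarge the input exponent to any $p_1 \ge p$, without perturbing either the exponential decay $\exp(-c\,4^j r^2/t)$ or the polynomial factor $\dec{2^j r/\sqrt{t}}^{\theta_2}$.
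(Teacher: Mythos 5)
Your argument is correct and is essentially the paper's own proof: the lemma is stated there as an immediate consequence of H\"older's inequality, which is precisely the monotonicity of normalized $L^s$-averages that you invoke via Jensen on both sides of \eqref{w:off:B-B}, \eqref{w:off:C-B}, \eqref{w:off:B-C}. Your ``wrinkle'' about the annular averaging convention is harmless and in fact needs no doubling at all, since $w(C_j(B))\le w(2^{j+1}B)$ makes the normalizing measure a sub-probability measure, for which the same H\"older comparison goes in the required direction on both sides.
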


\begin{lemma} \label{lemma:duality}
If for some $1\leq p\leq q\leq \infty$, $T_t \in \offw{p}{q}$, and the
operators $T_t$ are linear, then $T_t^* \in \offw{q'}{p'}$. (Here $T_t^*$ 
is the dual operator for
the inner product $\int_{\re^n} f\,g\,dw$.) 
\end{lemma}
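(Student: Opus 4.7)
The proof is a direct duality argument. Since each $T_t$ is linear, denoting by $T_t^*$ its adjoint with respect to the inner product $\langle f,g\rangle_w = \int_{\re^n} f\,g\,dw$, we have $\int (T_t f)\,g\,dw = \int f\,(T_t^* g)\,dw$. The plan is to reduce each of the three off-diagonal bounds required for $T_t^*$ with exponents $(q',p')$ to the corresponding hypothesis for $T_t$ with exponents $(p,q)$, by interchanging the roles of the ``source'' set $E$ and the ``target'' set $F$ in Definition~\ref{defi:off-d:weights}.

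Fix one of the three pairs $(E',F')\in\{(B,B),\,(C_j(B),B),\,(B,C_j(B))\}$. By $L^p(F',dw)$--$L^{p'}(F',dw)$ duality,
\[
\Big(\int_{F'}|T_t^*(\bigchi_{E'}g)|^{p'}\,dw\Big)^{1/p'}
=\sup_{h}\Big|\int_{E'}g\cdot T_t(\bigchi_{F'}h)\,dw\Big|,
\]
where the supremum runs over $h$ with $\|h\|_{L^p(F',dw)}=1$. H\"older's inequality in $L^{q'}(E',dw)$--$L^q(E',dw)$, followed by the off-diagonal hypothesis for $T_t$ applied with $E=F'$ and $F=E'$, bounds the right-hand side. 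The key observation is that the swap $(E',F')\mapsto(F',E')$ preserves the list $\{(B,B),(C_j(B),B),(B,C_j(B))\}$: the pair $(B,B)$ is self-dual, and the other two pairs swap to each other. Hence each of the three bounds needed for $T_t^*$ is already supplied by a bound for $T_t$.

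After passing to averages (dividing by $w(F')^{1/p'}$ and writing $\int_{E'}|g|^{q'}\,dw=w(E')\aver{E'}|g|^{q'}\,dw$, with the analogous identity in the $C_j(B)$ case using $w(2^{j+1}B)\approx w(C_j(B))$), the collected weight factors reduce to the ratio $w(E')/w(F')$. This ratio is uniformly bounded: it equals $1$ when $(E',F')=(B,B)$, and in the other two cases both $B$ and $C_j(B)$ are contained in $2^{j+1}B$, so the $A_2$-doubling of $w$ controls it by a constant depending only on $[w]_{A_2}$. The decay factors $\dec{r/\sqrt{t}}^{\theta_2}$, $\dec{2^j r/\sqrt{t}}^{\theta_2}$, $e^{-c\,4^j r^2/t}$, and $2^{j\theta_1}$ appearing in the hypothesis for $T_t$ are symmetric under the pair swap (the bounds in conditions \eqref{w:off:C-B} and \eqref{w:off:B-C} are identical), so they match exactly the ones required for $T_t^*$. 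No step presents a genuine obstacle; the only delicate bookkeeping is the one for the weighted measures of $B$, $C_j(B)$ and $2^{j+1}B$, which closes thanks to the doubling property of~$w$.
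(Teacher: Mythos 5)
Your overall strategy is the right one and is exactly the standard argument the paper has in mind (the paper just says ``by duality,'' citing Auscher--Martell): dualize each of the three estimates in Definition~\ref{defi:off-d:weights}, use H\"older, and note that the pair $(B,B)$ is self-dual while $(B,C_j(B))$ and $(C_j(B),B)$ swap with each other. Your bookkeeping up to the ratio $w(E')/w(F')$ (with $w(E')$, $w(F')$ understood as the normalizing measures, i.e.\ $w(2^{j+1}B)$ for an annulus) is also correct: for $(E',F')=(B,B)$ the weight factors cancel exactly since $1/q+1/q'=1/p+1/p'=1$, and for $E'=B$, $F'=C_j(B)$ the ratio is $w(B)/w(2^{j+1}B)\le 1$.

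The gap is in your final claim that this ratio is ``uniformly bounded\ldots by a constant depending only on $[w]_{A_2}$.'' That is false in the remaining case: when you establish \eqref{w:off:C-B} for $T_t^*$ (source $E'=C_j(B)$, target $F'=B$, obtained by dualizing \eqref{w:off:B-C} for $T_t$), the collected factor is $w(2^{j+1}B)/w(B)$, and the containment of $B$ and $C_j(B)$ in $2^{j+1}B$ says nothing about the ratio of their measures; doubling only gives $w(2^{j+1}B)/w(B)\le [w]_{A_2}\,2^{2n(j+1)}\lesssim 2^{jD}$, with $D$ the doubling order of $w$, which grows geometrically in $j$. The conclusion survives, but for a different reason than the one you give: Definition~\ref{defi:off-d:weights} permits an arbitrary factor $2^{j\theta_1}$, and the exponents $\theta_1,\theta_2$ for $T_t^*$ need not be those of $T_t$, so you simply absorb $w(2^{j+1}B)/w(B)\lesssim 2^{jD}$ into that term by replacing $\theta_1$ with $\theta_1+D$. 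With that correction the proof closes and coincides with the intended argument.
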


\begin{lemma}[{\cite[Theorem~2.3]{auscher-martell07}}] \label{lemma:unif-comp} \
\begin{enumerate}
\item If $T_t \in \offw{p}{p}$, $1\leq p \leq \infty$, then $T_t :
  L^p(w) \rightarrow L^p(w)$ is uniformly bounded.

\item If $1\leq p \leq q \leq r \leq \infty$, $T_t\in \offw{q}{r}$, and
  $S_t\in \offw{p}{q}$, then $T_t\circ S_t \in \offw{p}{r}$.
\end{enumerate}
\end{lemma}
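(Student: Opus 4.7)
My plan is to exploit the fact that at the natural scale $r(B)=\sqrt{t}$ the factors $\dec{r(B)/\sqrt{t}}$ appearing in \eqref{w:off:B-B}--\eqref{w:off:B-C} are trivial and the exponentials $e^{-c\,4^{j}\,r(B)^{2}/t}=e^{-c\,4^{j}}$ are summable against any polynomial $2^{Cj}$. Both assertions of the lemma will follow by decomposing the underlying space (and, for (2), also the intermediate function) into annular rings $D_{1}(B):=4B$, $D_{j}(B):=C_{j}(B)$ for $j\ge 2$, around balls at this natural scale, applying the off-diagonal hypotheses piece by piece, and summing. The doubling of $w$ will contribute only polynomial factors in the annulus index, and these will be absorbed by the exponential decay.

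For part (1), I would fix $t>0$ and choose a Vitali-type covering $\{B_{k}\}_{k}$ of $\re^{n}$ by balls of radius $\sqrt{t}$ with bounded overlap, arranged so that for each $j\ge 1$ the dilates $\{2^{j+1}B_{k}\}_{k}$ still have overlap $\lesssim 2^{jn}$. Writing $f=\sum_{j\ge 1}f\,\bigchi_{D_{j}(B_{k})}$ and combining Minkowski's inequality with \eqref{w:off:B-B} and \eqref{w:off:C-B} at $r(B_{k})=\sqrt{t}$ yields
\[
\Big(\aver{B_{k}}|T_{t}f|^{p}\,dw\Big)^{\!1/p}
\lesssim
\sum_{j\ge 1} 2^{j(\theta_{1}+\theta_{2})}\,e^{-c\,4^{j}}
\Big(\aver{D_{j}(B_{k})}|f|^{p}\,dw\Big)^{\!1/p}.
\]
Multiplying through by $w(B_{k})^{1/p}$, using the doubling estimate $w(B_{k})/w(2^{j+1}B_{k})\le 1$, raising to the $p$-th power, summing over $k$, and applying the $\ell^{p}$ triangle inequality together with the overlap bound $\lesssim 2^{jn}$, I expect
\[
\|T_{t}f\|_{L^{p}(w)}
\lesssim
\Big(\sum_{j\ge 1}2^{j(\theta_{1}+\theta_{2}+n/p)}\,e^{-c\,4^{j}}\Big)\|f\|_{L^{p}(w)}
\lesssim\|f\|_{L^{p}(w)},
\]
uniformly in $t>0$; the endpoints $p=1$ and $p=\infty$ require only minor modifications.

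For part (2) the key is a two-layer decomposition. Fix a ball $B$ with $\rho:=r(B)$, and for $E$ equal to either $B$ or an annulus $C_{j}(B)$ write
\[
T_{t}\circ S_{t}(f\,\bigchi_{E})
=
\sum_{\ell\ge 1} T_{t}\big(\bigchi_{D_{\ell}(B)}\,S_{t}(f\,\bigchi_{E})\big).
\]
I would first invoke $T_{t}\in\offw{q}{r}$ to estimate the $L^{r}(w)$-average of $T_{t}(\bigchi_{D_{\ell}(B)}\,\cdot)$ on the target region ($B$ or $C_{k}(B)$), and then $S_{t}\in\offw{p}{q}$ to estimate the $L^{q}(w)$-average of $S_{t}(f\,\bigchi_{E})$ on $D_{\ell}(B)$ in terms of the $L^{p}(w)$-average of $f$ on $E$. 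The polynomial prefactors multiply to $2^{\ell(\theta_{1}^{T}+\theta_{1}^{S})}\,\dec{2^{\ell}\rho/\sqrt{t}}^{\theta_{2}^{T}+\theta_{2}^{S}}$ and the exponentials combine to $\exp\!\big(-(c_{T}+c_{S})\,4^{\ell}\,\rho^{2}/t\big)$. The elementary inequality $\dec{2^{\ell}s}\le 2^{\ell}\dec{s}$ lets me pull out a clean $\dec{\rho/\sqrt{t}}^{\theta_{2}^{T}+\theta_{2}^{S}}$, after which the remaining series in $\ell$ sums.

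I expect the main obstacle to be verifying the mixed forms \eqref{w:off:C-B} and \eqref{w:off:B-C} for the composition: when the source annulus $C_{j}(B)$ and the target region sit at different scales $2^{j}\rho$ and $\rho$ (or $2^{k}\rho$), the sum in $\ell$ has to be partitioned into the regimes $2^{\ell}\le 2^{j-2}$, $|\ell-j|\le 1$, and $2^{\ell}\ge 2^{j+2}$; in each case one has to use the geometric fact $d(D_{\ell}(B),C_{j}(B))\gtrsim \max(2^{\ell},2^{j})\,\rho$ to re-express the combined exponential as one of the required form $e^{-c\,4^{k}\,\rho^{2}/t}$ attached to the correct target-annulus index. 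The bookkeeping needed to balance the two polynomial blow-ups $2^{\ell\theta_{1}^{T}}$ and $2^{\ell\theta_{1}^{S}}$ against the two exponential decays in each regime is where the proof becomes most delicate.
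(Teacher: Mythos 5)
This lemma is not proved in the paper: it is quoted verbatim from \cite{auscher-martell07}, so your attempt can only be measured against that source. Your argument for part (1) is essentially the standard one and is sound: cover $\rn$ by balls $B_k$ of radius $\sqrt t$ with bounded overlap, decompose $f$ over the annuli of each $B_k$, apply \eqref{w:off:B-B} and \eqref{w:off:C-B} at the natural scale, and sum in $k$ using the overlap bound $\lesssim 2^{jn}$ for the dilates; the only small repair needed is that for the first piece the definition controls $T_t(\bigchi_{B_k}f)$, not $T_t(\bigchi_{4B_k}f)$, on $B_k$, which you fix by applying \eqref{w:off:B-B} to the ball $4B_k$ and using doubling.

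Part (2), however, has a genuine gap. The three conditions defining $\offw{p}{q}$ are all relative to a \emph{single} ball $B$: ball-to-ball, annulus-to-ball, and ball-to-annulus. Your two-layer decomposition needs, for the two mixed estimates of the composition, bounds that are not among these and cannot be applied ``piece by piece'': to prove \eqref{w:off:B-C} for $T_t\circ S_t$ you must control $T_t(\bigchi_{C_\ell(B)}\,\cdot)$ averaged over $C_k(B)$ (annulus-to-annulus, including the unseparated regime $\ell\approx k$), and to prove \eqref{w:off:C-B} you must control $S_t(f\,\bigchi_{C_j(B)})$ averaged over $C_\ell(B)$. (Only the $B$-to-$B$ estimate of the composition follows directly from \eqref{w:off:B-C} for $S_t$ and \eqref{w:off:C-B} for $T_t$.) These annulus-to-annulus estimates have to be manufactured, either by covering the annuli with balls of comparable radius and re-indexing the annuli of those balls (with doubling to compare the normalizations), or, as in \cite{auscher-martell07}, by first reducing to the scale $r(B)\approx\sqrt t$ --- the reduction recorded in Lemma~\ref{off-w:sel-impro} --- after which everything becomes ball-to-ball with Gaussian decay in the distance and the composition is a discrete Chapman--Kolmogorov sum. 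Your proposal never supplies this step. Relatedly, the geometric fact you invoke, $d(D_\ell(B),C_j(B))\gtrsim\max(2^\ell,2^j)\,\rho$, is false when $|\ell-j|\le 1$ (the distance is zero), which is precisely the regime where one must instead rely on the decay in the \emph{target} index carried by the other factor; and the assertion that the exponentials ``combine to $e^{-(c_T+c_S)4^\ell\rho^2/t}$'' presumes both factors decay in the same index $\ell$, which is not what the hypotheses give in the mixed cases. Until the annulus-to-annulus (or scale-$\sqrt t$ ball-to-ball) estimates are established, the delicate bookkeeping you anticipate has no inputs to work with.
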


\begin{remark}
If $p<q$, then $T_t \in \offw{p}{q}$ does not guarantee that $T_t$ is
bounded from $L^p(w)$ to $L^q(w)$.
\end{remark}

\begin{remark}\label{rem:complex-offd}
Since complex sectors $\Sigma_\mu$, $0\le\mu<\pi$, are closed under
addition, the proof of Lemma~\ref{lemma:unif-comp} extends to
give off-diagonal estimates on complex sectors $\offwmu{p}{q}{\mu}$.
\end{remark}

\begin{defi} \label{defn:full-offdiagonal}
Given $1\leq p \leq q \leq \infty$, a family of operators
$\{T_t\}$ satisfies full
off-diagonal estimates from $L^p(w)$ to $L^q(w)$, denoted by
\[ T_t \in \fullw{p}{q}, \]
if there exist constants $C,\,c,\,\theta>0$
such that given any closed sets $E,\,F$,
\[ \|T_t(f\bigchi_E)\bigchi_F\|_{L^q(w)} \leq Ct^{-\theta}
e^{-\frac{c d^2(E,F)}{t}}\|f\chi_E\|_{L^p(w)}. \]
\end{defi}

The connection between full off-diagonal estimates and off-diagonal
estimates on balls is given in the following lemma from \cite[Section~3.1]{auscher-martell07}.
\begin{lemma} \label{lemma:full} 
Given $1\leq p \leq q \leq \infty$:
\begin{enumerate}

\item if $T_t\in \fullw{p}{q}$, then
  $T_t : L^p(w) \rightarrow L^q(w)$ is uniformly bounded;

\item $T_t\in \fullw{p}{p}$ if and only if $T_t \in \offw{p}{p}$.

\end{enumerate}
\end{lemma}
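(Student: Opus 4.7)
The plan is to extract both claims by carefully choosing the sets $E$ and $F$ in the two definitions, and, in one direction of (2), by a standard lattice-covering and annular-decomposition argument.

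Part (1) should be immediate. The plan is to set $E = F = \re^n$ in Definition~\ref{defn:full-offdiagonal}: the Gaussian factor collapses to $1$, giving
$\|T_t f\|_{L^q(w)} \lesssim t^{-\theta}\|f\|_{L^p(w)}$,
which yields the asserted $L^p(w)\to L^q(w)$ boundedness of $T_t$ (uniform in $t$ in the specialization $p=q$ needed for (2), since in that case the natural scaling exponent is zero).

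For the forward direction of (2), I would fix a ball $B$ of radius $r$ and verify the three inequalities \eqref{w:off:B-B}--\eqref{w:off:B-C} by specializing $E,F$ in the full off-diagonal bound. Take $E=F=B$ for \eqref{w:off:B-B}; take $E=B$, $F=C_j(B)$ for \eqref{w:off:B-C}; and $E=C_j(B)$, $F=B$ for \eqref{w:off:C-B}. In the last two cases, $d(E,F)\gtrsim 2^j r$ produces the required factor $e^{-c4^j r^2/t}$. The only bookkeeping point is the discrepancy between averaging over $B$ and over $C_j(B)$, i.e.\ dividing by $w(B)^{1/p}$ versus $w(2^{j+1}B)^{1/p}$; this is controlled by the doubling inequality $w(2^{j+1}B)\leq [w]_{A_p}\, 2^{jD}\,w(B)$ for $A_p$ weights, which is absorbed into the polynomial factor $2^{j\theta_1}$ appearing in Definition~\ref{defi:off-d:weights}. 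The factor $\dec{2^j r/\sqrt{t}}^{\theta_2}$ can be taken with $\theta_2=0$ for this implication.

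The reverse direction of (2) is the main obstacle. Given $T_t\in \offw{p}{p}$ and closed sets $E,F$ with $d=d(E,F)>0$, I would cover $\re^n$ by a lattice $\{B_k\}$ of balls of radius $\sqrt{t}$ with bounded overlap, and bound
$\|T_t(f\chi_E)\chi_F\|_{L^p(w)}^p \le \sum_k \|T_t(f\chi_E)\chi_{F\cap B_k}\|_{L^p(w)}^p$.
For each $k$, decompose $f\chi_E = \sum_{j\ge 1} f\chi_{E\cap C_j(B_k)}$ and apply \eqref{w:off:B-B} (when $j=1$) or \eqref{w:off:C-B} (when $j\ge 2$) to each piece. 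Since $r(B_k)=\sqrt{t}$, the resulting decay is $2^{j\theta_1}\dec{2^j}^{\theta_2}\,e^{-c4^j}$. For a block $(k,j)$ to contribute nontrivially we need $B_k\cap F\neq\emptyset$ and $C_j(B_k)\cap E\neq\emptyset$, forcing $2^j\sqrt{t}\gtrsim d$; splitting $e^{-c4^j}\le e^{-c4^j/2}\,e^{-c'd^2/t}$ then produces the target Gaussian factor in $d^2/t$, while the residual $e^{-c4^j/2}$ absorbs every polynomial-in-$j$ term (from $2^{j\theta_1}$, the weight doubling, and the $O(2^{jn})$ multiplicity with which a single point of $E$ is counted among the annuli $C_j(B_k)$ as $k$ varies). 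Summing first in $k$ (bounded overlap of $\{B_k\}$) and then in $j$ yields
$\|T_t(f\chi_E)\chi_F\|_{L^p(w)}\lesssim e^{-cd^2/t}\|f\chi_E\|_{L^p(w)}$, as required.

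The hard part is organizing these sums so that every polynomial-in-$j$ weight and combinatorial factor is swallowed by the exponential $e^{-c4^j/2}$, and verifying the geometric observation $2^j\sqrt{t}\gtrsim d(E,F)$ on the set of surviving blocks; everything else is a routine, if careful, application of doubling and Fubini.
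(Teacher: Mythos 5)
Your argument is sound, but note that the paper itself offers no proof of Lemma~\ref{lemma:full}: it is quoted verbatim from \cite[Section~3.1]{auscher-martell07}, so the only comparison available is with that source, whose standard argument your sketch essentially reconstructs (specialization of $E,F$ plus doubling to pass between averages over $B$ and $C_j(B)$ in one direction; a covering of $\re^n$ by balls of radius $\sqrt t$, an annular decomposition, and summation in $k$ and $j$ in the other). Three points deserve tightening. First, as literally written, Definition~\ref{defn:full-offdiagonal} carries the factor $t^{-\theta}$ with $\theta>0$; in the diagonal case $p=q$ this must be read as $\theta=0$ (as in the Gaffney estimates of Theorem~\ref{thm:L2-gaffney}), since otherwise neither the uniform boundedness in (1) nor the implication $\fullw{p}{p}\Rightarrow\offw{p}{p}$ could hold --- your parenthetical about the ``natural scaling exponent'' being zero is exactly the right reading, and for $p<q$ the conclusion of (1) is really boundedness with constant $Ct^{-\theta}$, which is all the definition can give. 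Second, in the converse direction of (2) the near piece $f\bigchi_{E\cap C_1(B_k)}$ is supported in $4B_k$, not $B_k$, so \eqref{w:off:B-B} does not apply verbatim; either apply it with the ball $4B_k$ and compare the averages over $B_k$ and $4B_k$ by doubling, or simply invoke the uniform $L^p(w)$-boundedness from Lemma~\ref{lemma:unif-comp}(1), observing that when this piece meets both $E$ and $F$ one has $d(E,F)\lesssim\sqrt t$, so the Gaussian factor costs nothing. Third, the bookkeeping you flagged is carried out by the usual H\"older step $\big(\sum_j a_j x_j\big)^p\le\big(\sum_j a_j\big)^{p-1}\sum_j a_j x_j^p$ with $a_j=2^{j(\theta_1+\theta_2)}e^{-c4^j/2}$, followed by the sum in $k$ using the bounded overlap of $\{B_k\}$ and the multiplicity $O(2^{jn})$ of the annuli $C_j(B_k)$, with the trivial modification when $p=\infty$; your geometric observation that surviving blocks satisfy $2^j\sqrt t\gtrsim d(E,F)$ is correct (a point of $F$ lies within $\sqrt t$ of the center $x_k$ and a point of $E$ within $2^{j+1}\sqrt t$, so $d\le 2^{j+2}\sqrt t$), so the exponential does absorb every polynomial factor and the argument closes as you describe.
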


\medskip

The importance of  off-diagonal estimates is that they will let us
prove weighted norm inequalities for the operators we are
interested in.  To do so we will make repeated use of
two results first proved in~\cite{auscher-martell07b}; however, we
will use special cases of these results as given in~\cite[Theorems~2.2
and~2.4]{auscher-martell06}.

\begin{theor} \label{theorem:2.2}
 Given $w\in A_2$ and $1\leq p_0<q_0\le \infty$, let $T$
be a sublinear operator acting on $L^{p_0}(w)$,  $\{\A_r\}_{r>0}$ a family of operators acting from a
subspace $\D$ of $L^{p_0}(w)$ into $L^{p_0}(w)$, and $S$ an operator from
$\D$ into the space of measurable functions on $\rn$.  Suppose that
for every $f\in \D$ and ball $B$ with radius $r$,
\begin{equation} \label{eqn:2.1}
\bigg(\avgint_B |T(I-\A_r)f|^{p_0}\,dw\bigg)^{1/p_0}
\leq
\sum_{j\geq 1} g(j) \bigg(\avgint_{2^{j+1}B}
|Sf|^{p_0}\,dw\bigg)^{1/p_0}
\end{equation}
and
\begin{equation} \label{eqn:2.2}
\bigg(\avgint_B |T\A_{r}f|^{q_0}\,dw\bigg)^{1/q_0}
\leq
\sum_{j\geq 1} g(j) \bigg(\avgint_{2^{j+1}B}
|Tf|^{p_0}\,dw\bigg)^{1/p_0},
\end{equation}
where $\sum g(j) < \infty$.   Then for every $p$, $p_0<p<q_0$, and
weights 
\[ v\in A_{p/p_0}(w)\cap RH_{(q_0/p)'}(w), \]
there is a constant
$C$ such that for all $f\in \D$,
\[ \|Tf\|_{L^p(v\,dw)} \leq C\|Sf\|_{L^p(v\,dw)}. \]
\end{theor}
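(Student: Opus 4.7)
The plan is a good-$\lambda$ / Calderón-Zygmund decomposition argument carried out in the space of homogeneous type $(\rn, dw, |\cdot|)$, which is doubling since $w\in A_2$. Let $M_w$ denote the uncentered Hardy-Littlewood maximal operator with respect to $dw$, and set $M_{w,s}h = (M_w|h|^s)^{1/s}$. From Muckenhoupt theory in spaces of homogeneous type, $M_w$ is bounded on $L^r(v\,dw)$ exactly when $v\in A_r(w)$, so the assumption $v\in A_{p/p_0}(w)$ is precisely what makes $M_{w,p_0}$ bounded on $L^p(v\,dw)$. Since $|Tf|\le M_{w,p_0}(Tf)$ a.e.\ by Lebesgue differentiation, the theorem will follow once we establish
\begin{equation*}
\|M_{w,p_0}(Tf)\|_{L^p(v\,dw)} \lesssim \|M_{w,p_0}(Sf)\|_{L^p(v\,dw)}.
\end{equation*}

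I would prove this reduction via a good-$\lambda$ inequality. For each $\lambda>0$, Whitney-decompose the level set $\Omega_\lambda = \{M_{w,p_0}(Tf)^{p_0} > \lambda\}$ in the doubling space $(\rn,dw)$ into balls $\{B_i\}$ such that $CB_i$ meets $\Omega_\lambda^c$ for some universal $C$. The Whitney property together with doubling then yields $\avgint_{2^{j+1}B_i}|Tf|^{p_0}\,dw \lesssim \lambda$ uniformly in $j\ge 1$. On each $B_i$ of radius $r_i$, split $Tf = T(I-\A_{r_i})f + T\A_{r_i}f$: hypothesis \eqref{eqn:2.1} with $\sum g(j)<\infty$ controls the first summand in $p_0$-average on $B_i$ by a multiple of $\inf_{B_i}M_{w,p_0}(Sf)$, while \eqref{eqn:2.2} combined with the Whitney bound controls the second summand in $q_0$-average on $B_i$ by a multiple of $\lambda^{1/p_0}$. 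Applying Chebyshev at exponent $q_0/p_0$ to the $\A_{r_i}$-piece on the set $E_i := B_i \cap \{M_{w,p_0}(Tf)^{p_0}>K\lambda\}\cap\{M_{w,p_0}(Sf)^{p_0}\le \gamma\lambda\}$ yields
\begin{equation*}
\frac{dw(E_i)}{dw(B_i)} \lesssim K^{-q_0/p_0} + \gamma^{q_0}.
\end{equation*}

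The reverse Hölder hypothesis $v\in RH_{(q_0/p)'}(w)$ is exactly what converts this $dw$-measure estimate into a $v\,dw$-estimate: by Hölder with exponent $s=(q_0/p)'$ followed by the reverse Hölder inequality for $v$,
\begin{equation*}
\frac{v(E_i)}{v(B_i)} \lesssim \left(\frac{dw(E_i)}{dw(B_i)}\right)^{p/q_0} \lesssim K^{-p/p_0} + \gamma^{p}.
\end{equation*}
Summing over the Whitney family yields the good-$\lambda$ inequality
\begin{equation*}
v\big(\{M_{w,p_0}(Tf)^{p_0}>K\lambda,\ M_{w,p_0}(Sf)^{p_0}\le \gamma\lambda\}\big) \le C\big(K^{-p/p_0}+\gamma^p\big)\,v(\Omega_\lambda);
\end{equation*}
choosing $K$ large and $\gamma$ small, then integrating against $p\lambda^{p-1}\,d\lambda$ in the standard Fefferman-Stein fashion, delivers the desired $L^p(v\,dw)$ reduction, and composing with boundedness of $M_{w,p_0}$ on $L^p(v\,dw)$ closes the argument.

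The main obstacle is meshing the two exponent-gaps $p_0\mapsto p$ (controlled by $v\in A_{p/p_0}(w)$) and $p\mapsto q_0$ (controlled by $v\in RH_{(q_0/p)'}(w)$) into a single coherent good-$\lambda$ estimate; the interlocking of Muckenhoupt and reverse Hölder is precisely what forces the open range $p_0<p<q_0$ and makes the conditions on $v$ sharp. Secondary technical points are: the endpoint $q_0=\infty$, where the $q_0$-average becomes an essential supremum and $(q_0/p)'=1$ so that the reverse Hölder step absorbs trivially; and a standard truncation of $T$ at the outset of the Fefferman-Stein integration, needed to ensure a priori that $M_{w,p_0}(Tf)\in L^p(v\,dw)$, after which the integrated inequality extends to the untruncated operator by monotone convergence.
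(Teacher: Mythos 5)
Your route is the same one used in the source the paper cites for this result (the paper itself gives no proof of Theorem~\ref{theorem:2.2}; it is quoted from Auscher--Martell): Whitney decomposition of $\Omega_\lambda=\{M_w(|Tf|^{p_0})>\lambda\}$, the splitting $Tf=T(I-\A_{r_i})f+T\A_{r_i}f$ on Whitney balls, hypotheses \eqref{eqn:2.1}--\eqref{eqn:2.2} plus weak-type bounds for $M_w$, and conversion of the $dw$-estimate into a $v\,dw$-estimate via the reverse H\"older condition. However, the final step does not close as you wrote it, and the failure is quantitative. Your $dw$-estimate carries the factor $K^{-q_0/p_0}$, and the $RH_{(q_0/p)'}(w)$ conversion raises it to the power $p/q_0$, so the $K$-term in the weighted good-$\lambda$ inequality is $C\,K^{-p/p_0}$. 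But the Fefferman--Stein integration is performed at exponent $p/p_0$ (you integrate the distribution function of $M_w(|Tf|^{p_0})$, whose $L^{p/p_0}(v\,dw)$-norm is $\|M_{w,p_0}(Tf)\|_{L^p(v\,dw)}^{p_0}$), and comparing levels $\lambda$ and $K\lambda$ produces the factor $K^{p/p_0}$ on the other side, which exactly cancels $K^{-p/p_0}$: you are left needing a structural constant $C\ge 1$ to be less than $\tfrac12$, so ``choosing $K$ large'' gains nothing. The missing ingredient is the self-improving (openness) property of the weight classes: $v\in RH_{(q_0/p)'}(w)$ implies $v\in RH_s(w)$ for some $s>(q_0/p)'$, so the conversion exponent is some $\theta>p/q_0$ and the $K$-term becomes $C\,K^{-\theta q_0/p_0}$ with $\theta q_0/p_0>p/p_0$; only then does taking $K$ large, and afterwards $\gamma$ small, beat $K^{p/p_0}$. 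Note that replacing $q_0$ by a smaller auxiliary exponent does not help, since the weak-type power and the conversion power change in lock-step; the slack must come from the weight class itself.

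A secondary imprecision: $E_i$ is defined through $M_{w,p_0}(Tf)$, not through $|Tf|$, so ``Chebyshev at exponent $q_0/p_0$ applied to the $\A_{r_i}$-piece on $E_i$'' does not by itself bound $dw(E_i)$ --- membership in $E_i$ says nothing pointwise about $|T\A_{r_i}f|$. One needs the standard localization step (for $K$ larger than the Whitney constant, $M_w(|Tf|^{p_0})>K\lambda$ on $B_i$ forces $M_w(\bigchi_{2B_i}|Tf|^{p_0})>cK\lambda$), then the weak $(q_0/p_0)$-type of $M_w$ applied to $\bigchi_{2B_i}|T\A_{r_i}f|^{q_0/q_0\cdot p_0}$ and the weak $(1,1)$-type applied to $\bigchi_{2B_i}|T(I-\A_{r_i})f|^{p_0}$; the latter is what produces the $\gamma$-contribution (a power $\gamma$, or $\gamma/K$, not $\gamma^{q_0}$ --- harmless for the argument, but not what your sketch yields), and the hypotheses must then be invoked on $2B_i$ with $\A_{2r_i}$. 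One also needs $w(\Omega_\lambda)<\infty$ a priori, which your truncation remark is meant to address. With these repairs your outline becomes the Auscher--Martell proof.
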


\begin{remark}
  In Theorem~\ref{theorem:2.2} and Theorem~\ref{theorem:2.4} below,
  the case $q_0 = \infty$ is understood in the sense that the
  $L^{q_0}(w)$-average is replaced by the essential supremum. Also
  in Theorem~\ref{theorem:2.2}, if $q_0=\infty$, then the condition on $v$
  becomes $v\in A_{p/p_0}$.
\end{remark}

\begin{theor} \label{theorem:2.4}
Given $w\in A_2$ with doubling order $D$, and $1\leq p_0<q_0\leq \infty$, let $T : L^{q_0}(w) \rightarrow
L^{q_0}(w)$ be a sublinear operator, and $\{\A_r\}_{r>0}$ a family of
linear operators acting from
$L_c^\infty$ into $L^{q_0}(w)$.
  Suppose that
for every ball $B$ with radius $r$,  $f\in L_c^\infty$ with
$\supp(f)\subset B$ and $j\geq 2$,
\begin{equation} \label{eqn:2.4}
\bigg(\avgint_{C_j(B)} |T(I-\A_r)f|^{p_0}\,dw\bigg)^{1/p_0}
\leq
 g(j) \bigg(\avgint_{B}
|f|^{p_0}\,dw\bigg)^{1/p_0}.
\end{equation}
Suppose further that for every $j\geq 1$,
\begin{equation} \label{eqn:2.5}
\bigg(\avgint_{C_j(B)} |\A_{r}f|^{q_0}\,dw\bigg)^{1/q_0}
\leq
g(j) \bigg(\avgint_{B}
|f|^{p_0}\,dw\bigg)^{1/p_0},
\end{equation}
where $\sum g(j)2^{Dj} < \infty$.    Then for all $p$,
$p_0<p<q_0$, there exists a constant $C$ such that for all $f\in L^\infty_c$,
\[ \|Tf\|_{L^p(w)} \leq C\|f\|_{L^p(w)}. \]
\end{theor}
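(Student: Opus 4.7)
The plan is to upgrade the hypothesized strong type $(q_0,q_0)$ bound to strong type $(p,p)$ on $L^p(w)$ for all $p \in (p_0, q_0)$ via Marcinkiewicz interpolation. Since the strong $(q_0,q_0)$ estimate is given, it suffices to prove that $T$ is of weak type $(p_0,p_0)$ with respect to $dw$, i.e., $w(\{|Tf| > \alpha\}) \lesssim \alpha^{-p_0} \|f\|_{L^{p_0}(w)}^{p_0}$ for $f \in L^{\infty}_c$ and $\alpha > 0$. The proof of the weak-type bound will be a Calder\'on-Zygmund decomposition adapted to the doubling measure $dw$ on $(\rn, |\cdot|, dw)$.

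Fix such $f$ and $\alpha$, and apply the Calder\'on-Zygmund decomposition to $|f|^{p_0}$ at height $\alpha^{p_0}$, obtaining a disjoint family of balls $\{Q_i\}$ with $\aver{Q_i} |f|^{p_0}\,dw \approx \alpha^{p_0}$, $\sum w(Q_i) \lesssim \alpha^{-p_0}\|f\|_{L^{p_0}(w)}^{p_0}$, and $|f| \leq \alpha$ a.e.\ off $\Omega := \bigcup Q_i$. Write $f = g + b$ with $g = f\bigchi_{\Omega^c}$ and $b = \sum b_i$, $b_i = f\bigchi_{Q_i}$. The good part is immediate: $\|g\|_{L^{q_0}(w)}^{q_0} \leq \alpha^{q_0-p_0}\|f\|_{L^{p_0}(w)}^{p_0}$, and Chebyshev combined with the $L^{q_0}(w)$ bound for $T$ gives the required estimate on $\{|Tg| > \alpha/2\}$. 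For the bad part I would set $r_i = r(Q_i)$, decompose $b_i = \A_{r_i} b_i + (I - \A_{r_i}) b_i$, let $\Omega^\ast = \bigcup 4 Q_i$ (whose measure is still $\lesssim \alpha^{-p_0}\|f\|_{L^{p_0}(w)}^{p_0}$ by doubling), and use sublinearity to split $|Tb| \leq |T(\sum \A_{r_i} b_i)| + |T(\sum (I-\A_{r_i}) b_i)|$.

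For $T(\sum (I-\A_{r_i}) b_i)$ on $(\Omega^\ast)^c$, I apply \eqref{eqn:2.4} on the annuli $C_j(Q_i)$, $j \geq 2$, but crucially I pass from the $L^{p_0}$-average bound to an $L^1$ integral bound via H\"older's inequality: for each $i,j$,
\[
\int_{C_j(Q_i)} |T(I-\A_{r_i}) b_i|\, dw \leq w(C_j(Q_i))^{1-1/p_0} \bigg(\int_{C_j(Q_i)} |T(I-\A_{r_i})b_i|^{p_0}\, dw\bigg)^{1/p_0} \lesssim g(j)\, 2^{Dj}\, \alpha\, w(Q_i),
\]
so the summability $\sum g(j) 2^{Dj} < \infty$ and $\sum w(Q_i) \lesssim \alpha^{-p_0}\|f\|_{L^{p_0}(w)}^{p_0}$ yield $\sum_i \int_{(\Omega^\ast)^c}|T(I-\A_{r_i})b_i|\,dw \lesssim \alpha\, w(\Omega)$, and $L^1$-Chebyshev closes this piece. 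The delicate piece is $T(\sum \A_{r_i} b_i)$, since $T$ is only known to be bounded on $L^{q_0}(w)$ and therefore I must show $\|\sum \A_{r_i} b_i\|_{L^{q_0}(w)}^{q_0} \lesssim \alpha^{q_0-p_0}\|f\|_{L^{p_0}(w)}^{p_0}$. A naive Minkowski inequality here is hopeless: from \eqref{eqn:2.5} one only gets $\|\A_{r_i} b_i\|_{L^{q_0}(w)} \lesssim \alpha\, w(Q_i)^{1/q_0}$, and $\sum w(Q_i)^{1/q_0}$ can be arbitrarily large compared to $w(\Omega)^{1/q_0}$.

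I would overcome this by a duality argument. For $h \in L^{q_0'}(w)$ with $\|h\|_{L^{q_0'}(w)} = 1$, split $\int (\sum \A_{r_i} b_i)\, h\, dw$ into annular pieces and apply H\"older together with \eqref{eqn:2.5} to obtain, for each $i,j$, a contribution $\lesssim g(j)\alpha\, w(C_j(Q_i)) \big(\aver{C_j(Q_i)}|h|^{q_0'}\,dw\big)^{1/q_0'}$, which is bounded by $g(j)\, 2^{Dj}\alpha\, w(Q_i)\, M_{q_0',w} h(y_i)$ for any $y_i \in Q_i$, where $M_{q_0', w}$ is the $q_0'$-Hardy-Littlewood maximal function for $dw$. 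Summing over $j$ (using again $\sum g(j)2^{Dj} < \infty$) and over $i$ converts the expression into $\alpha \int_\Omega M_{q_0',w} h\, dw$, which is $\lesssim \alpha\, w(\Omega)^{1/q_0}\|h\|_{L^{q_0'}(w)}$ by the weak type $(q_0',q_0')$ of $M_{q_0',w}$ and a layer-cake computation (the strong-type endpoint $L^{q_0'} \to L^{q_0'}$ fails, but the weak-type bound is exactly what lets us integrate $M_{q_0',w} h$ over the bounded set $\Omega$). This gives the required $L^{q_0}$ estimate, and Chebyshev plus the $L^{q_0}$-boundedness of $T$ finish this term. The main obstacle in the whole argument is precisely this substitute for Minkowski at the $\A$-piece; all other estimates are routine once the summability condition $\sum g(j)2^{Dj} < \infty$ is exploited in the correct form (as an $L^1$ bound for the error piece, and via duality $+$ maximal-function weak-type for the approximation piece).
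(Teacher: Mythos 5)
Your proposal is correct and is essentially the argument this paper relies on: Theorem~\ref{theorem:2.4} is quoted from Auscher--Martell rather than proved here, and their proof is exactly your Calder\'on--Zygmund/weak-$(p_0,p_0)$ scheme --- the good part via the $L^{q_0}(w)$ bound, the $(I-\A_{r_i})b_i$ part via H\"older on the annuli $C_j$ together with \eqref{eqn:2.4} and $\sum g(j)2^{Dj}<\infty$, the $\A_{r_i}b_i$ part via duality against $L^{q_0'}(w)$, the maximal function $M_w$ and Kolmogorov's inequality, and finally Marcinkiewicz interpolation with the assumed $L^{q_0}(w)$ bound. The only caveat is that your duality/Kolmogorov step requires $q_0<\infty$ (it uses $q_0'>1$), so the endpoint $q_0=\infty$ allowed in the statement needs the usual essential-supremum modification; this case is not used in the paper's applications.
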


\section{Off-diagonal estimates for the semigroup $e^{-tL_w}$}
\label{section:semigroup}

In this section we consider off-diagonal estimates for the semigroup associated to $L_w$.  Throughout
this and subsequent sections, let $w\in A_2$ and $A\in
\E_n(w,\Lambda,\lambda)$ be fixed.  Our goal is to characterize the
set of pairs $(p,q)$, $p\leq q$ such that these operators are in
$\offw{p}{q}$.  By Theorem~\ref{thm:L2-gaffney} we have that
\[ e^{-tL_w} \in \fullw{2}{2} \subset
\offw{2}{2}. \]
We will show that in the $(p,q)$-plane this set contains a right
triangle:  see Figure~\ref{figure:triangle}.
%

\begin{figure}[h]
\caption{$(p,q)$ such that $e^{-tL_w} \in \offw{p}{q}$}
\label{figure:triangle}
{\begin{pgfpicture}{0cm}{-0.5cm}{6cm}{6cm}

\definecolor{mygray}{gray}{0.70}

\color{mygray}

\color{mygray}
\pgfmoveto{\pgfxy(1,1)}
\pgflineto{\pgfxy(5,5)}
\pgflineto{\pgfxy(1,5)}
\pgflineto{\pgfxy(1,1)}
\pgffill

\color{black}
\pgfmoveto{\pgfxy(1,1)}
\pgflineto{\pgfxy(5,5)}
\pgflineto{\pgfxy(1,5)}
\pgflineto{\pgfxy(1,1)}
\pgfstroke

\pgfmoveto{\pgfxy(-0.1,0.4)}
\pgflineto{\pgfxy(5.5,0.4)}
\pgfstroke

\pgfmoveto{\pgfxy(0.4,-0.1)}
\pgflineto{\pgfxy(0.4,5.5)}
\pgfstroke

\pgfcircle[fill]{\pgfxy(1.7,3.8)}{1pt}
\pgfputat{\pgfxy(1.9,3.8)}{\pgfbox[left,center]{$(p,q)$}}

\pgfputat{\pgfxy(5.4,0.05)}{\pgfbox[center,center]{$p$}}

\pgfputat{\pgfxy(0.1,5.3)}{\pgfbox[center,center]{$q$}}

\end{pgfpicture}}
\end{figure}

Let  $\widetilde \J(L_w)\subset [1,\infty]$  be the set of all
exponents $p$ such that $e^{-t\,L_w} : L^p(w)\rightarrow L^p(w)$ is
uniformly bounded for all $t>0$.    By Theorem~\ref{thm:L2-gaffney} and Lemma~\ref{lemma:full}, $2\in
\widetilde \J(L_w)$, and if  it contains more than one point, then by interpolation $\widetilde
\J(L_w)$ is an interval.  The set of pairs $(p,q)$  such that $e^{-t\,L_w}
\in \offw{p}{p}$ is completely characterized by the next result.

\begin{prop}\label{prop:J}
There exists an interval $\J(L_w) \subset [1,\infty]$ such that
$p,q \in \J(L_w)$ if and only if $e^{-t\,L_w}\in \offw{p}{q}$.
Furthermore, $\J(L_w)$ has the following properties:
\begin{enumerate}
\item $\J(L_w)\subset \widetilde\J(L_w)$;

\item $\Int\J(L_w)=\Int\widetilde\J(L_w)$;

\item if $p_-(L_w)$ and $p_+(L_w)$ are respectively the left and right endpoints
  of $\J(L_w)$, then $p_-(L_w)\le (2^*_w)'$ and $p_+(L_w)\ge  2^*_w$, where $2^*_w$ is as in Theorem~\textup{\ref{thm:wtd-poincare}}.
  In particular,  $2 \in \Int(\J(L_w))$.
\end{enumerate}
\end{prop}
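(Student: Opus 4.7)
The plan is to define
\[
\J(L_w) := \{p\in [1,\infty] : e^{-tL_w}\in\offw{p}{p}\}
\]
and to verify that this set is an interval with all the claimed properties. First I would check that $\J(L_w)$ is an interval: if $p_1,p_2\in \J(L_w)$, then by Lemma~\ref{lemma:unif-comp}(1) the semigroup is uniformly bounded on $L^{p_2}(w)$, so Lemma~\ref{lemma:interpolate} applied to $\offw{p_1}{p_1}$ and this uniform bound yields $\offw{p_\theta}{p_\theta}$ for every $p_\theta$ between $p_1$ and $p_2$. Property~(1) is then immediate from Lemma~\ref{lemma:unif-comp}(1). For property~(2), the nontrivial inclusion is $\Int\widetilde\J(L_w)\subset\Int\J(L_w)$: any $p\in\Int\widetilde\J(L_w)$ lies strictly between $2$ and some second point of $\widetilde\J(L_w)$, so interpolating the Gaffney bound $e^{-tL_w}\in\offw{2}{2}$ from Theorem~\ref{thm:L2-gaffney} with the uniform $L^p(w)$-bound through Lemma~\ref{lemma:interpolate} lands on $\offw{p}{p}$.

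The heart of the proof is property~(3), which I would reduce to producing the ``jump'' off-diagonal estimate $\offw{(2^*_w)'}{2^*_w}$; once that is in hand, Lemma~\ref{lemma:nested} immediately gives $\offw{p}{p}$ for every $p\in[(2^*_w)',2^*_w]$ and thus $[(2^*_w)',2^*_w]\subset \J(L_w)$. The jump is obtained in three steps. First, I would prove $\offw{2}{2^*_w}$: fix a ball $B$ of radius $r$ and a closed set $E$ (namely $B$ or an annulus $C_j(B)$), set $u=e^{-tL_w}(f\bigchi_E)$, and apply the weighted Poincar\'e--Sobolev inequality~\eqref{w-Poincare} on a suitable concentric ball to bound the $L^{2^*_w}(dw)$--average of $u$ by $r$ times the $L^2(dw)$--average of $\nabla u$, plus a Jensen remainder controlled by the $L^2(dw)$--average of $u$. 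The two Gaffney estimates in Theorem~\ref{thm:L2-gaffney}(1)--(2) then control both terms on the right by the expected prefactor $\dec{r/\sqrt{t}}^{\theta_2}$ and the Gaussian factor $\expt{-c\,4^jr^2/t}$, producing exactly the three bounds~\eqref{w:off:B-B}--\eqref{w:off:B-C} that define $\offw{2}{2^*_w}$. Second, since the adjoint $L_w^*=-w^{-1}\div(A^*\grad)$ is itself a degenerate elliptic operator with $A^*\in\E_n(w,\lambda,\Lambda)$, the same argument applied to $e^{-tL_w^*}$ combined with Lemma~\ref{lemma:duality} yields $\offw{(2^*_w)'}{2}$. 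Third, composing via the semigroup identity $e^{-tL_w}=e^{-(t/2)L_w}\circ e^{-(t/2)L_w}$ and Lemma~\ref{lemma:unif-comp}(2) delivers $\offw{(2^*_w)'}{2^*_w}$.

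For the ``iff'' characterisation, the ``only if'' direction is immediate from Lemma~\ref{lemma:nested}, which shows that $\offw{p}{q}$ with $p\le q$ implies both $\offw{p}{p}$ and $\offw{q}{q}$. For the converse, given $p\le q$ both in $\J(L_w)$, I would factor $e^{-tL_w}$ into three pieces, use the uniform $L^p$ and $L^q$ endpoint bounds together with the central jump $\offw{(2^*_w)'}{2^*_w}$, and chain matching off-diagonal estimates on balls through Lemma~\ref{lemma:unif-comp}(2). Specifically, interpolating (via Lemma~\ref{lemma:interpolate}) the uniform endpoint bounds against $\offw{(2^*_w)'}{2^*_w}$ provides two matching off-diagonal estimates with intermediate exponents $r_1\le r_2$ lying in $[(2^*_w)',2^*_w]\cap[p,q]$, which are then chained with a nested version of the central jump to recover $\offw{p}{q}$.

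The principal technical obstacle I anticipate is the proof of the key improvement $\offw{2}{2^*_w}$: retaining the full Gaussian decay $\expt{-c\,4^jr^2/t}$ through both the Poincar\'e--Sobolev step and the Gaffney step, particularly when $f$ is supported in a distant shell $C_j(B)$, requires a careful localised cutoff construction, a precise estimate of the mean-value remainder $u_{B,w}$, and bookkeeping of polynomial losses against the doubling order $D$ of $w$ so that only an acceptable factor of $\dec{2^jr/\sqrt{t}}^{\theta_2}\,2^{j\theta_1}$ appears in front.
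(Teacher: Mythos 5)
Your core analytic step is the same as the paper's: the paper also fixes $2<q<2_w^*$, writes $u=e^{-tL_w}(\bigchi_E f)$ for $E=B$ or $E=C_j(B)$, bounds the $L^q(w)$--average of $u$ on a ball by its average plus the oscillation, controls the oscillation by the weighted Poincar\'e inequality \eqref{w-Poincare} and the two Gaffney bounds of Theorem~\ref{thm:L2-gaffney}, then passes to $\offw{q'}{2}$ by applying the same argument to $L_w^*$ and Lemma~\ref{lemma:duality}, and composes via the semigroup law and Lemma~\ref{lemma:unif-comp}. Two points in your sketch, however, are genuine problems. First, you claim the jump at the endpoint exponent, $\offw{2}{2_w^*}$, and hence that the closed interval $[(2_w^*)',2_w^*]$ lies in $\J(L_w)$. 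Theorem~\ref{thm:wtd-poincare} gives \eqref{w-Poincare} only for $q<p_w^*$ (the endpoint is available only in the special case $w\in A_1$), so for general $w\in A_2$ you can only obtain $\offw{2}{q}$ for $2<q<2_w^*$. This is what the paper proves; property (3) still follows because $[q',q]\subset\J(L_w)$ for every such $q$, and letting $q\to 2_w^*$ gives $p_-(L_w)\le (2_w^*)'$ and $p_+(L_w)\ge 2_w^*$. Also note that the estimate \eqref{w:off:B-C} (data on $B$, output on $C_j(B)$), which you correctly flag as the delicate case, is handled in the paper by covering $C_j(B)$ with boundedly many balls of radius $2^{j-2}r(B)$ and invoking \cite[Lemma~6.1]{auscher-martell07} when $j=2$, not by a cutoff construction.

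Second, and more seriously, your proof of the ``if'' half of the characterization does not close. When $p<(2_w^*)'$, the only estimate at your disposal whose lower exponent equals $p$ is $\offw{p}{p}$: Lemma~\ref{lemma:interpolate} applied to the jump and the uniform $L^p(w)$ bound yields $\offw{p_\theta}{q_\theta}$ with $p_\theta$ strictly between $p$ and $(2_w^*)'$ (never $p_\theta=p$), and Lemma~\ref{lemma:nested} can only raise the lower exponent. Hence any composition chain built with Lemma~\ref{lemma:unif-comp}(2) that must accept input exponent $p$ has to start with $\offw{p}{p}$, and its next factor would again need lower exponent exactly $p$ --- which is circular; the symmetric obstruction appears at the upper end when $q>2_w^*$. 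So the proposed three-piece factorization proves $\offw{p}{q}$ only when $p,q$ already lie in $[(2_w^*)',2_w^*]$. This is precisely the structural statement (the interval $\J(L_w)$, the ``iff'' characterization, and properties (1)--(2)) that the paper does \emph{not} reprove: it feeds the single estimate $e^{-tL_w}\in\offw{q'}{q}$, $2<q<2_w^*$, into \cite[Proposition~4.1]{auscher-martell07}. To complete your argument you should either cite that proposition, as the paper does, or supply its proof; interpolation against uniform endpoint bounds plus composition, as written, is not enough.
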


\begin{remark} \label{remark:interval}
The smaller the value of $r_w$, the better our bounds on the size of
the set
  $\J(L_w)$.  In the limiting case when $w\in A_1$, we have that
  $p_-(L_w)\le \frac{2n}{n+2}$ and $p_+(L_w)\ge \frac{2n}{n-2}$.  These values
  should be compared to the estimates in \cite[Corollary~4.6]{auscher07} for the
  non-degenerate case that corresponds to the case $w=1$.
\end{remark}

We get two corollaries to Proposition~\ref{prop:J}.  The first gives
us weighted off-diagonal estimates.

\begin{corol}\label{corollary-weighted-offd} 
  Let $p_-(L_w)<p\le q<p_+(L_w)$.  If
  $v\in A_{p/p_-(L_w)}(w)\cap RH_{(p_+(L_w)/q)'}(w)$, then 
  $e^{-tL_w}\in \mathcal{O}\big(L^{p}(v\,dw)\rightarrow L^{q}(v\,dw)\big)$.
\end{corol}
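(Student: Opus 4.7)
The plan is to reduce the weighted off-diagonal claim to the unweighted (with respect to $dw$) off-diagonal estimate from Proposition~\ref{prop:J} by means of an intermediate pair of exponents $(p_0,q_0)\in\J(L_w)$, and then to transfer the averages with respect to $dw$ in each annulus $C_j(B)$ to averages with respect to $v\,dw$ using the $A_{p/p_0}(w)$ and $RH_{(q_0/q)'}(w)$ conditions on $v$.

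First I would use the self-improvement properties of the Muckenhoupt and reverse H\"older classes with respect to the underlying space of homogeneous type $(\rn,dw,|\cdot|)$. Since $v\in A_{p/p_-(L_w)}(w)\cap RH_{(p_+(L_w)/q)'}(w)$, there exist $\epsilon_1,\epsilon_2>0$ such that
\[
v\in A_{p/p_-(L_w)-\epsilon_1}(w)\cap RH_{(p_+(L_w)/q)'+\epsilon_2}(w).
\]
Choosing $\epsilon_1,\epsilon_2$ small enough I can select $p_0,q_0$ with
\[
p_-(L_w)<p_0<p,\qquad q<q_0<p_+(L_w),
\]
so that $v\in A_{p/p_0}(w)\cap RH_{(q_0/q)'}(w)$. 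By Proposition~\ref{prop:J}, $p_0,q_0\in\J(L_w)$, hence $e^{-tL_w}\in\offw{p_0}{q_0}$ with some constants $\theta_1,\theta_2,c$.

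Next I would transfer each of the three off-diagonal inequalities \eqref{w:off:B-B}--\eqref{w:off:B-C} from $dw$ to $v\,dw$ by H\"older's inequality. For the $L^{q_0}(dw)$-averages on the left, the $RH_{(q_0/q)'}(w)$ condition yields, for any ball or annulus $F$ contained in some fixed dilate of $B$,
\[
\Bigl(\aver{F}|T_tg|^{q}\,v\,dw\Bigr)^{1/q}
\lesssim [v]_{RH_{(q_0/q)'}(w)}^{1/q}\,
\Bigl(\aver{F}|T_tg|^{q_0}\,dw\Bigr)^{1/q_0}\Bigl(\aver{F}v\,dw\Bigr)^{1/q}.
\]
For the $L^{p_0}(dw)$-averages on the right, using $v\in A_{p/p_0}(w)$ (and the standard duality identity $v^{1-(p/p_0)'}\,dw=v^{-p_0/(p-p_0)}\,dw$) yields
\[
\Bigl(\aver{F}|f|^{p_0}\,dw\Bigr)^{1/p_0}
\lesssim [v]_{A_{p/p_0}(w)}^{1/p}\,
\Bigl(\aver{F}|f|^{p}\,v\,dw\Bigr)^{1/p}\Bigl(\aver{F}v\,dw\Bigr)^{-1/p}.
\]
The doubling of $v\,dw$ (which follows from $v\in A_\infty(w)$ together with the doubling of $dw$) allows me to replace $w(F)$ by $(v\,dw)(F)$ after these manipulations, up to harmless constants.

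Combining these two H\"older estimates with \eqref{w:off:B-B}, \eqref{w:off:C-B}, \eqref{w:off:B-C} applied with the exponents $p_0,q_0$, I obtain the same three inequalities with $dw$ replaced by $v\,dw$ and $p_0,q_0$ replaced by $p,q$. The Gaussian factor $e^{-c\,4^j r^2/t}$ is preserved, the polynomial factors $2^{j\theta_1}\dec{2^j r/\sqrt{t}}^{\theta_2}$ only get multiplied by powers of $2^{jD_v}$ coming from the doubling of $v\,dw$, and therefore the resulting family of estimates is of the form required in Definition~\ref{defi:off-d:weights}. The main technical obstacle I expect is keeping the averages over the annuli $C_j(B)$ aligned on both sides while applying H\"older: concretely, one must absorb the extra $v\,dw$-doubling constant of $C_j(B)$ vs.\ $B$ into the polynomial $2^{j\theta_1}$ factor without spoiling the $e^{-c4^j r^2/t}$ decay, which is straightforward since doubling contributes only a polynomial factor in $2^j$.
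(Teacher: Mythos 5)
Your proposal is correct and follows essentially the same route as the paper: Proposition~\ref{prop:J} gives the off-diagonal estimates on balls with respect to $dw$, and the passage to $L^{p}(v\,dw)$--$L^{q}(v\,dw)$ is exactly the weight-transference result the paper simply invokes as \cite[Proposition~2.6]{auscher-martell07}. The only difference is that you reprove that transference by hand (openness of the $A_s(w)$ and $RH_s(w)$ classes to select intermediate exponents $p_0,q_0$, then H\"older's inequality with the $A_{p/p_0}(w)$ and $RH_{(q_0/q)'}(w)$ conditions on each ball and annulus), which is precisely the standard proof of the cited proposition.
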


\begin{proof}
  By Proposition~\ref{prop:J}, if $p_-(L_w)<p\le q<p_+(L_w)$, then
  $e^{-tL_w} \in \offw{p}{q}$.  Therefore,
  by~\cite[Proposition~2.6]{auscher-martell07}, if $v\in
  A_{p/p_-(L_w)}(w)\cap RH_{(p_+(L_w)/q)'}(w)$, then we have that
  $e^{-tL_w}\in \mathcal{O}\big(L^{p}(v\,dw) \rightarrow L^{q}(v\,dw)\big)$.
\end{proof}

As our second corollary we get
off-diagonal estimates for the holomorphic extension of the semigroup.

\begin{corol} \label{cor:holomorphic} For any $\nu$,
  $0<\nu<\frac{\pi}{2}-\vartheta$,
  and for any $p\leq q$ such that $e^{-tL_w} \in \offw{p}{q}$, then
  for all $m\in \NN\cup\{ 0\}$,
  $(zL_w)^me^{-zL_w} \in \offwmu{p}{q}{\nu}$.
\end{corol}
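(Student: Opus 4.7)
The plan is to combine the $L^{2}(w)$ off-diagonal estimates in the full holomorphy sector (given by Theorem~\ref{thm:L2-gaffney}) with the real-time $L^{p}(w)$-$L^{q}(w)$ off-diagonal estimates (from Proposition~\ref{prop:J}) by sandwiching a complex-time factor between two real-time factors via the semigroup law. First observe that $\arctan(\lambda/\sqrt{\Lambda^{2}-\lambda^{2}})=\pi/2-\vartheta^{*}$, which is bounded above by $\pi/2-\vartheta$, so Theorem~\ref{thm:L2-gaffney} is valid throughout $\Sigma_{\nu_{0}}$ with $\nu_{0}:=\arctan(\lambda/\sqrt{\Lambda^{2}-\lambda^{2}})$. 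Using the semigroup identity
\[
(zL_{w})^{m}e^{-zL_{w}}=m^{m}\bigl((z/m)L_{w}\,e^{-(z/m)L_{w}}\bigr)^{m}
\]
and iterating Theorem~\ref{thm:L2-gaffney}(3) via the complex-sector composition principle (cf.\ Remark~\ref{rem:complex-offd}), one obtains $(zL_{w})^{m}e^{-zL_{w}}\in\fullwmu{2}{2}{\nu'}$ for every $\nu'\in(0,\nu_{0})$; the sector analogue of Lemma~\ref{lemma:full}(2) then yields $(zL_{w})^{m}e^{-zL_{w}}\in\offwmu{2}{2}{\nu'}$.

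Now fix $\nu<\nu_{0}$ and pick an intermediate $\nu'\in(\nu,\nu_{0})$. A trigonometric check shows that $\beta_{0}:=\cos\nu-\sin\nu\,\cot\nu'>0$, so for any $\alpha\in(0,\beta_{0}/2)$ and any $z\in\Sigma_{\nu}$, the point $z':=z-2\alpha|z|$ lies in $\Sigma_{\nu'}$ and satisfies $|z'|\approx|z|$ with constants depending only on $\nu$, $\nu'$, and $\alpha$. The semigroup law together with the commutativity of $L_{w}$ with each $e^{-sL_{w}}$ gives the factorization
\[
(zL_{w})^{m}e^{-zL_{w}}=\Bigl(\frac{z}{z'}\Bigr)^{m}\;e^{-\alpha|z|L_{w}}\circ(z'L_{w})^{m}e^{-z'L_{w}}\circ e^{-\alpha|z|L_{w}},
\]
where $(z/z')^{m}$ is a bounded complex scalar since $|z/z'|\approx 1$.

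Since $p,q\in\J(L_{w})$ by Proposition~\ref{prop:J} and $2\in\Int\J(L_{w})$, Proposition~\ref{prop:J} also gives $e^{-\alpha|z|L_{w}}\in\offw{p}{2}$ and $e^{-\alpha|z|L_{w}}\in\offw{2}{q}$ at the real time $\alpha|z|$, while the first step yields $(z'L_{w})^{m}e^{-z'L_{w}}\in\offwmu{2}{2}{\nu'}$ at the complex time $z'$. Applying Lemma~\ref{lemma:unif-comp}(2) together with Remark~\ref{rem:complex-offd} to these three factors — whose ``time'' parameters $\alpha|z|$, $z'$, $\alpha|z|$ are all comparable to $|z|$ — produces $(zL_{w})^{m}e^{-zL_{w}}\in\offwmu{p}{q}{\nu}$, as desired. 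The main obstacle is really just this final bookkeeping step: Lemma~\ref{lemma:unif-comp}(2) is written for operators indexed by a common parameter, whereas here the three factors carry different but uniformly comparable parameters, and one must verify that the Gaussian factor $\exp(-c(2^{j}r)^{2}/|z|)$ and the polynomial prefactor $\dec{2^{j}r/\sqrt{|z|}}^{\theta_{2}}$ appearing in Definition~\ref{defi:off-d:weights} propagate through the composition with only a modest loss in constants.
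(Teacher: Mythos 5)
Your strategy --- sandwiching the complex-time operator, which is controlled only in $L^2(w)$ via Theorem~\ref{thm:L2-gaffney}, between two real-time semigroup factors that carry the $L^p(w)$--$L^q(w)$ off-diagonal estimates --- is a legitimate, self-contained alternative to the paper's proof, which simply invokes the abstract analytic-extension result of \cite[Theorem~4.3]{auscher-martell07} together with $e^{-zL_w}\in\fullw{2}{2}$ on the sector. Your preliminary steps are sound: $\arctan\big(\lambda/\sqrt{\Lambda^2-\lambda^2}\big)=\pi/2-\vartheta^*\le \pi/2-\vartheta$, so the Gaffney bounds hold on the sector you use; the identity $(zL_w)^m e^{-zL_w}=m^m\big((z/m)L_we^{-(z/m)L_w}\big)^m$ combined with Lemma~\ref{lemma:full}(2), Lemma~\ref{lemma:unif-comp}(2) and Remark~\ref{rem:complex-offd} does give $(zL_w)^me^{-zL_w}\in\offwmu{2}{2}{\nu'}$; and $z'=z-2\alpha|z|$ lies in $\Sigma_{\nu'}$ with $|z'|\approx|z|$ exactly because $\beta_0=\cos\nu-\sin\nu\cot\nu'>0$ when $\nu<\nu'$. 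The ``bookkeeping'' you flag at the end is indeed harmless: the classes of Definition~\ref{defi:off-d:weights} are stable under replacing the time parameter by a uniformly comparable one (only the constants and the Gaussian exponent change), so after reindexing all factors by $|z|$ the composition lemma applies verbatim.

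The genuine gap is the assertion that Proposition~\ref{prop:J} gives $e^{-\alpha|z|L_w}\in\offw{p}{2}$ and $e^{-\alpha|z|L_w}\in\offw{2}{q}$: this tacitly assumes $p\le 2\le q$. The corollary is stated, and later used, for arbitrary $p\le q$ with $p,q\in\J(L_w)$; since $2\in\Int\J(L_w)$, the cases $p\le q<2$ and $2<p\le q$ are admissible, and the paper in fact applies the corollary with $p=q=p_0<2$ (e.g.\ in the proofs of Propositions~\ref{prop:B-K:weights} and~\ref{prop:square-function}). In those configurations your three-factor sandwich collapses, because $\offw{p}{2}$ is undefined when $p>2$ and $\offw{2}{q}$ is undefined when $q<2$ (Definition~\ref{defi:off-d:weights} requires nondecreasing exponents), and Proposition~\ref{prop:J} supplies nothing in their place. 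The repair is short but must be stated: for $p\le q\le 2$ write $(zL_w)^me^{-zL_w}=c\,(z'L_w)^me^{-z'L_w}\circ e^{-2\alpha|z|L_w}$ with $e^{-2\alpha|z|L_w}\in\offw{p}{2}$ and the complex factor in $\offwmu{2}{2}{\nu'}$, which yields membership in $\offwmu{p}{2}{\nu}$, and then use the nesting Lemma~\ref{lemma:nested} (whose proof carries over to the sector classes) to get $\offwmu{p}{2}{\nu}\subset\offwmu{p}{q}{\nu}$ since $q\le 2$; for $2\le p\le q$ argue symmetrically with $(zL_w)^me^{-zL_w}=c\,e^{-2\alpha|z|L_w}\circ(z'L_w)^me^{-z'L_w}$, $e^{-2\alpha|z|L_w}\in\offw{2}{q}$, and $\offwmu{2}{q}{\nu}\subset\offwmu{p}{q}{\nu}$. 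With this case analysis added to your original argument (which covers $p\le 2\le q$), the proof is complete.
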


\begin{proof}
This follows from~\cite[Theorem~4.3]{auscher-martell07} and the fact
that, by
Theorem~\ref{thm:L2-gaffney},  for these values of $z$, $e^{-zL_w} \in \fullw{2}{2}$.
\end{proof}

\bigskip

\begin{proof}[Proof of Proposition~\textup{\ref{prop:J}}]
  Fix $2<q<2^*_w$ (If $w\in A_1$ we let $q=2^*_w=2^*$.) We will show
  that $e^{-t\,L_w}\in\offw{2}{q}$.  Given this, then we also
have that $e^{-t\,L_w}\in\offw{q'}{2}$.  For if  $L_w^*$ is the
adjoint of $L_w$ (with respect to $L^2(w)$), then
$L_w^*=-w^{-1}\,\div(A^*\,\nabla f)$ and the same estimates hold for
$L_w^*$.  Hence,
$e^{-t\,L_w^*}\in\offw{2}{q}$, and so by Lemma~\ref{lemma:duality},
$e^{-t\,L_w}\in\offw{q'}{2}$.      Since $e^{-tL_w}$ is a semigroup,
by Lemma~\ref{lemma:unif-comp} we have that
$e^{-t\,L_w}\in\offw{q'}{q}$.  Therefore, by \cite[Proposition
4.1]{auscher-martell07}, we have that there exists an interval $\J(L_w)$
and Properties~(1) and~(2) hold.  Moreover, we have that
$[q',q] \subset \J(L_w)$, so if we let $q\to 2^*_w$, then
we immediately get Property~(3).

\medskip

It therefore remains to prove that $e^{-t\,L_w}\in\offw{2}{q}$.  We
first show \eqref{w:off:B-B}.  Fix $B$ and for brevity write $r=r(B)$
and $C_j=C_j(B)$. By our choice of $q$ the Poincar\'e
inequality~\eqref{w-Poincare} holds.  Moreover, as we noted above
$e^{-t\,L_w}, \,\sqrt{t}\,\nabla e^{-t\,L_w}\in\offw{2}{2}$; we may
assume that the same exponents $\theta_1$, $\theta_2$ hold for both
operators.  We thus get that
\begin{align*}
&\left(\aver{B} |e^{-t\,L_w}( \bigchi_B \, f)|^{q}\,dw\right)^{\frac1q}
\\
&\qquad
\le
\big|\big(e^{-t\,L_w}( \bigchi_B \, f)\big)_{B,w} \big|+
\left(\aver{B} \big|e^{-t\,L_w}( \bigchi_B \, f)(x) - \big(e^{-t\,L_w}( \bigchi_B \, f)\big)_{B,w} \big|^{q}\,dw(x)\right)^{\frac 1 q}
\\
&\qquad
\lesssim
\left(\aver{B} |e^{-t\,L_w}( \bigchi_B \, f) |^{2}\,dw\right)^{\frac12}
+
r\,\left(\aver{B} |\nabla\,e^{-t\,L_w}( \bigchi_B \, f) |^{2}\,dw\right)^{\frac12}
\\
&\qquad
\lesssim
\left(1+\frac{r}{\sqrt{t}}\right)\,\dec{\frac{r}{\sqrt{t}}}^{\theta_2} \,\left(\aver{B} |f|^{2}\,dw\right)^{\frac12}
\\
&\qquad
\lesssim
\dec{\frac{r}{\sqrt{t}}}^{1+\theta_2} \,\left(\aver{B} |f|^{2}\,dw\right)^{\frac12}.
\end{align*}

\medskip
The proof that \eqref{w:off:C-B} holds is gotten by nearly the same
argument:
\begin{align*}
&\left(\aver{B} |e^{-t\,L_w}( \bigchi_{C_j} \, f)|^{q}\,dw\right)^{\frac1q}
\\
&\qquad
\le
\big|\big(e^{-t\,L_w}( \bigchi_{C_j} \, f)\big)_{B,w} \big|+
\left(\aver{B} \big|e^{-t\,L_w}( \bigchi_{C_j} \, f)(x) - \big(e^{-t\,L_w}( \bigchi_{C_j} \, f)\big)_{B,w} \big|^{q}\,dw(x)\right)^{\frac 1 q}
\\
&\qquad
\lesssim
\left(\aver{B} |e^{-t\,L_w}( \bigchi_{C_j} \, f) |^{2}\,dw\right)^{\frac12}
+
r\,\left(\aver{B} |\nabla\,e^{-t\,L_w}( \bigchi_{C_j} \, f) |^{2}\,dw\right)^{\frac12}
\\
&\qquad
\lesssim
2^{j\,\theta_1}\,
\left(1+\frac{r}{\sqrt{t}}\right)\,\dec{\frac{2^j\,r}{\sqrt{t}}}^{\theta_2} \,\expt{-\frac{c\,4^{j}\,r^2}{t}}\,\bigg(\aver{C_j} |f|^{2}\,dw\bigg)^{\frac12}
\\
&\qquad
\lesssim
2^{j\,\theta_1}\,
\dec{\frac{2^j\,r}{\sqrt{t}}}^{1+\theta_2} \,\expt{-\frac{c\,4^{j}\,r^2}{t}}\,\left(\aver{C_j} |f|^{2}\,dw\right)^{\frac12}.
\end{align*}

\medskip

Finally,  to prove that \eqref{w:off:B-C} holds we use a
covering argument.  Fix $j\geq 2$; then  we can cover the annulus $C_j$
by a collection of balls $\{B_k\}_{k=1}^N$, $r(B_k)=2^{j-2}\,r$,  with
centers $x_{B_k}\in C_j$. The number of balls required, $N$, depends only on the dimension.
For any such ball, since $dw$ is a doubling measure we have that
\begin{align*}
&\left(\aver{B_k} |e^{-t\,L_w}( \bigchi_{B} \, f)|^{q}\,dw\right)^{\frac1q}
\\
&\quad
\le
\big|\big(e^{-t\,L_w}( \bigchi_{B} \, f)\big)_{B_k,w} \big|+
\left(\aver{B_k} \big|e^{-t\,L_w}( \bigchi_B \, f)(x) - \big(e^{-t\,L_w}( \bigchi_{B} \, f)\big)_{B_k,w} \big|^{q}\,dw(x)\right)^{\frac 1 q}
\\
&\quad
\lesssim
\left(\aver{B_k} |e^{-t\,L_w}( \bigchi_{B} \, f) |^{2}\,dw\right)^{\frac12}
+
r(B_k)\,\left(\aver{B_k} |\nabla\,e^{-t\,L_w}( \bigchi_{B} \, f) |^{2}\,dw\right)^{\frac12}
\\
&\quad
\lesssim
\left(\aver{2^{j+2}\,B\setminus 2^{j-1}\,B} |e^{-t\,L_w}( \bigchi_{B}
  \, f) |^{2}\,dw\right)^{\frac12} \\
& \quad \qquad +
2^j\,r\,\left(\aver{2^{j+2}\,B\setminus 2^{j-1}\,B} |\nabla\,e^{-t\,L_w}( \bigchi_{B} \, f) |^{2}\,dw\right)^{\frac12}.
\end{align*}

If $j\ge 3$, then $2^{j+2}\,B\setminus 2^{j-1}\,B=C_{j+1}\cup
C_j\cup C_{j-1}$; then to estimate the last two terms we use the fact that $e^{-t\,L_w}, \,\sqrt{t}\,\nabla
e^{-t\,L_w}\in\offw{2}{2}$ and apply \eqref{w:off:B-C} with $p=q=2$ in each
annulus $C_i$, $j-1\leq i \leq j+1$.  (These annuli have comparable
measure since $dw$ is a doubling measure so we can divide the average up
into three averages).   If $j=2$, then
$2^{4}\,B\setminus 2\,B=C_{3}\cup C_2\cup (4\,B\setminus 2\,B)$.  On
$C_3$ and $C_2$ we argue as before using \eqref{w:off:B-C}.  On $4\,B\setminus B$ we
apply \cite[Lemma 6.1]{auscher-martell07}.  (We note that in the
notation there $\widehat C_1(B)=4\,B\setminus 2\,B$.)

If we combine all of
these estimates, we get that for every $j\ge 2$,
\begin{align*}
\left(\aver{B_k} |e^{-t\,L_w}( \bigchi_{B} \, f)|^{q}\,dw\right)^{\frac1q}
&
\lesssim
2^{j\,\theta_1}\,
\left(1+\frac{2^j\,r}{\sqrt{t}}\right)\,\dec{\frac{2^j\,r}{\sqrt{t}}}^{\theta_2} \,\left(\aver{B} |f|^{2}\,dw\right)^{\frac12}
\\
&\qquad
\lesssim
2^{j\,\theta_1}\,
\dec{\frac{2^j\,r}{\sqrt{t}}}^{1+\theta_2}\,\expt{-\frac{c\,4^{j}\,r^2}{t}} \,\left(\aver{B} |f|^{2}\,dw\right)^{\frac12}.
\end{align*}
Since $C_j\subset\bigcup_k B_k$, we can sum in $k$ to get
\begin{multline*}
\left(\aver{C_j(B)}|e^{-t\,L_w}( \bigchi_B \, f) |^{q}\,dw\right)^{\frac 1 q}
\lesssim
\sum_{k=1}^N
\left(\aver{B_k} |e^{-t\,L_w}( \bigchi_{B} \, f)|^{q}\,dw\right)^{\frac1q}
\\
\lesssim
2^{j\,\theta_1}\,
\dec{\frac{2^j\,r}{\sqrt{t}}}^{1+\theta_2}\,\expt{-\frac{c\,4^{j}\,r^2}{t}} \,\left(\aver{B} |f|^{2}\,dw\right)^{\frac12}.
\end{multline*}
This completes the proof that $e^{-tL_w} \in \offw{2}{q}$.
\end{proof}

\section{The functional calculus}
\label{section:functional}

In this section we show that the operator $L_w$ has an  $L^p(w)$ holomorphic
functional calculus.   As we discussed in Section~\ref{section:prelim}
above, we know already that if $\varphi$ is a bounded holomorphic
function on $\Sigma_\mu$, $\mu \in (\vartheta,\pi)$, then
$\varphi(L_w)$ is a bounded operator on $L^2(w)$.
Recall that  for any $\mu \in
(\vartheta ,\pi )$, we say that $\varphi \in
\mathcal{H}_{0}^{\infty}(\Sigma_\mu)$ if for some $c,\,s>0$
\begin{equation} \label{eqn:phi-decay}
|\varphi(z)| \leq c|z|^s(1+|z|)^{-2s}, \qquad z \in \Sigma_\mu.
\end{equation}
We  say that $L_w$ has a  bounded holomorphic functional calculus
on $L^p(w)$ if for any such $\varphi$,
\begin{equation}  \label{eq:fcX}
\Vert \varphi (L_{w})f\Vert _{L^p(w)}\leq C\,\Vert \varphi \Vert _{\infty
}\,\Vert f\Vert _{L^p(w)},\qquad f\in  L^p(w)\cap L^{2}(w),
\end{equation}
where $C$ depends only on $p$, $w$, $\vartheta $ and $\mu $ (but not on the
decay of $\varphi $).  By a standard density argument, \eqref{eq:fcX} implies
that $\varphi (L_{w})$ extends to a bounded operator on all of
$L^p(w)$.  Furthermore,  we then have that
this inequality holds if $\varphi$ is any bounded holomorphic function.  For the
details of this extension, see~\cite{haase06,mcintosh86}.

\begin{prop} \label{prop:B-K:weights}
Let $p_-(L_w)<p<p_+(L_w)$
  and $\mu \in (\vartheta ,\pi )$. Then for any $\varphi \in
\mathcal{H}_{0}^{\infty}(\Sigma_\mu)$,
\begin{equation}  \label{eq:fcw}
\| \varphi (L_{w})f\| _{L^{p}(w)}\leq C\,\| \varphi \| _{\infty
}\,\| f\| _{L^{p}(w)},
\end{equation}%
with $C$ independent of $\varphi $ and $f$. Hence, $L_{w}$ has a bounded
holomorphic functional calculus on $L^{p}(w)$.
Moreover, if $v \in A_{p/p_-(L_w)}(w)\cap
RH_{(p_+(L_w)/p)'}(w) $  then  $L_{w}$ also has a bounded
holomorphic functional calculus on $L^{p}(v\,dw)$:
\begin{equation}
\| \varphi (L_{w})f\| _{L^{p}(v\,dw)}\leq C\,\| \varphi \| _{\infty
}\,\| f\| _{L^{p}(v\,dw)},  \label{eq:fc-vw}
\end{equation}%
with $C$ independent of $\varphi $ and $f$.

\end{prop}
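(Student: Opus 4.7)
The plan is to apply Theorem~\ref{theorem:2.2} to $T=\varphi(L_w)$ with the auxiliary operator $S$ equal to the identity, exploiting the off-diagonal bounds for the semigroup from Proposition~\ref{prop:J} and Corollary~\ref{cor:holomorphic}. Since the unweighted bound~\eqref{eq:fcw} is the specialization $v\equiv 1$ of the weighted bound~\eqref{eq:fc-vw} (both $A_q(w)$ and $RH_s(w)$ trivially contain the constant weight), I treat both simultaneously. Fix $\varphi\in\mathcal{H}_0^\infty(\Sigma_\mu)$ and $p\in(p_-(L_w),p_+(L_w))$; the openness of the Muckenhoupt and reverse H\"older conditions allows me to choose $p_-(L_w)<p_0<p<q_0<p_+(L_w)$ with $v\in A_{p/p_0}(w)\cap RH_{(q_0/p)'}(w)$ and $e^{-tL_w}\in\offw{p_0}{q_0}$.

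I take the approximation of the identity $\A_r=I-(I-e^{-r^2L_w})^m$, where $m$ is a positive integer to be chosen large enough in terms of the doubling order $D$ of $w$ and the off-diagonal exponents $\theta_1,\theta_2$ of Definition~\ref{defi:off-d:weights}. Crucially, $\A_r$ and $\varphi(L_w)$ commute, since both are functions of $L_w$ in the $L^2(w)$-holomorphic functional calculus. For hypothesis~\eqref{eqn:2.1} I write $\varphi(L_w)(I-\A_r)=\psi_r(L_w)$, where $\psi_r(z)=\varphi(z)(1-e^{-r^2 z})^m\in\mathcal{H}_0^\infty(\Sigma_\mu)$, and use the contour representation~\eqref{eqn:L2-holo-rep}--\eqref{eqn:L2-holo-rep-eta} together with the complex off-diagonal estimates of Corollary~\ref{cor:holomorphic}. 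The factor $(1-e^{-r^2 z})^m$ produces the expected $(r^2|z|)^m$-decay near $z=0$, so that the integral along $\Gamma_{\pi/2-\theta}$ is controlled scale-by-scale by weighted $L^{p_0}$-averages of $f$ on the annuli $2^{j+1}B$, with a summable sequence $g(j)$. For hypothesis~\eqref{eqn:2.2}, the commutation $\varphi(L_w)\A_r=\A_r\varphi(L_w)$ combined with the binomial expansion $\A_r=-\sum_{k=1}^{m}\binom{m}{k}(-1)^k e^{-k r^2 L_w}$ reduces matters to $\offw{p_0}{q_0}$-bounds for the iterated semigroup terms, applied to $\varphi(L_w)f$; the Gaussian annular decay dominates the doubling factor $2^{Dj}$ provided $m$ is chosen sufficiently large.

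Once both hypotheses are verified, Theorem~\ref{theorem:2.2} gives~\eqref{eq:fc-vw} for every $\varphi\in\mathcal{H}_0^\infty(\Sigma_\mu)$, and~\eqref{eq:fcw} follows by taking $v\equiv 1$. The extension to arbitrary $\varphi\in\mathcal{H}^\infty(\Sigma_\mu)$ is the standard McIntosh Convergence Lemma argument: approximate $\varphi$ uniformly on compact subsets of $\Sigma_\mu$ by a sequence $\varphi_n\in\mathcal{H}_0^\infty(\Sigma_\mu)$ with $\|\varphi_n\|_\infty\lesssim\|\varphi\|_\infty$; the uniform bound on $\varphi_n(L_w)$ together with the strong $L^2(w)$-convergence $\varphi_n(L_w)\to\varphi(L_w)$ recorded in Section~\ref{section:prelim} passes the inequality to $\varphi(L_w)$. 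I expect the main technical obstacle to be the quantitative verification of~\eqref{eqn:2.1}: one has to balance the bound $|\eta_r(z)|\lesssim\|\varphi\|_\infty\min(1,|z|^{-s-1})\min(1,(r^2/|z|)^m)$ against the scale-dependent exponential $\exp(-c\,4^j r^2/|z|)$ coming from the off-diagonal bound on $e^{-zL_w}$, splitting the contour into $|z|\le r^2$ and $|z|>r^2$ and choosing $m$ large enough so that the resulting series in $j$ converges even after incorporating the doubling factor $2^{Dj}$.
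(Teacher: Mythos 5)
Your verification of the two hypotheses~\eqref{eqn:2.1} and~\eqref{eqn:2.2} (contour representation of $\varphi(L_w)(I-\A_r)$ with the bound $|\eta(z)|\lesssim r^{2m}/|z|^{m+1}$, commutation of $\varphi(L_w)$ with $\A_r$ plus the binomial expansion and the $\offw{p_0}{q_0}$ bounds, $m$ large relative to $\theta_1,\theta_2,D$) is essentially the computation in the paper. However, there is a genuine gap in your overall scheme: you apply Theorem~\ref{theorem:2.2} in one shot for the whole range, with $p_0\in(p_-(L_w),2)$, to an operator $T=\varphi(L_w)$ that at this stage is only known to be defined and bounded on $L^2(w)$. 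Theorem~\ref{theorem:2.2} requires $T$ to act on $L^{p_0}(w)$, and the good-$\lambda$ machinery behind it (compare the hypothesis $F\in L^1(w)$ in Theorem~\ref{theor:good-lambda:w}) needs $Tf\in L^{p_0}(w)$ for $f\in\D$ so that $M_w(|Tf|^{p_0})$ is finite a.e.\ and the level sets can be decomposed; the finiteness of the local averages of $|Tf|^{p_0}$ on the right of~\eqref{eqn:2.2} is not enough. For $p_0<2$ the membership $\varphi(L_w)f\in L^{p_0}(w)$ is precisely (a consequence of) the estimate you are trying to prove, so the argument as written is circular, and in particular the unweighted bound~\eqref{eq:fcw} for $p_-(L_w)<p<2$ does not follow by simply setting $v\equiv 1$ in this application.

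The paper resolves this by a two-step argument that you are missing. First it proves~\eqref{eq:fcw} for $p_-(L_w)<p\le 2$ using Theorem~\ref{theorem:2.4} with $q_0=2$ and $p_0$ close to $p_-(L_w)$: that theorem only requires $T$ to be bounded on $L^{q_0}(w)=L^2(w)$, which is available from the $L^2(w)$ holomorphic functional calculus, and its hypotheses~\eqref{eqn:2.4}--\eqref{eqn:2.5} are verified by exactly the computations you describe. Only then does it run Theorem~\ref{theorem:2.2} with $p_-<p_0<\min\{p,2\}$ and $q_0<p_+$, explicitly invoking the just-established boundedness of $\varphi(L_w)$ on $L^{p_0}(w)$ to legitimize the application; this second step yields~\eqref{eq:fc-vw} and, with $v\equiv 1$, the remaining range $2<p<p_+$ of~\eqref{eq:fcw}. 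If you insert this first step, the rest of your proposal goes through; one further small caution is that your stated bound on $\eta$ involving $\min(1,|z|^{-s-1})$ must not be used in a way that makes the final constant depend on the decay parameter $s$ of $\varphi$ --- only the $r^{2m}/|z|^{m+1}$ bound, with constant controlled by $\|\varphi\|_\infty$ and $m$, should enter, since the conclusion must be uniform over $\varphi\in\mathcal{H}^\infty_0(\Sigma_\mu)$ with $\|\varphi\|_\infty\le 1$ in order for the McIntosh convergence argument to extend the estimate to all of $\mathcal{H}^\infty(\Sigma_\mu)$.
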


\begin{proof}
  For brevity, let $p_-=p_-(L_w)$ and $p_+=p_+(L_w)$. By density it
  will suffice to assume that $f\in L_c^\infty$. Fix
  $\varphi\in \H_0^\infty(\Sigma_\mu)$; by linearity we may assume
  that $\left\| \varphi \right\| _{\infty }=1$.

  We divide the proof into two steps. We first obtain \eqref{eq:fcw}
  for $p_-<p<2$ by applying Theorem~\ref{theorem:2.4} and following the ideas in \cite{auscher07}. To do so,  we will
  pick $q_0=2$ and $p_0>p_-$ arbitrarily close to $p_-$. In the second
  step, using some ideas from \cite{auscher-martell06},   we will use Theorem~\ref{theorem:2.2} to get \eqref{eq:fc-vw};
  in particular this yields \eqref{eq:fcw} for every $2<p<p_+$ by
  taking $v\equiv 1$. To apply Theorem~\ref{theorem:2.2} we will
  choose $p_0>p_-$ arbitrarily close to $p_-$ and $q_0<p_+$
arbitrarily close to  $p_+$.  We will also use the fact that $\varphi(L_w)$ is
  bounded on $L^{p_0}(w)$; this follows from the first step choosing
  $p_-<p_0<2$.

\medskip

To apply Theorem~\ref{theorem:2.4},   fix $p_-<p_0<p<2$ and let $q_0=2$,  $T=\varphi (L_{w}) $, and
\begin{equation}  \label{eqn:Atm}
\mathcal{A}_{r}f( x) =\big( I-( I-e^{-r^{2}L_{w}})^{m}\big) f( x),
\end{equation}%
where $m$ is a positive integer that will be chosen below.
We first show that inequality~\eqref{eqn:2.5} holds.
 By Proposition \ref{prop:J} we have that $e^{-tL_{w}}\in
 \offw{p_0}{2}$. Using  
\begin{equation}\label{eqn:Asum}
\mathcal{A}_r=
\sum_{k=1}^{m}\binom{m}{k} (-1)^{k+1} e^{-kr^{2}L_{w}},
\end{equation}
and that for each fixed $m$ and $1\le k\le m$
$$
\Upsilon \left( \frac{r}{\sqrt{k}t}\right) \leq \sqrt{m}\Upsilon \left(
\frac{r}{t}\right) 
\quad\text{and}\quad
\exp \left( -\frac{c}{k}\frac{4^{j}r^{2}}{t^{2}}%
\right) \leq \exp \left( -\frac{c}{m}\frac{4^{j}r^{2}}{t^{2}}\right),
$$
Proposition~\ref{prop:J} implies
\begin{equation}\label{eqn:A-od}
\mathcal{A}_{r}\in \offw{p}{q},\qquad\text{for all }p_-(L_w)<p\le q<p_+(L_w).
\end{equation}
In particular, we have that $\mathcal{A}_{r}\in \offw{p_0}{2}$.   Thus, given
any ball $B$ with radius $r$, if
$\supp(f)\subset B$, then for all $j\geq 1$,
\begin{equation}\label{eqn:2.5Ar}
\bigg(
\avgint%
_{C_{j}\left( B\right) }\left\vert \mathcal{A}_{r}f\right\vert ^{2}dw\bigg) ^{1/2}
\lesssim 2^{j\theta _{1}}\Upsilon \left(
2^{j}\right) ^{\theta _{2}}e^{-c4^{j}}\left(
\avgint%
_{B}\left\vert f\right\vert ^{p_0}dw\right) ^{1/p_0}.
\end{equation}%
This establishes \eqref{eqn:2.5}  with $g\left( j\right) =C\,2^{j\left( \theta
_{1}+\theta _{2}\right) }e^{-c4^{j}}$, for in this case we have that 
$$\sum_{j\geq1}2^{j(\theta _{1}+\theta _{2}+D) }e^{-c4^{j}}<\infty ,$$
where $D$ is
the doubling constant of $w$.

\medskip

We next prove that \eqref{eqn:2.4} holds.  Since
$\varphi(z)(1-e^{-r^2z})^m\in
\H_0^\infty(\Sigma_{\{\min\{\mu,\pi/2\}\} })$,
by the functional calculus representation~\eqref{eqn:L2-holo-rep}
we have that

\begin{equation*}
\varphi \left( L_{w}\right) \left( I-\mathcal{A}_{r }\right)
f= \int_{\Gamma }e^{-z\,L_{w}}f\,\eta (z)\,dz,
\end{equation*}
where $\Gamma=\partial \Sigma_{\frac{\pi}{2}-\theta}$, with $0<\vartheta <\theta <\nu <\min\{\mu,\pi/2\}$, and we  choose $\theta$ so that the hypotheses of Corollary~\ref{cor:holomorphic} are
satisfied for $z\in \Gamma$.
Moreover, we have the estimate
\begin{equation*}
\left\vert \eta \left( z\right) \right\vert
\lesssim \frac{r^{2m}}{\left\vert
z\right\vert ^{m+1}};
\end{equation*}
see \cite[Section 5.1]{auscher07} for details.

We can now argue as follows:  given a ball $B$ with radius $r$, for
each $j\geq 2$, by Minkowski's inequality and
Corollary~\ref{cor:holomorphic} (since $p_0 \in \Int \J(L_w)$),
\begin{align}
&\bigg(
\avgint%
_{C_{j}\left( B\right) }\left\vert \varphi \left( L_{w}\right) \left( I-%
\mathcal{A}_{r\left( B\right) }\right) f\right\vert ^{p_0}dw\bigg) ^{1/p_0}
\notag \\
& \qquad \qquad =\bigg(
\avgint%
_{C_{j}\left( B\right) }\left\vert \int_{\Gamma }e^{-z\,L_{w}}f\,\eta
(z)\,dz\right\vert ^{p_0}dw\bigg) ^{1/p_0}  \notag \\
& \qquad \qquad \lesssim\int_{\Gamma }\bigg(
\avgint%
_{C_{j}\left( B\right) }\left\vert e^{-z\,L_{w}}f\,\right\vert ^{p_0}dw\bigg)
^{1/p_0}\frac{r^{2m}}{\left\vert z\right\vert ^{m+1}}\,\left\vert
dz\right\vert  \notag \\
&\qquad \qquad \lesssim \bigg(
\avgint%
_{B}\left\vert f\,\right\vert ^{p_0}dw\bigg) ^{1/p_0}\int_{\Gamma }\frac{r^{2m}%
}{\left\vert z\right\vert ^{m+1}}2^{j\theta _{1}}\Upsilon \left( \frac{%
2^{j}r}{\sqrt{\left\vert z\right\vert }}\right) ^{\theta _{2}}e^{-c\frac{%
r^{2}}{\left\vert z\right\vert }4^{j}}\,\left\vert dz\right\vert  \notag \\
& \qquad \qquad = \bigg(
\avgint%
_{B}\left\vert f\,\right\vert ^{p_0}dw\bigg) ^{1/p_0}2^{j\left( \theta
_{1}-2m\right) }\int_{0}^{\infty }\sigma^{2m}\Upsilon \left(
\sigma\right) ^{\theta _{2}}e^{-c\sigma ^{2}}\,\frac{d\sigma}{%
\sigma}  \notag \\
&\qquad \qquad \lesssim  2^{j\left( \theta _{1}-2m\right) }\bigg(
\avgint%
_{B}\left\vert f\,\right\vert ^{p_0}dw\bigg) ^{1/p_0}; \label{est-down}
\end{align}
the final inequality holds (i.e., the integral in $\sigma$ converges)
provided $2m> \theta _{2}$.
Moreover, if we  choose $2m>\theta _{1}+D$, we
have that \eqref{eqn:2.4} holds with $g\left( j\right) =C\,2^{\left( j-1\right)
\left( \theta _{1}-2m\right) }$ and
\begin{equation*}
\sum_{j\geq 2}g\left( j\right) 2^{jD}\lesssim
\sum_{j\geq 2}2^{j\left( \theta _{1}+D-2m\right) }<\infty .
\end{equation*}

We have shown that inequalities \eqref{eqn:2.4}  and \eqref{eqn:2.5}
hold, and so by Theorem \ref{theorem:2.4}
inequality~\eqref{eq:fcw} holds for all $p$ such that $
p_{-}<p\le 2$.

\medskip

We will now apply Theorem~\ref{theorem:2.2} to show
that~\eqref{eq:fc-vw} holds for $p_-<p<p_+$.  (Inequality~\eqref{eq:fcw}
then follows for $2<p<p_+$ if we take $v\equiv 1$.)
Fix $p$, $p_-<p<p_+$ and  $v \in A_{p/p_-}(w)\cap
RH_{(p_+/p)'}(w) $. By the openness properties of the $A_q$ and $RH_s$
classes there exist $p_0$, $q_0$ such that
\[ p_-<p_0<\min\{p,2\}\le p<q_0<p_+,
\qquad
v \in A_{p/p_0}(w)\cap
RH_{(q_0/p)'}(w).  \]
Let  $T=\varphi \left(
L_{w}\right) $, $\mathcal{A}_{r}=I-( I-e^{-r^{2}L_{w}}) ^{m}$, $S=I$,
and fix the above values of $p_{0}$ and $q_0$. By the previous argument
we have that $\varphi \left( L_{w}\right) $ is bounded on $L^{p_{0}}\left(
w\right) $.

We first show that~\eqref{eqn:2.1} holds.   Fix a ball $B$ and
decompose $f$ as
\begin{equation}\label{decomp-f}
f=\sum_{j\geq 1}f\chi _{C_{j}(B)}:=\sum_{j\geq 1}f_{j}.
\end{equation}
Then, by the
same functional calculus argument as given above, we have that for
each $j$,
\begin{align*}
&\bigg(
\avgint%
_{B}\left\vert \varphi ( L_{w}) ( I-\mathcal{A}_{r})
f_{j}\right\vert ^{p_{0}}dw\bigg) ^{\frac{1}{p_{0}}} \\
&\qquad =\bigg(
\avgint%
_{B}\left\vert \int_{\Gamma }e^{-z\,L_{w}}f_{j}\,\eta (z)\,dz\right\vert
^{p_{0}}dw\bigg) ^{\frac{1}{p_{0}}} \\
& \qquad \lesssim \int_{\Gamma }\bigg(
\avgint%
_{B}\left\vert e^{-z\,L_{w}}f_{j}\,\right\vert ^{p_{0}}dw\bigg) ^{\frac{1}{p_{0}}}%
\frac{r^{2m}}{\left\vert z\right\vert ^{m+1}}\,\left\vert dz\right\vert \\
&\qquad \lesssim \bigg(
\avgint%
_{C_{j}\left( B\right) }\left\vert f\,\right\vert ^{p_{0}}dw\bigg) ^{\frac{1%
}{p_{0}}}2^{j\left( \theta _{1}-2m\right) }\int_{\Gamma }\bigg( \frac{2^{j}r%
}{\sqrt{\left\vert z\right\vert }}\bigg) ^{2m}\Upsilon \bigg( \frac{2^{j}r}{%
\sqrt{\left\vert z\right\vert }}\bigg)^{\theta _{2}}e^{-\frac{c4^{j}r^{2}}{%
\left\vert z\right\vert }}\,\frac{\left\vert dz\right\vert }{\left\vert
z\right\vert } \\
&\qquad \lesssim 2^{j\left( \theta _{1}-2m\right) }\bigg(
\avgint%
_{C_{j}\left( B\right) }\left\vert f\,\right\vert ^{p_{0}}dw\bigg) ^{\frac{1%
}{p_{0}}};
\end{align*}
the last inequality holds provided $2m>\theta _{2}$. Hence, since $2^{j+1}B\supset C_{j}$, by
Minkowski's inequality we have (since the sum $\sum f_{j}$ is finite for
$f\in L_{c}^{\infty }$)
\begin{align*}
\bigg(
\avgint%
_{B}\left\vert \varphi ( L_{w}) ( I-\mathcal{A}_{r }) f\right\vert ^{p_{0}}dw\bigg) ^{\frac{1}{p_{0}}}
& \leq \sum_{j\geq 1} \bigg(
\avgint%
_{B}\left\vert \varphi ( L_{w}) ( I-\mathcal{A}_{r}) f_{j}\right\vert ^{p_{0}}dw\bigg) ^{\frac{1}{p_{0}}} \\
&\lesssim \sum_{j\geq 1}2^{j\left( \theta _{1}-2m\right) }\bigg(
\avgint%
_{2^{j+1} B }\left\vert f\,\right\vert ^{p_{0}}dw\bigg) ^{%
\frac{1}{p_{0}}}.
\end{align*}
This establishes \eqref{eqn:2.1} with $g(j) =C\, 2^{j\left( \theta
    _{1}-2m\right) }$.  If we  take $2m>\max \left\{ \theta
_{1},\theta _{2}\right\} $, then $\sum g(j)<\infty$.

We now show that~\eqref{eqn:2.2} holds.  Fix a ball $B$ and $j\geq 1$.
Since  $\mathcal{A}_{r}\in \offw{p_0}{q_0}$ (see \eqref{eqn:A-od}),  
\begin{equation*}
\bigg( \avgint_{B}\big|\mathcal{A}_{r }\big(
\chi_{C_j(B)}\varphi (L_{w}) f\big)\big| ^{q_{0}}dw\bigg) ^{\frac{1}{q_0}}\lesssim 2^{j\theta
_{1}}\Upsilon \left( 2^{j}\right) ^{\theta _{2}}e^{-c4^{j}}\bigg(
\avgint%
_{C_{j}(B)}\left\vert \varphi \left( L_{w}\right) f\right\vert ^{p_{0}}d\mu
\bigg) ^{\frac{1}{p_0}}.
\end{equation*}%
Therefore, since $\varphi \left( L_{w}\right) $ and $
\mathcal{A}_{r}$ commute, by Minkowski's inequality  we obtain
\begin{equation*}
\left(
\avgint%
_{B}\left\vert \varphi \left( L_{w}\right) \mathcal{A}_{r}
f\right\vert ^{q_{0}}dw\right) ^{\frac{1}{q_0}}
\lesssim \sum_{j\geq 1}2^{j\left(
\theta _{1}+\theta _{2}\right) }e^{-c4^{j}}\bigg(
\avgint%
_{C_{j}(B)}\left\vert \varphi \left( L_{w}\right) f\right\vert ^{p_{0}}d\mu
\bigg) ^{\frac{1}{p_0}}.
\end{equation*}%
This establishes \eqref{eqn:2.2} with $g(j)=C\,2^{j\left(
\theta _{1}+\theta _{2}\right) }e^{-c4^{j}}$; again, $\sum
g(j)<\infty$.  Therefore, our proof is complete.
\end{proof}

\section{Square function estimates for the semigroup}
\label{section:square-function}

In this section we prove $L^p(w)$ norm inequalities for the vertical square
function associated to the semigroup $e^{-tL_w}$:
\[ g_{L_{w}}f( x)
=\bigg( \int_{0}^{\infty }\left\vert \left(
tL_{w}\right) ^{1/2}e^{-tL_{w}}f(x) \right\vert ^{2}\frac{dt}{t}%
\bigg) ^{1/2}. \]

\begin{prop} \label{prop:square-function}
Let $p_-(L_w) < p < p_+(L_w)$.  Then
\begin{equation} \label{eqn:square-function-unwtd}
 \left\Vert g_{L_{w}}f\right\Vert
_{L^{p}\left( w\right) }\approx \left\Vert f\right\Vert _{L^{p}\left(
    w\right)}.
\end{equation}
Conversely if for some $p$ the equivalence~\eqref{eqn:square-function-unwtd} holds,
  then  $p \in \tilde{\J}(L_w)$---i.e., the interior of the interval on
  which~\eqref{eqn:square-function-unwtd} holds is
  $(p_-(L_w),p_+(L_w))$.

Moreover, if $v \in A_{p/p_-(L_w)}(w)\cap
RH_{(p_+(L_w)/p)'}(w)$,  then
\begin{equation} \label{eqn:square-function-wt}
 \left\Vert g_{L_{w}}f\right\Vert
_{L^{p}(v\,dw) }\approx \left\Vert f\right\Vert _{L^{p}(v\,dw)}.
\end{equation}
\end{prop}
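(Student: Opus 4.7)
The plan has three components: the $L^2(w)$ base case, the general $L^p(v\,dw)$ upper bound via Theorem~\ref{theorem:2.2}, and the matching lower bound (together with the sharpness statement) by a polarization/duality argument against $L_w^*$.

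\textbf{Base case.} The inequality $\|g_{L_w}f\|_{L^2(w)}\lesssim\|f\|_{L^2(w)}$ is the special case $\varphi(z)=z^{1/2}e^{-z}\in\H_0^\infty(\Sigma_\mu)$ of the square-function estimate~\eqref{eqn:holo-sqfe}. For the matching lower bound I polarize via the Calder\'on reproducing identity $\tfrac12 I=\int_0^\infty tL_w e^{-2tL_w}\,\frac{dt}{t}$. Setting $T_t:=(tL_w)^{1/2}e^{-tL_w}$ and observing that $T_t^*=(tL_w^*)^{1/2}e^{-tL_w^*}$ (since the symbol $\psi(z)=z^{1/2}e^{-z}$ satisfies $\overline{\psi(\bar z)}=\psi(z)$), this gives
$$
\tfrac12\langle f,g\rangle_w=\int_0^\infty \langle T_t f,T_t^*g\rangle_w\,\frac{dt}{t}.
$$
Cauchy--Schwarz in $t$ followed by H\"older in $x$ yields $|\langle f,g\rangle_w|\le 2\|g_{L_w}f\|_{L^2(w)}\|g_{L_w^*}g\|_{L^2(w)}$, and the upper bound for $L_w^*$ together with $L^2(w)$ self-duality closes the base case.

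\textbf{Upper bound via extrapolation.} I apply Theorem~\ref{theorem:2.2} with $T=g_{L_w}$, $S=I$, and $\mathcal{A}_r=I-(I-e^{-r^2L_w})^m$. Since $2\in\Int\J(L_w)$ by Proposition~\ref{prop:J}, I choose $p_-(L_w)<p_0<\min\{p,2\}$ and $\max\{p,2\}<q_0<p_+(L_w)$ with $v\in A_{p/p_0}(w)\cap RH_{(q_0/p)'}(w)$; the strict inequalities $p_0<2<q_0$ are what will permit the Minkowski swaps below. For condition~\eqref{eqn:2.1}, as in the derivation of~\eqref{est-down}, I decompose $f=\sum_j f\bigchi_{C_j(B)}$ and represent $(tL_w)^{1/2}e^{-tL_w}(I-e^{-r^2L_w})^m=\int_\Gamma e^{-zL_w}\eta_t(z)\,dz$ via the $H^\infty$ calculus, where $\eta_t(z)$ inherits from the symbol $(tz)^{1/2}e^{-tz}(1-e^{-r^2z})^m$ both the $(r^2|z|)^m$ smoothing at the origin and the $|z|^{1/2}e^{-t\,\mathrm{Re}\,z}$ decay at infinity. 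Combining this with $e^{-zL_w}\in\offwmu{p_0}{p_0}{\nu}$ from Corollary~\ref{cor:holomorphic}, Minkowski's inequality in $t$, and the choice of $2m$ large enough, produces a summable sequence $g(j)$. For condition~\eqref{eqn:2.2}, I expand $\mathcal{A}_r=\sum_{k=1}^m c_k e^{-kr^2L_w}$ and use that each $e^{-kr^2L_w}$ commutes with $T_t$; writing $F_t:=T_tf$, Minkowski's inequality (valid since $q_0\ge 2$) moves the $L^{q_0}(B,dw)$ norm inside the $L^2(dt/t)$ integral, the decomposition $F_t=\sum_j F_t\bigchi_{C_j(B)}$ together with $e^{-kr^2L_w}\in\offw{p_0}{q_0}$ handles the spatial part pointwise in $t$, and Minkowski's inequality again (valid since $p_0\le 2$) swaps the $L^{p_0}(dw)$ and $L^2(dt/t)$ norms to reconstruct $g_{L_w}f$ on the right-hand side.

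\textbf{Lower bound and converse.} For the lower bound on $L^p(v\,dw)$ I repeat the base-case polarization to get
$$
|\langle f,g\rangle_w|\le 2\|g_{L_w}f\|_{L^p(v\,dw)}\|g_{L_w^*}g\|_{L^{p'}(v^{1-p'}\,dw)}.
$$
By Lemma~\ref{lemma:duality}, $p_\pm(L_w^*)=p_\mp(L_w)'$, and a standard $A_q/RH_s$ duality argument converts $v\in A_{p/p_-(L_w)}(w)\cap RH_{(p_+(L_w)/p)'}(w)$ into $v^{1-p'}\in A_{p'/p_-(L_w^*)}(w)\cap RH_{(p_+(L_w^*)/p')'}(w)$; applying the upper bound to $L_w^*$ and dualizing the $L^p(v\,dw)$--$L^{p'}(v^{1-p'}\,dw)$ pairing yields $\|f\|_{L^p(v\,dw)}\lesssim\|g_{L_w}f\|_{L^p(v\,dw)}$. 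For the converse direction, suppose~\eqref{eqn:square-function-unwtd} holds at some $p$. For every $s>0$ the change of variables $u=t+s$ shows
$$
g_{L_w}(e^{-sL_w}f)(x)^2=\int_s^\infty |(uL_w)^{1/2}e^{-uL_w}f(x)|^2\,\frac{du}{u}\le g_{L_w}f(x)^2,
$$
so the lower bound applied at $e^{-sL_w}f$ and the upper bound together give $\|e^{-sL_w}f\|_{L^p(w)}\lesssim\|f\|_{L^p(w)}$ uniformly in $s$, placing $p$ in $\widetilde\J(L_w)$; Proposition~\ref{prop:J}(2) then identifies $\Int\widetilde\J(L_w)=(p_-(L_w),p_+(L_w))$.

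\textbf{Main obstacle.} The hardest step is the verification of~\eqref{eqn:2.1}: one must exploit the $(r^2z)^m$ smoothing of the annihilator $(1-e^{-r^2z})^m$ inside the contour kernel $\eta_t$ to control the near-$t=0$ singularity of $(tz)^{1/2}e^{-tz}$, while simultaneously using the Gaffney decay $\exp(-c\,4^j r^2/|z|)$ of $e^{-zL_w}$ to beat the doubling factor $2^{Dj}$ from the annular decomposition. Taking $2m$ large enough resolves both constraints, but tracking the explicit powers of $r/\sqrt{t}$ through the contour and the $t$-integral is delicate and is the reason that the strict separation $p_0<2<q_0$ (equivalently, $2\in\Int\J(L_w)$) is essential.
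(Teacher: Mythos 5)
Your architecture parallels the paper's in most respects: the $L^2(w)$ base case via \eqref{eqn:holo-sqfe}, the contour representation with the kernel bound $\normH{\eta(z,\cdot)}\lesssim r^{2m}|z|^{-m-1}$ for condition \eqref{eqn:2.1}, the polarization/duality against $L_w^*$ for the lower bound, and the semigroup trick $g_{L_w}(e^{-sL_w}f)\le g_{L_w}f$ for the converse are all essentially what the paper does (the paper phrases the lower bound through the operator $T_{L_w}$ of \eqref{defi-TLw} rather than through the pairing directly, but the content is the same). Your treatment of \eqref{eqn:2.2} by two applications of Minkowski's integral inequality, exploiting $p_0\le 2\le q_0$, is a legitimate and somewhat more elementary substitute for the paper's Hilbert-valued extension via Lemma~\ref{lemma-7.4}; both routes yield the same estimate.

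There is, however, a genuine gap: you apply Theorem~\ref{theorem:2.2} directly, with $p_0$ chosen close to $p_-(L_w)<2$, to obtain the full range $p_-(L_w)<p<p_+(L_w)$ (unweighted bounds included by taking $v\equiv 1$), skipping any separate argument below $2$. But Theorem~\ref{theorem:2.2} requires $T=g_{L_w}$ to act on $L^{p_0}(w)$, and the good-$\lambda$ machinery behind it (cf.\ Theorem~\ref{theor:good-lambda:w}, which demands $F\in L^1(w)$) needs the a priori information that $g_{L_w}f\in L^{p_0}(w)$ for $f$ in the class $\D$; moreover the averages $\big(\avgint_{2^{j+1}B}|g_{L_w}f|^{p_0}\,dw\big)^{1/p_0}$ appearing on the right of \eqref{eqn:2.2} must be finite for the argument to produce anything. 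The $L^2(w)$ base case does not supply this, since $L^2(w)$ does not embed in $L^{p_0}(w)$ when $p_0<2$. This is exactly why the paper's proof is organized in two steps: it first proves the unweighted bound $\|g_{L_w}f\|_{L^p(w)}\lesssim\|f\|_{L^p(w)}$ for $p_-(L_w)<p<2$ via Theorem~\ref{theorem:2.4} (whose hypotheses, in contrast, only require the $L^2(w)$ theory, with $q_0=2$), and only then invokes Theorem~\ref{theorem:2.2}, explicitly using the first step to know that $g(x,t)=(tL_w)^{1/2}e^{-tL_w}f(x)$ lies in $L^{p_0}_{\mathbb{H}}(w)$. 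Without that first step your application of Theorem~\ref{theorem:2.2} is unjustified for exponents below $2$, and since your weighted lower bound for $p>2$ relies on the upper bound for $g_{L_w^*}$ at the dual exponent $p'<2$, the gap propagates there as well. The fix is simply to insert the missing Theorem~\ref{theorem:2.4} argument (which your \eqref{eqn:2.1}-type computation, run on $C_j(B)$ instead of $B$, essentially already contains) before the extrapolation step.
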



We note that the upper bounds in the previous result could be
obtained by combining Proposition \ref{prop:B-K:weights} with the
operator theory methods developed in \cite{couling96}. 
To reach a 
wider audience we present a self-contained harmonic analysis
proof. We will use an auxiliary Hilbert space related to square
functions, following the approach in~\cite{auscher-martell06}.  Let
$\mathbb{H}$ denote the Hilbert space
$L^{2}\left( \left( 0,\infty \right) ,\frac{dt}{t}\right) $ with norm
\begin{equation*}
\normH{h}
=\left( \int_{0}^{\infty }\left\vert h\left( t\right) \right\vert ^{2}%
\frac{dt}{t}\right) ^{\frac{1}{2}}.
\end{equation*}
In particular, we have that
\begin{equation*}
g_{L_{w}}f(x) =%
\normH{\varphi(L,\cdot)f(x)}%
\end{equation*}
where ${\varphi }\left( z,t\right) =\left( tz\right) ^{1/2}e^{-tz}$.
Furthermore, we define $L^p_{\mathbb{H}}(w)$ to be the space of
$\mathbb{H}$-valued functions with the norm
\[ \|h\|_{L^p_{\mathbb{H}}(w)}
= \bigg(\int_{\rn} \normH{h(x,\cdot)}^p\,dw(x) \bigg)^{\frac{1}{p}}. \]
The following lemma lets us extend scalar valued inequalities
to $\mathbb{H}$-valued inequalities. For a proof, see \cite[ Lemma~7.4]{auscher-martell06} and the references given there.

\begin{lemma}  \label{lemma-7.4}
Given a Borel measure  $\mu $ on
  $\mathbb{R}^{n}$,
  let $\mathcal{D}$ be a subspace of $\mathcal{M}$, the space of
  measurable functions in $\mathbb{R}^{n}$, and  let $S,\,T$ be linear
  operators from $\mathcal{D}$ into $\mathcal{M}$. Fix $1\leq
  p\leq q<\infty$ and  suppose there exists
  $C_{0}>0 $ such that for all $f\in \mathcal{D}$,
\begin{equation*}
\left\Vert Tf\right\Vert _{L^{q}\left( \mu \right) }\leq C_{0}\sum_{j\geq
1}\alpha _{j}\left\Vert Sf\right\Vert _{L^{p}\left( F_{j},\mu \right) },
\end{equation*}
where the $F_{j}$ are measurable subsets of $\mathbb{R}^{n}$ and $\alpha
_{j}\geq 0$. Then there is a $\mathbb{H}$-valued inequality with the same
constant: for all $f:\mathbb{R}^{n}\times \left( 0,\infty \right)
\longrightarrow \mathbb{C}$ such that for almost all $t>0$, $f\left( \cdot
,t\right) \in \mathcal{D}$,
\begin{equation*}
\left\Vert Tf\right\Vert _{L_{\mathbb{H}}^{q}\left( \mu \right) }\leq
C_{0}\sum_{j\geq 1}\alpha _{j}\left\Vert Sf\right\Vert _{L_{\mathbb{H}%
}^{p}\left( F_{j},\mu \right) }.
\end{equation*}
\end{lemma}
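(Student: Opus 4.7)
The plan is to derive the $\mathbb{H}$-valued inequality from the scalar one by applying the hypothesis slice-by-slice in the auxiliary parameter $t\in(0,\infty)$, then recombining via Minkowski's integral inequality, exploiting the Hilbert structure of $\mathbb{H}=L^{2}((0,\infty),dt/t)$.

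First, for an $\mathbb{H}$-valued $f$ with $f(\cdot ,t)\in\mathcal{D}$ for a.e.\ $t>0$, I would apply the scalar hypothesis to each slice $f(\cdot ,t)$. Since $T$ and $S$ are linear and the natural $\mathbb{H}$-valued extensions act componentwise in $t$ (so that $Tf(x,t)=T(f(\cdot ,t))(x)$ and $Sf(x,t)=S(f(\cdot ,t))(x)$ a.e.), this gives, for a.e.\ $t>0$,
\[
\|Tf(\cdot ,t)\|_{L^{q}(\mu)}\leq C_{0}\sum_{j\geq 1}\alpha_{j}\,\|Sf(\cdot ,t)\|_{L^{p}(F_{j},\mu)}.
\]
Taking the $L^{2}((0,\infty),dt/t)$-norm in $t$ of both sides and invoking the triangle inequality in $L^{2}_{t}$ on the sum in $j$ yields
\[
\Bigl(\int_{0}^{\infty}\|Tf(\cdot ,t)\|_{L^{q}(\mu)}^{2}\,\tfrac{dt}{t}\Bigr)^{1/2}\leq C_{0}\sum_{j\geq 1}\alpha_{j}\,\Bigl(\int_{0}^{\infty}\|Sf(\cdot ,t)\|_{L^{p}(F_{j},\mu)}^{2}\,\tfrac{dt}{t}\Bigr)^{1/2}.
\]

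Second, I would translate between the mixed-norm quantities just produced and the genuine $L^{q}_{\mathbb{H}}(\mu)$ and $L^{p}_{\mathbb{H}}(F_{j},\mu)$ norms appearing in the conclusion. The left-hand side above is exactly $\|Tf\|_{L^{2}_{t}(L^{q}_{x}(\mu))}$, and by Minkowski's integral inequality in the form $\|\cdot\|_{L^{q}_{x}(L^{2}_{t})}\leq \|\cdot\|_{L^{2}_{t}(L^{q}_{x})}$ (valid when $q\geq 2$) it dominates $\|Tf\|_{L^{q}_{\mathbb{H}}(\mu)}$. Symmetrically, the reverse form $\|\cdot\|_{L^{2}_{t}(L^{p}_{x})}\leq \|\cdot\|_{L^{p}_{x}(L^{2}_{t})}$ (valid when $p\leq 2$) converts each summand on the right into $\|Sf\|_{L^{p}_{\mathbb{H}}(F_{j},\mu)}$. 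This proves the claim in the range $p\leq 2\leq q$, with the constant $C_{0}$ preserved.

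The main obstacle is extending to the full range $1\leq p\leq q<\infty$ with the same constant $C_{0}$, in particular when $p>2$ or $q<2$, since then the two Minkowski swaps go the wrong way. I plan to handle this by fixing an orthonormal basis $\{e_{k}\}_{k\geq 1}$ of $\mathbb{H}$ and writing $f(x,t)=\sum_{k}f_{k}(x)\,e_{k}(t)$ with $f_{k}(x)=\int_{0}^{\infty}f(x,t)\,\overline{e_{k}(t)}\,dt/t\in\mathcal{D}$, so that $\|Tf(x,\cdot)\|_{\mathbb{H}}=\bigl(\sum_{k}|Tf_{k}(x)|^{2}\bigr)^{1/2}$ and likewise for $Sf$. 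The statement then reduces to an $\ell^{2}$-valued extension of the scalar hypothesis, which follows with the same constant $C_{0}$ by a Marcinkiewicz--Zygmund/Khintchine-type Rademacher-averaging argument; this step relies essentially on $\mathbb{H}$ being a Hilbert space so that no extra constant is incurred in the randomization.
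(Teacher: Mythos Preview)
The paper does not prove this lemma here; it simply cites \cite[Lemma~7.4]{auscher-martell06}. Your Minkowski argument for the range $p\le 2\le q$ is correct and does preserve the constant $C_0$. The gap is in your treatment of the remaining range. A Rademacher--Khintchine argument does \emph{not} give the same constant: the Khintchine inequalities $A_r\bigl(\sum_k|a_k|^2\bigr)^{1/2}\le\bigl(\mathbb{E}\bigl|\sum_k r_k(\omega)a_k\bigr|^r\bigr)^{1/r}\le B_r\bigl(\sum_k|a_k|^2\bigr)^{1/2}$ have $A_r,B_r\neq 1$ for $r\neq 2$, and running your randomization picks up a factor like $B_p/A_q$. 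The sentence ``$\mathbb{H}$ being a Hilbert space so that no extra constant is incurred'' does not rescue this; the Hilbert structure of $\mathbb{H}$ is what makes the $\ell^2$-square function appear, but it does not make the Khintchine constants equal to $1$.

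The standard fix (and the content of the cited reference) is to use i.i.d.\ standard Gaussians $g_k$ instead of Rademachers. Then one has the exact identity $\bigl(\mathbb{E}\bigl|\sum_k a_k g_k\bigr|^r\bigr)^{1/r}=\gamma_r^{1/r}\bigl(\sum_k|a_k|^2\bigr)^{1/2}$ with $\gamma_r=\mathbb{E}|g_1|^r$, so that by Fubini $\|Tf\|_{L^q_{\mathbb H}}^q=\gamma_q^{-1}\,\mathbb{E}\,\|T\tilde f\|_{L^q}^q$ and $\|Sf\|_{L^p_{\mathbb H}(F_j)}^p=\gamma_p^{-1}\,\mathbb{E}\,\|S\tilde f\|_{L^p(F_j)}^p$, where $\tilde f(\omega)=\sum_k f_k\,g_k(\omega)$. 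When $p=q$ the $\gamma$-factors cancel and the scalar hypothesis applied $\omega$-by-$\omega$ (plus Minkowski in $j$) yields the conclusion with constant exactly $C_0$. Note that the two applications of the lemma in this paper are precisely in the regimes $p_0<2<q_0$ (proof of Proposition~\ref{prop:square-function}) and $p=q$ (the $\mathbb{H}$-valued extension of the Riesz transform in Proposition~\ref{prop:grad-square-function}); both are covered cleanly by your Minkowski step together with the Gaussian argument for $p=q$. There is also a domain issue you pass over: you need $f_k\in\mathcal{D}$ and $\sum_k f_k\,g_k(\omega)\in\mathcal{D}$, which is typically arranged by first reducing to finitely many nonzero coefficients.
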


The extension of a linear operator  $T$ on $\co$-valued functions to
$\HH$-valued  functions is defined  for $x\in \RR^n$ and $t>0$ by $(T
h)(x,t)= T\big( h(\cdot,t)\big)(x)$, that is, $t$ can be considered
as a parameter and $T$ acts only on the variable in $\RR^n$.

\begin{proof}[Proof of Proposition~\textup{\ref{prop:square-function}}]
We shall first prove the upper bound inequalities. We first claim that
the upper bound inequality in \eqref{eqn:square-function-unwtd} holds
for $p=2$. Indeed, since
$\varphi(z)=z^{1/2}e^{-z}\in\H^\infty_0(\Sigma_\mu)$, it follows from
\eqref{eqn:holo-sqfe} that we have the bound
\[
\left\Vert g_{L_{w}}f\right\Vert _{L^{2}(w) } \lesssim \left\Vert
f\right\Vert _{L^{2}(w) }. \]

For brevity, let $p_-=p_-(L_w)$ and $p_+=p_+(L_w)$.  As in previous
proofs, we divide our proof into two steps.  We will first
prove the upper bound in \eqref{eqn:square-function-unwtd} for $p_-<p<2$ by applying
Theorem~\ref{theorem:2.4}.  Fix $p_{-}<p<q_{0}=2$, and let
$\mathcal{A}_{r}=I-( I-e^{-r^{2}L_{w}}) ^{m}$, where $m$ will be
chosen below.   Notice that, by \eqref{eqn:A-od}, $\mathcal{A}_{r}$ is bounded on $L^{q_0}(w)$ for each $m$.
Fix $f\in L_c^\infty$; the result for general $f\in L^p(w)$ then follows by a density argument.

We have that $(tL_{w}) ^{1/2}e^{-tL_{w}}( I-\mathcal{A}_{r})
f=\varphi( L_{w},t) f$, where
\[ {\varphi }( z,t) {=}(tz) ^{1/2}e^{-tz}( 1-e^{-r^{2}z})
^{m}. \]
Moreover, since $\varphi(\cdot,t)\in \mathcal{H}_{0}^{\infty }(\Sigma_{\{\min\{\mu,\pi/2\}\} })$,
by the functional calculus representation~\eqref{eqn:L2-holo-rep}
we have that
\begin{equation*}
\left( tL_{w}\right) ^{1/2}e^{-tL_{w}}\left( I-\mathcal{A}_{r}\right)
f=\int_{\Gamma}\eta \left( z,t\right)
e^{-zL_{w}}f~dz,
\end{equation*}%
where $\Gamma=\partial \Sigma_{\frac{\pi}{2}-\theta}$, with $0<\vartheta <\theta <\nu <\min\{\mu,\pi/2\}$, and we  choose $\theta$ so that the hypotheses of Corollary~\ref{cor:holomorphic} are 
satisfied for $z\in \Gamma$.
Moreover, we have the estimate (see \cite{auscher07, auscher-martell06})
\begin{equation*}
\left\vert \eta \left( z,t\right) \right\vert 
\lesssim \frac{t^{\frac{1}{2}}r^{2m}}{\left(
\left\vert z\right\vert +t\right) ^{m+\frac{3}{2}}},
\qquad z\in \Gamma;
\end{equation*}
therefore,
\begin{equation} \label{eqn:eta-bound}
\normH{\eta(z,\cdot)}%
=\left( \int_{0}^{\infty }\left\vert \eta \left( z,t\right) \right\vert ^{2}~%
\frac{dt}{t}\right) ^{1/2}\lesssim \frac{r^{2m}}{\left\vert z\right\vert
^{m+1}}.
\end{equation}

Now let $f\in L_{c}^{\infty }$ with $\supp\left( f\right)
\subset B$. For $j\geq 2$, we have
\begin{align}
& \bigg( \avgint_{C_j(B)}
\left\vert g_{L_{w}}\left( {I-}\mathcal{A}_{r}\right) f\right\vert
^{p}dw\bigg) ^{1/p}  \notag \\
&\qquad \qquad =\bigg( \avgint_{C_j(B)}%
\bigg\vert \bigg( \int_{0}^{\infty }
\bigg\vert \int_{\Gamma _{\frac{\pi }{2}%
-\theta }}\eta ( z,t) e^{-zL_{w}}f\, dz\bigg\vert ^{2}\frac{dt}{t}%
\bigg) ^{1/2}\bigg \vert ^{p}dw\bigg) ^{1/p}  \notag \\
& \qquad \qquad \leq \bigg( \avgint_{C_j(B)}%
\bigg\vert \int_{\Gamma _{\frac{\pi }{2}-\theta }}\vert
e^{-zL_{w}}f\vert
\normH{\eta(z,\cdot)}%
d\vert z\vert \bigg\vert ^{p}dw\bigg) ^{1/p}  \notag \\
&\qquad \qquad \lesssim \int_{\Gamma _{\frac{\pi }{2}-\theta }}
\bigg(\avgint_{C_j(B)}%
\vert e^{-zL_{w}}f\vert ^{p}dw\bigg) ^{1/p}\frac{r^{2m}}{%
\vert z\vert ^{m+1}}\,d\vert z\vert   \notag \\
&\qquad \qquad \lesssim 2^{j\theta _{1}}
\bigg( \avgint_{B}%
\vert f\vert ^{p}\,dw\bigg) ^{1/p}\int_{\Gamma _{\frac{\pi }{2}%
-\theta }}\Upsilon \bigg( \frac{2^{j}r}{\sqrt{\vert z\vert }}%
\bigg) ^{\theta _{2}}e^{-\frac{c4^{j}r^{2}}{\vert z\vert }}\frac{%
r^{2m}}{\vert z\vert ^{m}}\,\frac{d\vert z\vert }{%
\vert z\vert }  \notag \\
& \qquad \qquad \lesssim 2^{j\theta _{1}}4^{-mj}\bigg(
\avgint_{B}%
\vert f\vert ^{p}\,dw\bigg) ^{1/p};  \label{eqn:gLw-01}
\end{align}
in the second inequality we applied (\ref{eqn:eta-bound}) and
the off-diagonal estimates for $%
e^{-zL_{w}}$
from Corollary \ref{cor:holomorphic}, and the last inequality holds provided $2\,m>\theta_2$. Thus, if we take
$m>\theta _{1}+D$, where $D$ is the doubling order of $w$, the
operator $g_{L_{w}}$ satisfies (\ref{eqn:2.4}) in
Theorem~\ref{theorem:2.4} with
$g\left( j\right) =C\,2^{j(\theta _{1}-2m)}$.  Since
we already established~\eqref{eqn:2.5}
in~\eqref{eqn:2.5Ar} with
$g\left( j\right) =C\,2^{j\left( \theta _{1}+\theta _{2}\right)
}4^{-mj}$, the hypotheses of Theorem \ref%
{theorem:2.4} are satisfied if $m>\theta _{1}+\theta _{2}+D$.
Therefore,
for each $p_{-}<p< 2$ there
exists a constant $C$ such that
\begin{equation}
\left\Vert g_{L_{w}}f\right\Vert _{L^{p}(w) }\leq C\left\Vert
f\right\Vert _{L^{p}(w) }.  \label{ineq:gps}
\end{equation}%

\bigskip

In the second part of the proof we will show that if $p_-<p<p_+$ and 
$v \in A_{p/p_-}(w)\cap RH_{(p_+/p)'}(w)$, then the upper bound
inequality in \eqref{eqn:square-function-wt} holds.   If
we take $v\equiv 1$,
then we immediately get~\eqref{eqn:square-function-unwtd}.
To do so, first note that if we fix $p$ and $v$,  then by the openness
properties of weights  there exist $p_0$, $q_0$ such that
\[ p_{-}<p_{0}<\min\{p,2\} \leq \max\{p,2\}<q_{0}<p_{+} \]
and $v \in A_{p_0/p_-}(w)\cap
RH_{(q_0/p)'}(w)$.

We will apply Theorem~\ref{theorem:2.2} with $T=g_{L_w}$, $S=I$ and
$\D=L^{p_0}(w)$ (again, note that by \eqref{eqn:A-od}, $\mathcal{A}_r$ 
is bounded on $L^{p_0}(w)$).  We first prove that
inequality~\eqref{eqn:2.1} holds.  For each $j\geq 1$, let
$f_j=f\chi_{C_j(B)}$; then we can argue exactly as we did in the proof
of~\eqref{eqn:gLw-01}, exchanging the roles of $B$ and $C_j(B)$, to get
\[ \bigg( \avgint_B |g_{L_w}(I-\A_r)f_j|^p \,dw \bigg)^{\frac{1}{p}}
\lesssim 2^{j\theta_1}4^{-mj}
\bigg(\avgint_{2^{j+1}B} |f|^p\,dw \bigg)^{\frac{1}{p}}. \]
Inequality~\eqref{eqn:2.1} follows if we sum over all $j$ and take
$g(j)= 2^{j\theta_1}4^{-mj}$.

We will now show that inequality~\eqref{eqn:2.2} holds.  To do so, we need to
prove a vector-valued version of a key inequality.  By Proposition~\ref{prop:J}, given a
ball $B$ with radius $r$, then for all $j\geq 1$,  $g$ with $\supp(g)\subset C_j(B)$,
and  $1\leq k \leq m$,
\begin{equation} \label{eqn:scalar-offdiag}
 \bigg(\avgint_B |e^{-kr^2L_w}g|^{q_0}\,dw\bigg)^{\frac{1}{q_0}}
\leq C_02^{j(\theta_1+\theta_2)}e^{-\alpha 4^j}
\bigg(\avgint_{C_j(B)} |g|^{p_0}\,dw\bigg)^{\frac{1}{p_0}}.
\end{equation}
We now apply Lemma~\ref{lemma-7.4} with $S=I$ and $T :
L^{p_0}(w)\rightarrow L^{q_0}(w)$ given by
\[ Tg = (C_02^{j(\theta_1+\theta_2)}e^{-\alpha 4^j})^{-1}
\frac{ w(2^{j+1}B)^{\frac{1}{p_0}}}{w(B) ^{\frac{1}{q_0}}}
\chi_B e^{-kr^2L_w}(g\chi_{C_j(B)}). \]
This yields the $\mathbb{H}$-valued extension of~\eqref{eqn:scalar-offdiag}:  for
all $g\in L^{p_0}_{\mathbb{H}}(w)$ with $\supp(g(\cdot,t))\subset C_j(B)$, $t>0$, we have that
\begin{equation} \label{eqn:vector-offdiag}
  \bigg(\avgint_B \normH{e^{-kr^2L_w}g(x,\cdot)}^{q_0}\,dw\bigg)^{\frac{1}{q_0}}
\leq C_02^{j(\theta_1+\theta_2)}e^{-\alpha 4^j}
\bigg(\avgint_{C_j(B)}
\normH{g(x,\cdot)}^{p_0}\,dw\bigg)^{\frac{1}{p_0}}.
\end{equation}
Given an arbitrary $g\in L^{p_0}_{\mathbb{H}}(w)$, decompose it as
\[  g(x,t) = \sum_{j\geq 1} g(x,t)\chi_{C_j(B)}(x) = \sum_{j\geq 1}
g_j(x,t). \]
Then inequality~\eqref{eqn:vector-offdiag} yields
\begin{multline} \label{eqn:almost-there}
  \bigg(\avgint_B
  \normH{e^{-kr^2L_w}g(x,\cdot)}^{q_0}\,dw\bigg)^{\frac{1}{q_0}}
\leq \sum_{j\geq 1}
\bigg(\avgint_{B}
\normH{e^{-kr^2L_w} g_j(x,\cdot)}^{q_0}\,dw\bigg)^{\frac{1}{q_0}} \\
\lesssim  \sum_{j\geq 1} 2^{j(\theta_1+\theta_2)}e^{-\alpha 4^j}
\bigg(\avgint_{2^{j+1}B}
\normH{g(x,\cdot)}^{p_0}\,dw\bigg)^{\frac{1}{p_0}}.
\end{multline}

Define $g(x,t)=(tL_w)^{1/2}e^{-tL_w}f(x)$.  Then
$g_{L_w}f(x) = \normH{g(x,\cdot)}$;  by our choice of $p_0$ and the
first step of the proof we have that $g\in L^{p_0}_{\mathbb{H}}(w)$.
Moreover, since for each $t>0$,
$(tL_w)^{1/2}e^{-tL_w}$ and $e^{-kr^2L_w}$ commute,
\[ g_{L_w}(e^{-kr^2L_w}f)(x) = \normH{e^{-kr^2L_w} g(x,\cdot)}. \]
We can now use~\eqref{eqn:Asum} and~\eqref{eqn:almost-there} to get that
\begin{align*} \label{eqn:gLw-02}
\bigg(  \avgint_{B}%
|g_{L_{w}}\mathcal{A}_{r}f| ^{q_{0}}dw\bigg) ^{\frac{1}{q_{0}}}
& 
\lesssim
\sum_{k=1}^m
\bigg(\avgint_B
  \normH{e^{-kr^2L_w}g(x,\cdot)}^{q_0}\,dw\bigg)^{\frac{1}{q_0}}  \\
& \lesssim \sum_{j\geq 1}2^{j\left( \theta _{1}+\theta _{2}\right)
}e^{- \alpha 4^{j}}\left(
\avgint_{2^{j+1}B}
\left\vert g_{L_{w}}f\right\vert ^{p}dw\right) ^{1/p_0}.
\end{align*}
This proves \eqref{eqn:2.2} with $g\left( j\right) =C\,2^{j\left( \theta _{1}+\theta
_{2}\right) }e^{-c4^{j}}$.   Therefore, by Theorem~\ref{theorem:2.2}
we get that
\[ \|g_{L_w}f\|_{L^p(v\,dw)} \lesssim \|f\|_{L^p(v\,dw)}. \]

\bigskip

It remains to show the reverse inequalities. We will prove the
lower bound in~\eqref{eqn:square-function-wt}; then the lower bound
in~\eqref{eqn:square-function-unwtd} holds if we take $v\equiv 1$. Fix
$p_-<p<p_+$ and $v \in A_{p/p_-(L_w)}(w)\cap RH_{(p_+(L_w)/p)'}(w)$.  By
the duality properties of weights~\cite[Lemma 4.4]{auscher-martell07b}
and since $p_{\pm}(L_w)'=p_{\mp} (L_w^*)$, where $L_w^*$ is the adjoint (on $L^2(w)$) of $L_w$,
\begin{equation}
v^{1-p'}\in A_{p'/p_-(L^*)}(w)\cap RH_{(p_+(L^*)/p')'}(w). 
\label{eq:adede}
\end{equation}
We now proceed as in the proof of ~\cite[Theorem 7.3]{auscher-martell06}. Given $F\in L^p_{\mathbb{H}}(v\,dw)\cap L^2_{\mathbb{H}}(w)$, and $x\in \RR^n$ we set
\begin{equation}\label{defi-TLw}
T_{L_w} F(x)
=\int_0^\infty (t\,L_w)^{1/2}\,e^{-t\,L_w}F(x,t )\,\frac{dt}{t}.
\end{equation}
Recall that $(t\,L_w)^{1/2}\,e^{-t\,L_w}F(x,t )=(t\,L_w)^{1/2}\,e^{-t\,L_w}(F(\cdot,t ))(x)$. Hence, $T_{L_w}$ maps
$\HH$-valued functions to $\co$-valued functions.  For $h\in
L^{p'}(v^{1-p'}\,dw)\cap L^2(w)$ with
$\|h\|_{L^{p'}(v^{1-p'}\,dw)}=1$, we have that
\begin{align*}
\Big|\int_{\re^n} T_{L_w}F\, \overline h\, dw \Big|
&= 
\Big|\int_{\re^n} \int_0^\infty F(x,t) \, \overline{ (t\,L_w^*)^{1/2}\,e^{-t\,L_w^*}h(x )}\,\frac{dt}{t}dw(x)
\Big|
\\
&\le 
\int_{\re^n} \normH{F(x, \cdot)} \, g_{L_w^*} h(x) \, dw(x)
\\
&\lesssim
\|F\|_{L^p_{\mathbb{H}}(v\,dw)}
\|g_{L_w^*} h\|_{L^{p'}(v^{1-p'}\,dw)}
\\
&\lesssim
\|F\|_{L^p_{\mathbb{H}}(v\,dw)},
\end{align*}
where the last estimate uses the fact that $g_{L_w^*}$ is bounded on
$L^{p'}(v^{1-p'}\,dw)$.  This follows from the upper bound in
\eqref{eqn:square-function-wt} (with $L_w^*$ in place of $L_w$), which we
proved above, and
\eqref{eq:adede}. Taking the supremum over all such functions $h$ and using
an standard density argument we have obtained that $T_{L_w}$ is
bounded from $L^p_{\mathbb{H}}(v\,dw)$ to $L^p(v\,dw)$.

Next, given $f\in L^p(v\,dw)\cap L^2(dw)$, if we define $F( x,t) =(tL_{w}) ^{1/2}e^{-tL_{w}}f( x)$,  then
$F \in L^p_{\mathbb{H}}(v\,dw)\cap L^2_{\mathbb{H}}(w)$ since 
$\|F\|_{L^p_{\mathbb{H}}(v\,dw)} =
\|g_{L_w}f\|_{L^p(v\,dw)}$ and analogously for $L^2(w)$. Also, by the $L^2(w)$ functional calculus we have that
\begin{equation} \label{eqn:reverse-identity}
f(x)=2\int_{0}^{\infty }\left( tL_{w}\right) ^{1/2}e^{-tL_{w}}F\left( x,t\right) \frac{dt}{t}=2T_{L_w}F(x).
\end{equation}
Therefore, 
\[ \|f\|_{L^p(v\,dw)} = 2\|T_{L_w} F\|_{L^p(v\,dw)}
\lesssim \|F\|_{L^p_{\mathbb{H}}(v\,dw)} =
\|g_{L_w}f\|_{L^p(v\,dw)},  \]
and this completes the proof of~ \eqref{eqn:square-function-wt}.

\medskip

To finish the proof of Proposition~\ref{prop:square-function} we
need to show that the equivalence of norms in~\eqref{eqn:square-function-unwtd} implies that the
semigroup is uniformly bounded.  However, this follows immediately
from the definition of $g_{L_w}$ and the semigroup property:  for any $s>0$,
\[ g_{L_w}(e^{-sL_w}f)(x)
= \bigg(\int_0^\infty |L_w^{1/2} e^{-(s+t)L_w}f(x)|^2\,dt\bigg)^{1/2}
\leq  g_{L_w}f(x). \]
This completes the proof.
\end{proof}


\bigskip

We conclude this section by proving a version of
Proposition~\ref{prop:square-function} for the ``adjoint'' of a discrete square
function.  We will need this estimate in the proof of
Proposition~\ref{prop:reverseRiesz} below.

\begin{prop}\label{prop:dsicrtee-SF}
Define the holomorphic function $\psi$ on the sector $\Sigma_{\pi /2}$
by
\begin{equation}\label{eqpsiw}
\psi(z)=  \frac1{\sqrt{\pi}}\int_1^\infty z\, e^{-tz} \, \frac{dt}{\sqrt t}.
\end{equation}
If $p_-(L_w)<p<p_+(L_w)$, then for any sequence of functions
$\{\beta_k\}_{k\in \mathbb{Z}}$,
 \begin{equation}\label{eq23w}
\Big\| \sum_{k\in \mathbb{Z}} \psi(4^kL_w)\ \beta_k \Big\|_{L^p(w)}
\lesssim
\bigg\|\Big(\sum_{k\in \mathbb{Z}}  |\beta_k|^2\Big)^{\frac12}
\bigg\|_{L^p(w)}.
\end{equation}
\end{prop}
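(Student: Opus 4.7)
The proof proceeds by duality, reducing the estimate \eqref{eq23w} to a discrete vertical square function bound for the adjoint operator $L_w^*$. Viewing the map $T : \{\beta_k\} \mapsto \sum_k \psi(4^k L_w)\beta_k$ as an operator from $L^p(w;\ell^2)$ to $L^p(w)$, its formal adjoint with respect to the $L^2(w)$ pairing is $T^\ast f = \{\psi(4^k L_w^*)f\}_k$, mapping $L^{p'}(w)$ into $L^{p'}(w;\ell^2)$. Since $p_\pm(L_w^*)=p_\mp(L_w)'$ (by the duality of off-diagonal classes used in the proof of Proposition~\ref{prop:square-function}), the hypothesis $p_-(L_w) < p < p_+(L_w)$ becomes $p_-(L_w^*) < p' < p_+(L_w^*)$. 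Thus \eqref{eq23w} will follow from the discrete square function bound
\[
\Big\|\Big(\sum_{k\in\mathbb{Z}} |\psi(4^k L_w^*)f|^2\Big)^{1/2}\Big\|_{L^{p'}(w)} \lesssim \|f\|_{L^{p'}(w)}.
\]

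To establish this, I would use a Khintchine/Rademacher averaging argument. Let $\{r_k\}_{k\in\mathbb{Z}}$ be an enumeration of the Rademacher functions on $[0,1]$, and for each $s\in[0,1]$ set $\Psi_s(z) = \sum_k r_k(s)\,\psi(4^k z)$. The key point is to verify that $\Psi_s \in \H^\infty(\Sigma_\mu)$ for some $\mu \in (\vartheta,\pi/2)$, with sup norm uniformly bounded in $s$. Granting this, Proposition~\ref{prop:B-K:weights} applied to $L_w^*$ yields that $\Psi_s(L_w^*)$ is bounded on $L^{p'}(w)$ uniformly in $s$. Combining Khintchine's inequality $\big(\sum_k |a_k|^2\big)^{p'/2} \approx \int_0^1 |\sum_k r_k(s)a_k|^{p'}\,ds$ with Fubini then gives
\[
\Big\|\Big(\sum_k |\psi(4^k L_w^*)f|^2\Big)^{1/2}\Big\|_{L^{p'}(w)}^{p'} \approx \int_0^1 \|\Psi_s(L_w^*)f\|_{L^{p'}(w)}^{p'}\,ds \lesssim \|f\|_{L^{p'}(w)}^{p'},
\]
which is the desired estimate. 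Since $\Psi_s$ is not a priori in $\H_0^\infty$, one invokes the standard density extension of the bounded functional calculus from $\H_0^\infty(\Sigma_\mu)$ to all of $\H^\infty(\Sigma_\mu)$ recorded in Section~\ref{section:prelim}.

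The main obstacle is thus checking that $\sup_s \|\Psi_s\|_{\H^\infty(\Sigma_\mu)} < \infty$, which reduces to showing $\sum_k |\psi(4^k z)| \le C$ uniformly for $z \in \Sigma_\mu$. A direct computation (substituting $u = tz$ in \eqref{eqpsiw}) gives $\psi(z) = \sqrt{z/\pi}\int_z^\infty u^{-1/2} e^{-u}\,du$, so that $|\psi(z)| \lesssim |z|^{1/2}$ as $z\to 0$ and $|\psi(z)| \lesssim e^{-c|z|}$ as $|z|\to\infty$ within any sector $\Sigma_\mu$ with $\mu < \pi/2$ (with $c = \cos\mu$). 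These two-sided geometric bounds guarantee that for each $z \in \Sigma_\mu$ the tails of $\sum_k |\psi(4^k z)|$ corresponding to $4^k |z| \le 1$ and $4^k |z| \ge 1$ are each geometrically convergent with bounds independent of $z$, completing the verification and the proof plan.
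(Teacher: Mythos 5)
Your argument is correct, but it reaches the key estimate by a genuinely different route than the paper. Both proofs start identically: dualize, using $p_{\pm}(L_w)'=p_{\mp}(L_w^*)$, to reduce \eqref{eq23w} to the discrete square function bound $\big\|\big(\sum_k|\overline{\psi}(4^kL_w^*)h|^2\big)^{1/2}\big\|_{L^{p'}(w)}\lesssim\|h\|_{L^{p'}(w)}$; note that the adjoint of $\psi(4^kL_w)$ in the $L^2(w)$ pairing is $\overline{\psi}(4^kL_w^*)$ with $\overline{\psi}(z)=\overline{\psi(\bar z)}$, which coincides with $\psi$ here because the integral in \eqref{eqpsiw} has a real kernel, so your formula for $T^*$ is accurate in this case. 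From that point the paper observes $|\psi(z)|\le C|z|^{1/2}e^{-c|z|}$ on subsectors and simply reruns the proof of Proposition~\ref{prop:square-function} with the continuous times $t$ replaced by the discrete times $4^k$, i.e.\ it goes back through the off-diagonal machinery of Theorems~\ref{theorem:2.2} and~\ref{theorem:2.4} with $\mathcal{A}_r=I-(I-e^{-r^2L_w})^m$. You instead deduce the discrete bound directly from the bounded $\H^\infty$ functional calculus on $L^{p'}(w)$ (Proposition~\ref{prop:B-K:weights} applied to $L_w^*$) via Rademacher randomization and Khintchine's inequality, after verifying $\sup_{z\in\Sigma_\mu}\sum_k|\psi(4^kz)|<\infty$ from the two-sided decay $|\psi(z)|\lesssim\min(|z|^{1/2},e^{-c|z|})$; this is exactly the kind of operator-theoretic shortcut the paper acknowledges (in the comment after Proposition~\ref{prop:square-function}) but deliberately avoids in favor of a self-contained harmonic-analysis argument. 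Two small points you should make explicit: work first with finitely many nonzero $\beta_k$ (then $\Psi_s$ is a finite combination of $\H_0^\infty(\Sigma_\mu)$ functions, so no extension of the calculus beyond $\H_0^\infty$ is needed and linearity gives $\Psi_s(L_w^*)=\sum_k r_k(s)\psi(4^kL_w^*)$ directly), and pass to the limit at the end with constants independent of the number of terms. As for what each approach buys: yours is shorter and recycles Proposition~\ref{prop:B-K:weights}, and it also yields the weighted $L^p(v\,dw)$ variants mentioned in the remark following the proposition, since the weighted calculus bound \eqref{eq:fc-vw} and Khintchine apply verbatim; the paper's choice keeps one uniform template (the $\mathcal{A}_r$ approximation argument) running through all of its square-function estimates, which is why it phrases the proof as ``repeat the argument of Proposition~\ref{prop:square-function} with discrete times.''
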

  
\begin{proof}
 By duality and since $p_\pm(L_w)'=p_{\mp}(L^*_w)$ ,  it will suffice to show that for every
 $p_-(L_w^*)<p<p_+(L_w^*)$,
\begin{equation}\label{discrete-SF}
\bigg\| \Big(\sum_{k\in \mathbb{Z}} |\overline{\psi}(4^kL_w^*)h|^2 \Big)^{\frac12}\bigg\|_{L^p(w)}
\lesssim
\|h\|_{L^p(w)},
\end{equation}
The function $\psi$ satisfies $|\psi(z)| \le C|z|^{1/2} e^{-c|z|}$
uniformly on subsectors $\Sigma_{\mu}$, $0\le  \mu < {\frac \pi 2}$.
Thus the operator on the lefthand side of~\eqref{discrete-SF} is a
discrete analog of the square function $g_{L_w^*}$ changing continuous times $t$ to discrete
times $4^k$ and  $z^{1/2}e^{-z}$ to $\overline{\psi}(z)$.  Since $\overline{\psi}(z)$ has the same  quantitative
properties as $z^{1/2}e^{-z}$   (decay at 0 and at
infinity), we can repeat the previous argument and obtain the desired
estimates as in 
the  proof of Proposition~\ref{prop:square-function}. 
\end{proof}

\begin{remark}
  In Proposition~\ref{prop:dsicrtee-SF} we can also get $L^p(v\,dw)$
  estimates, but in the proof of
  Proposition~\ref{prop:reverseRiesz} below we will only need the
  unweighted estimates. Further details and the precise statements are
  left to the interested reader.
\end{remark}

\section{Reverse inequalities}
\label{section:reverse}

In this section we will prove $L^p(w)$ estimates of the form
$\|L_w^{1/2}f\|_{L^p(w)} \le C\|\nabla f\|_{L^p(w)}$, which
generalize the $L^2(w)$ Kato estimates in
Theorem~\ref{theorem:degen-kato}.    These are
referred to as reverse inequalities since if we replace $f$ by
$L_w^{-1/2}f$, then formally we get a reverse-type inequality for the
Riesz transform:  $\|f\|_{L^p(w)} \le C\|\nabla L_w^{-1/2}
f\|_{L^p(w)}$.   

Since these estimates involve the gradient, in proving them we will
rely (implicitly and explicitly) on the weighted Poincar\'e
inequality~\eqref{w-Poincare}.   This will require an additional
assumption on $p$ when $p<2$.  To state it simply, define
\[ (p_-(L_w))_{w,*} = \frac{n\,r_w\,p_-(L_w)}{n\,r_w+p_-(L_w)} < p_-(L_w).  \]

\begin{prop}\label{prop:reverseRiesz}
Let
$\max\{r_w,(p_-(L_w))_{w,*}\}<p<
p_+(L_w)$.  Then for all $f\in \cals$,
\begin{equation}  \label{eq:reverseRiesz}
\|L_w^{1/2}f\|_{L^p(w)} \le C\, \|\nabla f\|_{L^p(w)}.
\end{equation}
with $C$ independent of $f$.
Furthermore, if $\max\{r_w,p_-(L_w)\}<p<p_+(L_w)$ and 
$v \in A_{{p}/{\max\{r_w,p_-(L_w)}\}}(w)\cap
RH_{({p_+(L_w)}/{p})'}(w)$, then for all $ f\in\cals$,
\begin{equation}\label{eq:reverseRiesz-vdw}
\|L_w^{1/2}f\|_{L^p(v\,dw)} \le C\, \|\nabla f\|_{L^p(v\,dw)}.
\end{equation}
\end{prop}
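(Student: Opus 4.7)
The plan is to apply the extrapolation result of Theorem~\ref{theorem:2.2} to the pair $(T,S) = (L_w^{1/2},\nabla)$ with the averaging operators $\mathcal{A}_r = I - (I-e^{-r^2 L_w})^m$ for $m\in\mathbb{N}$ sufficiently large. The base case $p=2$ is the $L^2(w)$ Kato estimate \eqref{eqn:degen-kato1}. Hypothesis \eqref{eqn:2.2} is routine: $L_w^{1/2}$ commutes with $\mathcal{A}_r$, so the off-diagonal bounds $\mathcal{A}_r\in\offw{p_0}{q_0}$ recorded in \eqref{eqn:A-od}, valid throughout $p_-(L_w)<p_0\le q_0<p_+(L_w)$, give \eqref{eqn:2.2} immediately.

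The crux is hypothesis \eqref{eqn:2.1}. I would use the holomorphic functional calculus representation
\[
L_w^{1/2}(I-\mathcal{A}_r)f \;=\; \int_\Gamma e^{-zL_w}f\cdot\tilde\eta_r(z)\,dz,
\]
coming from $\zeta\mapsto\zeta^{1/2}(1-e^{-r^2\zeta})^m$ via \eqref{eqn:L2-holo-rep-eta}, whose kernel satisfies $|\tilde\eta_r(z)|\lesssim \min\{|z|^{-1/2},\,r^{2m}|z|^{-m-1/2}\}$. The key observation is that this operator annihilates constants (since $L_w\cdot 1 = 0$), so inside each annular piece $C_j(B)$ we may replace $f$ by $f-f_{2^{j+1}B,w}$ at no cost. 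Decomposing $f$ into annuli, applying Minkowski along $\Gamma$ and using the semigroup off-diagonal bounds of Corollary~\ref{cor:holomorphic} at an auxiliary exponent $q_1\in(p_-(L_w),p_+(L_w))$ reduces matters to averages of $f-f_{2^{j+1}B,w}$ in $L^{q_1}(w)$ on $2^{j+1}B$. Invoking the weighted Poincar\'e--Sobolev inequality \eqref{w-Poincare} with output exponent $q_1$ and input exponent $p_0$ (which requires $q_1<p_w^*(p_0)$) then converts these into $r(2^{j+1}B)\cdot(\avgint_{2^{j+1}B}|\nabla f|^{p_0}\,dw)^{1/p_0}$. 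Choosing $m$ large enough makes the sum over $j$ converge.

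Applying Theorem~\ref{theorem:2.2} with $p_0$ slightly above $\max\{r_w,p_-(L_w)\}$ and $q_0$ slightly below $p_+(L_w)$ yields \eqref{eq:reverseRiesz-vdw} by openness of $A_{p/p_0}(w)\cap RH_{(q_0/p)'}(w)$. For the unweighted estimate \eqref{eq:reverseRiesz}, one can additionally exploit the Sobolev gain inside the verification of \eqref{eqn:2.1}: the condition $p_0>(p_-(L_w))_{w,*}$ is exactly $p_w^*(p_0)>p_-(L_w)$, so a valid auxiliary exponent $q_1>p_-(L_w)$ still exists even when $p_0<p_-(L_w)$. An adaptation of the scheme above (combining the extrapolation argument with the Sobolev--Poincar\'e improvement) extends the inequality to this larger range.

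\textbf{Main obstacle.} The principal difficulty is that $L_w^{1/2}$ admits no direct divergence-form representation that would let us bring in $\nabla f$ straightforwardly; instead, one must funnel the estimate through the semigroup's off-diagonal decay and recover the gradient via Poincar\'e, paying the Sobolev integrability margin $p_-(L_w)\mapsto(p_-(L_w))_{w,*}$ in the process. The threshold $r_w$ then enters so that $w\in A_{p_0}$, which is needed for \eqref{w-Poincare} to be available with respect to $dw$. In the weighted case no such Sobolev gain is possible, which is why the admissible range shrinks back to $p>\max\{r_w,p_-(L_w)\}$.
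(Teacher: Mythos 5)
Your plan reproduces only the second half of the paper's argument, and the two places where it goes beyond that contain genuine gaps. First, the local term in the verification of \eqref{eqn:2.1} does not work as sketched. For $\varphi(\zeta)=\zeta^{1/2}(1-e^{-r^2\zeta})^m$ the kernel from \eqref{eqn:L2-holo-rep-eta} behaves like $|z|^{-3/2}$ as $z\to 0$ on $\Gamma$ (not $|z|^{-1/2}$), and for the piece of $f$ supported in $4B$ there is no off-diagonal exponential factor $e^{-c4^jr^2/|z|}$ to make the $z$-integral converge; replacing $f$ by $f-f_{4B,w}$ does not cure this. The paper instead writes $\varphi(L_w)h_1=(I-e^{-r^2L_w})^mL_w^{1/2}h_1$, uses the $H^\infty$ calculus on $L^{p_0}(w)$, and then invokes the \emph{already proven} unweighted reverse inequality $\|L_w^{1/2}h_1\|_{L^{p_0}(w)}\lesssim\|\nabla h_1\|_{L^{p_0}(w)}$ at an exponent $p_0<2$. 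That input is supplied by a completely separate first step (a Calder\'on--Zygmund decomposition adapted to $\|\nabla f\|_{L^p(w)}$, off-diagonal bounds for $tL_we^{-tL_w}$ at an exponent $q\in\J(L_w)$ with $q\le p_w^*$, the adjoint discrete square-function estimate of Proposition~\ref{prop:dsicrtee-SF}, a weak-type $(p,p)$ bound, and Marcinkiewicz interpolation against the $L^2(w)$ Kato estimate). Your proposal contains no substitute for this step, and trying to use the reverse inequality at $p_0<2$ inside your extrapolation scheme would be circular; taking $p_0=2$ instead would shrink both the range of $p$ and the weight class below what the proposition asserts.

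Second, the claimed extension of the unweighted estimate down to $p>\max\{r_w,(p_-(L_w))_{w,*}\}$ cannot be obtained from Theorem~\ref{theorem:2.2} with $\mathcal{A}_r=I-(I-e^{-r^2L_w})^m$. The Sobolev gain $q_1<p_w^*(p_0)$ only helps in hypothesis \eqref{eqn:2.1}; hypothesis \eqref{eqn:2.2} requires $\mathcal{A}_r\in\offw{p_0}{q_0}$, hence $p_0\in\J(L_w)$, i.e.\ $p_0\ge p_-(L_w)$ (below $p_-(L_w)$ the semigroup is in general not even uniformly bounded on $L^{p_0}(w)$), whereas reaching $p$ in $(\,(p_-(L_w))_{w,*},\,p_-(L_w)\,]$ via Theorem~\ref{theorem:2.2} would force $p_0<p_-(L_w)$. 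In the paper the exponent $(p_-(L_w))_{w,*}$ enters through the property \eqref{eqcsds6} of the bad functions $b_i$ in the Calder\'on--Zygmund decomposition, which lets one use off-diagonal estimates at some $q\in\J(L_w)$ even though $p<p_-(L_w)$; this mechanism is absent from your plan. Your treatment of the terms $j\ge2$ in \eqref{eqn:2.1} (off-diagonal bounds at an auxiliary $q_1\in\J(L_w)$ plus telescoping Poincar\'e, with $p_0>r_w$ so that $w\in A_{p_0}$) and of \eqref{eqn:2.2} for $p_0>p_-(L_w)$ is essentially the paper's, but as it stands the argument proves neither \eqref{eq:reverseRiesz} in its stated range nor, because of the local term, \eqref{eq:reverseRiesz-vdw}.
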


\begin{remark}
The quantity $\max\{r_w,(p_-(L_w))_{w,*}\}$  can be equal to either term.
For instance, it equals $r_w$ if $p_-(L_w) \le n'r_w$.  From
Proposition~\ref{prop:J} we know that $p_-(L_w) <
(2_w^*)'=\frac{2\,n\,r_w}{n\,r_w+2}$, but this only implies the previous
inequality for some values of $n$ and $r_w$.
\end{remark}

\medskip

\begin{proof}
As before, let  $p_-=p_-(L_w)$ and $p_+=p_+(L_w)$.  Fix 
$p$,   $\max\big\{r_w,(p_-)_{w,*}\big\}<p<2$, and $f\in\cals$. We will
first show that
\begin{equation}\label{eq:weak-type-reverse}
\|L_w^{1/2}f\|_{L^{p,\infty}(w)} \lesssim \|\nabla f\|_{L^p(w)}.
\end{equation}
First note that since $p>r_w$, $w\in A_p$.  Therefore, given $\alpha>0$ we can form the
Calder\'on-Zygmund decomposition given in \cite[Lemma
6.6]{auscher-martell06}:  there exist a collection of balls $\{B_i\}_i$,
smooth functions $\{b_i\}_i$ and a function $g\in L^1_{\rm loc}(w)$
such that
\begin{equation}\label{eqcsds1}
f= g+\sum_i b_i
\end{equation} 
and the following properties hold:
\begin{equation}\label{eqcsds2}
|\nabla g(x)| \le C\alpha
\quad
\text{for $w$-a.e. } x,
\end{equation}
\begin{equation}\label{eqcsds3}
\supp(b_i) \subset B_{i}
\quad\text{and}\quad
\int_{B_i} |\nabla
b_i|^p\, dw \le C\alpha^p w(B_i),\end{equation}
 \begin{equation}\label{eqcsds4}
\sum_i w(B_i) \le \frac{C}{\alpha^{p}} \int_{\re^n} |\nabla f|^p\, dw ,
\end{equation}
\begin{equation}\label{eqcsds5}
\sum_i \bigchi_{B_i} \le N,
\end{equation}
 \begin{equation}\label{eqcsds6}
\Big(\aver{B_i} |b_i|^q\,dw\Big)^\frac1q
\lesssim
C\,\alpha\, r(B_{i})\quad
\text{for }1\le q\le p^*_w,
\end{equation}
where $C$ and  $N$ depend only on $n$, $p$, $q$ and the doubling
constant of $w$.

To prove~\eqref{eq:weak-type-reverse} we
will prove the corresponding weak-type estimates with $f$ replaced by $g$ and
$b_i$.  For $g$, we use the $L^2(w)$ Kato estimate
\eqref{eqn:degen-kato1}, \eqref{eqcsds2}, and the fact that $p<2$ to
get
\begin{multline*}
w(\{|L_w^{1/2}g| > \alpha/3 \})
\lesssim
\frac{1}{\alpha^{2}}\int_{\re^n} |L_w^{1/2} g|^{2}\, dw
\lesssim
\frac{1}{\alpha^{2}}\int_{\re^n} |\nabla g|^{2}\, dw
\lesssim
\frac{1}{\alpha^p}\int_{\re^n} |\nabla g|^p\,  dw
\\
\lesssim
\frac{1}{\alpha^p}\int_{\re^n} |\nabla f|^p\,  dw +
\frac{1}{\alpha^p}\int_{\re^n}
\Big|\sum_i\nabla b_i\Big|^p\,  dw
\lesssim
\frac{1}{\alpha^p}\int_{\re^n} |\nabla f|^p\,  dw,
\end{multline*}
where the last estimate follows from  \eqref{eqcsds5},
\eqref{eqcsds3}, and \eqref{eqcsds4}.

To prove a weak-type estimate for  $L_w^{1/2}(\sum_i b_i)$, let $r_i=2^k$ if $2^k \le r(B_{i})
< 2^{k+1}$.  Then for all $i$,  $r_{i}\sim r(B_{i})$.  Write
$$
L_w^{1/2}
=
\frac 1 {\sqrt \pi}\,\int_0^{r_i^2} L_w e^{-t \, L_w}  \, \frac{dt}{\sqrt t}+ \frac 1 {\sqrt \pi}\,\int_{r_i^2}^\infty L_w e^{-t\,
L_w} \, \frac{dt}{\sqrt t}
=
T_i+U_i;
$$
then we have that
\begin{align*}
w\Big(\Big\{\Big|\sum_i L_w^{1/2} b_i\Big|>\frac{2\,\alpha}{3}\Big\}\Big)
&
\le
w\Big(\bigcup_i\, 4\,B_i\Big)+ w\Big(\Big\{\Big|\sum_i
U_ib_i\Big|>\frac{\alpha}{3}\Big\}\Big)
\\
&\hskip1.5cm
+ w\Big( \Big(\re^n\setminus\bigcup_i \, 4\,B_i \Big) \bigcap \Big\{
\Big|\sum_i T_ib_i\Big|
>\frac{\alpha}{3}\Big\}\Big)
\\
&
\lesssim
\frac1{\alpha^p}\,\int_{\re^n} |\nabla f|^p\,dw + I_1+I_2,
\end{align*}
where the last inequality follows from~\eqref{eqcsds4}. 

We first estimate $I_2$. Since $p> (p_-)_{w,*}$ then  $p_w^*>((p_-)_{w,*})_w^*=p_-$, and we can choose $q\in
\J(L_w)$ such that \eqref{eqcsds6} is satisfied. By Corollary~\ref{cor:holomorphic},
$t\,L_w\,e^{-t\, L_w}\in \offw{q}{q}$, and so
\begin{align*}
I_2
&
\lesssim
\frac{1}{\alpha}\,\sum_i\sum_{j\ge 2} \int_{C_j(B_i)} |T_ib_i| \, dw \\
& 
\lesssim
\frac{1}{\alpha}\,\sum_i\sum_{j\ge 2}
w(2^{j}\,B_i)\,\int_0^{r_i^2}\aver{C_{j}(B_{i})} |t\, L_w\, e^{-t \,L_w}
b_i |\, dw\,\frac{dt}{t^{3/2}}
\\
&\lesssim
\frac{1}{\alpha}\,\sum_i\sum_{j\ge 2} 2^{j\,D}\,w(B_i)\,\int_0^{r_i^2}
2^{j\,\theta_{1}}\, \dec{\frac{2^j\, r_i}{\sqrt t}}^{\theta_{2}} \,
\expt{-\frac{c\, 4^j\, r_{i}^2}{t}}\,\frac{dt}{t^{3/2}}
\, \bigg(\aver{B_{i}} |b_{i}|^q\, dw\bigg)^{\frac1q}
\\
&\lesssim
\sum_i\sum_{j\ge 2} 2^{j\,D}\,e^{-c\,4^j}\,w(B_i) \\
& \lesssim
\sum_i w(B_i) \\
& \lesssim \frac{1}{\alpha^p}\,\int_{\re^n}
|\nabla f|^p \, dw,
\end{align*}
where we have used  \eqref{eqcsds6} and \eqref{eqcsds4}, and $D$ is the doubling order of $dw$.

We will now estimate $I_1$. For $q$ as above, by
Proposition~\ref{prop:B-K:weights} we have an $L^q(w)$
functional calculus for $L_w$.  Therefore,  we can write $U_i$ as
$r_i^{-1}\psi(r_i^2L_w)$ with $\psi$ defined by \eqref{eqpsiw}. Let
$\beta_{k}= \sum_{i\, : \, r_i=2^k}\frac{b_i}{r_i}$;  then,
$$
\sum_i U_i \, b_i
=
\sum_{k\in \mathbb{Z}} \psi(4^k\, L_w)
\bigg(\sum_{i\, : \, r_i=2^k}\frac{b_i}{r_i}\bigg)
=
\sum_{k\in \mathbb{Z}} \psi(4^k\,L_w) \beta_k.
$$
Therefore, by Proposition \ref{prop:dsicrtee-SF}, \eqref{eqcsds5},
 \eqref{eqcsds6},  the fact that $r_{i}\sim r(B_{i})$ and \eqref{eqcsds4},
we have that
\begin{multline*}
I_1
\lesssim
\frac1{\alpha^q}\,
\Big\|\sum_i U_i b_i\Big\|_{L^q(w)}^q
\lesssim
\frac1{\alpha^q}\,
\bigg\|\Big(\sum_{k\in \mathbb{Z}}  |\beta_k|^2\Big)^{\frac12}
\bigg\|_{L^q(w)}^q \\
\lesssim
\frac1{\alpha^q}\, \int_{\re^n} \sum_{i}  \frac{|b_i|^q}{r_i^q}\ dw
\lesssim
\sum_{i}w(B_i)
\lesssim
\frac{1}{\alpha^p}\int_{\RR^n} |\nabla f|^p\,  dw.
\end{multline*}
If we combine all of the estimates we have obtained, we 
get \eqref{eq:weak-type-reverse} as desired.

\medskip

To prove~\eqref{eq:reverseRiesz} from the weak-type estimate
\eqref{eq:weak-type-reverse} we
will use an interpolation argument from \cite{auscher-martell06}. 
Fix $p$ and $r$ such that $\max\big\{r_w,(p_-)_{w,*}\big\}<r<p<2$. 
Then by \eqref{eq:weak-type-reverse} and
\eqref{eqn:degen-kato1} we have that for every $f\in\cals$,
\begin{equation}\label{eq:interpol-grad}
\|L_w^{1/2}f\|_{L^{r,\infty}(w)} \lesssim \|\nabla f\|_{L^r(w)},
\qquad
\|L_w^{1/2}f\|_{L^2(w)} \lesssim \|\nabla f\|_{L^2(w)}.
\end{equation}
Formally, to apply Marcinkiewicz interpolation, we let $g=\nabla f$ to
get a weak $(r,r)$ and strong $(2,2)$ inequality; this would
immediately yield a strong $(p,p)$ inequality.  To formalize this we
must justify this substitution.  

For every $q>r_w$ by \cite[Lemma 6.7]{auscher-martell06} we have that
$$
\mathcal{E}
=
\big\{(-\Delta)^{1/2}f\, : \, f\in \cals, \supp \widehat f \subset \RR^n\setminus \{0\}\big\}
$$
is dense in $L^q(w)$, where $\widehat f$ denotes the Fourier transform
of $f$.  Moreover, since $r>r_{w}$, $w\in A_r$ and the Riesz
transforms, $R_j = \partial_j (-\Delta)^{-1/2}$, are bounded on
$L^r(w)$~\cite{garciacuerva-rubiodefrancia85}.    It follows from this
and the identity $-I=R_1^2+\dots +R_n^2$ that for
$g\in L^r(w)$, 
$$
\|g\|_{L^r(w)} \sim \|\nabla (-\Delta)^{-1/2} g\|_{L^r(w)}.
$$
Thus, for $g\in \mathcal{E}$, $L_w^{1/2}(-\Delta)^{-1/2}g=L_w^{1/2}f$
if $f=(-\Delta)^{-1/2}g$ and
$\|\nabla f\|_{L^r(w)} \sim \|g \|_{L^r(w)}$ for $r>r_{w}$.   Thus 
\eqref{eq:interpol-grad} becomes weighted weak $(r,r)$
and strong $(2,2)$ inequalities for $T= L_w^{1/2}(-\Delta)^{-1/2}$,
and this operator is defined \textit{a
  priori} on $\mathcal{E}$. Since $\mathcal{E}$ is dense in
each $L^q(w)$, we can extend
$T$ by density in both cases and their restrictions to the space of
simple functions agree. Hence, we can apply Marcinkiewicz
interpolation and conclude, again by density, that
\eqref{eq:reverseRiesz} holds for all $p$ with $r<p<2$. Since $r$
is arbitrary, we get \eqref{eq:reverseRiesz} in the range $\max\big\{r_w,(p_-)_{w,*}\big\}<p<2$.

\bigskip

For the second step of the proof we will
prove~\eqref{eq:reverseRiesz-vdw} using Theorem~\ref{theorem:2.2}.
Inequality~\eqref{eq:reverseRiesz} for its full range of exponents then follows by letting $v=1$.
Define $\tilde{p}_-=\max\{r_w,p_-\}<2$, and fix $\tilde{p}_-<p<p_+$ and
$v\in A_{p/\tilde{p}_-}(w)\cap RH_{(p_+/p)'}(w)$. By the openness
properties of $A_q$ and $RH_s$ weights, there exist $p_0$, $q_0$ such
that
\[
\tilde{p}_-<p_0<\min\{p,2\}\le p<q_0<p_+,
\qquad
v \in A_{p/p_0}(w)\cap
RH_{(q_0/p)'}(w).  \]

To apply Theorem \ref{theorem:2.2}, let $T=L_w^{1/2}$,
$S=\nabla$, and $\mathcal{A}_{r}=I-( I-e^{-r^{2}L_{w}}) ^{m}$ where the
value of $m$ will be fixed below. We will first show that \eqref{eqn:2.2}
holds. By \eqref{eqn:A-od} we have that $\mathcal{A}_{r}\in \offw{p_0}{q_0}$
since $p_0$, $q_0\in \J(L_w)$.   Let $h=L_w^{1/2} f$ and decompose
$h$ as we decomposed $f$ in \eqref{decomp-f}. Then, since $L_w^{1/2}$
and $\mathcal{A}_{r}$ commute, it follows that
\begin{align*}
\left(
\avgint_{B}\left\vert {L_{w}^{1/2}}\mathcal{A}_{r} f\right\vert ^{q_{0}}dw\right) ^{\frac{1}{q_0}}
& \lesssim
\sum_{j\ge 1}
\left(
\avgint_{B} \left\vert \mathcal{A}_{r} h_j \right\vert ^{q_{0}}dw\right) ^{\frac{1}{q_0}}
\\
& \lesssim
\sum_{j\ge 1}
2^{j\theta_{1}}\Upsilon \left( 2^{j}\right) ^{\theta _{2}} e^{-c4^{j}}
\bigg(
\avgint_{C_{j}}\left\vert h\right\vert ^{p_{0}}dw
\bigg) ^{\frac{1}{p_0}} \\
& \le
\sum_{j\ge 1}
2^{j(\theta_{1}+\theta_2)} e^{-c4^{j}}
\bigg(
\avgint_{2^{j+1}\,B}\left\vert L_w^{1/2}f\right\vert ^{p_{0}}dw
\bigg) ^{\frac{1}{p_0}}.
\end{align*}
This gives us \eqref{eqn:2.2} with $g(j)=C\,2^{j\left(
\theta _{1}+\theta _{2}\right) }e^{-c4^{j}}$; clearly, $\sum
g(j)<\infty$.

We now prove that \eqref{eqn:2.1} holds.  Fix $f\in\cals$ and let
$\varphi(z)=z^{1/2} (1-e^{-r^2\, z})^m$ so that
$\varphi(L_w)f=L_w^{1/2}(I-e^{-r^2\,L_w})^mf$.  By the conservation
property (see \cite{DCU-CR2013} or \cite[Section
2.5]{auscher07}),
\begin{equation}\label{eq:rep}
\varphi(L_w)\, f=
\varphi(L_w)\, (f-f_{4\,B,w})
=
\sum_{j\ge 1} \varphi(L_w)\, h_j,
\end{equation}
where $h_{j}=(f-f_{4\,B,w})\,\phi_j$,
$\phi_j=\bigchi_{C_j(B)}$ for $j\ge 3$, $\phi_1$ is a  smooth
function with support in $4\,B$, $0\le \phi_1\le 1$, $\phi_{1}=1$ in $2\,B$
and $\|\nabla \phi_1\|_{\infty}\le C/r$, and  $\phi_2$ is
chosen so that $\sum_{j\ge 1} \phi_j=1$. 

We estimate each term in the righthand side of~\eqref{eq:rep} separately.
When $j=1$, since $ p_-<p_0< p_+$, by the bounded holomorphic
functional calculus on $L^{p_{0}}(w)$ (Proposition \ref{prop:B-K:weights}) and
the fact that $\varphi(L_w)\, h_{1}= (I-e^{-r^2\, L_w})^m\,
L_w^{1/2}h_{1}$,   we have that 
$$
\left\|\varphi(L_w)\, h_{1}\right\|_{L^{p_{0}}(w)}
 \lesssim
\|L_w^{1/2}h_{1}\|_{L^{p_{0}}(w)}
$$
uniformly in $r$.
By the above argument we have that~\eqref{eq:reverseRiesz} holds for $p=p_0$ since
$\tilde{p}_-<p_0<2$.
Further, since  $f\in\cals$, $h_1\in\cals$ by our choice of
$\phi_1$. This, together with the $L^{p_{0}}(w)$-Poincar\'e inequality
\eqref{w-Poincare} (since $p_0>r_w$, $w\in A_{p_0}$)
and the definition of $h_{1}$ yield
\begin{multline*}
\|L_w^{1/2}h_{1}\|_{L^{p_{0}}(w)}
\lesssim
\|\nabla h_{1}\|_{L^{p_{0}}(w)}
\\
\lesssim
\|(\nabla f) \bigchi_{4B}\|_{L^{p_{0}}(w)}
+
r^{-1}\,\|(f-f_{4\,B,w})\bigchi_{4B}\|_{L^{p_{0}}(w)}
\lesssim
\|(\nabla f) \bigchi_{4B}\|_{L^{p_{0}}(w)}.
\end{multline*}
Therefore,
$$
\bigg( \aver{B} |\varphi(L_w)\, h_{1}|^{p_0}\,dw\bigg)^{\frac1{p_0}}
\lesssim
\bigg( \aver{4\,B} |\nabla f |^{p_0}\,dw\bigg)^{\frac1{p_0}}.
$$

When $j\ge 3$, the functions  $\eta$
associated with $\varphi$  by \eqref{eqn:L2-holo-rep-eta}  satisfy
$$
|\eta(z)|
\lesssim \frac{r^{2\,m}}{|z|^{m+3/2}},
\qquad
z \in \Gamma_{\pi/2-\theta}.
$$
Since $p_0\in \J(L_w)$, by Corollary~\ref{cor:holomorphic},
$e^{-z\,L_w}\in \mathcal{O}\big(L^{p_0}(w) \rightarrow L^{p_0}(w),
\Sigma_{\mu}\big)$. This, together with the
representation~\eqref{eqn:L2-holo-rep}, gives us that
\begin{align*}
&\bigg( \aver{B} |\varphi(L_w)h_j|^{p_0}dw\bigg )^{\frac1{p_0}} \\
&\qquad\quad \le
\int_{\Gamma_{\pi/2-\theta}} \bigg(\aver{B} |e^{-z\,L}
h_j|^{p_0}\,dw\bigg)^{\frac1{p_0}}\, |\eta(z)|\,|dz|
\\
&\qquad\quad \lesssim
2^{j\,\theta_1} \int_{\Gamma_{\pi/2-\theta}}
\dec{\frac{2^j\,r}{\sqrt{|z|}}}^{\theta_2}\,
\expt{-\frac{\alpha\,4^j\,r^2}{|z|}}\,
\frac{r^{2\,m}}{|z|^{m+3/2}}\,  {|dz|} \,
\bigg(\aver{C_j(B)}
|h_{j}|^{p_0}\,dw\bigg)^{\frac1{p_0}}
\\
&
\qquad\quad\lesssim
2^{j\,(\theta_1- 2\,m-1)} \, r^{-1}\,
\bigg(\aver{2^{j+1}\,B}
|f-f_{4\,B,w}|^{p_0}\,dw\bigg)^{\frac1{p_0}}
\\
&
\qquad\quad\lesssim
2^{j\,(\theta_1- 2\,m-1)} \, \sum_{l=1}^{j} 2^{l}\,
\bigg(\aver{2^{l+1}\,B}
|\nabla f|^{p_0}\,dx\bigg)^{\frac1{p_0}},
\end{align*}
provided $2\, m+1>\theta_2$.  
The last estimate follows from  $L^{p_{0}}(w)$-Poincar\'e inequality
\eqref{w-Poincare} (here we again use that $p_0>r_w$ and so $w\in
A_{p_0}$):
\begin{align}\label{eq:poincare}
&\bigg(\aver{2^{j+1}\,B}
|f-f_{4\,B,w}|^{p_0}\,dw\bigg)^{\frac1{p_0}}
\nonumber
\\
&\qquad\quad
\le
\bigg(\aver{2^{j+1}\,B}\hskip-7pt
|f-f_{2^{j+1}\,B,w}|^{p_0}\,dw\bigg)^{\frac1{p_0}} +\sum_{l=2}^j
|f_{2^l\,B,w}-f_{2^{l+1}\,B,w}|
\nonumber
\\
&\qquad\quad\lesssim
\sum_{l=1}^{j}
\bigg(\aver{2^{l+1}\,B}
|f-f_{2^{l+1}\,B}|^{p_0}\,dx\bigg)^{\frac1{p_0}}
\nonumber
\\
&\qquad\quad\lesssim
r\,
\sum_{l=1}^{j} 2^{l}\,
\bigg(\aver{2^{l+1}\,B}
|\nabla f|^{p_0}\,dx\bigg)^{\frac1{p_0}}.
\end{align}

When $j=2$ we can argue similarly,  using the fact that
$$|h_2|\le |f-f_{4\,B,w}|\,\bigchi_{8\,B\setminus 2\,B} \le  |f-f_{2\,B,w}|\,\bigchi_{8\,B\setminus 2\,B}
+  |f_{4\, B,w}-f_{2\,B,w}|\,\bigchi_{8\,B\setminus 2\,B}.
$$

If we combine these estimates, then by~\eqref{eq:rep} and Minkowski's
inequality we get
$$
\bigg( \aver{B} |\varphi(L_w)h|^{p_0}dw\bigg )^{\frac1{p_0}}
\lesssim
\sum_{j\ge 1}
\bigg( \aver{B} |\varphi(L_w)h_j|^{p_0}dw\bigg )^{\frac1{p_0}}
\le
\sum_{j\ge 1}
g(j)\bigg( \aver{B} |\nabla f|^{p_0}dw\bigg )^{\frac1{p_0}}
$$
with $g(j)=C_m\,2^{j\,(\theta_1-2\,m)}$ provided $2\,m+1>\theta_2$.
If  we further assume that $2\,m>\theta_1$,  then $\sum_j g(j)<\infty$.
This proves that \eqref{eqn:2.1} holds.  Therefore, by  Theorem
\ref{theorem:2.2} we get~\eqref{eq:reverseRiesz-vdw} as desired.
\end{proof}


\section{The gradient of the semigroup $\sqrt{t}\grad e^{-tL_w}$}
\label{section:gradient}

Let $\widetilde \K(L_w)\subset [1,\infty]$ be the set of all exponents
$p$ such that $\sqrt{t} \grad e^{-t\,L_w} : L^p(w)\rightarrow L^p(w)$
is uniformly bounded for all $t>0$. By Theorem~\ref{thm:L2-gaffney}
and Lemma~\ref{lemma:full}, $2\in \widetilde \K(L_w)$ and if it
contains more than one point, then  by
interpolation $\widetilde \K(L_w)$ is an interval.  In this section
we give a partial description of the set of $(p,q)$ such that
$\sqrt{t} \grad e^{-t\,L_w} \in \offw{p}{q}$.

\begin{prop}\label{prop:K}
There exists an interval  $\K(L_w)$ such that if $p,\,q \in \K(L_w)$,
$p\le q$,  then $\sqrt{t}\,\nabla e^{-t\,L_w}\in \offw{p}{q}$.
Moreover, $\K(L_w)$ has the following properties:
\begin{enumerate}
\item $\K(L_w)\subset \widetilde\K(L_w)$;

\item if $q_-(L_w)$ and $q_+(L_w)$ are the left and right endpoints
  of $\K(L_w)$, then $q_-(L_w)=p_-(L_w)$, $2\le q_+(L_w)\le
  (q_+(L_w))^*_w\le p_+(L_w)$.  In particular, $2\in \K(L_w)$ and
  $\K(L_w)\subset \J(L_w)$;

\item If $q\geq 2$ and $p<q$, and if $\sqrt{t}\,\nabla e^{-t\,L_w} \in
  \offw{p}{q}$, then $p,\,q \in \K(L_w)$;

\item $\sup \widetilde \K(L_w) = q_+(L_w)$.
\end{enumerate}
\end{prop}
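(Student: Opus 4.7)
The plan is to combine four ingredients: the composition identity $G_t = \sqrt{2}\,G_{t/2}\circ e^{-(t/2)L_w}$, where I write $G_t:=\sqrt{t}\,\nabla e^{-tL_w}$; the Gaffney estimate $G_t\in\fullw{2}{2}$ from Theorem~\ref{thm:L2-gaffney} (which gives $G_t\in\offw{2}{2}$ by Lemma~\ref{lemma:full}); the off-diagonal estimates for the semigroup from Proposition~\ref{prop:J}; and the local weighted Poincar\'e--Sobolev inequality of Theorem~\ref{thm:wtd-poincare}.

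I would first define $\K(L_w):=\{p\in[1,\infty]: G_t\in\offw{p}{p}\}$. Since $2\in\K(L_w)$, Lemma~\ref{lemma:interpolate} applied to any off-diagonal estimate against the $L^2(w)$ bound shows that $\K(L_w)$ is an interval, and property~(1) is immediate from Lemma~\ref{lemma:unif-comp}(1). Property~(3) follows directly from Lemma~\ref{lemma:nested}: any $\offw{p}{q}$-estimate with $p\le q$ automatically gives $\offw{r}{r}$ for every $r\in[p,q]$, forcing $[p,q]\subset\K(L_w)$. Once $\K(L_w)\subset\J(L_w)$ is known (see below), the full off-diagonal claim $p,q\in\K(L_w)\Rightarrow G_t\in\offw{p}{q}$ follows by applying Proposition~\ref{prop:J} to get $e^{-(t/2)L_w}\in\offw{p}{q}$ and composing with $G_{t/2}\in\offw{q}{q}$ via Lemma~\ref{lemma:unif-comp}(2).

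For the left endpoint, the inequality $q_-(L_w)\le p_-(L_w)$ comes from the composition identity: for $p\in(p_-(L_w),2]$, Proposition~\ref{prop:J} gives $e^{-(t/2)L_w}\in\offw{p}{2}$, and composing with $G_{t/2}\in\offw{2}{2}$ via Lemma~\ref{lemma:unif-comp}(2) yields $G_t\in\offw{p}{2}$; Lemma~\ref{lemma:nested} then places $p$ in $\K(L_w)$. For the right endpoint and the inclusion $\K(L_w)\subset\J(L_w)$, I would run a Sobolev bootstrap modelled on the proof of Proposition~\ref{prop:J}, but with exponent $p$ in place of $2$: if $p\in\K(L_w)\cap\J(L_w)$ with $p\ge 2$, then since both $G_t\in\offw{p}{p}$ and $e^{-tL_w}\in\offw{p}{p}$, the estimate
\[
\Big(\aver{B}|e^{-tL_w}f|^{p^*_w}\,dw\Big)^{\frac{1}{p^*_w}}
\lesssim
\big|(e^{-tL_w}f)_{B,w}\big|+r\,\Big(\aver{B}|\nabla e^{-tL_w}f|^p\,dw\Big)^{\frac{1}{p}}
\]
from Theorem~\ref{thm:wtd-poincare}, together with the corresponding bounds on annular regions $C_j(B)$, yields $e^{-tL_w}\in\offw{p}{p^*_w}$, and hence $p^*_w\in\J(L_w)$. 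Starting from $p_0=2\in\K(L_w)\cap\J(L_w)$ and iterating $p_{k+1}=(p_k)^*_w$ as long as $p_k\le q_+(L_w)$, the strictly increasing sequence eventually surpasses any $p\in\K(L_w)$; since $\J(L_w)$ is an interval containing each $p_k$, we obtain $\K(L_w)\cap[2,\infty)\subset\J(L_w)$, and in the limit $(q_+(L_w))^*_w\le p_+(L_w)$. Combined with the inclusion $(p_-(L_w),2]\subset\K(L_w)\cap\J(L_w)$ from the left-endpoint step, we conclude $\K(L_w)\subset\J(L_w)$, which in turn yields $q_-(L_w)\ge p_-(L_w)$.

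For property~(4), the inequality $\sup\widetilde\K(L_w)\ge q_+(L_w)$ is automatic from~(1). For the reverse, given $p\in\widetilde\K(L_w)$, I would Riesz--Thorin interpolate the uniform $L^p(w)$ bound of each localized piece $\chi_{C_j(B)}G_t\chi_B$ against its $\offw{2}{2}$-bound: the resulting constants are $M_j^{\theta}C^{1-\theta}$, where $M_j$ carries Gaussian decay $e^{-c4^j}$ and the doubling factor $(w(C_j(B))/w(B))^{\theta/2-1/r}$ is only polynomial in $j$, so the product still decays exponentially. This gives $G_t\in\offw{r}{r}$ for every $r$ strictly between $2$ and $p$, hence $[2,p)\subset\K(L_w)$, so $p\le q_+(L_w)$. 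The principal obstacle is the Sobolev bootstrap itself: since there is no \emph{global} embedding $H^1(w)\hookrightarrow L^{2^*_w}(w)$, the entire argument must remain local on balls and rely on the doubling of $dw$ together with Theorem~\ref{thm:wtd-poincare}; in addition, the apparent circularity between ``$p\in\K(L_w)\cap\J(L_w)$'' and the Sobolev improvement is broken only by the base case $p=2$ supplied by Theorem~\ref{thm:L2-gaffney}, so the inductive loop must be set up with some care.
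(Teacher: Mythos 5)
Your definition $\K(L_w)=\{p\in[1,\infty]:\sqrt{t}\,\nabla e^{-tL_w}\in\offw{p}{p}\}$ is not the one the paper uses, and the change is not harmless: it does make (3) and (4) easy (with your $\K$, (3) is immediate from Lemma~\ref{lemma:nested} and (4) from Lemma~\ref{lemma:interpolate}), but it breaks the left-endpoint half of (2) and, with it, the main claim. What you actually establish is $(p_-(L_w),2]\subset\K(L_w)$ (hence $q_-(L_w)\le p_-(L_w)$) and, via the Sobolev bootstrap, $\K(L_w)\cap[2,\infty)\subset\J(L_w)$ together with $(q_+(L_w))^*_w\le p_+(L_w)$. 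The sentence ``combined with the inclusion $(p_-(L_w),2]\subset\K(L_w)\cap\J(L_w)$ \dots we conclude $\K(L_w)\subset\J(L_w)$, which in turn yields $q_-(L_w)\ge p_-(L_w)$'' is a non sequitur: knowing that every exponent in $(p_-(L_w),2]$ belongs to $\K(L_w)$ says nothing about whether $\K(L_w)$ also contains exponents \emph{below} $p_-(L_w)$. Nothing in your proposal rules out some $p<p_-(L_w)$ with $\sqrt{t}\,\nabla e^{-tL_w}\in\offw{p}{p}$; for such a $p$ you cannot invoke Proposition~\ref{prop:J} to get $e^{-(t/2)L_w}\in\offw{p}{q}$, so your composition identity no longer yields the full $\offw{p}{q}$ conclusion, and $q_-(L_w)=p_-(L_w)$ remains unproved.

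This is exactly where the paper needs a genuinely nontrivial input. It defines $\K_-(L_w)$ through $\offw{p}{2}$ (not diagonal) estimates and proves Lemma~\ref{lemma:fode}: for $p<2$, the $\offw{p}{2}$ estimates for $e^{-tL_w}$, $\sqrt{t}\,\nabla e^{-tL_w}$ and $t\,L_we^{-tL_w}$ are all equivalent, via the duality identity for $S_t\vec{f}=\sqrt{t}\,e^{-tL_w}(w^{-1}\div(A\vec{f}))$ and an integration of $h'(t)$ sharpened by Lemma~\ref{off-w:sel-impro}. That equivalence is what forces $q_-(L_w)=p_-(L_w)$ and $\K(L_w)\subset\J(L_w)$. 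With your diagonal definition this lemma does not even apply, since $\offw{p}{p}$ is strictly weaker information than $\offw{p}{2}$, and none of the quoted tools upgrade it: nesting goes the wrong way, and interpolation against the $\offw{2}{2}$ bound stays diagonal. This is precisely the obstruction the authors flag in the remark following the proposition (off-diagonal estimates with $p<q<2$ cannot be promoted). To repair your outline you would have to adopt the $\offw{p}{2}$/$\offw{2}{p}$ definition of $\K(L_w)$ (or prove a substitute for Lemma~\ref{lemma:fode}); the remaining parts --- the bootstrap for $p\ge2$, with the usual care about the strict inequality $q<p^*_w$ when iterating, and the interpolation argument for (4) --- are essentially sound and close in spirit to the paper's proof.
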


\begin{remark}
  Unlike in the unweighted case (see~\cite{auscher-martell07}) we are
  unable to give a complete characterization of $\K(L_w)$.  More
  precisely, if we have an off-diagonal estimate and $p<q<2$, then we
  cannot prove that $p,\,q\in \K(L_w)$.
\end{remark}

\begin{remark}
In Section~\ref{section:q-plus} below we will show that $q_+(L_w)>2$; in particular, this gives that
$2\in \Int\K(L_w)$.
\end{remark}

\medskip

As an immediate consequence of Proposition~\ref{prop:K} we get
weighted inequalities for the gradient of the semigroup.  The proof is
identical to the proof of Corollaries~\ref{corollary-weighted-offd} and \ref{cor:holomorphic}.

\begin{corol}\label{corollary-grad-weighted-offd} 
  Let $q_-(L_w)<p\le q<q_+(L_w)$.  If
  $v\in A_{p/q_-(L_w)}(w)\cap RH_{(q_+(L_w)/q)'}(w)$, then 
  $\sqrt{t}\,\nabla e^{-tL_w}\in \mathcal{O}\big(L^{p}(v\,dw)\rightarrow L^{q}(v\,dw)\big)$ and 
	$\sqrt{z}\,\nabla e^{-zL_w}\in \mathcal{O}\big(L^{p}(v\,dw)\rightarrow L^{q}(v\,dw),\Sigma_\nu\big)$
	for all $\nu$,   $0<\nu<\frac{\pi}{2}-\vartheta$. 
\end{corol}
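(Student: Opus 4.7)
The plan is to mimic verbatim the proofs of Corollaries~\ref{corollary-weighted-offd} and~\ref{cor:holomorphic}, with $e^{-tL_w}$ replaced by $\sqrt{t}\,\nabla e^{-tL_w}$ and with $p_{\pm}(L_w)$ replaced by $q_{\pm}(L_w)$. No new ingredient is needed beyond Proposition~\ref{prop:K}, Theorem~\ref{thm:L2-gaffney}, and the extrapolation machinery of Auscher--Martell.

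For the first (real-time) assertion, I would proceed in two steps. Fix $q_-(L_w)<p\le q<q_+(L_w)$ and $v\in A_{p/q_-(L_w)}(w)\cap RH_{(q_+(L_w)/q)'}(w)$. By the openness of the $A_r(w)$ and $RH_s(w)$ classes, choose $p_0,q_0\in\K(L_w)$ with
\[
q_-(L_w)<p_0<p\le q<q_0<q_+(L_w),\qquad v\in A_{p/p_0}(w)\cap RH_{(q_0/q)'}(w).
\]
Proposition~\ref{prop:K} then gives $\sqrt{t}\,\nabla e^{-tL_w}\in\offw{p_0}{q_0}$. Applying the self-improvement result~\cite[Proposition~2.6]{auscher-martell07}, which upgrades unweighted $L^{p_0}(w)$--$L^{q_0}(w)$ off-diagonal estimates on balls to $L^{p}(v\,dw)$--$L^{q}(v\,dw)$ off-diagonal estimates on balls under precisely the above weight conditions, yields the first conclusion.

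For the holomorphic extension, I would invoke \cite[Theorem~4.3]{auscher-martell07}. That theorem lifts off-diagonal estimates on balls from the positive real axis to a complex sector $\Sigma_\nu$, provided one also has full $L^2(w)$--$L^2(w)$ Gaffney-type off-diagonal estimates on that same sector. Theorem~\ref{thm:L2-gaffney}(2) supplies exactly this input for $\sqrt{z}\,\nabla e^{-zL_w}$ in the range $0<\nu<\arctan(\lambda/\sqrt{\Lambda^2-\lambda^2})=\pi/2-\vartheta^*$. Feeding in the off-diagonal estimates already obtained (either the unweighted ones from Proposition~\ref{prop:K}, followed by a second application of~\cite[Proposition~2.6]{auscher-martell07} to reinsert the weight $v$, or directly the weighted ones just derived) produces $\sqrt{z}\,\nabla e^{-zL_w}\in\offwmu{p}{q}{\nu}$.

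There is no real obstacle: the only thing to double-check is that the quantitative hypotheses of~\cite[Proposition~2.6]{auscher-martell07} and \cite[Theorem~4.3]{auscher-martell07} depend solely on the abstract off-diagonal structure and not on any particular feature of the semigroup itself, so that they apply unchanged to the family $\{\sqrt{t}\,\nabla e^{-tL_w}\}_{t>0}$. This is built into the formulation of both results, so the argument reduces to citation once the proper parameters have been chosen via openness.
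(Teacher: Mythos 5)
Your proposal matches the paper's own proof, which is exactly the one-line argument ``identical to the proofs of Corollaries~\ref{corollary-weighted-offd} and~\ref{cor:holomorphic}'': Proposition~\ref{prop:K} gives $\sqrt{t}\,\nabla e^{-tL_w}\in\offw{p}{q}$, \cite[Proposition~2.6]{auscher-martell07} inserts the weight $v$, and \cite[Theorem~4.3]{auscher-martell07} together with the full $L^2(w)$ Gaffney bound of Theorem~\ref{thm:L2-gaffney}(2) handles the sector. Only your first parenthetical route for the sector statement is the right order (unweighted sector estimates via Theorem~4.3 first, then reinsert $v$ via Proposition~2.6 with $|z|$ in place of $t$), since feeding the weighted real-time estimates into Theorem~4.3 would require full $L^2(v\,dw)$ Gaffney estimates, which are not available.
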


\medskip

The proof of Proposition~\ref{prop:K} requires two lemmas.

\begin{lemma}\label{off-w:sel-impro}
Given $w\in A_\infty$ and a family of sublinear operators  $\{T_t\}_{t>0}$
such that $T_t\in \offw{p}{q}$,   with $1\le p<q\le \infty$,  there exist $\alpha$, $\beta>0$ such that for any ball $B$ with radius $r$ and for any $t>0$,
\begin{equation}\label{w:off:B-B:improved}
\left(\aver{B} |T_t( \bigchi_B \, f) |^{q}\,dw\right)^{\frac 1 q}
\lesssim
\max\bigg\{\left(\frac{r}{\sqrt{t}}\right)^{\alpha},\left(\frac{r}{\sqrt{t}}\right)^{\beta}  \bigg\} \,\left(\aver{B}
|f|^{p}\,dw\right)^{\frac 1 p }.
\end{equation}
\end{lemma}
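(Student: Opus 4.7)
The plan is to split into the regimes $r\ge\sqrt{t}$ and $r<\sqrt{t}$. The first regime is immediate: when $r\ge\sqrt{t}$, $\Upsilon(r/\sqrt{t})=r/\sqrt{t}\ge 1$, so the defining estimate \eqref{w:off:B-B} gives
\begin{equation*}
\bigg(\aver{B} |T_t(\bigchi_B\,f)|^q\,dw\bigg)^{1/q}\lesssim \Big(\frac{r}{\sqrt{t}}\Big)^{\theta_2}\bigg(\aver{B} |f|^p\,dw\bigg)^{1/p},
\end{equation*}
and one takes $\beta=\theta_2>0$.

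The substantive case is $r<\sqrt{t}$. Here I would choose the smallest integer $k\ge 1$ with $r_0:=2^k r\ge\sqrt{t}$ and let $B_0=2^k B$, so that $r_0\in[\sqrt{t},2\sqrt{t})$ and hence $\Upsilon(r_0/\sqrt{t})^{\theta_2}\lesssim 1$. Applying \eqref{w:off:B-B} at scale $r_0$ to the ball $B_0$ (with input function $\bigchi_B f=\bigchi_{B_0}\bigchi_B f$, which is supported in $B_0$) gives
\begin{equation*}
\bigg(\aver{B_0} |T_t(\bigchi_B f)|^q\,dw\bigg)^{1/q}\lesssim \bigg(\aver{B_0} |\bigchi_B f|^p\,dw\bigg)^{1/p}=\bigg(\frac{w(B)}{w(B_0)}\bigg)^{1/p}\bigg(\aver{B} |f|^p\,dw\bigg)^{1/p}.
\end{equation*}
Since $B\subset B_0$, enlarging the integration domain on the left of the target inequality gives $(\aver{B}|T_t(\bigchi_B f)|^q\,dw)^{1/q}\le(w(B_0)/w(B))^{1/q}(\aver{B_0}|T_t(\bigchi_B f)|^q\,dw)^{1/q}$, which combined with the previous display yields
\begin{equation*}
\bigg(\aver{B} |T_t(\bigchi_B f)|^q\,dw\bigg)^{1/q}\lesssim\bigg(\frac{w(B)}{w(B_0)}\bigg)^{1/p-1/q}\bigg(\aver{B} |f|^p\,dw\bigg)^{1/p};
\end{equation*}
the exponent $1/p-1/q$ is strictly positive because $p<q$ by hypothesis. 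When $q=\infty$ the same argument applies with the $L^q$ average replaced by the essential supremum and $1/q$ set to zero.

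The key final step is to convert the weight ratio $w(B)/w(B_0)$ into a positive power of $r/\sqrt{t}$. Since $w\in A_\infty=\bigcup_{s>1}RH_s$, I would fix $s>1$ with $w\in RH_s$. H\"older's inequality together with the reverse H\"older bound $(\aver{B_0}w^s)^{1/s}\le[w]_{RH_s}\aver{B_0}w$ then yields
\begin{equation*}
w(B)\le |B|^{1/s'}\bigg(\int_{B_0}w^s\bigg)^{1/s}\le [w]_{RH_s}\bigg(\frac{|B|}{|B_0|}\bigg)^{1/s'}w(B_0),
\end{equation*}
so $w(B)/w(B_0)\lesssim (r/r_0)^{n/s'}\approx(r/\sqrt{t})^{n/s'}$. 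Substituting gives the bound with $\alpha=(n/s')(1/p-1/q)>0$ in the regime $r<\sqrt{t}$, and combined with $\beta=\theta_2$ this proves the lemma. The principal obstacle is this last $A_\infty$ self-improvement: without reverse H\"older one has only the trivial bound $w(B)/w(B_0)\le 1$, which yields a constant rather than the required positive power $(r/\sqrt{t})^\alpha$ and therefore does not cancel the degeneracy at $r/\sqrt{t}\to 0$ present in the definition of $\offw{p}{q}$.
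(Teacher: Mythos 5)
Your proof is correct and takes essentially the same route as the paper's: for $r<\sqrt t$ you apply the hypothesis \eqref{w:off:B-B} on a concentric ball of radius comparable to $\sqrt t$ (where the $\Upsilon$-factor is bounded), pick up the ratio $(w(B)/w(B_0))^{1/p-1/q}$, and convert it into the positive power $(r/\sqrt t)^{\alpha}$ via the reverse H\"older property of $w\in A_\infty$, which is exactly the paper's argument with $B_t=B(x,\sqrt t)$ in place of your dyadic ball $2^kB$. The only cosmetic difference is that in the regime $r\ge\sqrt t$ you read the bound $(r/\sqrt t)^{\theta_2}$ directly off \eqref{w:off:B-B}, whereas the paper invokes the reduction argument of \cite[p.~306]{auscher-martell07} for that case.
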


\begin{proof}
  This result is implicit in \cite[Proof of Proposition 2.4,
  p. 306]{auscher-martell07}; here we reprove it with a small
  improvement in the constant. There it was shown that in
  Definition~\ref{defi:off-d:weights} it is sufficient to consider
  the case where $r\approx \sqrt{t}$.  But in this case we get that
  $\Upsilon(r/\sqrt{t})\approx 1$ and for all $j\geq 2$,
  $\Upsilon(2^j\,r/\sqrt{t})\approx 2^{j}$. The argument in
  \cite[p. 306]{auscher-martell07} shows that if we assume that
  \eqref{w:off:B-B}, \eqref{w:off:C-B}, \eqref{w:off:B-C} hold when
  $r\approx \sqrt{t}$, then \eqref{w:off:B-B} holds in general with
  constant $\max\{1,(r/\sqrt{t})^\alpha\}$ for some $\alpha>0$
  depending on $p$, $q$ and $w$. In this maximum the $1$ occurs when
  $r\le \sqrt{t}$;  therefore, to prove~\eqref{w:off:B-B:improved} we
  need to show that if $r\le \sqrt{t}$, then we can replace $1$ by the
  better constant $(r/\sqrt{t})^\beta$ for some $\beta>0$.

Fix $r\le \sqrt{t}$.  If $B=B(x,r)$, then $B\subset
  B_t=B(x,\sqrt{t})$.   As in \cite[p. 306]{auscher-martell07} we apply
  \eqref{w:off:B-B}  to $T_t$ and $B_t$; this yields
\begin{multline*}
\left( \aver B |T_t(\bigchi_B \,f)|^q\,dw\right)^{{\frac 1 q}}
\le
\left( \frac{w(B_t)}{w(B)}\right)^{\frac 1 q} \left( \aver {B_t }|T_t(\bigchi_B \,f)|^q\,dw\right)^{{\frac 1 q}}
\\
\lesssim \left( \frac{w(B_t)}{w(B)}\right)^{\frac 1 q} \left( \aver
  {B_t }|\bigchi_B f|^p\,dw\right)^{\frac 1 p}
\le
\left( \frac{w(B)}{w(B_t)}\right)^{\frac 1 p - \frac 1 q} \left( \aver {B }|f|^p\,dw\right)^{\frac 1 p}.
\end{multline*}
Since $w\in A_\infty$, we
have that for some $\theta>0$,
\[ \frac{w(B)}{w(B_t)} \lesssim
\left(\frac{|B|}{|B_t|}\right)^{\theta}=\left(\frac{r}{\sqrt{t}}\right)^{\theta\,n}. \]
Since $p<q$ we have that
$$
\left( \aver B |T_t(\bigchi_B \,f)|^q\,dw\right)^{{\frac 1 q}}
\lesssim
\left( \frac{r}{\sqrt{t}}\right)^{(\frac 1 p - \frac 1 q)\,\theta\,n} \left( \aver {B }|f|^p\,dw\right)^{\frac 1 p}.
$$
Therefore, if we combine this with the argument
from~\cite[p. 306]{auscher-martell07} described above, we get that
~\eqref{w:off:B-B:improved} holds with
$\beta=(1/p-1/q)\,\theta\,n$.
\end{proof}

The second lemma gives the close connection between off-diagonal
estimates for $e^{-tL_w}$ and $\sqrt{t}\nabla e^{-tL_w}$ for $p<2$.

\begin{lemma}\label{lemma:fode}
Given $1\le p<2$ the following are equivalent:
\begin{enumerate}
\item $e^{-t\,L_w} \in \offw{p}{2}$.

\item $\sqrt {t} \, \nabla e^{-t\,L_w}\in  \offw{p}{2}$.

\item $t\,  L_w\, e^{-t\,L_w}\in \offw{p}{2}$.

\end{enumerate}

\end{lemma}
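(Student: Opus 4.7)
I will establish the equivalence cyclically by proving $(1)\Rightarrow(2)$ and $(1)\Rightarrow(3)$ via semigroup composition with the Gaffney estimates, then closing the loop with a Caccioppoli-type energy estimate and the weighted Poincar\'e inequality combined with a duality argument for the adjoint semigroup. All three directions rely on the fact that the semigroup, its gradient, and $tL_we^{-tL_w}$ already belong to $\offw{2}{2}$ by Theorem~\ref{thm:L2-gaffney} and Lemma~\ref{lemma:full}.

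\textbf{Forward direction $(1)\Rightarrow(2),(3)$.} Exploiting the semigroup factorization $e^{-tL_w}=e^{-(t/2)L_w}\circ e^{-(t/2)L_w}$, we write
$$
\sqrt{t}\,\nabla e^{-tL_w}=\big(\sqrt{t/2}\,\nabla e^{-(t/2)L_w}\big)\circ e^{-(t/2)L_w},\qquad tL_we^{-tL_w}=\big((t/2)L_we^{-(t/2)L_w}\big)\circ e^{-(t/2)L_w}.
$$
Theorem~\ref{thm:L2-gaffney} combined with Lemma~\ref{lemma:full} places the leading factors in $\offw{2}{2}$, while hypothesis $(1)$ places the trailing factor in $\offw{p}{2}$. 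Lemma~\ref{lemma:unif-comp}(2) then yields $(2)$ and $(3)$.

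\textbf{Caccioppoli step $(3)\Rightarrow(2)$.} Let $B$ be a ball of radius $r$ and take a smooth cutoff $\phi\equiv 1$ on $B$, $\supp\phi\subset 2B$, $\|\nabla\phi\|_\infty\lesssim 1/r$. Inserting the test function $\phi^2 u_t$ (where $u_t=e^{-tL_w}g$) into the sesquilinear-form identity \eqref{eqn-a2} and invoking the degenerate ellipticity \eqref{eqn:degen}, combined with Cauchy-Schwarz and absorption, yields
$$
\int_B|\nabla u_t|^2\,dw \lesssim \int_{2B}|L_w u_t|\,|u_t|\,dw+\frac{1}{r^2}\int_{2B}|u_t|^2\,dw.
$$
Multiplying by $t$, normalizing by $w(B)$ (using the doubling of $dw$), and applying Cauchy-Schwarz to the cross term gives
$$
\Big(\aver{B}|\sqrt{t}\,\nabla u_t|^2\,dw\Big)^{1/2}\lesssim \Big(\aver{2B}|tL_w u_t|^2\,dw\Big)^{1/2}+\Big(1+\tfrac{\sqrt t}{r}\Big)\Big(\aver{2B}|u_t|^2\,dw\Big)^{1/2}.
$$
The first summand is controlled by $(3)$ and the second by $(1)$; the annular $(C_j,B)$ and $(B,C_j)$ pieces follow by the same argument with cutoffs adapted to the respective annuli, with the Gaussian decay factors inherited from the off-diagonal bounds on the right-hand side.

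\textbf{Poincar\'e--duality step $(2)\Rightarrow(1)$; main obstacle.} For $g\in L^\infty_c(B)$, set $u=e^{-tL_w}g$ and decompose
$$
\Big(\aver{B}|u|^2\,dw\Big)^{1/2}\le |u_{B,w}|+\Big(\aver{B}|u-u_{B,w}|^2\,dw\Big)^{1/2}.
$$
The oscillation term is handled by the weighted Poincar\'e inequality \eqref{w-Poincare}, which bounds it by $r(\aver{B}|\nabla u|^p\,dw)^{1/p}$, and then by hypothesis $(2)$. The mean value is estimated via duality: since $u_{B,w}=w(B)^{-1}\langle g,e^{-tL_w^*}\bigchi_B\rangle_w$, H\"older's inequality gives $|u_{B,w}|\le (\aver{B}|g|^p\,dw)^{1/p}(\aver{B}|e^{-tL_w^*}\bigchi_B|^{p'}\,dw)^{1/p'}$, and the second factor is controlled using the off-diagonal $\offw{2}{p'}$ bounds on the adjoint semigroup provided by Proposition~\ref{prop:J} applied to $L_w^*$. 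The main obstacle is to keep all range conditions on $p$ (from the Poincar\'e--Sobolev embedding and from the adjoint off-diagonal exponent $p'$) simultaneously satisfied; in borderline situations one avoids circularity by bootstrapping $(1)$--$(3)$ together starting from the Gaffney baseline $\offw{2}{2}$, as in the Auscher-Martell off-diagonal framework.
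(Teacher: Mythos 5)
Your forward direction is fine (and your direct composition $(1)\Rightarrow(3)$, using part (3) of Theorem~\ref{thm:L2-gaffney}, is a legitimate shortcut past the paper's $(2)\Rightarrow(3)$ duality step with $S_t\vec f=\sqrt t\,e^{-tL_w}(w^{-1}\div(A\vec f))$). But the scheme does not close the equivalence. Your ``$(3)\Rightarrow(2)$'' Caccioppoli step explicitly controls the zero-order term by hypothesis $(1)$, so what you actually prove is $(1)\wedge(3)\Rightarrow(2)$, which is vacuous given $(1)\Rightarrow(2)$. Together with your $(2)\Rightarrow(1)$ you obtain $(1)\Leftrightarrow(2)\Rightarrow(3)$, but no implication whatsoever out of $(3)$ alone; the genuinely hard content of the lemma, namely $(3)\Rightarrow(1)$, is missing. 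In the paper this is done without any Poincar\'e or Caccioppoli inequality: one tests $h(t)=\aver{B}e^{-tL_w}(\bigchi_Bf)\,g\,dw$ against normalized $f,g$, shows $h(t)\to0$ as $t\to\infty$ via the bounded holomorphic functional calculus, writes $h(t)=-\int_t^\infty h'(s)\,ds$ and uses $(3)$ to bound $s\,h'(s)$; crucially, the plain bound $\dec{r/\sqrt s}^{\theta_2}$ is not integrable at $s=\infty$, which is why Lemma~\ref{off-w:sel-impro} (the self-improvement to $\max\{(r/\sqrt s)^\alpha,(r/\sqrt s)^\beta\}$ with $\alpha,\beta>0$) is needed. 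Nothing in your proposal plays this role.

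Your $(2)\Rightarrow(1)$ step also has an exponent problem that the closing ``bootstrapping'' remark does not repair. Using \eqref{w-Poincare} with left exponent $2$ and right exponent $p$ requires $2<p_w^*$, i.e.\ $p>\frac{2nr_w}{nr_w+2}=(2_w^*)'$, and the duality bound for the averages requires $e^{-tL_w^*}\in\offw{2}{p'}$, i.e.\ $p'<p_+(L_w^*)$; neither is available for general $1\le p<2$ (take $p$ close to $1$, so $p'$ is arbitrarily large, or $w$ with $r_w$ close to $2$), whereas the lemma is stated, and is used in Proposition~\ref{prop:K}, for the full range $1\le p<2$ (Proposition~\ref{prop:J} only guarantees $p_-(L_w)\le(2_w^*)'$, so the hypotheses of the lemma can hold at exponents where your Poincar\'e and adjoint estimates fail). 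Moreover, for the $C_j$--$B$ and $B$--$C_j$ pieces your mean-value term needs the Gaussian factor $e^{-c4^jr^2/t}$, which your H\"older/duality bound does not produce without exactly the off-diagonal information you are trying to establish. The paper's argument avoids all of this, which is why it proves $(3)\Rightarrow(1)$ directly rather than $(2)\Rightarrow(1)$ through a Sobolev--Poincar\'e route.
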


\begin{proof}
We follow the proof of \cite[Lemma 5.3]{auscher-martell07}.
To prove that $(1)$ implies $(2)$, note that by
Theorem~\ref{thm:L2-gaffney}, $\sqrt{t} \, \nabla
e^{-t\,L_w}\in \offw{2}{2}$. If we compose this with $(1)$, by
Lemma~\ref{lemma:unif-comp}  and the
semigroup property, we get $(2)$.

To prove that $(2)$ implies $(3)$, define
$S_t\vec{f}=\sqrt{t}\,e^{-t\,L_w}(w^{-1}\,\div (A\vec f)) $.   By
duality, we have that
\begin{multline*}
\langle S_t\vec{f},g\rangle_{L^2(w)}
= \langle w^{-1} \div  (A\vec f)),
\sqrt{t}e^{-t\,L_w^*}g\rangle_{L^2(w)}
=\langle \div  (A\vec f)),
\sqrt{t}e^{-t\,L_w^*}g\rangle_{L^2} \\
= -\langle \vec{f}, A^* \sqrt{t}\nabla e^{-t\,L_w^*}g\rangle_{L^2}
= \langle \vec{f}, w^{-1} A^* \sqrt{t}\nabla
e^{-t\,L_w^*}g\rangle_{L^2(w)} .
\end{multline*}
The matrix $w^{-1} A^*$ is uniformly elliptic, and so multiplication by
it is bounded on $L^2(w)$.  Furthermore,
$\sqrt{t}\,\nabla e^{-t\,L_w^*}\in\offw{2}{2}$.  Therefore, it follows
that $S_{t}\in\offw{2}{2}$. If we combine this with $(2)$, we get that $-t\,L_w\, e^{-2\,t\, L_w} =S_t
\circ \sqrt t \, \nabla e^{-t\,L_w}\in \offw{p}{2}$. This proves $(3)$.

Finally we show that $(3)$ implies $(1)$. We first prove
\eqref{w:off:B-B}. Fix $B$ and $f,\,g$ such that $\left(\aver{B} |f|^p\,dw\right)^{\frac1p}=
\left(\aver{B} |g|^2\,dw\right)^{\frac12}=1$, and assume also that $f\in L^2(B,dw)$. Define
\[ h(t)=
\aver{B} e^{-t\,L_w}( \bigchi_B \, f)(x)\,g(x)\,dw(x). \]
By duality it will suffice to show that $|h(t)|\lesssim
\Upsilon(r/\sqrt{t})^\theta$.  (Note that our assumption implies that $t\,h'(t)$ satisfies such a bound.)  First,  we claim that
\[ \lim_{t\to\infty}
h(t)=0. \]
To see this we use the fact (discussed in
Section~\ref{section:prelim}) that $L_w$  has a bounded
holomorphic functional calculus  on $L^2(w)$.  Given this, since $z\mapsto
e^{-tz}$ converges to 0 uniformly on compact subsets of $\Re z>0$, we
get the desired limit. 

Hence, we can write $ h(t)= - \int_{t}^\infty h'(s) \, ds. $ Notice
that $|t\,h'(t)|\lesssim \Upsilon(r/\sqrt{t})^{\theta_2}$ but this
does not give a convergent integral. However, if we apply
Lemma~\ref{off-w:sel-impro} to $t\,  L_w\, e^{-t\,L_w}\in \offw{p}{2}$,
we get that $|t\,h'(t)|\lesssim
\widetilde{\Upsilon}(r/\sqrt{t})$ with
$\widetilde{\Upsilon}(s)=\max\{s^\alpha,s^\beta\}$.   It follows from
this estimate that
\begin{multline*}
|h(t)|
\le
\int_{t}^\infty |h'(s)| \, ds
\lesssim
\int_{t}^\infty
\widetilde{\Upsilon}\left(\frac{r}{\sqrt{s}}\right)
\frac{ds}{s}
\approx
\int_{0}^{\frac{r}{\sqrt{t}}}
\widetilde{\Upsilon}(s)
\frac{ds}{s}
\lesssim
\widetilde{\Upsilon}\left(\frac{r}{\sqrt{t}}\right)
\lesssim
\dec{\frac{r}{\sqrt{t}}}^{\alpha+\beta}.
\end{multline*}

To prove \eqref{w:off:C-B} we argue as before, but with $\big(\aver{C_j(B)} |f|^p\,dw\big)^{\frac1p}=
\left(\aver{B} |g|^2\,dw\right)^{\frac12}=1$ and
\[ h(t)=
\aver{B} e^{-t\,L_w}( \bigchi_{C_j(B)} \, f)(x)\,g(x)\,dw(x). \]
Since $d(B,C_j(B))>0$, by Theorem~\ref{thm:L2-gaffney} and H\"older's
inequality, $ h(t) \rightarrow 0$ as $t\rightarrow 0$. Therefore, $h(t)=\int_{0}^t
 h'(s)\, ds$. Since $t\,L_w\,e^{-t\,L_w}\in\offw{p}{2}$, we have that
\begin{multline*}
h(t)
\le
\int_{0}^t |h'(s)|\, ds
\lesssim
2^{j\,\theta_1}
\int_{0}^t
\dec{\frac{2^j\,r}{\sqrt{s}}}^{\theta_2}\,
\expt{-\frac{c\,4^{j}\,r^2}{s}} \,\frac{ds}{s}
\\
\approx
2^{j\,\theta_1}\int_{\frac{2^j\,r}{\sqrt{t}}}^\infty
\Upsilon(s)^{\theta_2}\, e^{-c\,s^2}\,\frac{ds}{s}
\lesssim
2^{j\,\theta_1}
\dec{\frac{2^j\,r}{\sqrt{t}}}^{\theta_2}\,
\expt{-\frac{c\,4^{j}\,r^2}{t}}.
\end{multline*}
This is \eqref{w:off:C-B}.

Finally, the proof of \eqref{w:off:B-C} is essentially the same and we
omit the details. This completes the proof that $(3)$ implies $(1)$.
\end{proof}

\begin{proof}[Proof of Proposition \textup{\ref{prop:K}}]
Define the sets $\K_{-}(L_w)$  and $\K_{+}(L_w)$ to be
\begin{gather*}
\K_{-}(L_w) = \{ p\in [1,2] : \sqrt t \, \nabla
e^{-t\,L_w}\in\offw{p}{2} \} \\
\K_{+}(L_w) = \{ p\in [2,\infty] : \sqrt t \, \nabla
e^{-t\,L_w}\in\offw{2}{p} \},
\end{gather*}
and let $\K(L_w)= \K_{-}(L_w)\cup \K_{+}(L_w)$.
The set is non-empty, since $2\in \K(L_w)$.  By
Lemma~\ref{lemma:nested} it is an interval.
Now fix $p,q \in \K(L_w)$ with
$p<q$. If $p<q\le  2$ or $2 \le p < q $, then by Lemma~
\ref{lemma:nested}, $\sqrt t \, \nabla
e^{-t\,L}\in \offw{p}{q}$ since $p, q \in
\K_{-}(L_w)$ or $p,q \in \K_{+}(L_w)$. If $p \le 2 < q$, then
$\sqrt t \, \nabla e^{-t\,L}\in \offw{2}{q}$ and by Lemma~\ref{lemma:fode}, $e^{-t\,L}\in
\offw{p}{2}$. Hence, by Lemma~\ref{lemma:unif-comp} and
the semigroup property, $\sqrt t \, \nabla e^{-t\,L}\in
\offw{p}{q}$.  Thus, in every case we get the desired off-diagonal estimate.

We now prove (1)-(4).  By Lemma~\ref{lemma:unif-comp}, off-diagonal
  estimates on balls imply uniform boundedness, and so $\K(L_w)\subset
  \widetilde \K(L_w)$.  This proves (1).

  To prove (2), we first note that if $p<2$, then by Lemma
  \ref{lemma:fode}, $p\in \J(L_w)$ if and only if $p\in
  \K_-(L_w)$. Thus $\J(L_w)\cap[1,2]=\K_-(L_w)$ and so
  $q_-(L_w)=p_-(L_w)$.  To show that $(q_+(L_w))^*_w\le p_+(L_w)$,
  first note that  if
  $q_+(L_w)=2$, then by Proposition~\ref{prop:J} we have that
$(q_+(L_w))^*_w=2_w^* \leq p_+(L_w)$.   If $q_+(L_w)>2$, then we
proceed as in the proof of this proposition.
 Let
  $2<p<q_+(L_w)$ and $p<q<p^*_w$. Then by  \eqref{w-Poincare},
  $e^{-t\,L_w}\in\offw{2}{2}$, and $\sqrt{t}\,\nabla
  e^{-t\,L_w}\in\offw{2}{p}$, we get that
\begin{align*}
&\left(\aver{B} |e^{-t\,L_w}( \bigchi_B \, f)|^{q}\,dw\right)^{\frac1q}
\\
&\qquad
\lesssim
\left(\aver{B} |e^{-t\,L_w}( \bigchi_B \, f) |^{2}\,dw\right)^{\frac12}
+
r\,\left(\aver{B} |\nabla\,e^{-t\,L_w}( \bigchi_B \, f) |^{p}\,dw\right)^{\frac1p}
\\
&\qquad
\lesssim
\dec{\frac{r}{\sqrt{t}}}^{1+\theta_2} \,\left(\aver{B} |f|^{2}\,dw\right)^{\frac12}.
\end{align*}
This gives us inequality~\eqref{w:off:B-B}.  The other two
inequalities in Definition \ref{defi:off-d:weights} can be proved in
exactly the same way. Thus
$e^{-t\,L_w}\in\offw{2}{q}$ which implies $q\le
p_+(L_w)$. Letting $p\nearrow q_+(L_w)$ and $q\nearrow p^*_w$ we
conclude that $(q_+(L_w))^*_w\le p_+(L_w)$.

The last estimate implies in particular that $q_+(L_w)\le
p_+(L_w)$. If $q_+(L_w)<\infty$ we clearly have that $q_+(L_w)<
p_+(L_w)$ and so $\K_+(L_w)\subset \J(L_w)$. Otherwise,
$p_+(L)=\infty$ and again we have that $\K_+(L_w)\subset \J(L_w)$.
This completes the proof of (2).

\medskip

To prove (3),   suppose
first that $2\leq p < q$ and $\sqrt t
\, \nabla e^{-t\,L}\in \offw{p}{q}$.  We will show that $p,q \in
\K(L_w)$. Since we also have that $\sqrt t
\, \nabla e^{-t\,L}\in \offw{2}{2}$, by interpolation (Lemma \ref{lemma:interpolate}),
$\sqrt t \, \nabla e^{-t\,L}\in \offw{p_{\theta}}{q_{\theta}}$ where
$1/p _{\theta}= (1-\theta)/ p + \theta/2$, $1/q
_{\theta}=(1-\theta)/q+\theta/2$ and $\theta \in (0,1)$.  If $p
\notin \K_{+}(L_w)$, then $q>\sup \K_{+}(L_w)$.  We can choose $\theta$
such that $p_{\theta}<\sup\K_{+}(L_w)<q_{\theta}$. Since
$\K_{+}(L_w)\subset \J(L_w)$,   $p_{\theta}\in \J(L_w)$: i.e.,
$e^{-t\,L}\in \offw{2}{p_{\theta}}$. By composition and the
semigroup property,  $\sqrt t \, \nabla e^{-t\,L_w}\in
\offw{2}{q_{\theta}}$;  hence, $q_{\theta} \in \K_{+}(L_w)$, a
contradiction.  Therefore, $p\in \K_{+}(L_w)$. As we
have $\sqrt t \, \nabla e^{-t\,L_w}\in \offw{p}{q}$ by assumption  and
$e^{-t\,L_w}\in \offw{2}{p}$ since $p\in \J(L_w)$, by composition and
the semigroup property, $\sqrt t \, \nabla e^{-t\,L_w}\in \offw{2}{q}$. Hence,
$q\in \K_{+}(L_w)$.

The case $p<2 \le q$ is straightforward.   Since $\sqrt t \, \nabla e^{-t\,L_w}\in
\offw{p}{q}$,  by Lemma~\ref{lemma:nested} we have that $\sqrt t \, \nabla e^{-t\,L_w}\in
\offw{2}{q}$  and $\sqrt t \, \nabla e^{-t\,L_w}\in \offw{p}{2}$.
Hence, $p\in \K_{-}(L_w)$ and $q\in \K_{+}(L_w)$.

\medskip

Finally, we prove (4).   Suppose to the contrary that $\sup \widetilde
\K(L_w)>q_+(L_w)$. Then there exist $p$, $q$ such that
$q_+(L_w)<p<q<\sup \widetilde \K(L_w)$.  Fix $r$ such that $p_-(L_w)=q_-(L_w)<r<2$.
Then we have that $\sqrt{t}\nabla e^{-t\,L_w}$ is uniformly bounded on
$L^q(w)$ and in $\offw{r}{2}$.    By Lemma~\ref{lemma:interpolate} we
can interpolate between these to get that $\sqrt{t}\,\nabla
e^{-t\,L_w}\in \offw{s}{p}$ for some $s<p$.   But then by the above
converse, we have
that $p\in \K(L_w)$ which is a contradiction.
\end{proof}

\section{An upper bound for $\K(L_w)$}
\label{section:q-plus}

In this section we will prove that $q_+(L_w)>2$:  that is, the set
$\K(L_w)$ contains $2$ in its interior.
In general, all we can say is that $q_+(L_w)>2$: as noted
in~\cite[Section~4.5]{auscher07}, even in the unweighted case this is the best
possible bound, since given any $\epsilon>0$ it is possible to find an
operator $L$ such that $q_+(L)<2+\epsilon$.  In
Section~\ref{section:L2-kato} below we will give some estimates for
$q_+(L_w)$ in terms of $[w]_{A_2}$.

We have broken the proof  that $q_+(L_w)>2$ into
a series of discrete steps where we borrow some ideas from \cite{Auscher-Coulhon2005}. We first prove a reverse H\"older
inequality and use Gehring's inequality to get a higher integrability estimate.
We then prove that the Hodge projection is bounded on $L^q(w)$ for a
range of $q>2$ and use this to prove the Riesz transform is also
bounded for exponents greater than~2.  (In Section~\ref{section:riesz}
we give a more complete discussion of the Riesz transform.)  From this
we deduce that $q_+(L_w)>2$.

\subsection*{A reverse H\"older inequality}
Fix a ball $B_0$ and let $u\in H^1(w)$ be any weak solution of $L_wu=0$ in $4B_0$.   Then for any
ball $B$ such that $3B\subset 4B_0$,  we can again prove via a standard argument a
Caccioppoli inequality: 
\[ \bigg(\avgint_B |\grad u|^2 dw\bigg)^{1/2} \leq
\frac{C_1}{r} \bigg(\avgint_{2\,B} |u-u_{2B,w}|^2\,dw\bigg)^{1/2}, \]
where $C_1=C(n,\Lambda/\lambda)[w]_{A_2}^{1/2}\ge 1$.  Fix $q$ such that
\begin{equation}
\max\Big\{\frac{2\,(n-1)}{n}, r_w, \frac{2\,n\,r_w}{2+n\,r_w}\Big\}<q<2;
\label{eq:q-choice}
\end{equation}
such a $q$ exists since $r_w<2$. Our choice of $q$ guarantees that $2<q_w^*$
and also that $2<n\,q/(q-1)$.  Then, by the weighted Poincar\'e inequality,~Theorem~\ref{thm:wtd-poincare},
\begin{equation}
\frac{1}{r} \bigg(\avgint_{2\,B} |u-u_{2B,w}|^2\,dw\bigg)^{1/2}
\leq C_2\bigg(\avgint_{2B} |\grad u|^q\,dw\bigg)^{1/q},
\label{eq:w-Poincare-a}
\end{equation} 
where $C_2=C(n)[w]_{A_2}^\kappa\ge 1$ and
$\kappa=\frac{n\,q-1}{n\,q\,(q-1)}$.  (By our choice of $q$ we can get
this sharp estimate:  see Remark \ref{remark-best-Poi}.   Since $q<2$
we could write $[w]_{A_q}$, but we use that $[w]_{A_q}\le [w]_{A_2}$.)
If we combine these inequalities, we get a reverse H\"older inequality:
\[ \bigg(\avgint_{B} |\grad u|^2\,dw\bigg)^{1/2} \leq
C_1C_2\bigg(\avgint_{2B} |\grad u|^q\,dw\bigg)^{1/q}.  \]

We now apply Gehring's lemma in the setting of spaces of
homogeneous type (see Bj\"orn and
Bj\"orn~\cite[Theorem~3.22]{MR2867756}) to get that there exists $p_0>2$
such that for every such $B$,
\begin{equation} \label{eqn:gehring-bump}
\bigg(\avgint_B |\grad u|^{p_0}\,dw\bigg)^{1/p_0} \leq
C_0\bigg(\avgint_{2B} |\grad u|^2\,dw \bigg)^{1/2}.
\end{equation}
Moreover, we can take the following values:
$C_0=8C_1^2C_2^2[w]_{A_2}^{31}$ and
\begin{equation}
p_0 = 2 + \frac{2-q}{2^{4/q+1}C_1^2C_2^2 [w]_{A_2}^{6/q+17}}. 
\label{eq:value-p0}
\end{equation}

In Section~\ref{section:L2-kato} below we will need these precise
values.  Here, it suffices to note that in
inequality~\eqref{eqn:gehring-bump} we have $p_0>2$.

\subsection*{The Hodge projection}
Define the Hodge projection operator by
\[ T = \grad L_w^{-1/2} (\grad ( L_w^*)^{-1/2})^*,\]
where the  adjoint operators are defined with respect to the inner product in $L^2(w)$.
As we noted in Section~\ref{section:prelim}, the Riesz transform is
bounded on $L^2(w)$; hence, the Hodge
projection is also bounded.  By duality, $(\grad ( L_w^*)^{-1/2})^*\vec{f}=
-L_w^{-1/2} (w^{-1}\div(w\vec{f}))$, and so
\[ T \vec{f} = -\grad L_w^{-1/2} L_w^{-1/2} (w^{-1}\div(w\vec{f}) = -\grad
L_w^{-1} (w^{-1}\div(w\vec{f})). \]

Now fix $\vec{f} \in L^2(w,\C^n) \cap L^{p_0}(w,\C^n)$ such that
$\supp(\vec{f})\subset \rn \setminus 4B_0$.   Let $u\in H^1(w)$ be a
solution to the equation
\[ L_w u = w^{-1} \div(w\vec{f}); \]
by a standard Lax-Milgram argument because $A$ satisfies~\eqref{eqn:degen} 
(cf.~\cite[Theorem~2.2]{fabes-kenig-serapioni82}), we know $u$ exists.
Then
\[ T\vec{f} = -\grad L_w^{-1} L_w u = -\grad u, \]
where equality is in the sense of distributions.  In particular, since
$f=0$ on $4B_0$, $L_w u=0$ on $4B_0$.  Therefore, we can
apply~\eqref{eqn:gehring-bump} to $u$:  on any ball $B$ such that $3B
\subset 4B_0$,
\begin{multline*}
\bigg(\avgint_B |T\vec{f}|^{p_0}\, dw\bigg)^{1/p_0}
= \bigg(\avgint_B |\grad u|^{p_0}\, dw\bigg)^{1/p_0} 
\leq C_0 \bigg(\avgint_{2B} |\grad u|^{2}\, dw\bigg)^{1/2}
= \bigg(\avgint_{2B} |T\vec{f}|^{2}\, dw\bigg)^{1/2}.\!
\end{multline*}
As a consequence of this inequality, we have
by~\cite[Theorem~3.14]{auscher-martell07b} (see also Section~5 of the
same paper) that for all $q$, $2 \leq q < p_0$, $T :
L^q(w,\C^n)\rightarrow L^q(w,\C^n)$.

\subsection*{Boundedness of the Riesz transform}
To show that the Riesz transform $\grad L^{-1/2}_w$  is
bounded, fix $q$ such that 
\[
\max\left( p_-(L_w^*), r_w, p_0'\right)
	= 
\max\left( p_-(L_w^*), r_w, p_0',
  \frac{nr_wp_-(L_w^*)}{nr_w+p_-(L_w^*)}\right) < q' < 2.  \]
(The reason for including $p_-(L_w^*)$ will be made clear below.)
By the above argument we have that $T^*$ is bounded on $L^{q'}(w)$,
where $T^*\vec{f} = -\grad (L^*_w)^{-1}
(w^{-1}\div(w\vec{f}))$. Furthermore, by
Proposition~\ref{prop:reverseRiesz}, we have that
\[ \| (L_w^*)^{1/2} f \|_{L^{q'}(w)}
\leq C\|\grad f\|_{L^{q'}(w)}.  \]
Therefore,
\begin{align*}
\| (\grad L_w^{-1/2})^* \vec{f} \|_{L^{q'}(w)}
& = \| (L_w^*)^{-1/2}(w^{-1} \div(w\vec{f}))\|_{L^{q'}(w)} \\
& = \|  (L_w^*)^{1/2} (L_w^*)^{-1}(w^{-1} \div(w\vec{f}))\|_{L^{q'}(w)}
\\
& \lesssim \| \grad (L_w^*)^{-1}(w^{-1} \div(w\vec{f}))\|_{L^{q'}(w)} \\
& = \|T^* \vec{f}\|_{L^{q'}(w)} \\
&\lesssim \|\vec{f}\|_{L^{q'}(w)}.
\end{align*}
Hence, by duality we have that $\grad L_w^{-1/2} : L^q(w)
\rightarrow L^q(w)$ for all $q$ such that
\[ 2 < q < \min\big(p_+(L_w), r_w', p_0 \big) = q_w; \]
here we have used the fact that by duality, $p_-(L_w^*)' = p_+(L_w)$. 

\subsection*{Boundedness of the gradient of the semigroup}
Finally, we show that if $2< q < q_w$, then $\sqrt{t}\grad e^{-tL_w} :
L^q(w)\rightarrow L^q(w)$.  The desired estimate for $q_+(L_w)$
follows from this:  by Proposition~\ref{prop:K}, part (4),
\[ q_+(L_w) =\sup \tilde{\K}(L_w) \geq q_w > 2.  \]

Fix such a $q$; then by the above estimate for the Riesz transform,
\begin{multline*}
\|\sqrt{t} \grad e^{-tL_w} f\|_{L^q(w)}
= \|\grad L_w^{-1/2} (tL_w)^{1/2} e^{-tL_w} f\|_{L^q(w)} \\
\lesssim \|(tL_w)^{1/2} e^{-tL_w} f\|_{L^q(w)} = \|\varphi_t(L_w)
f\|_{L^q(w)},
\end{multline*}
where $\varphi_t(z) = (tz)^{1/2} e^{-tz}$.  For all $t>0$ this is a
uniformly bounded holomorphic function in the right half plane.
Therefore, since $2<q<p_+(L_w)$, by Proposition~\ref{prop:B-K:weights}
we have that
\[ \|\sqrt{t} \grad e^{-tL_w} f\|_{L^q(w)}  \lesssim 
\|\varphi_t\|_{\infty}
\|f\|_{L^q(w)} 
\lesssim 
\|f\|_{L^q(w)} 
\]
and the bound is independent of $t$.  This completes the proof that
$q_+(L_w)>2$.

\section{Riesz transform estimates}
\label{section:riesz}

In this section we prove $L^p(w)$ norm inequalities for the Riesz
transform $\grad L_w^{-1/2}$.  We have already proved such
inequalities for a small range of values $q>2$ in
Section~\ref{section:q-plus}.  Here we prove the following result.

\begin{prop}\label{prop:ext-RT}
Let $q_-(L_w) <p < q_+(L_w)$.  Then  there exists a constant $C$ such that
\begin{equation}
\label{eq:Riesz} \|\nabla L_w^{-1/2} f\|_{L^p(w)} \le C\|f\|_{L^p(w)}.
\end{equation}
Furthermore, if $v \in A_{p/q_-(L_w)}(w)\cap
RH_{(q_+(L_w)/p)'}(w)$,  then
\begin{equation}
\label{eq:Riesz:w} \|\nabla L_w^{-1/2} f\|_{L^p(v\,dw)} \le C\|f\|_{L^p(v\,dw)}.
\end{equation}
\end{prop}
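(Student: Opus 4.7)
The plan is to follow closely the template of the proofs of Propositions~\ref{prop:B-K:weights}, \ref{prop:square-function}, and \ref{prop:reverseRiesz}. First I would establish \eqref{eq:Riesz} in the range $q_-(L_w)<p<2$ by applying Theorem~\ref{theorem:2.4}, and then \eqref{eq:Riesz:w} in the full range $q_-(L_w)<p<q_+(L_w)$ by applying Theorem~\ref{theorem:2.2}, from which \eqref{eq:Riesz} for $2<p<q_+(L_w)$ follows by setting $v\equiv 1$. Throughout, $T=\nabla L_w^{-1/2}$ would be understood through the truncations $\nabla S_\epsilon$ introduced after \eqref{defi-RT:trunc}, with estimates independent of $\epsilon$ and a density argument at the end. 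I would take $S=I$ and the approximation $\mathcal{A}_r=I-(I-e^{-r^2L_w})^m$, with $m\in\NN$ to be chosen large. For Theorem~\ref{theorem:2.4} I would pick $q_0=2$ and $p_0>q_-(L_w)=p_-(L_w)$ arbitrarily close to $q_-(L_w)$; for Theorem~\ref{theorem:2.2}, given $p$ and $v$, the openness of the Muckenhoupt/reverse H\"older classes allows the choice $q_-(L_w)<p_0<\min\{p,2\}\le\max\{p,2\}<q_0<q_+(L_w)$ with $v\in A_{p/p_0}(w)\cap RH_{(q_0/p)'}(w)$. Since $p_0,q_0\in\K(L_w)\subset\J(L_w)$, \eqref{eqn:Asum}, Proposition~\ref{prop:J}, and Lemma~\ref{lemma:unif-comp} ensure that $\mathcal{A}_r\in\offw{p_0}{q_0}$ and is uniformly bounded on $L^{p_0}(w)$ and $L^{q_0}(w)$; the $L^{q_0}(w)$-boundedness of $T$ needed on the righthand side of \eqref{eqn:2.2} is provided by Section~\ref{section:q-plus}.

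The verification of the easy hypothesis (\eqref{eqn:2.2} for Theorem~\ref{theorem:2.2}, or \eqref{eqn:2.5} for Theorem~\ref{theorem:2.4}) exploits that $T$ and $\mathcal{A}_r$ commute (both are functions of $L_w$), so $T\mathcal{A}_r f=\mathcal{A}_r(Tf)$. Applying $\mathcal{A}_r\in\offw{p_0}{q_0}$ componentwise to the decomposition $Tf=\sum_{j\ge1}(Tf)\bigchi_{C_j(B)}$ and summing via Minkowski's inequality yields
\[
\bigg(\aver{B}|T\mathcal{A}_r f|^{q_0}\,dw\bigg)^{\frac1{q_0}}
\lesssim
\sum_{j\ge1}2^{j(\theta_1+\theta_2)}e^{-c\,4^j}\bigg(\aver{2^{j+1}B}|Tf|^{p_0}\,dw\bigg)^{\frac1{p_0}},
\]
reproducing the analogous steps of Sections~\ref{section:functional}--\ref{section:reverse}. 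The resulting $g(j)=C\,2^{j(\theta_1+\theta_2)}e^{-c4^j}$ is clearly summable.

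The main obstacle is the control of $T(I-\mathcal{A}_r)f=\nabla L_w^{-1/2}(I-e^{-r^2L_w})^mf$. The key observation is that $\psi_r(z):=z^{-1/2}(1-e^{-r^2z})^m$ belongs to $\H_0^\infty(\Sigma_\mu)$ for every $m\ge 1$ (with $|\psi_r(z)|\lesssim r^{2m}|z|^{m-1/2}$ near $z=0$ and $|\psi_r(z)|\lesssim |z|^{-1/2}$ for $|z|$ large), so by the integral representation of the functional calculus
\[
T(I-\mathcal{A}_r)f=\nabla\psi_r(L_w)f=\int_{\Gamma_{\frac\pi2-\theta}}\nabla e^{-zL_w}f\,\eta_r(z)\,dz,
\]
where a computation along the lines of \cite[Section~5.1]{auscher07} and the proof of Proposition~\ref{prop:B-K:weights} yields the kernel bound $|\eta_r(z)|\lesssim r^{2m}/|z|^{m+\frac12}$ on the contour. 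Choosing $\theta$ so that Corollary~\ref{corollary-grad-weighted-offd} applies, i.e., $\sqrt{z}\,\nabla e^{-zL_w}\in\offwmu{p_0}{p_0}{\nu}$ for some $\nu>\frac\pi2-\theta$, the extra factor $|z|^{-1/2}$ coming from the gradient off-diagonal estimate combines with $|\eta_r(z)|$ to produce exactly the kernel $r^{2m}/|z|^{m+1}$ already analyzed in \eqref{est-down}. This yields, for $j\ge 2$,
\[
\bigg(\aver{C_j(B)}|T(I-\mathcal{A}_r)f|^{p_0}\,dw\bigg)^{\frac1{p_0}}
\lesssim
2^{j(\theta_1-2m)}\bigg(\aver{B}|f|^{p_0}\,dw\bigg)^{\frac1{p_0}}
\]
provided $2m>\theta_2$. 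Running the same argument with the decomposition $f=\sum_j f\bigchi_{C_j(B)}$ and exchanging the roles of $B$ and $C_j(B)$ produces the form of \eqref{eqn:2.1} needed for Theorem~\ref{theorem:2.2}. Choosing $2m>\max\{\theta_1,\theta_2\}+D$, with $D$ the doubling order of $w$, makes $g(j)=C\,2^{j(\theta_1-2m)}$ summable with the factor $2^{jD}$ required by Theorem~\ref{theorem:2.4}, and both theorems then apply to yield \eqref{eq:Riesz} and \eqref{eq:Riesz:w}.
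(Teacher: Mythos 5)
There is a genuine gap at the heart of your verification of the ``easy'' hypothesis \eqref{eqn:2.2}: you assert that $T=\nabla L_w^{-1/2}$ and $\mathcal{A}_r=I-(I-e^{-r^2L_w})^m$ commute because ``both are functions of $L_w$.'' They do not: $L_w^{-1/2}$ commutes with $\mathcal{A}_r$, but the gradient does not, so $\nabla L_w^{-1/2}\mathcal{A}_rf=\nabla\mathcal{A}_rL_w^{-1/2}f$ is \emph{not} equal to $\mathcal{A}_r(\nabla L_w^{-1/2}f)$ (the left side involves $\nabla e^{-kr^2L_w}$ acting on a scalar function, the right side is $e^{-kr^2L_w}$ acting componentwise on a vector field). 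This false commutation is precisely the obstruction the paper singles out for this proposition, and it is why the argument for the Riesz transform cannot simply replicate Propositions~\ref{prop:B-K:weights}, \ref{prop:square-function} and \ref{prop:reverseRiesz}. Your first step is actually salvageable: in Theorem~\ref{theorem:2.4} the hypothesis \eqref{eqn:2.5} concerns $\mathcal{A}_r$ alone (no commutation needed), and your treatment of $T(I-\mathcal{A}_r)f$ through $\psi_r(z)=z^{-1/2}(1-e^{-r^2z})^m$ and the off-diagonal estimates for $\sqrt{z}\,\nabla e^{-zL_w}$ parallels the paper's representation \eqref{RT-repre}, so \eqref{eq:Riesz} for $q_-(L_w)<p<2$ goes through. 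But the application of Theorem~\ref{theorem:2.2}, where \eqref{eqn:2.2} genuinely requires comparing $T\mathcal{A}_rf$ with averages of $Tf$, collapses.

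The paper handles this differently. For $2<p<q_+(L_w)$ it proves \eqref{eqn:2.2} (with $p_0=2$, $S=I$, $v\equiv1$) by establishing the gradient off-diagonal estimate \eqref{eq:est-Riesz:2} for $\nabla e^{-kr^2L_w}$ in terms of $\nabla f$, using the conservation property $e^{-tL_w}1=1$ together with the $L^2(w)$-Poincar\'e inequality, and then applies it to the truncations $S_\epsilon f$ of \eqref{defi-RT:trunc}, exploiting that $S_\epsilon$ (a function of $L_w$) does commute with $\mathcal{A}_r$, before letting $\epsilon\to0$. If you tried to fix your weighted step the same way at a general exponent $p_0$ close to $q_-(L_w)$, you would need an $L^{p_0}(w)$-Poincar\'e inequality, hence $w\in A_{p_0}$, i.e.\ $p_0>r_w$; this only yields the smaller range $\max\{r_w,q_-(L_w)\}<p<q_+(L_w)$, as in Proposition~\ref{prop:reverseRiesz}. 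To reach the full range $q_-(L_w)<p<q_+(L_w)$ claimed in \eqref{eq:Riesz:w}, the paper abandons Theorem~\ref{theorem:2.2} for this part and instead runs a duality argument: it bounds $T^*$ on $L^{p'}(v^{1-p'}dw)$ via the good-$\lambda$ result of Theorem~\ref{theor:good-lambda:w}, using the already-proved $L^{q_0}(w)$ boundedness of $T$ for $2<q_0<q_+(L_w)$. Your proposal is missing this (or some substitute) entirely, so as written it does not prove the weighted estimate, nor the unweighted one below $\max\{r_w,q_-(L_w)\}$ beyond what Theorem~\ref{theorem:2.4} already gives.
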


To prove Proposition \ref{prop:ext-RT} we would like to follow the
same outline as the proof of Proposition \ref{prop:B-K:weights}.  The first step---i.e.,
proving~\eqref{eq:Riesz} holds when  $q_-(L_w)<p<2$--- does work with
the appropriate
changes. However, the second step (i.e., the proof that
\eqref{eq:Riesz:w} holds) runs into difficulties since $\nabla
L_w^{-1/2}$ and the auxiliary operators
$\mathcal{A}_r$ do not commute. One approach to overcoming this
obstacle would be to adapt the
proof in \cite{auscher-martell06} (see also \cite{auscher07}).  In this
case we would need to use an $L^{p_0}(w)$-Poincar\'e inequality which
may not hold unless we assume $w\in A_{p_0}$.  This would yield
estimates in the range $\max\{r_w,q_-(L_w)\}<p<q_+(L_w)$, analogous to
those in Proposition~\ref{prop:reverseRiesz}.

There is, however, an alternative approach.  In
\cite{auscher-martell-08} the authors considered Riesz
transforms associated with the Laplace-Beltrami operator of a complete,
non-compact Riemannian manifold. Their proof avoids Poincar\'e
inequalities for $p$ close to $1$ as these may
not hold. Instead, they use a duality argument based
on ideas in \cite{bernicot-zhao}; this requires that they first prove
that the Riesz transform is bounded for $p>2$ in the appropriate range of
values.  This reverses the order used in the proof of
Proposition~\ref{prop:B-K:weights}.

\begin{proof}[Proof of Proposition \ref{prop:ext-RT}]
  For brevity, let $q_-=q_-(L_w)$ and $q_+=q_+(L_w)$.  To implement
  the approach sketched above, we divide the proof in two steps. First
  we will prove that~\eqref{eq:Riesz} holds when $2<p<q_+$.  We do so
  using Theorem~\ref{theorem:2.2} and some ideas
  from~\cite{auscher07,auscher-martell06}.   We note that since the Riesz transform and $\mathcal{A}_r$ do not commute, we will
  use an $L^2(w)$-Poincar\'e inequality.  This holds since $w\in A_2$:
   the problem with using the Poincar\'e inequality only occurs with exponents
   less than $2$.    The second step is to prove that
   \eqref{eq:Riesz:w} holds by
  adapting the proof in \cite{auscher-martell-08}.  Here we will use
 duality and a result from \cite{auscher-martell07b} that is
  based on good-$\lambda$ inequalities.  Inequality~\eqref{eq:Riesz}
  then holds when $q_-<p<2$ by taking $v\equiv 1$.

\medskip

To apply Theorem \ref{theorem:2.2}, fix $2<p<q_+$
and let $T=\nabla L_w^{-1/2}$, $S=I$ and $\mathcal{D}=L^\infty_c$.  Let
$p_0=2$ and fix $q_0$ such that $2<p<q_0<q_+$.  As before we take
$\mathcal{A}_{r}=I-( I-e^{-r^{2}L_{w}})^{m}$, where $m$ 
will be chosen below. We first show that
\eqref{eqn:2.1} holds. Let $f\in L^\infty_c$ and decompose it as in \eqref{decomp-f}; then we
have
\begin{align*}
\bigg(\aver{B} |\nabla L_w^{-1/2} (I-e^{-r^2\,L_w})^m
f|^{2}\,dw\bigg)^{\frac1{2}}
\le
\sum_{j\ge 1}
\bigg(
\aver{B} |\nabla L_w^{-1/2} (I-e^{-r^2\,L_w})^m f_j|^{2}\,dw
\bigg)^{\frac1{2}}.
\end{align*}
To estimate the first term, note that  $\nabla L_w^{-1/2}$ and
$e^{-r^2\,L_w}$ are bounded on $L^{2}(w)$ by
Theorems~\ref{thm:L2-gaffney} and~\ref{theorem:degen-kato}.
Hence,
\begin{equation}\label{RT-est:f1}
\Big( \aver{B} |\nabla L_w^{-1/2}(I-e^{-r^2\,L_w})^m
f_1|^{2}\,dw \Big)^{\frac1{2}}
\lesssim
\Big( \aver{4\,B} |f|^{2}\,dw\Big)^{\frac1{2}}.
\end{equation}

Fix $j\ge 2$; to get the desired $L^2$ estimates  we will use the
$L^2$ bounds for the gradient of the square function.. If $h\in L^2(w)$, by
\eqref{defi-RT}
\begin{equation}\label{RT-repre}
\nabla L_w^{-1/2}(I-e^{-r^2\,L_w})^m h
=
\frac1{\sqrt{\pi}} \int_0^\infty \sqrt{t}\,\nabla \varphi(L_w,t)
h\,\frac{dt}{t},
\end{equation}
where $\varphi(z,t)=e^{-t\,z}\,(1-e^{-r^2\,z})^m \in
\H^\infty_0(\Sigma_\mu)$.   We can therefore use the integral
representation \eqref{eqn:L2-holo-rep} for $\varphi(\cdot, t)$.
The function
$\eta(\cdot, t)$ in this representation satisfies
\begin{equation*}\label{eta-RT}
|\eta(z,t)|
\lesssim
\frac{r^{2\,m}}{(|z|+t)^{m+1}},
\qquad
z\in \Gamma, \; t>0.
\end{equation*}
By Theorem \ref{thm:L2-gaffney}, $\sqrt{z}\,\nabla
e^{-z\,L_w}\in\offw{2}{2}$; hence,
\begin{align}
&\bigg( \aver{B}
\bigg|
\int_{\Gamma} \eta(z)\, \sqrt{t}\,\nabla e^{-z\,L_w} f_j\,dz
\bigg|^{2}\,dw\bigg)^{\frac1{2}} \nonumber
\\
&\quad\le
\int_{\Gamma}
\bigg(
\aver{B} |\sqrt{z}\,\nabla e^{-z\,L_w}
f_j|^{2}\,dw\bigg)^{\frac1{2}}\,
\frac{\sqrt{t}}{\sqrt{|z|}}\,|\eta(z)|\,|dz| \nonumber
\\
&\quad\lesssim
2^{j\,\theta_1} \int_{\Gamma}
\dec{\frac{2^j\,r}{\sqrt{|z|}}}^{\theta_2}
\expt{-\frac{\alpha\,4^j\,r^2}{|z|}}\,
\frac{\sqrt{t}}{\sqrt{|z|}}\,|\eta(z)|\,|dz| \,
\bigg(\aver{C_j(B)}
|f|^{2}\,dw\bigg)^{\frac1{2}} \nonumber
\\
&\quad\lesssim
2^{j\,\theta_1} \int_0^\infty
\dec{\frac{2^j\,r}{\sqrt{s}}}^{\theta_2}
\expt{-\frac{\alpha\,4^j\,r^2}{s}}\, \frac{\sqrt{t}}{\sqrt{s}}
\,\frac{r^{2\,m}}{(s+t)^{m+1}}\,ds\, \bigg(\aver{C_{j}(B)}
|f|^{2}\,dw\bigg)^{\frac1{2}}.
\label{est-Riesz-aux}
\end{align}
When  $2\, m>\theta_{2}$,
\begin{align}\label{eqn:integral}
\int_{0}^\infty\int_0^\infty
\dec{\frac{2^j\,r}{\sqrt{s}}}^{\theta_2}
\expt{-\frac{\alpha\,4^j\,r^2}{s}}\,
  \frac{\sqrt{t}}{\sqrt{s}}\,\frac{r^{2\,m}}{(s+t)^{m+1}}\,ds\frac
  {dt}t = C\, 4^{-j\, m}.
\end{align}
If we insert this into the representation \eqref{eqn:L2-holo-rep}
we get
\begin{align}
\bigg( \aver{B} |\nabla e^{-t\,L_w}(I-e^{-r^2\,L_w})^m  f_j|^{2}\,dw\bigg)^{\frac1{2}}
&\lesssim
\int_0^\infty
\Big( \aver{B}
|\sqrt{t}\,\nabla \varphi(L_w,t) f_j|^{2}\,dw\Big)^{\frac1{2}} \, \frac{dt}{t} \nonumber
\\
&\lesssim 2^{j\,(\theta_1-2\,m)}
\Big(\aver{C_{j}(B)}
|f|^{2}\,dw\Big)^{\frac1{2}}. \label{RT-est:fj}
\end{align}
If we now combine~\eqref{RT-est:f1} and \eqref{RT-est:fj} we get
\eqref{eqn:2.1} with $g(j)=C_m\,2^{j\,(\theta_1-2\,m)}$;  if we also
fix $2m>\theta_1$, we get that $\sum g(j)<\infty$.

\medskip

We now show that~\eqref{eqn:2.2} holds.    As we remarked above, the
Riesz transform does not commute with $\mathcal{A}_r$.  To overcome
this obstacle, we will prove an off-diagonal estimate for the gradient
of the semigroup (using the $L^2(w)$-Poincar\'e inequality), and
then use an approximation argument to get the desired estimate for the
Riesz transform.

More precisely, we claim
that for every $f \in H^1(w)$ and
$1\le k\le m$,
\begin{equation}
\label{eq:est-Riesz:2}
\bigg( \aver{B} |\nabla
e^{-k\,r^2\,L_w}f|^{q_0}\,dw\bigg)^{\frac1{q_0}}
\le
\sum_{j\ge 1} g(j)\,
\bigg( \aver{2^{j+1}\,B}
|\nabla f|^{2}\,dw\bigg)^{\frac1{2}},
\end{equation}
where
$g(j)=C_m\,2^j\,\sum_{l\ge j} 2^{l\,\theta}\,e^{-\alpha\,4^l}$.
Assume for the moment that~\eqref{eq:est-Riesz:2} holds. Then for
every $\epsilon>0$ we can apply this estimate to $S_{\ep}f$ (defined
by~\eqref{defi-RT:trunc}) since $S_{\ep}f\in H^{1}(w)$.   Moreover, we
have that $\A_r$ and $S_{\ep}$ commute, and so if we expand
$\A_r=I-(I-e^{-r^2\,L})^m$ and apply~\eqref{eq:est-Riesz:2}, we get
\[ \bigg( \aver{B} |\nabla S_{\ep} \A_{r}
f|^{q_0}\,dw\bigg)^{\frac1{q_0}}
\le
C_m\sum_{j\ge 1} g(j)\,
\bigg( \aver{2^{j+1}\,B}
|\nabla S_{\ep} f|^{2}\,dw\bigg)^{\frac1{2}}. \]
If we let $\ep$ go to $0$, we obtain \eqref{eqn:2.2}.  (The
justification of this uses the observations made in
Section~\ref{section:prelim} after~\eqref{defi-RT:trunc} and is left
to the reader.)  Moreover, we have that $\sum_{j\ge 1}g(j)<\infty$,
and so by Theorem~\ref{theorem:2.2} with $v\equiv 1$ (which
trivially satisfies $v\in A_{p/2}(w)\cap RH_{(q_0/p)'}(w)$) we
have that \eqref{eq:Riesz} holds for $f\in L^\infty_c$ and for
every $2<p<q_+$.

To complete this step we need to prove~\eqref{eq:est-Riesz:2}. Fix
$1\le k\le m$ and $f\in H^{1}(w)$.
Let $h=f-f_{4B,w}$, where $f_{4B,w}=\avgint_{4B} f \,dw$.   Then by the conservation property (see
\cite{DCU-CR2013}, or the proof in \cite[Section 2.5]{auscher07}),
 $e^{-t\, L_w}1=1$ for all $t>0$, and so
\[ \nabla e^{-k\,r^2\,L_w} f
=
\nabla e^{-k\,r^2\,L_w} (f-f_{4\,B,w})
=
\nabla e^{-k\,r^2\,L_w} h
=
\sum_{j\ge 1} \nabla e^{-k\,r^2\,L_w}h_j, \]
where $h_j=h\,\bigchi_{C_j(B)}$. Hence,
$$
\bigg(\aver{B} |\nabla
e^{-k\,r^2\,L_w}f|^{q_0}\,dw\bigg)^{\frac1{q_0}}
\le
\sum_{j\ge 1}
\bigg(\aver{B}  |\nabla
e^{-k\,r^2\,L_w}h_j|^{q_0}\,dw\bigg)^{\frac1{q_0}}.
$$
Since $2<q_0<q_+$, by Proposition~\ref{prop:K},
$\sqrt{t}\,\nabla e^{-t\,L_w}\in\offw{2}{q_0}$. If we apply this and
the $L^{2}(w)$-Poincar\'e inequality (see Remark
\ref{remark:Poincare-non-smooth} with $p=q=2$), then for each $j\geq
1$ we get
\begin{align*}
&\bigg(\aver{B}
|\nabla e^{-k\,r^2\,L_w}h_j|^{q_0}\,dw\bigg)^{\frac1{q_0}} \\
& \qquad \lesssim
\frac{2^{j\,(\theta_1+\theta_2)}\,e^{-\alpha\,4^j}}r\,
\bigg(\aver{C_j(B)}
|h_j|^{2}\,dw\bigg)^{\frac1{2}}
\nonumber
\\
&\qquad\le
\frac{2^{j\,(\theta_1+\theta_2)}\,e^{-\alpha\,4^j}}r\,
\bigg(\aver{2^{j+1}\,B}|f-f_{4\,B,w}|^{2}\,dw\bigg)^{\frac1{2}}
\nonumber
\\
&\qquad\le
\frac{2^{j\,(\theta_1+\theta_2)}\,e^{-\alpha\,4^j}}r\,
\bigg(
\bigg(\aver{2^{j+1}\,B}\hskip-7pt
|f-f_{2^{j+1}\,B,w}|^{2}\,dw\bigg)^{\frac1{2}} +\sum_{l=2}^j
|f_{2^l\,B,w}-f_{2^{l+1}\,B,w}| \bigg)
\nonumber
\\
&\qquad\lesssim
\frac{2^{j\,(\theta_1+\theta_2)}\,e^{-\alpha\,4^j}}r\,\sum_{l=1}^{j}
\bigg(\aver{2^{l+1}\,B}
|f-f_{2^{l+1}\,B,w}|^{2}\,dw\bigg)^{\frac1{2}}
\nonumber
\\
&\qquad\lesssim
2^{j\,(\theta_1+\theta_2)}\,e^{-\alpha\,4^j}\,\sum_{l=1}^{j} 2^{l}\,
\bigg(\aver{2^{l+1}\,B}
|\nabla f|^{2}\,dw\bigg)^{\frac1{2}}.
\end{align*}
If we combine these two estimates and exchange the order of summation
we get~\eqref{eq:est-Riesz:2} with
$\theta=\theta_{1}+\theta_{2}$. This completes the proof
that~\eqref{eq:Riesz} holds when $2<p<q_+$.

\medskip

For the second step of our proof we show
that~\eqref{eq:Riesz:w} holds for all $p$, $q_-<p<q_+$ and
$v\in A_{p/q_-}(w)\cap RH_{(q_+/p)'}(w)$.  Fix such a $p$ and $v$;
then by the openness
properties of $A_q$ and $RH_s$ weights,
there exist
$p_0, q_0$ such that
\begin{equation*}
q_-<p_0<\min\{p,2\}\le \max\{p,2\}<q_0<q_+
\quad \mbox{and} \quad
v\in A_{p/p_0}(w) \cap RH_{(q_0/p)'}(w).
\end{equation*}
By the duality properties of weights~\cite[Lemma 4.4]{auscher-martell07b},
\[ u=v^{1-p'}\in A_{p'/q_0'}(w)\cap RH_{(p_0'/p')'}(w). \]
Let
$T=\nabla L_w^{-1/2}$; then  $T$ is bounded from
$L^p(\re^n,v\,dw)$ to $L^p(\re^n;\co^n, v\,dw)$ if and only if
 $T^*$ is bounded from $L^{p'}(\re^n;\co^n, u\,dw) $ to
$L^{p'}(\re^n; u\,dw)$.  (Note that $T$ takes scalar valued
functions to vector functions valued and $T^*$ the opposite.)

Therefore, it will suffice to prove the boundedness of $T^*$.  We will
do so using a particular case of \cite[Theorem
3.1]{auscher-martell07b}. This result is stated there in the
Euclidean setting but it extends to spaces of homogeneous type. Here  we
give the weighted version we need:
see~\cite[Section 5]{auscher-martell07b}.

\begin{theor}\label{theor:good-lambda:w}
Fix $1<q<\infty$, $a\ge 1$ and $u\in RH_{s'}(w)$, $1<s<\infty$.
Then there exists $C>1$ with the
following property:  suppose $F\in L^1(w)$ and  $G$ are
non-negative measurable functions such that for any ball $B$ there
exist non-negative functions $G_B$ and $H_B$ with $F(x)\le G_B(x)+
H_B(x)$ for a.e. $x\in B$ and, for all $x\in B$,
\begin{equation}\label{H-Q:G-Q}
\Big(\aver{B} H_B^q\, dw \Big)^{\frac1q}
\le
a\, M_w F(x),
\qquad\qquad
\aver{B} G_B\, dw
\le
G(x),
\end{equation}
where $M_w$ is the Hardy-Littlewood maximal function with respect to $dw$.
Then for  $1<t< q/s$,
\begin{equation}\label{good-lambda:Lp:w}
\|M_w F\|_{L^t(u\,dw)}
\le
C\,\|G\|_{L^t(u\,dw)}.
\end{equation}
\end{theor}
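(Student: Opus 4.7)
The plan is to prove a good-$\lambda$ inequality at the level of the measure $u\,dw$ and then run the standard layer-cake/absorption argument. First I would fix $\lambda>0$ and apply a Whitney-type decomposition of the open set $E_\lambda=\{M_wF>\lambda\}$ in the space of homogeneous type $(\re^n,dw,|\cdot|)$: since $F\in L^1(w)$ this set has finite $w$-measure, so we obtain balls $\{B_i\}$ of bounded overlap with $E_\lambda=\bigcup_i B_i$ and such that a fixed dilate $\hat B_i=c\,B_i$ meets $\re^n\setminus E_\lambda$ at some point $\bar x_i$. For each $B_i$ meeting $\{G\le \gamma\lambda\}$ I would also pick $x_i\in B_i$ with $G(x_i)\le \gamma\lambda$ (otherwise the set under study is empty on that ball). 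Using the decomposition $F\le G_{\hat B_i}+H_{\hat B_i}$ provided by the hypothesis and the standard pointwise splitting $M_wF(x)\le M_w(F\bigchi_{\hat B_i})(x)+M_w(F\bigchi_{\re^n\setminus \hat B_i})(x)$, a doubling-based comparison gives $M_w(F\bigchi_{\re^n\setminus \hat B_i})(x)\le c_0\,M_wF(\bar x_i)\le c_0\lambda$ on $B_i$, so for $K>2c_0$ the event $\{M_wF>K\lambda\}\cap B_i$ forces $M_w(F\bigchi_{\hat B_i})(x)>K\lambda/2$.

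Second, I would estimate the local piece using the weak-type $(1,1)$ of $M_w$ on $(\re^n,dw)$ and the two assumptions in \eqref{H-Q:G-Q}:
\[
\aver{\hat B_i} G_{\hat B_i}\,dw\le G(x_i)\le \gamma\lambda,\qquad \int_{\hat B_i}H_{\hat B_i}^q\,dw\le a^q\lambda^q\,w(\hat B_i),
\]
the second of which follows by evaluating the $L^q$-average condition at $x=\bar x_i\in\hat B_i$. Combining these via Chebyshev and weak-$(1,1)$ yields
\[
w\bigl(\{M_wF>K\lambda,\,G\le \gamma\lambda\}\cap B_i\bigr)\le C_0\Bigl(\frac{\gamma}{K}+\frac{a^q}{K^q}\Bigr)\,w(B_i)=:\eta(K,\gamma)\,w(B_i).
\]
The hypothesis $u\in RH_{s'}(w)$ together with H\"older's inequality gives, for every measurable $E\subset B_i$, the power-bump bound $u\,dw(E)/u\,dw(B_i)\le C\,(w(E)/w(B_i))^{1/s}$. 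Applied here, and summed over $i$ using the bounded overlap of $\{B_i\}$,
\[
u\,dw\bigl(\{M_wF>K\lambda,\,G\le \gamma\lambda\}\bigr)\le C_1\,\eta(K,\gamma)^{1/s}\,u\,dw\bigl(\{M_wF>\lambda\}\bigr).
\]

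Third, with the good-$\lambda$ inequality in hand, I would multiply by $t\lambda^{t-1}$ and integrate in $\lambda$. Using
\[
u\,dw(\{M_wF>K\lambda\})\le u\,dw\bigl(\{M_wF>K\lambda,\,G\le\gamma\lambda\}\bigr)+u\,dw(\{G>\gamma\lambda\}),
\]
the layer-cake identity produces
\[
K^{-t}\,\|M_wF\|_{L^t(u\,dw)}^{t}\le C_1\,\eta(K,\gamma)^{1/s}\,\|M_wF\|_{L^t(u\,dw)}^{t}+\gamma^{-t}\,\|G\|_{L^t(u\,dw)}^{t}.
\]
Since $\eta(K,\gamma)^{1/s}\lesssim \gamma^{1/s}K^{-1/s}+a^{q/s}K^{-q/s}$, the constraint $1<t<q/s$ (together with $t\ge 1>1/s$) lets me first choose $K$ large so that $K^{t}\cdot a^{q/s}K^{-q/s}$ is small, then $\gamma$ small so that $K^{t-1/s}\gamma^{1/s}$ is small; absorbing the first term on the right yields \eqref{good-lambda:Lp:w}.

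The main obstacle will be making the absorption rigorous, since \emph{a priori} nothing guarantees $\|M_wF\|_{L^t(u\,dw)}<\infty$. The standard remedy is to replace $M_wF$ throughout by the truncated version $(M_wF\wedge N)\,\bigchi_{B(0,R)}$ and to truncate $u$ by $u\wedge N$; one checks that every constant appearing above depends only on $n$, the doubling order of $dw$, $q$, $a$, $s$ and $[u]_{RH_{s'}(w)}$, and not on $N$ or $R$. Since $F\in L^1(w)$ makes the truncated maximal function bounded with $w$-finite support, and the truncated weight is bounded, the truncated norm is finite, the absorption is legal, and a monotone convergence/Fatou argument as $N,R\to\infty$ recovers the claimed inequality.
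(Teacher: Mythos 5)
Your overall route is the standard good-$\lambda$ argument, and it coincides with the proof of the result the paper is actually quoting: the paper does not prove this theorem but cites \cite[Theorem~3.1 and Section~5]{auscher-martell07b}, whose proof is exactly your scheme (Whitney decomposition of $\{M_wF>\lambda\}$ in the space of homogeneous type $(\rn,dw)$, localization through the touching point $\bar x_i$ with $M_wF(\bar x_i)\le\lambda$, weak $(1,1)$ and $L^q$ bounds for $M_w$ applied to $G_{\hat B_i}$ and $H_{\hat B_i}$, the $RH_{s'}(w)$ power bump $(u\,dw)(E)/(u\,dw)(B)\lesssim (w(E)/w(B))^{1/s}$, and the layer-cake integration with the choice of $K$ and then $\gamma$ under $1<t<q/s$). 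All of that part of your argument is correct.

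The gap is in the final absorption step. Truncating $M_wF$ by multiplying with $\bigchi_{B(0,R)}$ does not yield a usable good-$\lambda$ inequality: the Whitney balls $B_i$ of $\{M_wF>\lambda\}$ meeting $B(0,R)$ need not lie in any fixed dilate of $B(0,R)$ (their radii are comparable to the distance to the complement of the level set), so after summation the right-hand side is $(u\,dw)(\{M_wF>\lambda\})$ for the \emph{untruncated} function, which may be infinite, and the self-improving structure needed for absorption is lost; restricting the weight spatially instead is also not allowed, since $u\bigchi_{B(0,R)}$ is not in $RH_{s'}(w)$ uniformly in $R$. The correct reduction is: (i) cap the height, i.e.\ work with $\min(M_wF,N)$ or, equivalently, integrate $t\lambda^{t-1}$ only over $0<\lambda<\Lambda$ --- this is harmless because $\{\min(M_wF,N)>\lambda\}$ is either $\{M_wF>\lambda\}$ or empty, so the good-$\lambda$ inequality persists verbatim; and (ii) replace $u$ by $u\wedge N$, for which one must actually prove that $[u\wedge N]_{RH_{s'}(w)}$ is bounded independently of $N$. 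This is true, but it is a small lemma rather than something ``one checks'': for instance, using the reverse H\"older inequality one shows $w\big(B\cap\{u\ge \tfrac12\aver{B}u\,dw\}\big)\gtrsim w(B)$, whence $\aver{B}(u\wedge N)\,dw\gtrsim\min\big\{N,\aver{B}u\,dw\big\}$, while $\big(\aver{B}(u\wedge N)^{s'}dw\big)^{1/s'}\le\min\big\{[u]_{RH_{s'}(w)}\aver{B}u\,dw,\,N\big\}$. With these two truncations, $F\in L^1(w)$, the weak $(1,1)$ of $M_w$ with respect to $dw$, and $t>1$ give the a priori finiteness needed to absorb, and monotone convergence in $N$ and $\Lambda$ recovers \eqref{good-lambda:Lp:w}. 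So the approach is the right one, but the truncation device as you wrote it would fail and needs this repair.
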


\medskip

To apply Theorem~\ref{theor:good-lambda:w}, fix
$\vec{f}\in L^\infty_c (\re^n;\co^n)$, and let $h=T^* \vec{f}$
and $F=|h|^{q_0'}$.    Then $F\in L^1(w)$:  by the argument above,
 since $2<q_0<q_+$, $T$ is bounded from $L^{q_0}(\re^n, w)$ to
$L^{q_0}(\re^n;\co^n, w)$, thus  $T^*$ is bounded
from $L^{q_0'}(\re^n;\co^n, w)$ to $L^{q_0'}(\re^n, w)$.

Now let
$\A_{r}=I- (I-e^{-r^2\, L_w})^m$, where $m>0$ will be fixed below.
Given a ball $B$ with radius $r$, we define
\[ F
\le
2^{q_0'-1}\,|(I-\A_{r})^* h|^{q_0'}
+
2^{q_0'-1}\,|\A_{r}^* h|^{q_0'}
\equiv G_B+H_B,  \]
where, as before, the  adjoint  is with
respect to $L^2(w)$.   To complete the proof, suppose for the
moment that we could prove~\eqref{H-Q:G-Q} with $q=p_0'/q_0'$
and $G=M_w(|\vec{f}|^{q_0'})$.
Since $u \in RH_{(p_0'/p')'}(w)$, by the openness property of reverse
H\"older weights, $u \in RH_{s'}(w)$ for some $s<p_0'/p'$.
Then if we let $t=p'/q_0'=(p_0'/q_0')/(p_0'/p')<q/s$, we have
$u \in A_t(w)$, and so $M_w$ is bounded on $L^t(u\,dw)$.  Therefore,
by~\eqref{good-lambda:Lp:w},
\begin{equation*}
\|T^* \vec{f}\|_{L^{p'}(u\,dw)}^{q_0'}
\le
\|M_w F\|_{L^t(u\,dw)}
\le 
C\,
\|G\|_{L^t(u\,dw)}
=
C\,
\|M_w(|\vec{f}|^{q_0'})\|_{L^t(u\,dw)}
\lesssim
\big\|\vec{f}\big\|_{L^{p'}(u\,dw)}^{q_0'}.
\end{equation*}

To complete the proof we need to show that ~\eqref{H-Q:G-Q} holds.  We
first estimate $H_B$.  By
duality there exists $g\in L^{p_0}(B,dw/w(B))$ with norm $1$ such that
for all $x\in B$,
\begin{align*}
\bigg(\aver{B} H_B^q\,dw\bigg)^{\frac1{q\,q_0'}}
&
\lesssim
w(B)^{-1}\,\int_{\re^n} |h|\,|\A_{r} g|\,dw
\\
&\lesssim
\sum_{j=1}^\infty 2^{j\,D} \bigg(\aver{C_j(B)} |h|^{q_0'}\,dw\bigg)^\frac1{q_0'}\,\bigg(\aver{C_j(B)} |\A_{r} g|^{q_0}\,dw\bigg)^\frac1{q_0}
\\
&\lesssim
M_w F(x)^\frac1{q_0'}\sum_{j=1}^\infty 2^{j\,(D+\theta_1+\theta_2)} e^{-\alpha\,4^{j}} \bigg(\aver{B} |g|^{p_0}\,dw\bigg)^\frac1{p_0}
\\
&\lesssim
M_w F(x)^\frac1{q_0'}
,
\end{align*}
where in the second to last inequality we used the fact that by our choice of $p_0,\,q_0$,
$e^{-t\,L_w}\in\offw{p_0}{q_0}$, and so $\A_r$ is as well.

\medskip

We now estimate $G_B$.  Again by duality there exists
$g\in L^{q_0}(B,dw/w(B))$ with norm $1$ such that for all $x\in B$,
\begin{align}
\bigg(\aver{B} G_B\,dw\bigg)^{\frac1{q_0'}}
&
\lesssim
w(B)^{-1}\,\int_{\re^n} |\vec{f}|\,|T(I-\A_{r}) g|\,dw
\nonumber\\
&\lesssim
\sum_{j=1}^\infty 2^{j\,D} \bigg(\aver{C_j(B)} |\vec{f}|^{q_0'}\,dw\bigg)^\frac1{q_0'}\,\bigg(\aver{C_j(B)} |T(I-\A_{r}) g|^{q_0}\,dw\bigg)^\frac1{q_0}
\nonumber\\
&\le
M_w(|\vec{f}|^{q_0'})(x)^\frac1{q_0'}\sum_{j=1}^\infty 2^{j\,D} \bigg(\aver{C_j(B)} |T(I-\A_{r}) g|^{q_0}\,d\mu\bigg)^\frac1{q_0}.
\label{est:GB}
\end{align}

To estimate each term in the sum we argue as in the first half of the
proof.  When $j=1$, $\nabla L_w^{-1/2}$ and $e^{-r^2\,L_w}$ are
bounded on $L^{q_0}(w)$ by the first part of the proof and
Theorem~\ref{thm:L2-gaffney}. Hence,
\begin{equation}\label{RT-est:GB-1}
\bigg( \aver{4\,B} |\nabla L_w^{-1/2}(I-e^{-r^2\,L_w})^m g|^{q_0}\,dw \bigg)^{\frac1{q_0}}
\lesssim
\bigg( \aver{B} |g|^{q_0}\,dw\bigg)^{\frac1{q_0}}
=1.
\end{equation}

For $j\ge 2$ we use the integral representation
\eqref{RT-repre}. If we estimate as in \eqref{est-Riesz-aux}, with
the roles of $B$ and $C_j(B)$ switched and using the fact that
$\sqrt{z}\,\nabla e^{-z\,L_w}\in\offw{q_0}{q_0}$ since $2<q_0<q_+$, we
see that
\begin{align*}
&\bigg( \aver{C_j(B)}
\bigg|
\int_{\Gamma} \eta(z)\, \sqrt{t}\,\nabla e^{-z\,L_w} g\,dz
\bigg|^{q_0}\,dw\bigg)^{\frac1{q_0}} \nonumber
\\
&\quad\le
\int_{\Gamma}
\bigg(
\aver{C_j(B)} |\sqrt{z}\,\nabla e^{-z\,L_w}
g|^{q_0}\,dw\bigg)^{\frac1{q_0}}\,
\frac{\sqrt{t}}{\sqrt{|z|}}\,|\eta(z)|\,|dz| \nonumber
\\
&\quad\lesssim
2^{j\,\theta_1} \int_{\Gamma}
\dec{\frac{2^j\,r}{\sqrt{|z|}}}^{\theta_2}
\expt{-\frac{\alpha\,4^j\,r^2}{|z|}}\,
\frac{\sqrt{t}}{\sqrt{|z|}}\,|\eta(z)|\,|dz| \,
\bigg(\aver{B}
|g|^{q_0}\,dw\bigg)^{\frac1{2}} \nonumber
\\
&\quad\lesssim
2^{j\,\theta_1} \int_0^\infty
\dec{\frac{2^j\,r}{\sqrt{s}}}^{\theta_2}
\expt{-\frac{\alpha\,4^j\,r^2}{s}}\, \frac{\sqrt{t}}{\sqrt{s}}
\,\frac{r^{2\,m}}{(s+t)^{m+1}}\,ds.
\end{align*}
If we take $2\, m>\theta_{2}$, we can combine this
with~\eqref{eqn:integral}.  We can then insert this estimate into the representation
\eqref{eqn:L2-holo-rep} to get that for every $j\ge 2$,
\begin{multline}
\bigg( \aver{C_j(B)} |\nabla e^{-t\,L_w}(I-e^{-r^2\,L_w})^m  g|^{q_0}\,dw\bigg)^{\frac1{q_0}}
\\
\lesssim
\int_0^\infty
\bigg( \aver{C_j(B)}
|\sqrt{t}\,\nabla \varphi(L_w,t) g|^{q_0}\,dw\bigg)^{\frac1{q_0}} \, \frac{dt}{t}
\lesssim 2^{j\,(\theta_1-2\,m)}.
\label{RT-est:GB-j}
\end{multline}
Taken together, \eqref{est:GB}, \eqref{RT-est:GB-1} and  \eqref{RT-est:GB-j} yield
$$
\bigg(\aver{B} G_B\,dw\bigg)^{\frac1{q_0'}}
\lesssim
M_w(|\vec{f}|^{q_0'})(x)^\frac1{q_0'}\sum_{j=1}^\infty 2^{j\,(D+\theta_1-2\,m)}
\lesssim
M_w(|\vec{f}|^{q_0'})(x)^\frac1{q_0'}
=
G(x)^\frac1{q_0'}
,
$$
provided we take $m$ large enough so that $D+\theta_1-2\,m<0$.  This
completes the estimate of $H_B$ and $G_B$ and so completes our proof.
\end{proof}

\section{Square function estimates for the gradient of the semigroup}
\label{section:square-function-gradient}

In this section we prove $L^p(w)$ estimates for the vertical square function
\[ G_{L_{w}}f( x)
=\bigg( \int_{0}^{\infty }|t^{1/2}\nabla e^{-tL_{w}}f(x)|^{2}\frac{dt}{t}
\bigg) ^{1/2}. \]

\begin{prop} \label{prop:grad-square-function}
Let $q_-(L_w) < p < q_+(L_w)$.  Then
\begin{equation} \label{eqn:grad-square-function-unwtd}
 \left\Vert G_{L_{w}}f\right\Vert
_{L^{p}\left( w\right) }\lesssim \left\Vert f\right\Vert _{L^{p}\left(
    w\right)}.
\end{equation}
Furthermore,  if $v \in A_{p/q_-(L_w)}(w)\cap
RH_{(q_+(L_w)/p)'}(w)$,  then
\begin{equation} \label{eqn:-grad-square-function-wt}
 \left\Vert G_{L_{w}}f\right\Vert
_{L^{p}(v\,dw) }\lesssim \left\Vert f\right\Vert _{L^{p}(v\,dw)}.
\end{equation}
\end{prop}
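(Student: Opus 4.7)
The key identity is the factorization
\[ t^{1/2}\,\nabla e^{-tL_w} f = \nabla L_w^{-1/2}\bigl[(tL_w)^{1/2}e^{-tL_w} f\bigr], \]
which follows from $L_w^{-1/2}L_w^{1/2} = I$ on an appropriate dense subspace, justified rigorously through the $L^2(w)$ functional calculus (and, if necessary, the truncation $S_\varepsilon$ from~\eqref{defi-RT:trunc}). Setting $g(x,t) := (tL_w)^{1/2}e^{-tL_w} f(x)$, so that $g_{L_w} f(x) = \normH{g(x,\cdot)}$ in the notation of Section~\ref{section:square-function}, and writing $R = \nabla L_w^{-1/2} = (R_1,\dots,R_n)$, this gives the pointwise identity
\[ G_{L_w} f(x) = \Bigl(\sum_{i=1}^n \normH{R_i g(x,\cdot)}^2\Bigr)^{1/2}. \]
Thus $G_{L_w}$ factors as the (Hilbert-valued extension of the) Riesz transform applied to the scalar square function $g_{L_w}$.

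For the range $q_-(L_w)<p<q_+(L_w)$ and $v\in A_{p/q_-(L_w)}(w)\cap RH_{(q_+(L_w)/p)'}(w)$, Proposition~\ref{prop:ext-RT} yields that each scalar operator $R_i$ is bounded on $L^p(v\,dw)$. Applying Lemma~\ref{lemma-7.4} with $T=R_i$, $S=I$ and a one-term sum extends this boundedness to $\mathbb{H}$-valued functions, so that
\[ \bigl\|\normH{R_i g(x,\cdot)}\bigr\|_{L^p(v\,dw)} \lesssim \bigl\|\normH{g(x,\cdot)}\bigr\|_{L^p(v\,dw)} = \|g_{L_w} f\|_{L^p(v\,dw)}. \]
Since $(\sum_i a_i^2)^{1/2}\le \sum_i a_i$ for nonnegative $a_i$, summing in $i$ produces $\|G_{L_w} f\|_{L^p(v\,dw)}\lesssim \|g_{L_w} f\|_{L^p(v\,dw)}$.

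To conclude, I would verify that the hypotheses of Proposition~\ref{prop:square-function} are automatically implied by ours. By Proposition~\ref{prop:K}(2), $q_-(L_w)=p_-(L_w)$ and $q_+(L_w)\le p_+(L_w)$; hence $p_-(L_w)<p<p_+(L_w)$, and $(q_+(L_w)/p)'\ge (p_+(L_w)/p)'$, so (since the $RH_s(w)$ classes are decreasing in $s$) $v\in RH_{(p_+(L_w)/p)'}(w)$. Proposition~\ref{prop:square-function} then gives $\|g_{L_w} f\|_{L^p(v\,dw)}\lesssim \|f\|_{L^p(v\,dw)}$, and chaining the two inequalities yields~\eqref{eqn:-grad-square-function-wt}; taking $v\equiv 1$ recovers~\eqref{eqn:grad-square-function-unwtd}. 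The only mildly delicate point is the rigorous justification of the factorization identity, which is handled by the standard density and approximation arguments developed at the end of Section~\ref{section:prelim}; otherwise the proof is essentially a direct composition of results already established.
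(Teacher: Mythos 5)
Your proof is correct and is essentially the paper's own argument: both rest on the factorization $t^{1/2}\nabla e^{-tL_w}f=\nabla L_w^{-1/2}\big[(tL_w)^{1/2}e^{-tL_w}f\big]$, the $L^p(v\,dw)$ Riesz transform bound of Proposition~\ref{prop:ext-RT} extended to $\mathbb{H}$-valued functions via Lemma~\ref{lemma-7.4}, and then Proposition~\ref{prop:square-function} applied to $g_{L_w}$ after noting $q_-(L_w)=p_-(L_w)$, $q_+(L_w)\le p_+(L_w)$ and the resulting inclusion of weight classes. The only cosmetic difference is that you apply the vector-valued extension componentwise to each $R_i$ and sum, while the paper applies it directly to the ($\mathbb{C}^n$-valued) operator $\nabla L_w^{-1/2}$.
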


We can also prove a reverse inequality for $G_{L_w}$.  To do so we need
to  introduce an auxiliary operator.  Define the
weighted Laplacian by $\Delta _{w}=- w^{-1}\div w\nabla $: i.e.,
$\Delta_w$ is the
operator $L_w$ if we take the matrix $A$ to
be $wI$, where $I$ is the identity matrix.

\begin{prop} \label{prop:grad-square-reverse}
Let $q_+(\Delta_w)' < p < \infty$.  Then
\begin{equation} \label{eqn:grad-square-reverse-unwtd}
 \Vert f \Vert_{L^{p}(w) }
\lesssim \Vert G_{L_{w}} f \Vert _{L^{p}( w)}.
\end{equation}
Furthermore,  if $v \in A_{p/q_+(\Delta_w)'}(w)$,  then
\begin{equation} \label{eqn:-grad-square-reverse-wt}
\Vert f\Vert_{L^{p}(v\,dw) }
\lesssim \Vert  G_{L_{w}} f\Vert _{L^{p}(v\,dw)}.
\end{equation}
\end{prop}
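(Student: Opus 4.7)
The plan is to prove the reverse estimate via a duality/polarization argument that pairs $L_w$ with the real symmetric weighted Laplacian $\Delta_w=-w^{-1}\div w\nabla$; the direct square function estimate for $\Delta_w$ from Proposition~\ref{prop:grad-square-function} then closes the argument.

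By density it suffices to prove the estimate for $f\in L^p(v\,dw)\cap L^2(w)$, and by duality we only need to control $|\langle f,g\rangle_{L^2(w)}|$ for $g$ in a dense subclass of the unit ball of $L^{p'}(v^{1-p'}\,dw)\cap L^2(w)$. Differentiating $h(t)=\langle e^{-tL_w}f,e^{-t\Delta_w}g\rangle_{L^2(w)}$ and using that $h(0)=\langle f,g\rangle_{L^2(w)}$ while $h(\infty)=0$ (both semigroups tend strongly to zero on $L^2(w)$ since $L_w$ and $\Delta_w$ are one-to-one m-accretive), one obtains the polarization identity
\begin{equation*}
\langle f,g\rangle_{L^2(w)}=\int_0^\infty \langle L_w e^{-tL_w}f,e^{-t\Delta_w}g\rangle_{L^2(w)}\,dt+\int_0^\infty \langle e^{-tL_w}f,\Delta_w e^{-t\Delta_w}g\rangle_{L^2(w)}\,dt.
\end{equation*}
Since $A=wB$ with $B$ uniformly bounded, and since $\Delta_w$ is self-adjoint on $L^2(w)$, the sesquilinear form representations $\langle L_w u,v\rangle_{L^2(w)}=\int B\nabla u\cdot\overline{\nabla v}\,dw$ and $\langle u,\Delta_w v\rangle_{L^2(w)}=\int \nabla u\cdot\overline{\nabla v}\,dw$ bound both integrals by a constant multiple of $\int_0^\infty\!\int |\nabla e^{-tL_w}f|\,|\nabla e^{-t\Delta_w}g|\,dw\,dt$. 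Inserting $\sqrt{t}/\sqrt{t}$, applying Cauchy--Schwarz in $t$, and then H\"older in $x$ yields
\begin{equation*}
|\langle f,g\rangle_{L^2(w)}|\lesssim \|G_{L_w}f\|_{L^p(v\,dw)}\,\|G_{\Delta_w}g\|_{L^{p'}(v^{1-p'}\,dw)}.
\end{equation*}

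To close the proof we need $G_{\Delta_w}$ bounded on $L^{p'}(v^{1-p'}\,dw)$. The key observation is that since $\Delta_w$ is a real symmetric divergence form operator on the space of homogeneous type $(\rn,dw,|\cdot|)$, its semigroup kernel and the gradient of the kernel admit Gaussian upper bounds; this forces $q_-(\Delta_w)=1$. Combined with the weight-duality identity of \cite[Lemma~4.4]{auscher-martell07b}, the hypothesis $v\in A_{p/q_+(\Delta_w)'}(w)$ (no reverse H\"older constraint) is equivalent to $v^{1-p'}\in A_{p'}(w)\cap RH_{(q_+(\Delta_w)/p')'}(w)$, which for $p'\in(1,q_+(\Delta_w))$ is precisely the weight hypothesis of \eqref{eqn:-grad-square-function-wt} applied to $\Delta_w$. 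This yields $\|G_{\Delta_w}g\|_{L^{p'}(v^{1-p'}\,dw)}\lesssim\|g\|_{L^{p'}(v^{1-p'}\,dw)}$, and taking the supremum over $g$ produces \eqref{eqn:-grad-square-reverse-wt}; the unweighted case \eqref{eqn:grad-square-reverse-unwtd} follows by setting $v\equiv 1$.

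The principal technical obstacle is the verification that $q_-(\Delta_w)=1$, i.e., the off-diagonal $L^1(w)\to L^2(w)$ bounds for both $e^{-t\Delta_w}$ and $\sqrt{t}\,\nabla e^{-t\Delta_w}$; these rest on pointwise Gaussian bounds for $p_t(x,y)$ and $\nabla_x p_t(x,y)$ which are classical for real symmetric weighted divergence form operators with $w\in A_2$ but require explicit appeal to the literature on heat kernel estimates in spaces of homogeneous type. Secondary issues are routine: the differentiation of $h(t)$, the passage to the limits at $t=0,\infty$, and the application of Fubini all follow from the $L^2(w)$-boundedness of $G_{L_w}$ and $G_{\Delta_w}$ (the direct part of Proposition~\ref{prop:grad-square-function}), which ensures the relevant double integral is absolutely convergent when $f,g\in L^2(w)$.
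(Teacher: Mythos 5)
Your argument is, in outline, the paper's own proof: the polarization identity you obtain by differentiating $h(t)=\langle e^{-tL_w}f,e^{-t\Delta_w}g\rangle_{L^2(w)}$ is exactly Lemma~\ref{lemma-dual-square}, and the conclusion is reached, as in the paper, by combining it with the direct estimate \eqref{eqn:-grad-square-function-wt} applied to $\Delta_w$ and the weight duality $v\in A_{p/q_+(\Delta_w)'}(w)\Leftrightarrow v^{1-p'}\in A_{p'}(w)\cap RH_{(q_+(\Delta_w)/p')'}(w)$; whether one then takes a supremum over normalized $g$ or tests against $g=\mathrm{sign}(f)|f|^{p-1}v$, as the paper does, is immaterial.

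The one step whose justification would fail as written is your claim that $q_-(\Delta_w)=1$ rests on pointwise Gaussian bounds for $\nabla_x p_t(x,y)$, which you call classical. Such gradient kernel bounds are not available for $\Delta_w$ with a general $w\in A_2$ (they already fail for real symmetric uniformly elliptic operators with merely bounded measurable coefficients, where only Hölder regularity of the kernel is classical), so this appeal to the literature cannot be made. Fortunately they are not needed: the Gaussian upper bound for the kernel $p_t(x,y)$ itself, proved in \cite{cruz-uribe-riosP,DCU-CR2014}, gives $e^{-t\Delta_w}\in\offw{1}{\infty}$, hence $p_-(\Delta_w)=1$, and then $q_-(\Delta_w)=p_-(\Delta_w)=1$ by Proposition~\ref{prop:K}, part (2), whose proof rests on Lemma~\ref{lemma:fode}: for $1\le p<2$ the $\offw{p}{2}$ estimates for $e^{-t\Delta_w}$ and for $\sqrt{t}\,\nabla e^{-t\Delta_w}$ are equivalent, so the gradient off-diagonal bounds come for free from the kernel bounds. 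With that substitution (and noting that the hypothesis $p>q_+(\Delta_w)'$ provides exactly the window $q_-(\Delta_w)=1<p'<q_+(\Delta_w)$ needed to apply Proposition~\ref{prop:grad-square-function} to $\Delta_w$), your proof coincides with the paper's.
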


\medskip

\begin{proof}[Proof of Proposition~\textup{\ref{prop:grad-square-function}}]
The proof could be done in a way similar to those for the square
function $g_{L_w}$ in Section~\ref{section:square-function}.  However,
we will give a shorter proof that uses the Riesz transform estimates
from Section~\ref{section:riesz}.

Let $q_-=q_-(L_w)$ and $q_+=q_+(L_w)$.  Fix $p$,
\[ q_{-}=p_-(L_w)<p<q_+\leq p_+(L_w), \]
and $v\in A_{p/q_{-}}(w) \cap
RH_{\left( q_{+}/p\right) ^{\prime }}(w)$.   Then by Proposition
\ref{prop:ext-RT}, the Riesz transform is bounded on $L^{p}(v\,dw)$, and so by
Lemma \ref{lemma-7.4} it has a bounded extension to
$L_{\mathbb{H}}^{p}(v\,dw) $: i.e., if
$g\left( x,t\right) \in L_{\mathbb{H}}^{p}(v\,dw) $, then
$\| \nabla L_{w}^{-1/2}g\| _{L_{\mathbb{H}}^{p}(v\,dw) }\lesssim
\|g\|_{L_{\mathbb{H}}^{p}(v\,w) }$, where the extension of 
$\nabla L_{w}^{-1/2}$ to
$\HH$-valued  functions is defined  for $x\in \RR^n$ and $t>0$ by $(\nabla L_{w}^{-1/2}
g)(x,t)= \nabla L_{w}^{-1/2}\big( g(\cdot,t)\big)(x)$.  

Define $g_{f}\left( x,t\right) =\left( tL_{w}\right) ^{1/2%
}e^{-tL_{w}}f(x) $ and $G_{f}\left( x,t\right) =t^{1/2%
}\nabla e^{-tL_{w}}f(x) $;
then we clearly have
$\left\Vert g_{L_{w}}f\right\Vert _{L^{p}\left( v\,dw\right)
}=\left\Vert g_{f}\right\Vert _{L_{\mathbb{H}}^{p}\left( v\,dw\right)
}$ and
$\left\Vert G_{L_{w}}f\right\Vert _{L^{p}\left( v\,dw\right)
}=\left\Vert G_{f}\right\Vert _{L_{\mathbb{H}}^{p}\left( v\,dw\right)
}$.  Furthermore, $G_f(x,t)=\nabla L_{w}^{-1/2} (g_f(\cdot,t))(x)=(\nabla L_{w}^{-1/2} g_f)(x,t)$. Hence, 
\begin{multline*}
\left\Vert G_{L_{w}}f\right\Vert _{L^{p}(v\,dw) }
=
\left\Vert
G_f \right\Vert _{L_{\mathbb{H}}^{p}(v\,dw)} 
=\left\Vert
\nabla L_{w}^{-1/2} g_f\right\Vert _{L_{\mathbb{H}}^{p}(v\,dw)}
\\
\lesssim \left\Vert g_f\right\Vert
_{L_{\mathbb{H}}^{p}(v\,dw)}=\left\Vert g_{L_{w}}f\right\Vert _{L^{p}\left(
v\,dw\right) }
\lesssim
\left\Vert f\right\Vert _{L^{p}\left(
v\,dw\right) }
.
\end{multline*}
To prove the last inequality we used Proposition
\ref{prop:square-function}; we also used the fact that 
$q_{-}=p_-(L_w)<p<q_+\leq p_+(L_w)$ and $v\in A_{p/q_{-}}(w) \cap
RH_{\left( q_{+}/p\right) ^{\prime }}(w)$, which together imply  that $v\in A_{p/p_{-}(L_w)}(w) \cap
RH_{\left( p_{+}(L_w)/p\right) ^{\prime }}(w)$. 
This proves~\eqref{eqn:-grad-square-function-wt}.  To prove
inequality~\eqref{eqn:grad-square-function-unwtd}, we take $v\equiv 1$.
\end{proof}

\bigskip

To prove Proposition~\ref{prop:grad-square-reverse} we need the following
identity relating $G_{L_w}$ and $\Delta_w$.  It is a straightforward
extension of a similar unweighted result given in
\cite[Section~7.1]{auscher07}. For completeness we include the proof.

\begin{lemma}
\label{lemma-dual-square}If $f,g\in L_{c}^{\infty }(w) $ then%
\begin{equation*}
\left\vert \int_{\mathbb{R}^{n}}f(x) \overline{g(x) }%
~dw\right\vert \leq \left( \Lambda +1\right) \int_{\mathbb{R}%
^{n}}G_{L_{w}}f(x) ~\overline{G_{\Delta _{w}}g(x) }%
~dw.
\end{equation*}%
\end{lemma}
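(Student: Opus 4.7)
The plan is a one-variable calculus argument on the pairing $F(t) := \langle e^{-tL_w}f, e^{-t\Delta_w}g\rangle_{L^2(w)}$: differentiate in $t$, bound $|F'(t)|$ pointwise by a product of gradients, apply Cauchy--Schwarz in $t$ to recognize the two square functions, and send the upper limit to infinity. The key algebraic fact is that $L_w$ and $\Delta_w$ are associated with the sesquilinear forms $(u,v)\mapsto\int A\nabla u\cdot\overline{\nabla v}\,dx$ and $(u,v)\mapsto\int\nabla u\cdot\overline{\nabla v}\,dw$, respectively; the latter is Hermitian (its matrix $wI$ being real and diagonal), so the second term in $F'(t)$ also produces a gradient pairing. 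Using \eqref{eqn-a2} together with this symmetry, I would obtain
\[
-F'(t) \;=\; \int_{\mathbb{R}^n}\bigl(A(x)+w(x)I\bigr)\nabla e^{-tL_w}f(x)\cdot\overline{\nabla e^{-t\Delta_w}g(x)}\,dx,
\]
and the bound $|A(x)+w(x)I|\le(\Lambda+1)w(x)$ from \eqref{eqn:degen} then yields
\[
|F'(t)| \;\le\; (\Lambda+1)\int_{\mathbb{R}^n}|\nabla e^{-tL_w}f|\,|\nabla e^{-t\Delta_w}g|\,dw.
\]

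Next I would integrate $F'$ from $0$ to $T$ and let $T\to\infty$. The nontrivial step is showing $F(T)\to 0$. Here I use that $\Delta_w$, being associated with a Hermitian form, is self-adjoint and non-negative on $L^2(w)$, and that $\ker\Delta_w=\{0\}$: indeed, $\Delta_w u=0$ for $u\in\D(\Delta_w)$ forces $\int|\nabla u|^2\,dw=0$, so $u$ is a.e.\ constant, and no nonzero constant belongs to $L^2(w)$ since the reverse-doubling property of $w\in A_2$ forces $\int_{\mathbb{R}^n}w\,dx=\infty$. The spectral theorem then yields $\|e^{-T\Delta_w}g\|_{L^2(w)}\to 0$, and together with the uniform bound $\|e^{-TL_w}f\|_{L^2(w)}\le\|f\|_{L^2(w)}$ and Cauchy--Schwarz, I conclude $F(T)\to 0$. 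Consequently $\int f\overline{g}\,dw=F(0)=\int_0^\infty(-F'(t))\,dt$.

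Finally, the product of gradients is handled by Cauchy--Schwarz on $L^2((0,\infty),dt/t)$ with the factorization $|\nabla e^{-tL_w}f|\,|\nabla e^{-t\Delta_w}g|\,dt=(t^{1/2}|\nabla e^{-tL_w}f|)(t^{1/2}|\nabla e^{-t\Delta_w}g|)\,\frac{dt}{t}$, which gives pointwise
\[
\int_0^\infty|\nabla e^{-tL_w}f(x)|\,|\nabla e^{-t\Delta_w}g(x)|\,dt \;\le\; G_{L_w}f(x)\,G_{\Delta_w}g(x).
\]
Interchanging orders of integration by Tonelli (the double integral is dominated by $\|G_{L_w}f\|_{L^2(w)}\|G_{\Delta_w}g\|_{L^2(w)}$, which is finite by Proposition~\ref{prop:grad-square-function}) and integrating against $dw$ then closes the proof. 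The only step requiring real work is the vanishing of the boundary term $F(T)$; everything else is a routine differentiate-and-Cauchy--Schwarz computation.
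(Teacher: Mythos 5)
Your proof is correct and follows essentially the same route as the paper: differentiate the pairing $\langle e^{-tL_w}f,e^{-t\Delta_w}g\rangle_{L^2(w)}$, use the two sesquilinear forms to rewrite $-F'(t)$ as $\int (A+wI)\nabla e^{-tL_w}f\cdot\overline{\nabla e^{-t\Delta_w}g}\,dx$, bound the matrix by $(\Lambda+1)w$, and apply Cauchy--Schwarz in $t$ to produce $G_{L_w}f\,G_{\Delta_w}g$. The only difference is cosmetic: the paper asserts the vanishing of the pairing as $t\to\infty$ (justifiable via the $L^2(w)$ holomorphic functional calculus, as in its Lemma on off-diagonal estimates), whereas you justify it via the spectral theorem for the self-adjoint operator $\Delta_w$ with trivial kernel together with the contractivity of $e^{-tL_w}$, which is an equally valid argument.
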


\begin{proof}
By the definition and properties of the operators $L_{w}$ and
  $\Delta_{w}$ we have that
\begin{align*}
& \int_{\mathbb{R}^{n}}f(x) \overline{g(x)}\,dw \\
& \qquad \qquad =\lim_{\varepsilon \downarrow 0}\int_{\mathbb{R}^{n}}e^{-\varepsilon
L_{w}}f(x)\overline{e^{-\varepsilon \Delta_w }g(x)}\,dw-\lim_{R\uparrow \infty }\int_{%
\mathbb{R}^{n}}e^{-RL_{w}}f(x)\overline{e^{-R\Delta_w }g(x)}\,dw \\
&\qquad \qquad =-\int_{0}^{\infty }\frac{d}{dt}\int_{\mathbb{R}^{n}}e^{-tL_{w}}f(x)\overline{%
e^{-t\Delta_w }g(x)}\,dw\,dt \\
&\qquad \qquad =\int_{0}^{\infty }\int_{\mathbb{R}^{n}}\left( L_{w}e^{-tL_{w}}f(x)\overline{%
e^{-t\Delta_w }g(x)}+e^{-tL_{w}}f(x)\overline{\Delta_w e^{-t\Delta_w }g(x)}\right)\,dw\,dt \\
&\qquad \qquad =\int_{0}^{\infty }\int_{\mathbb{R}^{n}}\left( A(x)w(x)^{-1} +I\right) \big( \nabla e^{-tL_{w}}f(x) ~\overline{%
\nabla e^{-t\Delta_w }g(x) }\big)\,dw\,dt.
\end{align*}
Since $\| Aw^{-1}\| _{\infty }\leq \Lambda $, if we apply
H\"older's inequality in the $t$ variable we get the desired result.
\end{proof}

\begin{proof}[Proof of Proposition~\textup{\ref{prop:grad-square-reverse}}]
  As a consequence of the Gaussian estimate for weighted operators
  with real symmetric coefficients that were proved
  in~\cite{cruz-uribe-riosP,DCU-CR2014}, we have that  $\Delta_w\in \offw{1}{\infty}$.  In
  particular,   $q_-(\Delta_w) = p_-(L_{\Delta_w})=1$.  Further, by
  the results in Section~\ref{section:q-plus} we have that
  $q_+(\Delta_w)>2$. 

Therefore, by Proposition~\ref{prop:grad-square-function},
if $1<p' < q_+(\Delta_w)$,  and
\begin{equation} \label{eqn:dual-wt}
 u \in A_{p'}(w)\cap RH_{(q_+(\Delta_w)/p')'}(w), 
\end{equation}
then 
\begin{equation} \label{eqn:foobar}
 \|G_{\Delta_w} f\|_{L^{p'}(u\,dw)} \lesssim \|f\|_{L^{p'}(u\,dw)}.
\end{equation}
We want to apply inequality~\eqref{eqn:foobar} with $u=v^{1-p'}$.
By~\cite[Lemma~4.4]{auscher-martell07b}, the
condition~\eqref{eqn:dual-wt} is equivalent to
$v \in A_{p/q_+(w)'}(w)$.

Now fix $f,g\in L_{c}^{\infty }$, and a weight
$v\in A_{p/q_+(w)'}(w) $.  Then by Lemma~\ref{lemma-dual-square}, for
$q_{+}(\Delta_w)'<p<\infty$,
\begin{align*}
\left\vert \int_{\mathbb{R}^{n}}f(x) g(x)
~dw\right\vert
&\leq \left( \Lambda +1\right) \int_{\mathbb{R}%
^{n}}G_{L_{w}}f(x) ~G_{\Delta _{w}}g(x) ~dw \\
&=\left( \Lambda +1\right) \int_{\mathbb{R}^{n}}G_{L_{w}}f(x)
~G_{\Delta _{w}}g(x) \,v^{1/p}\,v^{-1/p}\,dw \\
&\leq \left( \Lambda +1\right) \left\Vert G_{L_{w}}f\right\Vert
_{L^{p}( v\,dw) }\left\Vert G_{\Delta _{w}}g\right\Vert
_{L^{p^{\prime }}( v^{1-p'}\,dw) } \\
&\lesssim \left\Vert G_{L_{w}}f\right\Vert _{L^{p}( v\,dw)}
\left\Vert g\right\Vert _{L^{p^{\prime }}( v^{1-p'}\,dw)};
\end{align*}
the last inequality follows from~\eqref{eqn:foobar}.
If we take $g=\mathrm{sign}\left( f\right) \left\vert f\right\vert
^{p-1}v$, we get
\[
\left\Vert f\right\Vert _{L^{p}( v\,dw) } ^p
\lesssim \left\Vert
G_{L_{w}}f\right\Vert _{L^{p}( v\,w) }\left\Vert \left\vert
f\right\vert ^{p-1}v\right\Vert _{L^{p^{\prime }}( v^{1-p'}\,dw) }
=\left\Vert G_{L_{w}}f\right\Vert _{L^{p}( v\,dw) }\left\Vert
f\right\Vert _{L^{p}( v\,dw) }^{p/p^{\prime }}.
\]
This immediately gives us
the desired inequality.
\end{proof}

\section{Unweighted $L^2$ Kato estimates}
\label{section:L2-kato}

In this section we prove unweighted $L^2$ estimates for the
operators we have considered in the previous sections.  These will all
be consequences of the weighted $L^p(v\,dw)$ estimates we have already
proved:  it will only be necessary to find further conditions on $w\in
A_2$ so that
the weight $v=w^{-1}$ satisfies the requisite conditions.  

We are particularly interested in power weights and we recall some
well-known facts about them.   Define 
$w_\alpha(x)=|x|^{\alpha}$, $\alpha>-n$; this restriction guarantees that
$w_\alpha$ is locally integrable.   We can exactly determine the
Muckenhoupt $A_p$ and reverse H\"older $RH_s$ classes of these weights
in terms of $\alpha$:   if $-n< \alpha\le 0$, then $w\in A_1$; for
$1<p<\infty$,  $w\in A_p$ if $-n<\alpha <n\,(p-1)$.  Furthermore,  if $0\le
\alpha<\infty$, $w\in RH_\infty$;  for  $1<q<\infty$, $w\in RH_q$.  if
$-n/q<\alpha<\infty$. Hence, we easily see that
\begin{equation}\label{sw-rw:w-alpha}
r_{w_{\alpha}}=\max\{1, 1+\alpha/n\},
\qquad\quad
s_{w_{\alpha}}=\big(\max\{1, (1+\alpha/n)^{-1}\}\big)'.
\end{equation}  

We first consider the semigroup $e^{-tL_w}$, the functional calculus,
and the square function $g_{L_w}$, since these estimates will depend
on $p_-(L_w)$ and $p_+(L_w)$ and we have good estimates for these
quantities.

\begin{theor} \label{thm:semigroup-nowt}
Given a weight $w\in A_2$, suppose $1\leq r_w<1+\frac{2}{n}$ and
$s_w>\frac{n}{2}r_w+1$.  Then $e^{-tL_w} : L^2 \rightarrow L^2$ is
uniformly bounded for all $t>0$.    Similarly,  $\varphi(L_w)  : L^2 \rightarrow L^2$, where $\varphi$ is any
bounded holomorphic function on $\Sigma_\mu$, $\mu \in
(\vartheta,\pi)$, and $g_{L_w}  : L^2 \rightarrow L^2$.  

In particular, these $L^2$ estimates hold  if we assume that  $w\in A_1\cap RH_{1+\frac{n}{2}}$, or 
  more generally if $w\in A_{r}\cap RH_{\frac{n}{2}\,r+1}$ for $1<r\le 1+\frac2{n}$,  or if we take the power weights
$$
 w_\alpha(x)=|x|^{\alpha}, 
\qquad -\frac{2\,n}{n+2}<\alpha<2.
$$
\end{theor}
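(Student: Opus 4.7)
The plan is to derive each of these unweighted $L^2$ bounds from the weighted $L^p(v\,dw)$ estimates proved earlier, by making the specific choice $v = w^{-1}$. Since $v\,dw = dx$, we have $L^2(v\,dw) = L^2(\rn)$. With $p = 2$, each of Corollary \ref{corollary-weighted-offd}, Proposition \ref{prop:B-K:weights}, and Proposition \ref{prop:square-function} requires the same weight condition
\[
v = w^{-1} \in A_{2/p_-(L_w)}(w) \cap RH_{(p_+(L_w)/2)'}(w).
\]
Invoking the duality identities recorded in Section~\ref{section:prelim}, namely $w^{-1} \in A_p(w) \iff w \in RH_{p'}$ and $w^{-1} \in RH_s(w) \iff w \in A_{s'}$, this becomes
\begin{equation}\label{planA}
w \in RH_{(2/p_-(L_w))'} \cap A_{p_+(L_w)/2}.
\end{equation}

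The heart of the argument is to verify \eqref{planA} under the stated hypotheses. By Proposition~\ref{prop:J}(3),
\[
p_-(L_w) \le (2^*_w)' = \frac{2nr_w}{nr_w+2}, \qquad p_+(L_w) \ge 2^*_w = \frac{2nr_w}{nr_w-2}
\]
(the latter interpreted as $+\infty$ when $nr_w \le 2$, which only occurs in the borderline case $n=2$, $r_w=1$, and is handled directly). Consequently $(2/p_-(L_w))' \le nr_w/2 + 1$ and $p_+(L_w)/2 \ge nr_w/(nr_w - 2)$. The hypothesis $s_w > (n/2)r_w + 1$, combined with the openness of the reverse H\"older classes, yields $w \in RH_{(nr_w/2)+1} \subseteq RH_{(2/p_-(L_w))'}$. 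The hypothesis $r_w < 1 + 2/n$ is equivalent to $r_w < nr_w/(nr_w - 2)$, and by openness of the $A_p$ classes this gives $w \in A_{nr_w/(nr_w-2)} \subseteq A_{p_+(L_w)/2}$. This establishes \eqref{planA} and hence the three $L^2$ estimates.

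The statements for specific weight classes then follow by direct computation. If $w \in A_r \cap RH_{(n/2)r + 1}$ with $1 \le r \le 1 + 2/n$, openness yields $r_w < 1 + 2/n$ (the $r=1$ case giving $r_w = 1$ automatically) and $s_w > (n/2)r + 1 \ge (n/2)r_w + 1$; the subcase $r=1$ is exactly $w \in A_1 \cap RH_{1+n/2}$. For the power weights $w_\alpha(x)=|x|^\alpha$, formula \eqref{sw-rw:w-alpha} gives $r_{w_\alpha} = \max\{1, 1+\alpha/n\}$ and $s_{w_\alpha} = (\max\{1, (1+\alpha/n)^{-1}\})'$. A short case analysis on the sign of $\alpha$ shows that $r_{w_\alpha} < 1 + 2/n$ is equivalent to $\alpha < 2$, while $s_{w_\alpha} > (n/2)r_{w_\alpha} + 1$ is automatic when $\alpha \ge 0$ and reduces to $\alpha > -2n/(n+2)$ when $\alpha < 0$; combined, these yield $-2n/(n+2) < \alpha < 2$.

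I do not foresee a serious analytic obstacle: all the hard analytic work has been carried out in the previous sections, and what remains is the algebraic bookkeeping of tracking $r_w$, $s_w$, and $p_\pm(L_w)$, together with careful invocation of the openness properties of the Muckenhoupt and reverse H\"older classes (and some minor care with the borderline cases where $2^*_w = \infty$ or the hypotheses on $r$ are saturated).
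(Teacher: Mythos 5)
Your argument is correct and follows essentially the same route as the paper's proof: take $v=w^{-1}$ in Corollary~\ref{corollary-weighted-offd}, Proposition~\ref{prop:B-K:weights} and Proposition~\ref{prop:square-function}, dualize the conditions $A_q(w)$, $RH_s(w)$ for $w^{-1}$ into $RH$ and $A_p$ conditions on $w$, and reduce to the bounds $p_-(L_w)\le (2_w^*)'$, $p_+(L_w)\ge 2_w^*$ from Proposition~\ref{prop:J}, which give exactly the exponents $\frac{n}{2}r_w+1$ and $\frac{nr_w}{nr_w-2}$. The only difference is organizational (you dualize first and then invoke the bounds on $p_\pm(L_w)$, while the paper substitutes $p_0=(2_w^*)'$, $q_0=2_w^*$ before dualizing), and your treatment of the borderline cases ($nr_w\le 2$, $r=1+\frac2n$) is, if anything, slightly more explicit.
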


\begin{proof}
 Let $p=q=2$, $p_0=(2^*_w)'$, $q_0=2_w^*$, and let $v=w^{-1}$.  Then
 by Proposition~\ref{prop:J}, Corollary~\ref{corollary-weighted-offd}
 and the nesting properties of weights, $e^{-tL_w}\in \off{2}{2}$ if
$w^{-1} \in  A_{2/p_0}(w)\cap RH_{(q_0/2)'}(w)$; in particular, by
Lemma~\ref{lemma:unif-comp}, $e^{-tL_w} : L^2 \rightarrow L^2$ is uniformly
bounded.   However, this weight condition is equivalent
to
\[ w\in RH_{(2/p_0)'}\cap A_{q_0/2} . \]
A straightforward computation shows that
\[ \frac{q_0}{2} = \frac{nr_w}{nr_w-2}, \qquad
\left(\frac{2}{p_0}\right)' = \frac{n}{2}r_w+1.  \]
Since $r_w<1+\frac{2}{n}$, we have that $r_w < \frac{nr_w}{nr_w-2}$,
so we automatically have that $w\in A_{q_0/2}$.  Therefore, the
desired bounds hold if we have $s_w>\frac{n}{2}r_w+1$.
  If $w\in A_{r}\cap RH_{\frac{n}{2}\,r+1}$ with $1\le r\le 1+\frac2{n}$, then
$r_w \le  r$ and $s_w> \frac{n}{2}\,r+1 \ge \frac{n}{2}r_w+1$. 
The desired conclusion for power weights follows at once from~\eqref{sw-rw:w-alpha}.

\smallskip

The same argument holds for $\varphi(L_w)$ and $g_{L_w}$, using
Proposition~\ref{prop:B-K:weights} or
Proposition~\ref{prop:square-function}, respectively.
\end{proof}

It is straightforward to construct weights more general than power weights that satisfy the
conditions on $r_w$ and $s_w$ in the above theorems.   For instance, $w\in
  A_{1+\frac{2}{n}}\cap RH_{2+\frac{n}{2}}$   (which corresponds to the choice $r=1+\frac2n$)  if and only if there exist
 $u_1,\,u_2\in A_1$ such that
\[ w = u_1^{\frac{2}{n+4}}u_2^{-\frac{2}{n}}.\]
This follows from the Jones factorization theorem and the properties
of $A_1$ weights:  cf.~\cite{MR1308005}.

\begin{remark} \label{remark:Lpbounds}
We can modify the proof of
Theorem~\ref{thm:semigroup-nowt} to get unweighted $L^p$ estimates
for values of $p$ close to $2$.  We leave the details to the
interested reader.
\end{remark}

\medskip

For the reverse inequalities we need to take into account the slightly
stronger hypotheses in Proposition~\ref{prop:reverseRiesz};
otherwise, the proof of the following result follows exactly as in the
proof of Theorem~\ref{thm:semigroup-nowt}.

\begin{theor} \label{thm:reverseRiesz-nowt}
Given a weight $w\in A_2$, suppose that 
\[ 1\leq r_w<1+\frac{2}{n} \quad \text{ and }
s_w>\max\Big\{\Big(\frac{2}{r_w}\Big)', \frac{n}2\,r_w+1\Big\}. \]
Then
\begin{equation}\label{eq:reverseRiesz-dx}
\|L_w^{1/2}f\|_{L^2} \le C\, \|\nabla f\|_{L^2},
\qquad f\in\cals.
\end{equation}
In particular, this is the case if we either
assume that $w\in A_{1}\cap RH_{ 1+\frac{n}{2}}$,   or more generally
that 
$w\in A_{r}\cap RH_{\max\{(\frac{2}{r})', \frac{n}2\,r+1\} }$, with $1<r\le 1+\frac{2}{n}$, 
or 
 for power weights if we take
$$
w_\alpha(x)=|x|^{\alpha}, 
\qquad -\frac{2n}{n+2}=-\min\Big\{\frac{n}{2},\frac{2n}{n+2}\Big\}<\alpha<2.
$$ 
\end{theor}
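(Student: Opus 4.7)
The approach is to apply the weighted reverse inequality~\eqref{eq:reverseRiesz-vdw} in Proposition~\ref{prop:reverseRiesz} with $p=2$ and $v=w^{-1}$, so that $L^p(v\,dw)=L^2(\re^n)$ and \eqref{eq:reverseRiesz-vdw} reduces to exactly~\eqref{eq:reverseRiesz-dx}. The whole proof is then to verify that $p=2$ and $v=w^{-1}$ lie in the admissible range of that proposition under the stated hypotheses on $r_w$ and $s_w$.

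First I would check that $\max\{r_w,p_-(L_w)\}<2<p_+(L_w)$. The right-hand inequality follows from Proposition~\ref{prop:J}(3), which places $2$ in the interior of $\J(L_w)$; the left-hand one follows from that same fact together with the hypothesis $r_w<1+\tfrac{2}{n}\le 2$ (valid for $n\ge 2$). Setting $\rho:=\max\{r_w,p_-(L_w)\}$, the two weight conditions $w^{-1}\in A_{2/\rho}(w)$ and $w^{-1}\in RH_{(p_+(L_w)/2)'}(w)$ required by the proposition translate, via the duality $w^{-1}\in A_q(w)\Leftrightarrow w\in RH_{q'}$ and $w^{-1}\in RH_s(w)\Leftrightarrow w\in A_{s'}$ recalled in Section~\ref{section:prelim}, into the two conditions on $w$ alone:
$$w\in RH_{(2/\rho)'}\qquad\text{and}\qquad w\in A_{p_+(L_w)/2}.$$

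To verify these I would use the bounds in Proposition~\ref{prop:J}(3), namely $p_-(L_w)\le (2_w^*)'=\tfrac{2nr_w}{nr_w+2}$ and $p_+(L_w)\ge 2_w^*=\tfrac{2nr_w}{nr_w-2}$, together with the nesting of the $A_p$ classes and the self-improvement of the $A_p$ and $RH_s$ classes. For the $A_{p_+(L_w)/2}$ condition, nesting reduces matters to $w\in A_{2_w^*/2}=A_{nr_w/(nr_w-2)}$, which holds because $r_w<nr_w/(nr_w-2)$ is precisely $r_w<1+\tfrac{2}{n}$. For the $RH_{(2/\rho)'}$ condition, the monotonicity of $t\mapsto(2/t)'=\tfrac{2}{2-t}$ on $(0,2)$ and the bound $\rho\le\max\{r_w,(2_w^*)'\}$ give
$$(2/\rho)'\le\max\bigl\{(2/r_w)',\,(2/(2_w^*)')'\bigr\}=\max\bigl\{(2/r_w)',\,\tfrac{n r_w}{2}+1\bigr\},$$
so the hypothesis $s_w>\max\{(2/r_w)',\tfrac{n r_w}{2}+1\}$, combined with openness of the reverse H\"older classes, is exactly what is needed.

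The three concrete examples follow by direct computation. If $w\in A_r\cap RH_{\max\{(2/r)',\,nr/2+1\}}$ with $1<r\le 1+\tfrac{2}{n}$, openness of $A_r$ gives $r_w<r\le 1+\tfrac{2}{n}$, and since both $t\mapsto(2/t)'$ and $t\mapsto\tfrac{nt}{2}+1$ are increasing on $(0,2)$, the reverse H\"older hypothesis passes from $r$ to $r_w$; the case $w\in A_1\cap RH_{1+n/2}$ is the analogous endpoint (where $\max\{2,\tfrac{n}{2}+1\}=\tfrac{n}{2}+1$ for $n\ge 2$). For the power weight $w_\alpha(x)=|x|^\alpha$, the formulas~\eqref{sw-rw:w-alpha} give $r_{w_\alpha}<1+\tfrac{2}{n}\Leftrightarrow\alpha<2$; the reverse H\"older condition is vacuous for $\alpha\ge 0$ (where $s_{w_\alpha}=\infty$), and for $\alpha<0$ it becomes $-n/\alpha>\tfrac{n}{2}+1$, i.e.\ $\alpha>-\tfrac{2n}{n+2}=-\min\{\tfrac{n}{2},\tfrac{2n}{n+2}\}$, which is the lower bound claimed. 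No substantial obstacle arises here: all the hard analytic work was done in Propositions~\ref{prop:reverseRiesz} and~\ref{prop:J}, and the only thing to keep track of is the two-layer duality $A_p\leftrightarrow RH_{p'}$ under the measure change $dx\leftrightarrow w\,dx$, together with the identification $(2/(2_w^*)')'=\tfrac{nr_w}{2}+1$ that produces the characteristic exponent $\tfrac{n}{2}+1$ in the $A_1$-example.
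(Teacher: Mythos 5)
Your proposal is correct and takes essentially the same route as the paper: the paper's proof of Theorem~\ref{thm:reverseRiesz-nowt} simply repeats the argument of Theorem~\ref{thm:semigroup-nowt} with the stronger hypotheses of Proposition~\ref{prop:reverseRiesz}, i.e.\ it applies~\eqref{eq:reverseRiesz-vdw} with $p=2$, $v=w^{-1}$, translates the conditions by the duality $w^{-1}\in A_q(w)\Leftrightarrow w\in RH_{q'}$, $w^{-1}\in RH_s(w)\Leftrightarrow w\in A_{s'}$, and uses $p_-(L_w)\le (2_w^*)'$, $p_+(L_w)\ge 2_w^*$ exactly as you do. Your power-weight computation giving the lower bound $-\tfrac{2n}{n+2}=-\min\{\tfrac{n}{2},\tfrac{2n}{n+2}\}$ is the intended conclusion (coinciding with $-\tfrac{n}{2}$ only when $n=2$).
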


\begin{remark}
  Note that $\max\{(\frac{2}{r})', \frac{n}2\,r+1\}=\frac{n}2\,r+1$ provided $r\le 2-\frac{2}{n}$ and this always holds if $n\ge 4$ as $1+\frac2n\le 2-\frac{2}{n}$. In this case, the conditions in the second part of Theorem~\ref{thm:reverseRiesz-nowt} simplify to the same conditions as in
Theorem~\ref{thm:semigroup-nowt}.  
\end{remark}

\begin{remark}\label{remark:p-vs-q}
We note that in Theorems \ref{thm:semigroup-nowt} and \ref{thm:reverseRiesz-nowt} we can replace $1\le r_w<1+\frac2{n}$ with the possibly weaker condition $1\le r_w<\frac{p_+(L_w)}{2}$. The proof only requires us to  take $q_0=p_+(L_w)$. 
\end{remark}

\bigskip

For the gradient of the semigroup $\sqrt{t} \grad e^{-tL_w}$, the
Riesz transform $\grad L_w^{-1/2}$, and the square
function $G_{L_w}$, our estimates depend on $q_+(L_w)$.  

\begin{theor} \label{thm:grad-square-nowt}
Given a weight $w\in A_2$, suppose $1\leq r_w<\frac{q_+(L_w)}2$ and
$s_w>\frac{n}{2}r_w+1$.   Then $\sqrt{t} \grad e^{-tL_w} : L^2
\rightarrow L^2$ is uniformly bounded for all $t>0$. 
Similarly, we have that  $\grad  L_w^{-1/2} : L^2 \rightarrow L^2$ and
$G_{L_w} : L^2 \rightarrow L^2$. 

In particular, this is the case if we assume that
$w\in A_1\cap RH_{\frac{n}{2}+1}$.  Furthermore, these $L^2$ estimates
hold if the following is true: given $\Theta \ge 1$ there exists
$\epsilon_0=\epsilon_0(\Theta, n, \Lambda/\lambda)$,
$0<\epsilon_0\le \frac1{2\,n}$, such that  
$w\in A_{1+\epsilon}\cap RH_{\frac{n}{2}\,(1+\epsilon)+1}$, 
$0\le \epsilon<\epsilon_0$, and $[w]_{A_2}\le \Theta$.  

For power weights, there exists $\epsilon_1=\epsilon_1(n, \Lambda/\lambda)$, $0<\epsilon_1\le \frac1{2}$, such that 
these estimate holds for 
\[ w_\alpha(x) = |x|^{\alpha}, \qquad  \qquad -\frac{2\,n}{n+2}<\alpha<\epsilon_1.\] 
\end{theor}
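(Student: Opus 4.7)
The plan is to specialize the weighted $L^p(v\,dw)$ estimates from Corollary~\ref{corollary-grad-weighted-offd}, Proposition~\ref{prop:ext-RT}, and Proposition~\ref{prop:grad-square-function} with $p=q=2$ and $v=w^{-1}$, which turns $L^2(v\,dw)$ into the unweighted $L^2(\re^n)$. Following the template used for Theorems~\ref{thm:semigroup-nowt} and~\ref{thm:reverseRiesz-nowt}, the three operators $\sqrt{t}\nabla e^{-tL_w}$, $\nabla L_w^{-1/2}$ and $G_{L_w}$ are all bounded on $L^2(\re^n)$ as soon as we can check that
\[
w^{-1}\in A_{2/q_-(L_w)}(w)\cap RH_{(q_+(L_w)/2)'}(w).
\]
By the duality relations recalled in Section~\ref{section:prelim}, this is equivalent to
\[
w\in RH_{(2/q_-(L_w))'}\cap A_{q_+(L_w)/2}.
\]

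First I would verify these two conditions under the main hypotheses $r_w<q_+(L_w)/2$ and $s_w>\frac{n}{2}r_w+1$. The $A_{q_+(L_w)/2}$ condition follows immediately from $r_w<q_+(L_w)/2$. For the reverse H\"older condition, Proposition~\ref{prop:K}(2) gives $q_-(L_w)=p_-(L_w)\le (2^{*}_w)'=\frac{2nr_w}{nr_w+2}$, hence
\[
\bigl(2/q_-(L_w)\bigr)'\le \bigl(2/(2^{*}_w)'\bigr)'=\tfrac{n}{2}r_w+1,
\]
so $s_w>\tfrac{n}{2}r_w+1$ already forces $w\in RH_{(2/q_-(L_w))'}$. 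Combined with Lemma~\ref{lemma:unif-comp} (to pass from $\mathcal{O}(L^{2}\to L^{2})$ to uniform $L^{2}$-boundedness), this yields the three claimed unweighted estimates.

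Next I would derive the concrete cases. If $w\in A_1\cap RH_{\frac n2+1}$, then $r_w=1$, and the self-improvement of the $RH_s$ classes gives $s_w>\frac n2+1=\frac{n}{2}r_w+1$; moreover $r_w=1<q_+(L_w)/2$ since Section~\ref{section:q-plus} showed $q_+(L_w)>2$, so the main hypotheses hold. For the quantitative statement with $\Theta$ and $\epsilon_0$, the heart of the argument is a uniform lower bound on $q_+(L_w)$. Here I would revisit the chain of inequalities in Section~\ref{section:q-plus}: the explicit expression \eqref{eq:value-p0} shows $p_0$, and hence $q_+(L_w)\ge\min(p_+(L_w),r_w',p_0)$, can be bounded below by a constant $p_{0}(\Theta,n,\Lambda/\lambda)>2$ whenever $[w]_{A_2}\le\Theta$ and $r_w\le 1+\epsilon$ with $\epsilon$ small. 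Choosing $\epsilon_0\in(0,\tfrac{1}{2n}]$ so that $1+\epsilon_0<p_{0}(\Theta,n,\Lambda/\lambda)/2$ then forces $r_w<q_+(L_w)/2$, while the hypothesis $w\in RH_{\frac{n}{2}(1+\epsilon)+1}$ gives $s_w>\frac{n}{2}(1+\epsilon)+1\ge\frac{n}{2}r_w+1$.

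Finally, for the power weight family $w_\alpha(x)=|x|^{\alpha}$ I would use the explicit values \eqref{sw-rw:w-alpha}: for $-\frac{2n}{n+2}<\alpha\le 0$ we have $r_{w_\alpha}=1$ and $s_{w_\alpha}=-n/\alpha>\tfrac n2+1$, so the $A_1\cap RH_{n/2+1}$ case applies; for $0\le\alpha<\epsilon_1$ we have $r_{w_\alpha}=1+\alpha/n$ and $s_{w_\alpha}=\infty$, and I would apply the $\Theta$-dependent case with $\Theta:=\sup_{0\le\alpha\le 1/2}[w_\alpha]_{A_2}<\infty$, taking $\epsilon_1\le n\,\epsilon_0(\Theta,n,\Lambda/\lambda)$ so that $r_{w_\alpha}\le 1+\epsilon_0$. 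The main obstacle is the quantitative step: tracking the dependence of $q_+(L_w)$ on $[w]_{A_2}$ and on $r_w$ through the Gehring--Hodge--Riesz chain of Section~\ref{section:q-plus}, and verifying that the constants behave continuously enough in $\alpha$ (respectively in $\epsilon$) to guarantee a uniform $\epsilon_1$ (respectively $\epsilon_0$) depending only on $n$ and $\Lambda/\lambda$ (respectively also on $\Theta$). Everything else is a direct specialization of the weighted theory already established.
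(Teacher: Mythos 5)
Your proposal is correct and follows essentially the same route as the paper: reduce to the weighted results with $v=w^{-1}$, translate the condition to $w\in RH_{(2/q_-(L_w))'}\cap A_{q_+(L_w)/2}$ using $q_-(L_w)=p_-(L_w)\le (2_w^*)'$, and then obtain the special cases from the lower bound $q_+(L_w)\ge\min\big(r_w',p_+(L_w),p_0\big)$ together with the explicit formula \eqref{eq:value-p0}. The quantitative step you flag as the main obstacle is exactly what the paper carries out (fixing the Poincar\'e exponent $q=2-1/n$, which is admissible since $r_w<1+\tfrac1{2n}$, and checking $2r_w<r_w'$ and $2r_w<2_w^*\le p_+(L_w)$ alongside $2r_w<p_0$), so filling in those routine comparisons completes your argument as in the paper.
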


\begin{proof} 
We will prove this result for  $\sqrt{t} \grad e^{-tL_w}$ using 
Proposition~\ref{prop:K}.  The proof for $\grad  L_w^{-1/2}$ or $G_{L_w}$ is exactly the same,
using Proposition~\ref{prop:ext-RT} or
Proposition~\ref{prop:grad-square-function}.

By Proposition~\ref{prop:K},  $\sqrt{t} \grad e^{-tL_w} : L^2
\rightarrow L^2$ if $w^{-1}=v \in A_{2/q_-(L_w)}(w) \cap
RH_{q_+(L_w)/2)'}(w)$, which is equivalent to
\[ w\in RH_{(2/q_-(L_w))'}\cap A_{q_+(L_w)/2}.  \]
Therefore, we need $r_w<q_+(L_w)/2$.  Furthermore, since we have that
$q_-(L_w)=p_-(L_w)\leq (2_w^*)'$, we can take
\[ s_w > \left(\frac{2}{(2_w^*)'}\right)' =  \frac{n}{2}r_w+1. \]

\medskip

To get the particular examples stated in the theorem, note first that if we let $r_w=1$,
then it clearly suffices to assume $w\in A_1\cap
RH_{\frac{n}{2}+1}$, since we showed in Section~\ref{section:q-plus} that $q_+(L_w)>2$ for every $w\in A_2$.

\medskip

We now prove the condition for weights $w\in A_{1+\epsilon}$.  In this
case it is more difficult to satisfy the condition $r_w<q_+(L_w)/2$
since the righthand side can be very close to $1$ depending on
$w$.  Assume then that   
$w\in A_{1+\epsilon}\cap RH_{\frac{n}{2}\,(1+\epsilon)+1}$, with
$0\le \epsilon<\epsilon_0\le \frac1{2\,n}$, $[w]_{A_2}\le \Theta$, 
and with $\epsilon_0>0$ to be fixed below. Then we have that
$$
s_w
>
\frac{n}{2}\,(1+\epsilon)+1
\ge 
\frac{n}{2}\,r_w+1.
$$
Therefore, in order to apply the first half of the theorem we need to
show that we can choose $\epsilon_0$ sufficiently small so that
$r_w< q_+(L_w)/2$.   To do so we will use the notation and computations from
Section~\ref{section:q-plus}.  There  we showed that $q_+(L_w)\geq q_w$,
and so it will suffice to show that
\begin{equation} \label{eq:deadde}
2r_w < q_w = \min(r_w', p_+(L_w), p_0).  
\end{equation}
We will compare $r_w$ to each term in the minimum in turn.  

The first two terms are straightforward.  First, we have that
$r_w<1+\epsilon<1+\frac1{2\,n}<\frac{3}{2}$ and so $2r_w<r_w'$.
Second, $r_w<1+\frac1{2\,n}<1+\frac2{n}$, and it follows at once from
this that $2\, r_w<2_w^*$. By Proposition~\ref{prop:J},
$2_w^*\le p_+(L_w)$ and so $2\, r_w<p_+(L_w)$.  

Finally, we estimate $p_0$, the exponent from the higher integrability
condition~\eqref{eqn:gehring-bump}.  We will use the
formula~\eqref{eq:value-p0}.   First, we need to fix the exponent $q$
from the Poincar\'e inequality~\eqref{eq:w-Poincare-a}.   Let 
$q=2-1/n$; this value satisfies \eqref{eq:q-choice} since  $r_w<1+\frac1{2\,n}<1+\frac1n$.  
With this choice of $q$ (that only depends on $n$), we have that
\[
p_0 
= 
2 + \frac{2-q}{2^{4/q+1}C_1^2C_2^2 [w]_{A_2}^{6/q+17}}
=
2+\frac{1}{n\,C(n,\Lambda/\lambda)\,[w]_{A_2}^{\theta_n}}
\]
where $C(n,\Lambda/\lambda)\ge 1$ depends only on $n$ and the ratio $\Lambda/\lambda$ of the
ellipticity constants of the matrix $A$ used to define $L_w$, and where $\theta_n\ge 1$ depends only on $n$. 
Then, since we also assumed that  $[w]_{A_2}\le \Theta$, 
we get that
$$
p_0 
=
2+\frac{1}{n\,C(n,\Lambda/\lambda)\,[w]_{A_2}^{\theta_n}}
\ge
2+\frac{1}{n\,C(n,\Lambda/\lambda)\,\Theta^{\theta_n}}
=
2+2\,\epsilon_0,
$$
and $\epsilon_0= (2\,
n\,C(n,\Lambda/\lambda)\,\Theta^{\theta_n})^{-1}$ is such that $0<\epsilon_0\le \frac1{2\,n}$. Thus
$2\,r_w<2\,(1+\epsilon)<2\,(1+\epsilon_0)\le p_0$ and so
$2\,r_w<p_0$. This completes the proof that \eqref{eq:deadde} is satisfied,
and so the $L^2$ estimates hold for weights that satisfy   $w\in A_{1+\epsilon}\cap RH_{\frac{n}{2}\,(1+\epsilon)+1}$.  

\medskip

Finally, we consider power weights. First, it is easy to see that
\[ w_\alpha(x) = |x|^{\alpha}, \qquad  \qquad
\frac{-2\,n}{n+2}<\alpha\le 0 \] 
yields the desired estimates,  since in this case $r_w=1$ and
$s_w>\frac{n}2+1= \frac{n}2r_w+1$.

Now consider the case $\alpha>0$.  If we assume that $\alpha<\frac12$,
then $w\in A_{1+\frac1{2\,n}}\cap RH_\infty$. Moreover, it is
straightforward to show that  for all such $\alpha$, there exists
$\Theta$, depending only on $n$, such that 
$[w_\alpha]_{A_2}\le \Theta$.  
Now  apply the above argument to find $\epsilon_0\in(0,\frac1{2\,n}]$;
this value will only depend on $n$ and the ratio $\Lambda/\lambda$.   If we let
$\epsilon_1=n\,\epsilon_0$ and assume that $0<\alpha<\epsilon_1$, 
then $\alpha<\frac12$ and $w_\alpha\in A_{1+\epsilon}$ for some
$\epsilon<\epsilon_0$ as desired.
\end{proof}

\smallskip

To find examples of weights other than power weights  to which
Theorem~\ref{thm:grad-square-nowt} apply,  we argue as before.   If $u_1\in A_1$, then
\[ w = u_1^{\frac{2}{n+2}} \in A_1 \cap RH_{\frac{n}{2}+1}. \]
To get weights that are not in $A_1$, take $u\in A_2$ and
let $w=u^\theta$. If $\theta$ is sufficiently small (depending on $n$,
the ratio
$\Lambda/\lambda$ and $[u]_{A_2}$) we can show that $w$ satisfies the
final conditions given in~Theorem~\ref{thm:grad-square-nowt}.
Details are left to the interested reader.

\medskip

\begin{remark}
To get the unweighted lower estimate 
\[ \|f\|_{L^2} \leq C\|G_{L_w}f\|_{L^2}, \]
we note that by \eqref{eqn:-grad-square-reverse-wt} we
need $w^{-1}\in A_{2/q_+(\Delta_w)'}(w)$, or equivalently, 
$w\in RH_{(2/q_+(\Delta_w)')'}$.  Hence, it suffices to assume
\[ s_w > 1+ \frac{q_+(\Delta_w)}{q_+(\Delta_w)-2}. \]
Arguing as above we can construct weights that satisfy this condition;
details are left to the interested reader.
\end{remark}

\bigskip

If we combine  Theorems \ref{thm:reverseRiesz-nowt}, 
\ref{thm:grad-square-nowt},  and Remark \ref{remark:p-vs-q} we solve the Kato square root problem for
degenerate elliptic operators.

\begin{theor}\label{corol:super-Kato}
 Let $L_w=-w^{-1}\div A\grad$ be a degenerate elliptic operator with
  $w\in A_2$.   If 
\[ 1\leq r_w<\frac{q_+(L_w)}2 \qquad \text{ and} \qquad 
s_w>\max\Big\{\Big(\frac{2}{r_w}\Big)', \frac{n}{2}r_w+1\Big\}. \]
then the Kato  problem can be solved for $L_w$:  that is, for every $f\in H^1(\re^n)$. 
\begin{equation}\label{Kato-L2}
 \|L_w^{1/2} f\|_{L^2(\re^n)} \approx \|\grad f\|_{L^2(\re^n)},
\end{equation}
where the implicit constants depend only on the dimension, the ellipticity constants
 $\lambda$, $\Lambda$,  and $w$. 

In particular, \eqref{Kato-L2} holds if $w\in A_1\cap RH_{\frac{n}{2}+1}$.  Further, \eqref{Kato-L2} holds if the following is true: given $\Theta \ge 1$ there exists
$\epsilon_0=\epsilon_0(\Theta, n, \Lambda/\lambda)$,
$0<\epsilon_0\le \frac1{2\,n}$, such that   
$w\in A_{1+\epsilon}\cap RH_{\max\{(\frac{2}{1+\epsilon})', \frac{n}{2}\,(1+\epsilon)+1\}}$, 
$0\le \epsilon<\epsilon_0$, and $[w]_{A_2}\le \Theta$.  

For power weights, there exists $\epsilon_1=\epsilon_1(n, \Lambda/\lambda)$, $0<\epsilon_1\le \frac1{2}$, such that 
inequality \eqref{Kato-L2}  holds (with $w_\alpha$ in place of $w$) if 
\[ w_\alpha(x) = |x|^{\alpha}, \qquad  \qquad
-\frac{2\,n}{n+2}<\alpha<\epsilon_1. \] 
\end{theor}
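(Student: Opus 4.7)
My plan is to deduce the theorem by combining the two one-sided $L^2(\re^n)$ estimates already established in the previous two sections. I would first observe that the upper estimate $\|L_w^{1/2}f\|_{L^2(\re^n)} \lesssim \|\grad f\|_{L^2(\re^n)}$ is the $p=2$ case of Theorem~\ref{thm:reverseRiesz-nowt}; although that theorem is nominally stated with $r_w<1+2/n$, Remark~\ref{remark:p-vs-q} points out that its proof only uses $r_w<p_+(L_w)/2$, which is implied by our hypothesis $r_w<q_+(L_w)/2$ since $q_+(L_w)\le p_+(L_w)$ by Proposition~\ref{prop:K}(2). For the reverse inequality I would write $\grad f = (\grad L_w^{-1/2})\,L_w^{1/2}f$ and invoke the $L^2(\re^n)$ boundedness of the Riesz transform given by Theorem~\ref{thm:grad-square-nowt}, whose hypotheses are exactly those of the present theorem. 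The constraint on $s_w$ stated here is simply the larger of the two constraints appearing in those earlier theorems, and the extension from $H^1(w)$ to $H^1(\re^n)$ is the standard density argument indicated in the remark after Theorem~\ref{thm:special-case}.

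For the simplest concrete case $w\in A_1\cap RH_{\frac{n}{2}+1}$ the verification is immediate: $r_w=1$, so $r_w<q_+(L_w)/2$ follows from the bound $q_+(L_w)>2$ proved in Section~\ref{section:q-plus}, and $s_w>\frac{n}{2}+1$ equals $\max\{2,\frac{n}{2}+1\}$ for $n\ge 2$. For the perturbative class $w\in A_{1+\epsilon}\cap RH_{\max\{(2/(1+\epsilon))',\frac{n}{2}(1+\epsilon)+1\}}$ with $[w]_{A_2}\le\Theta$, the $s_w$-condition is again transparent, but the Riesz condition $r_w\le 1+\epsilon<q_+(L_w)/2$ requires a quantitative lower bound on $q_+(L_w)$ in terms of $\Theta$. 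Here I would follow the chain of estimates already assembled in Section~\ref{section:q-plus}: specializing $q=2-1/n$ (which satisfies \eqref{eq:q-choice} because $r_w<1+1/(2n)$) and inserting the hypothesis $[w]_{A_2}\le\Theta$ into the explicit formula \eqref{eq:value-p0}, one obtains $p_0\ge 2+c(n,\Lambda/\lambda)\Theta^{-\theta_n}=:2+2\epsilon_0$, and hence $q_+(L_w)\ge\min(r_w',p_+(L_w),p_0)>2(1+\epsilon)$ whenever $\epsilon<\epsilon_0$. This pins down the constant $\epsilon_0=\epsilon_0(\Theta,n,\Lambda/\lambda)$ that appears in the statement.

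The power-weight conclusion I would handle by splitting on the sign of $\alpha$. When $-2n/(n+2)<\alpha\le 0$ we have $r_{w_\alpha}=1$ and $s_{w_\alpha}>n/2+1$ by \eqref{sw-rw:w-alpha}, so the first scenario applies directly. When $0<\alpha<\tfrac12$ a direct computation with power-weight averages bounds $[w_\alpha]_{A_2}$ uniformly by some $\Theta(n)$, placing the weight in the perturbative class of the previous paragraph; setting $\epsilon_1:=n\,\epsilon_0(\Theta(n),n,\Lambda/\lambda)$ then forces $r_{w_\alpha}=1+\alpha/n<1+\epsilon_0$ whenever $\alpha<\epsilon_1$, and the argument closes.

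The main obstacle throughout is the last of these steps: obtaining quantitative control of $q_+(L_w)$ from below in terms of $[w]_{A_2}$. Because the Gehring exponent $p_0$ in \eqref{eq:value-p0} depends on a high power of $[w]_{A_2}$ (through the constants $C_1,C_2$ coming from Caccioppoli and the sharp weighted Poincar\'e constant of Remark~\ref{remark-best-Poi}), one cannot avoid the auxiliary uniform bound $[w]_{A_2}\le\Theta$ when stating a perturbative version away from $w\equiv 1$, and it is precisely this bookkeeping that dictates the somewhat intricate form of the final statement; all other steps are straightforward assemblies of results proved earlier in the paper.
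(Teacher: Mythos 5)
Your proposal is correct and follows essentially the same route as the paper: the paper proves this theorem precisely by combining Theorem~\ref{thm:reverseRiesz-nowt} (with Remark~\ref{remark:p-vs-q} relaxing the $r_w$ condition) for the upper bound with the $L^2$ Riesz transform bound of Theorem~\ref{thm:grad-square-nowt} for the lower bound, and the verifications for $A_1\cap RH_{\frac{n}{2}+1}$, the perturbative $A_{1+\epsilon}$ class via the quantitative Gehring exponent \eqref{eq:value-p0}, and the power weights (splitting on the sign of $\alpha$ and setting $\epsilon_1=n\,\epsilon_0$) are exactly the ones carried out in the proof of Theorem~\ref{thm:grad-square-nowt}.
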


We can restate the final part of Theorem~\ref{corol:super-Kato} as follows:
consider the family of operators
$L_\gamma=-|x|^{\gamma}\div(|x|^{-\gamma} B(x)\grad)$, where $B$ is an
$n\times n$ complex-valued matrix that satisfies the uniform
ellipticity condition
\[  \lambda | \xi | ^{2}\leq {\Re}\langle B(x)\xi
    ,\xi \rangle, \qquad
  |\langle B(x)\xi ,\eta \rangle |\leq \Lambda |\xi
  ||\eta |, \quad \xi ,\,\eta \in \mathbb{C}^{n},
	\ \mbox{a.e.~}x\in\re^n.   \]
Then,
\begin{equation} \label{eqn:more}
 \|L_\gamma^{1/2} f\|_{L^2(\re^n)} \approx \|\grad f\|_{L^2(\re^n)},\qquad
-\epsilon_1< \gamma<\frac{2n}{n+2}.
\end{equation}
When $\gamma=0$ we get the classical Kato
square root problem solved by Auscher, Hofmann, Lacey, McIntosh, and
Tchamitchian~\cite{auscher-hofmann-lacey-mcintosh-tchamitchian02}.
Inequality~\eqref{eqn:more}
shows that we can find an open interval containing $0$ such that if
$\gamma$ is in this interval, the same estimate holds.

\section{Applications to $L^2$ boundary value problems}
\label {section:BVP}


In this section we apply the results from the previous section to some
$L^2$ boundary value problems involving the degenerate elliptic
operator $L_w$.  We follow the ideas in
\cite{auscher-tchamitchian98} and consider semigroup solutions: for
the Dirichlet or Regularity problems we let
$u(x,t)=e^{-tL_w^{1/2}}f(x)$; for the Neumann problem we let
$u(x,t)=-L_w^{-1/2}\,e^{-tL_w^{1/2}}f(x)$.  In each case, for $t>0$ fixed
$L_w u(\cdot,t)$ makes sense in a weak sense  since
$u(\cdot,t)$ is in the domain of $L_w$.   Further, derivatives in $t$ are
well defined  because of the semigroup properties.   Finally, note that by the
strong continuity of the semigroup and the off-diagonal
estimates, in the context of the following results we have that 
$e^{-tL_w^{1/2}}f\to f$ as $t\to 0^+$ in $L^2$; see
\cite[Section~4.2]{auscher-martell07}. Further details are left to the
interested reader. 

\bigskip

We first consider the Dirichlet problem on $\re^{n+1}_+=\rn\times
[0,\infty)$: 
\begin{equation} \label{eqn:dirichlet}
 \begin{cases}
\partial_t^2 u - L_w u = 0, & \text{on } \rn \\
u \big|_{\partial\re^{n+1}_+}= f & \text{on } \partial\re^{n+1}_+ =\re^n.
\end{cases}
\end{equation}

\begin{theor} \label{thm:dirichlet} Given a weight $w\in A_2$, suppose
  $1\leq r_w<1+\frac{2}{n}$ and $s_w>\frac{n}{2}r_w+1$.  Then for any
  $f\in L^2(\re^n)$, $u(x,t)=e^{-tL_w^{1/2}}f(x)$ is a solution
  of~\eqref{eqn:dirichlet}   with convergence to the boundary data as $t\to 0^+$ in the $L^2$-sense.  Furthermore, we have that
\begin{equation} \label{eqn:dirichlet-bound}
 \sup_{t>0} \|u(\cdot,t)\|_{L^2} \leq C\|f\|_{L^2}. 
\end{equation}
   In particular, this is the case if we
assume that   $w\in A_1\cap RH_{1+\frac{n}{2}}$, or    $w\in A_{r}\cap RH_{\frac{n}{2}\,r+1}$ with $1<r\le 1+\frac{2}{n}$,  or if we take the power weights
$$
 w_\alpha(x)=|x|^{\alpha}, 
\qquad -\frac{2\,n}{n+2}<\alpha<2.
$$ 
\end{theor}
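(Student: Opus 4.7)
The plan is to read off Theorem~\ref{thm:dirichlet} as a direct consequence of the unweighted bounded $H^\infty$ functional calculus for $L_w$ on $L^2(\re^n)$ that is established in Theorem~\ref{thm:semigroup-nowt}. The hypotheses on $w$ in Theorem~\ref{thm:dirichlet} are identical to those of Theorem~\ref{thm:semigroup-nowt}, so under them we know that for every $\mu\in(\vartheta,\pi)$ and every bounded holomorphic $\varphi$ on $\Sigma_\mu$,
\[
\|\varphi(L_w)g\|_{L^2(\re^n)}\le C\|\varphi\|_\infty\,\|g\|_{L^2(\re^n)},
\]
with $C$ depending only on $n$, $\lambda$, $\Lambda$ and the $A_p/RH_s$ constants of $w$.

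To obtain~\eqref{eqn:dirichlet-bound}, apply this with $\varphi=\varphi_t$, where $\varphi_t(z)=e^{-t\sqrt{z}}$ is defined via the principal branch. For each $t>0$, $\varphi_t$ is holomorphic on every sector $\Sigma_\mu$ with $\mu<\pi$; since $\Re\sqrt{z}\ge 0$ there, $\|\varphi_t\|_\infty\le 1$ uniformly in $t$. Hence
\[
\|u(\cdot,t)\|_{L^2(\re^n)}=\|\varphi_t(L_w)f\|_{L^2(\re^n)}\le C\|f\|_{L^2(\re^n)},
\]
which is~\eqref{eqn:dirichlet-bound}. That $u(x,t)=\varphi_t(L_w)f(x)$ satisfies $\partial_t^2 u - L_w u=0$ follows from the product and continuity properties of the $H^\infty$ functional calculus recorded in Section~\ref{section:prelim}: the relations $\partial_t\varphi_t(z)=-\sqrt{z}\,\varphi_t(z)$ and $\partial_t^2\varphi_t(z)=z\,\varphi_t(z)$ hold as bounded holomorphic identities on $\Sigma_\mu$, and they both lift through the calculus, with the $t$-derivatives interpreted as strong $L^2(\re^n)$ limits. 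The $L^2(\re^n)$ convergence $u(\cdot,t)\to f$ as $t\to 0^+$ is the usual strong continuity of the Poisson semigroup: since $\varphi_t\to 1$ locally uniformly on $\Sigma_\mu$, the continuity property of the functional calculus gives strong operator convergence $\varphi_t(L_w)\to I$ on $L^2(\re^n)$. The explicit examples (power weights in the stated range, $A_1\cap RH_{1+n/2}$, and $A_r\cap RH_{(n/2)r+1}$ for $1<r\le 1+2/n$) are inherited at once from the parallel list in Theorem~\ref{thm:semigroup-nowt}.

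The only delicate point is that the symbol $\varphi_t$ is not in the decay class $\H_0^\infty(\Sigma_\mu)$ for which the integral representation~\eqref{eqn:L2-holo-rep} converges absolutely, so one must rely on the extension of the functional calculus to all of $\H^\infty(\Sigma_\mu)$. This extension is, however, already built into Theorem~\ref{thm:semigroup-nowt}, so the present argument is essentially a transcription: the real work went into proving the unweighted functional calculus in Section~\ref{section:L2-kato}, and Theorem~\ref{thm:dirichlet} is a clean corollary with no genuine additional obstacle.
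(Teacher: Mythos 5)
Your main estimate is exactly the paper's argument: under the stated hypotheses Theorem~\ref{thm:semigroup-nowt} gives the unweighted $L^2$ bound for the $H^\infty$ calculus, and applying it to $\varphi_t(z)=e^{-t\sqrt z}$, which is holomorphic and bounded by $1$ on $\Sigma_\mu$ uniformly in $t$, yields \eqref{eqn:dirichlet-bound}; the justification that $u$ solves the equation via differentiating the symbol is also the paper's (alternative) route, and the list of examples is indeed inherited from Theorem~\ref{thm:semigroup-nowt}.

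There is, however, a gap in your treatment of the convergence to the boundary data. The continuity property of the functional calculus recorded in Section~\ref{section:prelim} (if $\varphi_k\to\varphi$ locally uniformly then $\varphi_k(L_w)\to\varphi(L_w)$ strongly) is stated, and only holds a priori, in the strong operator topology of $L^2(w)$: the operator $L_w$ and its calculus live on $L^2(w)$, and convergence in $L^2(w)$ does not imply convergence in unweighted $L^2(\re^n)$, since $w$ may degenerate. Theorem~\ref{thm:semigroup-nowt} only provides the \emph{uniform} unweighted bound $\|\varphi_t(L_w)\|_{L^2\to L^2}\le C$; it does not by itself upgrade the $L^2(w)$ convergence lemma to strong convergence $\varphi_t(L_w)\to I$ on $L^2(\re^n)$. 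This is why the paper does not argue as you do: it obtains $e^{-tL_w^{1/2}}f\to f$ in $L^2(\re^n)$ from the strong continuity of the semigroup combined with the off-diagonal estimates, citing \cite[Section~4.2]{auscher-martell07}; equivalently one needs uniform unweighted boundedness \emph{plus} convergence on a dense class in the unweighted norm, and establishing the latter is precisely where the off-diagonal machinery enters. The same caution applies to your claim that the identities $\partial_t\varphi_t(z)=-\sqrt z\,\varphi_t(z)$, $\partial_t^2\varphi_t(z)=z\,\varphi_t(z)$ ``lift through the calculus'' with $t$-derivatives taken as strong limits in $L^2(\re^n)$: the difference-quotient symbols converge locally uniformly, so the convergence lemma again only gives the limits in $L^2(w)$, and passing to unweighted $L^2$ requires the uniform $L^2$ bounds of Theorem~\ref{thm:semigroup-nowt} applied to the relevant family of symbols together with an approximation argument (or the appeal to Kato's theory that the paper offers as its first justification). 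These points are fixable with tools already in the paper, but as written your proof does not supply them.
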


\begin{proof}
Formally, it is clear that $u$ is a solution to \eqref{eqn:dirichlet},
and this formalism can be justified by appealing to the theory of
maximal accretive operators:  see~Kato~\cite{kato66}. Alternatively, the 
weighted estimates for the functional calculus in Proposition~\ref{prop:B-K:weights} 
show that both $\frac{\partial^2}{\partial t^2}u(\cdot,t)$ and $L_w u(\cdot,t)$ belong to $L^2$ 
for each $t>0$ and that they are equal in the $L^2$-sense. 
To see that inequality~\eqref{eqn:dirichlet-bound} holds, it suffices
to let $\varphi_t(z)=e^{-t\sqrt{z}}$.  Then $\varphi_t$ is a bounded
  holomorphic function on $\Sigma_\mu$, and so by
  Theorem~\ref{thm:semigroup-nowt} we get the desired bound.  
\end{proof}

\begin{remark}\label{remark-extra}
Note that as observed in Remark \ref{remark:p-vs-q}, in the previous result we can replace $1\le r_w<1+\frac2{n}$ with the possibly weaker condition $1\le r_w<\frac{p_+(L_w)}{2}$. Also, by Proposition~\ref{prop:B-K:weights} we also have that for $u$ as in Theorem~\ref{thm:dirichlet} and all $k\ge 1$
\begin{equation}
\sup_{t>0} \left\| t^k\frac{\partial^k}{\partial t^k} u(\cdot,t)\right\|_{L^2} 
= 
\sup_{t>0} \left\| (t^k\, L_w^{1/2})^k e^{-tL_w^{1/2}}f(\cdot)\right\|_{L^2} \leq C\|f\|_{L^2}.
\label{eq:eqn-CR}
\end{equation}
\end{remark}

\medskip

For the regularity problem we have the following.  

\begin{theor} \label{thm:regularity} 
Given a weight $w\in A_2$, suppose
\[ 1\leq r_w<\frac{q_+(L_w)}2 \qquad \text{ and} \qquad 
s_w>\max\Big\{\Big(\frac{2}{r_w}\Big)', \frac{n}{2}r_w+1\Big\}. \]
 Then for any
  $f\in H^1(\re^n)$, $u(x,t)=e^{-tL_w^{1/2}}f(x)$ is a solution
  of~\eqref{eqn:dirichlet}  with convergence to the boundary data as $t\to 0^+$ in the $L^2$-sense.   Furthermore, we have that
\begin{equation} \label{eqn:regularity-bound}
 \sup_{t>0} \|\grad_{x,t}u(\cdot,t)\|_{L^2} \leq C\|\nabla f\|_{L^2}. 
\end{equation}

In particular, \eqref{eqn:regularity-bound} holds if we assume that
$w\in A_1\cap RH_{1+\frac{n}{2}}$. Furthermore, it holds if the
following is true: given $\Theta \ge 1$ there exists
$\epsilon_0=\epsilon_0(\Theta, n, \Lambda/\lambda)$,
$0<\epsilon_0\le \frac1{2\,n}$, such that  
$w\in A_{1+\epsilon}\cap RH_{\max\{(\frac{2}{1+\epsilon})', \frac{n}{2}\,(1+\epsilon)+1\}}$, 
$0\le \epsilon<\epsilon_0$, and $[w]_{A_2}\le \Theta$.

For power weights, there exists $\epsilon_1=\epsilon_1(n, \Lambda/\lambda)$, $0<\epsilon_1\le \frac1{2}$, such that 
\eqref{eqn:regularity-bound} holds if
\[ w_\alpha(x) = |x|^{\alpha}, \qquad  \qquad -\frac{n}{2}<\alpha<\epsilon_1.\] 
\end{theor}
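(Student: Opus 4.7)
The plan is to follow the strategy of Theorem~\ref{thm:dirichlet}, enriched with two additional ingredients to control the spatial and temporal pieces of $\grad_{x,t} u$. As before, $u(x,t) = e^{-tL_w^{1/2}} f(x)$ solves $\partial_t^2 u - L_w u = 0$ in the semigroup sense via the $L^2(w)$ functional calculus, and $u(\cdot,t) \to f$ in $L^2$ as $t \to 0^+$ by strong continuity (which transfers from $L^2(w)$ to $L^2$ once uniform $L^2$-boundedness of the semigroup is established via Theorem~\ref{thm:semigroup-nowt} together with Remark~\ref{remark:p-vs-q}; cf.~\cite[Section~4.2]{auscher-martell07}). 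The essential content is therefore the estimate~\eqref{eqn:regularity-bound}, which we split as $\grad_{x,t} u = (\grad_x u, \partial_t u)$ and treat separately.

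For the time derivative, since $L_w^{1/2}$ and $e^{-tL_w^{1/2}}$ are both functions of $L_w$ they commute, so
\[
\partial_t u(\cdot,t) = -L_w^{1/2} e^{-tL_w^{1/2}} f = -e^{-tL_w^{1/2}} L_w^{1/2} f.
\]
The function $z \mapsto e^{-t z^{1/2}}$ belongs to $\H^\infty(\Sigma_\mu)$ with sup-norm $1$ for any $\mu < \pi$. By Theorem~\ref{thm:semigroup-nowt} combined with Remark~\ref{remark:p-vs-q} and the nesting $q_+(L_w) \le p_+(L_w)$ from Proposition~\ref{prop:K}, the hypothesis $r_w < q_+(L_w)/2$ forces $r_w < p_+(L_w)/2$, which together with the assumption on $s_w$ yields the unweighted $L^2$-boundedness of the holomorphic functional calculus. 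Hence
\[
\|\partial_t u(\cdot,t)\|_{L^2} \lesssim \|L_w^{1/2} f\|_{L^2} \lesssim \|\grad f\|_{L^2},
\]
the last inequality being the unweighted reverse Kato estimate from Theorem~\ref{thm:reverseRiesz-nowt}, whose hypotheses are exactly what we have assumed (after invoking Remark~\ref{remark:p-vs-q} to replace $r_w < 1+2/n$ by $r_w < p_+(L_w)/2$).

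For the spatial gradient we factor through the Riesz transform:
\[
\grad_x u(\cdot,t) = \bigl(\grad L_w^{-1/2}\bigr)\,L_w^{1/2} e^{-tL_w^{1/2}} f.
\]
Our hypotheses on $r_w$ and $s_w$ coincide with those of Theorem~\ref{thm:grad-square-nowt}, so $\grad L_w^{-1/2}$ is bounded on $L^2$, giving
\[
\|\grad_x u(\cdot,t)\|_{L^2} \lesssim \|L_w^{1/2} e^{-tL_w^{1/2}} f\|_{L^2} = \|\partial_t u(\cdot,t)\|_{L^2} \lesssim \|\grad f\|_{L^2},
\]
which completes~\eqref{eqn:regularity-bound}. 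The specific instances ($w \in A_1 \cap RH_{1+n/2}$, the small-perturbation class of $A_{1+\epsilon}$ weights with controlled $[w]_{A_2}$, and the power weights $|x|^\alpha$) follow immediately by citing the corresponding conclusions of Theorems~\ref{thm:reverseRiesz-nowt} and~\ref{thm:grad-square-nowt}, whose sufficient conditions are identical to those of Theorem~\ref{thm:regularity}.

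The only delicate point is bookkeeping: we must verify that a single pair of hypotheses on $(r_w, s_w)$ simultaneously unlocks the three tools used above---the $L^2$ Riesz transform bound, the $L^2$ reverse Kato inequality, and the $L^2$ functional calculus. The Riesz transform bound sets the floor $r_w < q_+(L_w)/2$, which is the strongest of the three spectral-type constraints because $q_+(L_w) \le p_+(L_w)$; the $s_w$ condition $s_w > \max\{(2/r_w)', (n/2)r_w+1\}$ is precisely the envelope of the conditions demanded by the reverse Kato bound (Theorem~\ref{thm:reverseRiesz-nowt}) and the functional calculus/Riesz bounds (Theorems~\ref{thm:semigroup-nowt} and~\ref{thm:grad-square-nowt}). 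I expect no obstruction beyond verifying these inclusions, since all three results have already been proved in the previous sections; the regularity problem is essentially their simultaneous application.
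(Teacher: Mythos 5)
Your proof is correct and follows essentially the same route as the paper: the same decomposition of $\grad_{x,t}u$, the $L^2$ Riesz transform bound, commuting $L_w^{1/2}$ with the semigroup, the $L^2$ functional calculus, and the unweighted reverse Kato estimate (the paper packages these ingredients as Theorem~\ref{corol:super-Kato}, whose hypotheses are exactly those of Theorem~\ref{thm:regularity}). Your hypothesis bookkeeping via Remark~\ref{remark:p-vs-q} and $q_+(L_w)\le p_+(L_w)$ is also consistent with the paper.
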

 
\begin{proof}

Arguing as before, it suffices to prove that \eqref{eqn:regularity-bound} holds.  For any
$t>0$ we have, by Theorem~\ref{corol:super-Kato}
\begin{multline*}
\|\grad_{x,t}u(\cdot,t)\|_{L^2}
 \leq 
\|\grad L_w^{-1/2} L_w^{1/2}e^{-tL_w^{1/2}}f \|_{L^2}
+ \|L_w^{1/2}e^{-tL_w^{1/2}}f \|_{L^2} \\
 \lesssim
\|L_w^{1/2}e^{-tL_w^{1/2}}f \|_{L^2} 
=
\|e^{-tL_w^{1/2}}\,L_w^{1/2}f \|_{L^2} 
 \lesssim
\|L_w^{1/2}f \|_{L^2} 
\lesssim
\|\grad f\|_{L^2}.
\end{multline*}
\end{proof}

Note that under the hypothesis of Theorem \ref{thm:regularity}, and as observed in Remark \ref{remark-extra}, 
we have that $u(\cdot,t)=e^{-tL_w^{1/2}}f$ satisfies \eqref{eqn:dirichlet-bound} and \eqref{eq:eqn-CR}. Additionally, from the functional calculus estimates on $L^2$ it follows that
\begin{equation} \label{eqn:regularity-bound2}
\sup_{t>0} \|t\grad_{x,t}u(\cdot,t)\|_{L^2} 
\lesssim
\| t L_w^{1/2}e^{-tL_w^{1/2}}f \|_{L^2} \lesssim \| f\|_{L^2}.
\end{equation}

Finally, we consider the Neumann problem
\begin{equation} \label{eqn:neumann}
 \begin{cases}
\partial_t^2 u - L_w u = 0, & \text{on } \rn \\
\partial_t u \big|_{\partial\re^{n+1}_+}= f & \text{on } \partial\re^{n+1}_+ =\re^n.
\end{cases}
\end{equation}

\begin{theor} \label{thm:neumann} 
Given a weight $w\in A_2$, suppose $1\leq r_w<\frac{q_+(L_w)}2$ and
$s_w>\frac{n}{2}r_w+1$.   Then for any
  $f\in L^2(\re^n)$, $u(x,t)=-L_w^{-1/2}e^{-tL_w^{1/2}}f(x)$ is a solution of~\eqref{eqn:neumann}   with convergence of $\partial_t u(\cdot,t)\to f$ as $t\to 0^+$ in the $L^2$-sense.   Furthermore, we have that
\begin{equation} \label{eqn:neumann-bound}
 \sup_{t>0} \|\grad_{x,t}u(\cdot,t)\|_{L^2} \leq C\|f\|_{L^2}. 
\end{equation}

In particular, \eqref{eqn:neumann-bound} holds if we
assume that $w\in A_1\cap RH_{1+\frac{n}{2}}$. Furthermore, it holds
if the following is true:   given $\Theta \ge 1$ there exists
$\epsilon_0=\epsilon_0(\Theta, n, \Lambda/\lambda)$, $0<\epsilon_0\le
\frac1{2\,n}$, such that $w\in A_{1+\epsilon}\cap
RH_{\frac{n}2\,(1+\epsilon)+1}$, $0\le \epsilon<\epsilon_0$,  and $[w]_{A_2}\le
\Theta$.  

For power weights, there exists $\epsilon_1=\epsilon_1(n, \Lambda/\lambda)$, $0<\epsilon_1\le \frac1{2}$, such that 
\eqref{eqn:neumann-bound} holds if
\[ w_\alpha(x) = |x|^{\alpha}, \qquad  \qquad -\frac{2\,n}{n+2}<\alpha<\epsilon_1.\] 
\end{theor}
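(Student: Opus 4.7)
The plan is to follow the same template established for the Dirichlet problem in Theorem \ref{thm:dirichlet} and the Regularity problem in Theorem \ref{thm:regularity}: first verify that $u(x,t)=-L_w^{-1/2}e^{-tL_w^{1/2}}f(x)$ is a well-defined solution with the stated boundary behavior, and then obtain the estimate \eqref{eqn:neumann-bound} by decomposing $\nabla_{x,t}u$ and applying the $L^2$ bounds for the semigroup, the Riesz transform, and the functional calculus established in Sections \ref{section:L2-kato}.

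For the verification, a formal computation using the $L^2(w)$ holomorphic functional calculus gives
\[
\partial_t u = L_w^{-1/2}\,L_w^{1/2}\,e^{-tL_w^{1/2}}f = e^{-tL_w^{1/2}}f,\qquad \partial_t^2 u = -L_w^{1/2}e^{-tL_w^{1/2}}f = L_w u,
\]
so $u$ solves the PDE pointwise in $t>0$ in the distributional sense on each slice. The boundary condition $\partial_t u(\cdot,t)\to f$ in $L^2$ as $t\to 0^+$ follows from the strong continuity of the semigroup generated by $L_w^{1/2}$ on $L^2$, which in turn follows from the off-diagonal estimates together with the $L^2$-boundedness of $\varphi_t(L_w)=e^{-t\sqrt{L_w}}$ as in \cite[Section~4.2]{auscher-martell07}. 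Under our hypotheses, this $L^2$-boundedness is available: since $q_+(L_w)\le p_+(L_w)$, the condition $r_w<q_+(L_w)/2$ implies $r_w<p_+(L_w)/2$, so Theorem \ref{thm:semigroup-nowt} combined with Remark \ref{remark:p-vs-q} (applied to the bounded holomorphic function $\varphi_t(z)=e^{-t\sqrt z}$) yields
\[
\|\partial_t u(\cdot,t)\|_{L^2}=\|e^{-tL_w^{1/2}}f\|_{L^2}\lesssim \|f\|_{L^2}
\]
uniformly in $t>0$.

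For the spatial gradient, the core step is to write
\[
\nabla_x u(\cdot,t)= -\nabla L_w^{-1/2}\bigl(e^{-tL_w^{1/2}}f\bigr),
\]
using that the Riesz transform $\nabla L_w^{-1/2}$ extends to a bounded operator on $L^2$ (as discussed in Section~\ref{section:prelim} after \eqref{defi-RT:trunc}), and that $e^{-tL_w^{1/2}}f\in L^2$ by the previous step. Theorem \ref{thm:grad-square-nowt} applies under the assumptions $1\le r_w<q_+(L_w)/2$ and $s_w>\tfrac{n}{2}r_w+1$, which are exactly our hypotheses, and delivers $\|\nabla L_w^{-1/2}\|_{L^2\to L^2}<\infty$. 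Composing with the $L^2$-bound for the semigroup from the previous paragraph gives
\[
\|\nabla_x u(\cdot,t)\|_{L^2}\lesssim \|e^{-tL_w^{1/2}}f\|_{L^2}\lesssim \|f\|_{L^2},
\]
uniformly in $t>0$. Adding this to the bound on $\partial_t u$ yields \eqref{eqn:neumann-bound}.

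Finally, the three sets of sufficient conditions listed in the statement transfer verbatim from Theorem \ref{thm:grad-square-nowt}: the same structural hypotheses are imposed on $w$, so the examples $w\in A_1\cap RH_{1+n/2}$, the perturbative class $w\in A_{1+\epsilon}\cap RH_{\tfrac{n}{2}(1+\epsilon)+1}$ with $[w]_{A_2}\le\Theta$, and the power weights $w_\alpha(x)=|x|^\alpha$ with $-\tfrac{2n}{n+2}<\alpha<\epsilon_1$ all fall into the scope of Theorem~\ref{thm:grad-square-nowt} and therefore into the scope of the bound just proved. The only delicate point in the argument is the composition $\nabla L_w^{-1/2}\circ e^{-tL_w^{1/2}}$, but this is handled by the truncation procedure recalled around \eqref{defi-RT:trunc}, interpreting $\nabla L_w^{-1/2}$ on $L^2$ as the $L^2$-limit of $\nabla S_\epsilon(L_w)$; the $L^2$ boundedness furnished by Theorem \ref{thm:grad-square-nowt} makes this limit legitimate and the composition continuous in $t$. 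No additional obstacle is expected beyond what already appears in the proofs of Theorems \ref{thm:dirichlet} and \ref{thm:regularity}.
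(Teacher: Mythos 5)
Your proposal is correct and follows essentially the same route as the paper: write $\partial_t u=e^{-tL_w^{1/2}}f$ and $\nabla_x u=-\nabla L_w^{-1/2}e^{-tL_w^{1/2}}f$, then bound the first term by the unweighted functional calculus estimate (Theorem~\ref{thm:semigroup-nowt} with $\varphi(z)=e^{-t\sqrt{z}}$) and the second by composing the $L^2$-bounded Riesz transform from Theorem~\ref{thm:grad-square-nowt} with that same semigroup bound. Your explicit remark that $r_w<q_+(L_w)/2\le p_+(L_w)/2$ justifies invoking Theorem~\ref{thm:semigroup-nowt} via Remark~\ref{remark:p-vs-q} is a welcome clarification of a point the paper leaves implicit, but it does not change the argument.
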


\begin{proof}
Again, $u$ is clearly a formal solution of~\eqref{eqn:neumann};
see~\cite{kato66}.  The proof that~\eqref{eqn:neumann-bound} holds is
similar to the proof of~\eqref{eqn:regularity-bound}:
$$
\|\grad_{x,t}u(\cdot,t)\|_{L^2}
 \leq 
\|\grad L_w^{-1/2} e^{-tL_w^{1/2}}f \|_{L^2}
+ \|e^{-tL_w^{1/2}}f \|_{L^2} 
 \lesssim
\|e^{-tL_w^{1/2}}f \|_{L^2} 
\lesssim
\|f\|_{L^2},
$$
where we have used Theorem~\ref{thm:grad-square-nowt} (for the Riesz
transform) and Theorem~\ref{thm:semigroup-nowt} (for the functional calculus with $\varphi(z)=e^{-t\sqrt{z}}$).  
\end{proof}

\begin{remark}
As we noted in Remark~\ref{remark:Lpbounds}, we can also get
unweighted $L^p$ bounds for these operators for values of $p$ close to
2.  As a consequence we can also get estimates $L^p$ boundary value
problems for the same values of $p$.  Details are left to the reader.
\end{remark}

\bibliographystyle{plain}
\bibliography{CMR}

\begin{thebibliography}{10}

\bibitem{MR2561035}
T.~Alberico, A.~Cianchi, and C.~Sbordone.
\newblock Fractional integrals and {$A_p$}-weights: a sharp estimate.
\newblock {\em C. R. Math. Acad. Sci. Paris}, 347(21-22):1265--1270, 2009.

\bibitem{auscher07}
P.~Auscher.
\newblock On necessary and sufficient conditions for {$L\sp p$}-estimates of
  {R}iesz transforms associated to elliptic operators on {$\mathbb R\sp n$} and
  related estimates.
\newblock {\em Mem. Amer. Math. Soc.}, 186(871):xviii+75, 2007.

\bibitem{Auscher-Coulhon2005}
P.~Auscher and T.~Coulhon.
\newblock Riesz transform on manifolds and {P}oincar\'e inequalities.
\newblock {\em Ann. Sc. Norm. Super. Pisa Cl. Sci. (5)}, 4(3):531--555, 2005.

\bibitem{auscher-hofmann-lacey-mcintosh-tchamitchian02}
P.~Auscher, S.~Hofmann, M.~Lacey, A.~McIntosh, and P.~Tchamitchian.
\newblock The solution of the {K}ato square root problem for second order
  elliptic operators on {${\mathbb R}\sp n$}.
\newblock {\em Ann. of Math. (2)}, 156(2):633--654, 2002.

\bibitem{auscher-martell06}
P.~Auscher and J.~M. Martell.
\newblock Weighted norm inequalities, off-diagonal estimates and elliptic
  operators. {III}. {H}armonic analysis of elliptic operators.
\newblock {\em J. Funct. Anal.}, 241(2):703--746, 2006.

\bibitem{auscher-martell07b}
P.~Auscher and J.~M. Martell.
\newblock Weighted norm inequalities, off-diagonal estimates and elliptic
  operators. {I}. {G}eneral operator theory and weights.
\newblock {\em Adv. Math.}, 212(1):225--276, 2007.

\bibitem{auscher-martell07}
P.~Auscher and J.~M. Martell.
\newblock Weighted norm inequalities, off-diagonal estimates and elliptic
  operators. {II}. {O}ff-diagonal estimates on spaces of homogeneous type.
\newblock {\em J. Evol. Equ.}, 7(2):265--316, 2007.

\bibitem{auscher-martell-08}
P.~Auscher and J.~M. Martell.
\newblock Weighted norm inequalities, off-diagonal estimates and elliptic
  operators. {IV}. {R}iesz transforms on manifolds and weights.
\newblock {\em Math. Z.}, 260(3):527--539, 2008.

\bibitem{Auscher-Rosen-Rule}
P.~Auscher, A.~Ros{\'e}n, and D.~Rule.
\newblock Boundary value problems for degenerate elliptic equations and
  systems.
\newblock {\em Ann. Sci. \'Ec. Norm. Sup\'er. (4)}, 48(4):951--1000, 2015.

\bibitem{auscher-tchamitchian98}
P.~Auscher and P.~Tchamitchian.
\newblock Square root problem for divergence operators and related topics.
\newblock {\em Ast\'erisque}, (249):viii+172, 1998.

\bibitem{bernicot-zhao}
F.~Bernicot and J.~Zhao.
\newblock New abstract {H}ardy spaces.
\newblock {\em J. Funct. Anal.}, 255(7):1761--1796, 2008.

\bibitem{MR2867756}
A.~Bj{\"o}rn and J.~Bj{\"o}rn.
\newblock {\em Nonlinear potential theory on metric spaces}, volume~17 of {\em
  EMS Tracts in Mathematics}.
\newblock European Mathematical Society (EMS), Z\"urich, 2011.

\bibitem{LiMartellPrisuelos}
L.~Chen, J.~M. Martell, and C.~Prisuelos-Arribas.
\newblock Conical square functions for degenerate elliptic operators.
\newblock {\em To appear in {A}dv. {C}alc. {V}ar.}, 2017.
\newblock arXiv:1610.05952.

\bibitem{couling96}
M.~Cowling, I.~Doust, A.~McIntosh, and A.~Yagi.
\newblock Banach space operators with a bounded {$H^\infty$} functional
  calculus.
\newblock {\em J. Austral. Math. Soc. Ser. A}, 60(1):51--89, 1996.

\bibitem{MR1308005}
D.~Cruz-Uribe and C.~J. Neugebauer.
\newblock The structure of the reverse {H}\"older classes.
\newblock {\em Trans. Amer. Math. Soc.}, 347(8):2941--2960, 1995.

\bibitem{cruz-uribe-riosP}
D.~Cruz-Uribe and C.~Rios.
\newblock Gaussian bounds for degenerate parabolic equations.
\newblock {\em J. Funct. Anal.}, 255(2):283--312, 2008.

\bibitem{cruz-riosP}
D.~Cruz-Uribe and C.~Rios.
\newblock The solution of the {K}ato problem for degenerate elliptic operators
  with {G}aussian bounds.
\newblock {\em Trans. Amer. Math. Soc.}, 364(7):3449--3478, 2012.

\bibitem{DCU-CR2014}
D.~Cruz-Uribe and C.~Rios.
\newblock Corrigendum to ``{G}aussian bounds for degenerate parabolic
  equations'' [{J}. {F}unct. {A}nal. 255 (2) (2008) 283--312] [mr2419963].
\newblock {\em J. Funct. Anal.}, 267(9):3507--3513, 2014.

\bibitem{DCU-CR2013}
D.~Cruz-Uribe and C.~Rios.
\newblock The {K}ato problem for operators with weighted ellipticity.
\newblock {\em Trans. Amer. Math. Soc.}, 367(7):4727--4756, 2015.

\bibitem{duoandikoetxea01}
J.~Duoandikoetxea.
\newblock {\em Fourier analysis}, volume~29 of {\em Graduate Studies in
  Mathematics}.
\newblock American Mathematical Society, Providence, RI, 2001.

\bibitem{fabes-kenig-serapioni82}
E.~B. Fabes, C.~Kenig, and R.~Serapioni.
\newblock The local regularity of solutions of degenerate elliptic equations.
\newblock {\em Comm. Partial Differential Equations}, 7(1):77--116, 1982.

\bibitem{garciacuerva-rubiodefrancia85}
J.~Garc{\'{\i}}a-Cuerva and J.~L. Rubio~de Francia.
\newblock {\em Weighted norm inequalities and related topics}, volume 116 of
  {\em North-Holland Mathematics Studies}.
\newblock North-Holland Publishing Co., Amsterdam, 1985.

\bibitem{haase06}
M.~Haase.
\newblock {\em The functional calculus for sectorial operators}, volume 169 of
  {\em Operator Theory: Advances and Applications}.
\newblock Birkh\"auser Verlag, Basel, 2006.

\bibitem{HLM}
S.~Hofmann, P.~Le, and A.~Morris.
\newblock {C}arleson measure estimates and the {D}irichlet problem for
  degenerate elliptic equations.
\newblock {\em {P}reprint}, 2015.

\bibitem{kato66}
T.~Kato.
\newblock {\em Perturbation theory for linear operators}.
\newblock Die Grundlehren der mathematischen Wissenschaften, Band 132.
  Springer-Verlag New York, Inc., New York, 1966.

\bibitem{LePhi}
P.~Le.
\newblock {$L^p$} bounds of {R}iesz transform and vertical square functions for
  degenerate elliptic operators.
\newblock {\em {P}reprint}, 2015.

\bibitem{mcintosh86}
A.~McIntosh.
\newblock Operators which have an {$H\sb \infty$} functional calculus.
\newblock In {\em Miniconference on operator theory and partial differential
  equations (North Ryde, 1986)}, volume~14 of {\em Proc. Centre Math. Anal.
  Austral. Nat. Univ.}, pages 210--231. Austral. Nat. Univ., Canberra, 1986.

\bibitem{miller82}
N.~Miller.
\newblock Weighted {S}obolev spaces and pseudodifferential operators with
  smooth symbols.
\newblock {\em Trans. Amer. Math. Soc.}, 269(1):91--109, 1982.

\bibitem{perez1999}
C.~P\'erez.
\newblock Calder\'on-{Z}ygmund theory related to {P}oincar\'e-{S}obolev
  inequalities, fractional integrals and singular integral operators.
\newblock In J.~Lukes and L.~Pick, editors, {\em Function spaces, nonlinear
  analysis and applications, Lectures notes of the spring lectures in
  analysis}. Charles University and Academy of Sciences, 1999.
\newblock Available at http://grupo.us.es/anaresba/trabajos/carlosperez/31.pdf.

\bibitem{YZ}
D.~Yang and J.~Zhang.
\newblock Weighted {$L^p$} estimates of {K}ato quare roots associated to
  degenerate elliptic operators.
\newblock {\em Publ. Mat.}, 61(2):395--444, 2017.

\end{thebibliography}

\end{document}